\newcolumntype{M}[1]{>{\centering\arraybackslash}m{#1}} %define dimension for long stable
\DeclareFontFamily{OMS}{rsfs}{\skewchar\font'60}
\DeclareFontShape{OMS}{rsfs}{m}{n}{<-5>rsfs5 <5-7>rsfs7 <7->rsfs10 }{}
\DeclareSymbolFont{rsfs}{OMS}{rsfs}{m}{n}
\DeclareSymbolFontAlphabet{\scr}{rsfs}
\DeclareSymbolFontAlphabet{\scr}{rsfs}
\newcommand\cH{{\mathcal H}}
\newcommand\cV{{\mathcal V}}
\newcommand\bbB{{\mathbb B}}
\newcommand\bbC{{\mathbb C}}
\newcommand\bbK{{\mathbb K}}
\newcommand\bbP{{\mathbb P}}
\newcommand\bbQ{{\mathbb Q}}
\newcommand\bbS{{\mathbb S}}
\newcommand\sE{{\mathscr E}}
\newcommand\sF{{\mathscr F}}
\newcommand\sG{{\mathscr G}}
\newcommand\sI{{\mathscr I}}
\newcommand\sL{{\mathscr L}}
\newcommand\sM{{\mathscr M}}
\newcommand\sN{{\mathscr N}}
\newcommand\sO{{\mathscr O}}
\newcommand\sQ{{\mathscr Q}}
\newcommand\sT{{\mathscr T}}
\newcommand\sV{{\mathscr V}}
\newcommand\sH{{\mathscr H}}
\newcommand{\id}{{\rm id}}
\newcommand{\rk}{{\rm rk}}
\newcommand{\codim}{{\rm codim}}
\newcommand{\im}{{\rm im}}
\DeclareMathOperator*{\Sym}{Sym}
\DeclareMathOperator*{\supp}{Supp}
\DeclareMathOperator*{\PGL}{PGL}
\newcommand{\Chow}[1]{\ensuremath{\mbox{\rm Chow}(#1)}}
\newcommand{\RC}[1]{\ensuremath{\mbox{\rm RatCurves}^n(#1)}}
\newcommand{\Proj}[1]{\ensuremath{\mbox{\rm Proj}(#1)}}
\newcommand{\sHom}[2]{\ensuremath{\mathscr{H}om_{\mathscr{O}_X}(#1,#2)}}
\newcommand{\defeq}{{\vcentcolon=}}
\newcommand\bp{{\bar\partial}}
\theoremstyle{plain}
\newtheorem{thm}{Theorem}[section]
\newtheorem{lemma}[thm]{Lemma}
\newtheorem{prop}[thm]{Proposition}
\newtheorem{cor}[thm]{Corollary}
\newtheorem{defn}[thm]{Definition}
\newtheorem{claim}[thm]{Claim}
\theoremstyle{definition}
\newtheorem{example}[thm]{Example}
\newtheorem{remark}[thm]{Remark}
\newtheorem{convention}[thm]{Convention}
\setlist[itemize]{leftmargin=*}
\setlist[enumerate]{leftmargin=*}
\numberwithin{equation}{section} %numbering of equations
\title{Projective manifolds whose tangent bundle contains a strictly nef subsheaf}
\date{\today}
\subjclass[2010]{14H30,14J40,14J60,32Q57}
\keywords{strictly nef, MRC fibration,  numerically projectively flat, hyperbolicity}
\author{Jie Liu}
\address{Jie Liu, Morningside Center of Mathematics, Academy of Mathematics and Systems Science, Chinese Academy of Sciences, Beijing, 100190, China}
\email{jliu@amss.ac.cn}
\author{Wenhao Ou}
\address{Wenhao Ou, Institute of Mathematics, Academy of Mathematics and Systems Science, Chinese Academy of Sciences, Beijing, 100190, China}
\email{wenhaoou@amss.ac.cn}
\author{Xiaokui Yang}
\address{Xiaokui Yang, Department of Mathematics and Yau Mathematical Sciences Center, Tsinghua University, Beijing, 100084, China}
\email{xkyang@mail.tsinghua.edu.cn}
\begin{document}

\begin{abstract}Suppose that $X$ is a projective manifold whose tangent bundle $T_X$ contains a locally free strictly nef  subsheaf.  We prove that $X$ is isomorphic to a projective bundle over a hyperbolic  manifold.  Moreover, if  the fundamental group $\pi_1(X)$ is  virtually abelian, then $X$ is isomorphic to a projective space.
\end{abstract}

\maketitle

\tableofcontents

\vspace{-0.2cm}

\section{Introduction}

Since the seminal works of Mori   and Siu-Yau on the solutions to
Hartshorne
 conjecture   and Frankel conjecture
(\cite{Mori1979}, \cite{SiuYau1980}), it becomes apparent that the
positivity
  of the tangent bundle of a complex projective manifold
  carries important geometric information. In the past
decades,  many remarkable generalizations  have been
established. For instance, Mok classified compact K\"ahler manifold
with semipositive holomorphic bisectional curvature in
\cite{Mok1988}. Initiated by the fundamental works of Campana,
Demailly, Peternell and Schneider (\cite{CampanaPeternell1991},
\cite{DemaillyPeternellSchneider1994}, \cite{Peternell1996}), the
structure of projective manifolds with nef tangent bundles is
investigated by many mathematicians.
The  last building block to be understood for such manifolds are Fano manifolds with nef tangent bundles.
 Campana and Peternell proposed in \cite{CampanaPeternell1991} the following
conjecture, which is still an important  open problem: a  Fano manifold with nef tangent bundle  must be a rational
homogeneous space.
It is proved for all Fano manifolds of
dimension at most five and has also been verified for certain
special varieties.  We refer to
\cite{CampanaPeternell1991,CampanaPeternell1993,Mok2002,Hwang2006,Pandharipande2013,
Watanabe2014,MunozOcchettaSolaCondeWatanabeEtAl2015,Kanemitsu2017,Li2017}
and the references therein.

Recall that  a line bundle $\sL$ on a projective variety $X$  is said to be
\emph{strictly nef} if $c_1(\sL)\cdot C>0$ for all complete curves $C$ in $X$,
and a vector bundle $\sF$   is strictly nef if its tautological line
bundle $\sO_{\mathbb{P}(\sF)}(1)$ is strictly nef. The definition of strict nefness is quite natural and it is a notion of positivity which is stronger than nefness but weaker than ampleness.  The main difficulty to deal with it is that the strictly nefness is not closed under exterior product.  Actually, there exist \emph{Hermitian flat} vector bundles which are also strictly nef, and this phenomenen will be studied intensively in this paper.
Even though there are significant differences between strict nefness and ampleness, we  still expect that the strict nefness could play similar roles as ampleness in many  situations. Indeed,  together with Li, the second and third authors obtained the following theorem in \cite{LiOuYang2019} which extends Mori's result.

\begin{thm}\cite[Theorem 1.4]{LiOuYang2019}
    \label{thm:Li-Ou-Yang}
    Let $X$ be an $n$-dimensional  complex projective manifold such that $T_X$ is strictly nef. Then $X\cong  \bbP^n$.
\end{thm}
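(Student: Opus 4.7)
The plan is to adapt Mori's argument that ample $T_X$ forces $X\cong\bbP^n$ to the strict nef setting. I would proceed in three stages: produce a minimal free rational curve on $X$, use strict nefness to constrain its splitting type, and conclude via the Cho-Miyaoka-Shepherd-Barron characterization of $\bbP^n$.

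\emph{Stage 1: $X$ is a rationally connected Fano manifold admitting minimal free rational curves of anticanonical degree at most $n+1$.} The restriction of a strictly nef $T_X$ to any curve $C\subset X$ is a strictly nef vector bundle, and such a bundle on a curve is ample: every line bundle quotient $T_X|_C\twoheadrightarrow L$ defines a section of $\bbP(T_X|_C)$ on which $\cO_{\bbP(T_X)}(1)$ has degree $\deg L$, so strict nefness forces $\deg L>0$, and Hartshorne's criterion for ampleness on a curve applies. Consequently $-K_X\cdot C=\deg\det T_X|_C>0$ for every curve $C$, so $K_X$ is anti-nef and Mori's cone theorem yields a $K_X$-negative extremal rational curve of anticanonical degree at most $n+1$. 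To promote such a curve to one that is free through a general point, I would invoke the Demailly-Peternell-Schneider structure theorem: for projective manifolds with nef $T_X$, the Albanese map $\alpha\colon X\to A$ is a smooth surjective submersion with rationally connected fibres. Strict nefness forces $A$ to be a point, for otherwise the trivial quotient $T_X\twoheadrightarrow\alpha^*T_A\cong\cO_X^{\dim A}$ gives an embedding $\bbP(\alpha^*T_A)\cong X\times\bbP^{\dim A-1}\hookrightarrow\bbP(T_X)$, and curves of the form $C\times\{pt\}$ there have $\cO_{\bbP(T_X)}(1)$-degree zero, contradicting strict nefness. Hence $X$ itself is rationally connected and Fano.

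\emph{Stage 2: Splitting type.} Pick a minimal free rational curve $f\colon\bbP^1\to X$ through a general point and write $f^*T_X\cong\bigoplus_{i=1}^n \cO(a_i)$ with $a_1\geq\cdots\geq a_n\geq 0$. For each $i$, the direct summand projection $f^*T_X\twoheadrightarrow\cO(a_i)$ defines a section $\sigma_i\colon\bbP^1\to\bbP(f^*T_X)\hookrightarrow\bbP(T_X)$ with $\sigma_i^*\cO_{\bbP(T_X)}(1)=\cO(a_i)$, so strict nefness of $T_X$ forces $a_i\geq 1$ for every $i$. The differential $df\colon T_{\bbP^1}=\cO(2)\hookrightarrow f^*T_X$ forces $a_1\geq 2$, and therefore $-K_X\cdot C=\sum a_i\geq n+1$. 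Combined with the bound $-K_X\cdot C\leq n+1$ from Stage~1, equality holds and the minimal rational family on $X$ has anticanonical degree exactly $n+1$. Stage~3 is then immediate: the Cho-Miyaoka-Shepherd-Barron characterization of projective space yields $X\cong\bbP^n$.

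The main obstacle is Stage~1, specifically the passage from the purely numerical input of strict nefness to the geometric conclusion that $X$ is rationally connected with a free minimal rational curve. The bound $-K_X\cdot C\leq n+1$ is standard Mori theory, but ensuring freeness (equivalently, ruling out a positive-dimensional Albanese or non-trivial MRC base) genuinely requires the structure theory of projective manifolds with nef tangent bundle. Stage~2 is a direct exploitation of the definition of strict nefness applied to each direct summand of $f^*T_X$, and Stage~3 is classical.
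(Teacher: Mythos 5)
First, a remark on scope: the paper does not prove Theorem \ref{thm:Li-Ou-Yang} at all --- it imports it from \cite{LiOuYang2019} --- so there is no in-paper proof to compare against. Your overall architecture (establish that $X$ is Fano and rationally connected by combining the structure theory of nef tangent bundles with the incompatibility of strict nefness with trivial quotients, then compute the splitting type of a minimal free rational curve and invoke Cho--Miyaoka--Shepherd-Barron) is sound and is essentially the strategy of the cited reference. Stages 2 and 3 are correct: for a minimal free rational curve the splitting type is $\sO(2)\oplus\sO(1)^{\oplus p}\oplus\sO^{\oplus(n-1-p)}$ by bend-and-break, strict nefness (via the Barton--Kleiman criterion, Proposition \ref{BK criterion}) excludes the trivial summands, hence $-K_X\cdot C=n+1$ and CMSB applies.

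Stage 1, however, contains a genuinely false step: the claim that a strictly nef vector bundle on a curve is ample ``by Hartshorne's criterion''. Hartshorne's criterion requires every quotient \emph{bundle}, of every rank, to have positive degree; strict nefness only controls rank-one quotients (on finite covers). The gap is not hypothetical: Mumford's example --- the rank-$2$ Hermitian flat, strictly nef bundle on a curve of genus at least $2$, which this very paper's introduction highlights --- is strictly nef of degree $0$, hence not ample. Consequently your deduction $-K_X\cdot C=\deg\det(T_X|_C)>0$ for all curves is unjustified (this is precisely the ``exterior powers of strictly nef bundles need not be strictly nef'' phenomenon that makes the whole subject nontrivial), and with it the appeal to the cone theorem to produce the initial rational curve. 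Your proposal survives only because the parallel sub-argument does the real work: strict nefness implies nefness, the Demailly--Peternell--Schneider structure theorem applies to a finite \'etale cover $\widetilde{X}$ of maximal irregularity (not to $X$ itself, a point you should fix, though strict nefness is preserved under \'etale pullback), a positive-dimensional Albanese would give a trivial quotient $T_{\widetilde{X}}\twoheadrightarrow\sO^{\oplus q}$ contradicting strict nefness, and hence $\widetilde{X}$, and so $X$, is Fano; minimal free rational curves then exist by Mori's theory without any reference to the positivity of $\det T_X$ on arbitrary curves. You should delete the Hartshorne step and let the DPS argument carry all of Stage 1.
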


Meanwhile, it is also known that the existence of positive subsheaves of the
tangent bundle already impose strong geometric restrictions on the
ambient manifold. For example,  Andreatta and Wi\'sniewski achieved
the following characterization of projective spaces.
\begin{thm}\cite[Theorem]{AndreattaWisniewski2001}
    \label{thm:Andreatta-Wisniewski}
    Let $X$ be an $n$-dimensional   complex projective manifold.
    If there exists   a rank $r$ ample locally free subsheaf $\sF$ of $T_X$, then $X\cong  \bbP^n$ and either $\sF\cong  T_{\bbP^n}$  or $\sF\cong \sO_{\bbP^n}(1)^{\oplus r}$.
\end{thm}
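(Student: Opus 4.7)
\medskip

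My strategy for proving Theorem~\ref{thm:Andreatta-Wisniewski} would combine Mori's bend-and-break with a careful analysis of the restriction of $\sF$ to a minimal rational curve, aiming to apply the Cho--Miyaoka--Shepherd-Barron (CMSB) characterization of $\bbP^n$.

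First, ampleness of $\sF\subseteq T_X$ forces $c_1(\det\sF)\cdot H^{n-1}>0$ for every ample $H$, so $T_X$ has positive slope and fails to be generically nef; Miyaoka's uniruledness criterion then gives that $X$ is uniruled. Taking a minimal dominating family $\cK$ of rational curves produced by Mori's bend-and-break, a general $[\ell]\in\cK$ has free normalization $f\colon\bbP^1\to\ell$ with
\[
f^*T_X\cong\sO(2)\oplus\sO(1)^{\oplus p}\oplus\sO^{\oplus q},\qquad p+q=n-1,\qquad -K_X\cdot\ell=p+2.
\]
Since $\sF$ is locally free, $f^*\sF$ is a rank-$r$ subbundle whose summands must all have degree $\geq 1$ by ampleness; hence $f^*\sF$ is contained in the positive part $\sO(2)\oplus\sO(1)^{\oplus p}$, giving $r\leq p+1$, and combined with the standard bound $-K_X\cdot\ell\leq n+1$ (from deformations of $\ell$ fixing one point) one concludes $r\leq n$.

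The aim is then to upgrade these inequalities to $-K_X\cdot\ell=n+1$, at which point $X\cong\bbP^n$ by CMSB, and a rank/stability computation on a line forces $\sF$ to be either $T_{\bbP^n}$ or $\sO(1)^{\oplus r}$ according to whether $r=n$ or $r<n$. In the case $r=n$ this is immediate: the chain $n=r\leq p+1\leq n$ forces $p=n-1$ and hence $-K_X\cdot\ell=n+1$, after which matching determinants combined with the semistability of $T_{\bbP^n}$ yields $\sF=T_{\bbP^n}$.

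The genuine obstacle is the case $r<n$, where the inequality $r\leq p+1$ no longer forces $p=n-1$. To overcome it I would run a Mori extremal contraction $\pi\colon X\to Y$: ampleness of $\sF$ should exclude small contractions and strongly restrict divisorial ones, so $\pi$ is a Mori fibre space. On a general fibre $F$ one analyses how $\sF$ meets $T_{X/Y}$: when $\sF\subseteq T_{X/Y}$ the restriction $\sF|_F\subset T_F$ is an ample locally free subsheaf of rank $r$, and the theorem applies inductively, identifying $F$ with a projective space; alternatively a VMRT approach forces the variety of minimal rational tangents $\cC_x\subset\bbP(T_{X,x})$ to equal the entire projective tangent space, yielding $p=n-1$. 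The most delicate step, and what I expect to be the main obstacle, is the globalization: promoting a fibrewise or infinitesimal identification with projective space to a global isomorphism $X\cong\bbP^n$ together with $\sF\cong\sO(1)^{\oplus r}$, which in particular requires ruling out a positive-dimensional base for $\pi$ and controlling how the locally free subsheaf $\sF$ sits relative to the relative tangent subsheaf.
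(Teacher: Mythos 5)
This theorem is quoted in the paper from \cite{AndreattaWisniewski2001} and is not proved there, so there is no internal proof to compare against; I will therefore assess your proposal on its own terms.

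Your opening steps are sound: an ample locally free subsheaf of $T_X$ restricts to an ample bundle on a general complete-intersection curve, so $\Omega_X$ (not $T_X$, as you wrote) fails to be generically nef and Miyaoka's criterion gives uniruledness; on a general free minimal rational curve the projection of $f^*\sF$ to the trivial part of $f^*T_X$ vanishes, giving $r\leqslant p+1$ and, when $r=n$, the equality $-K_X\cdot\ell=n+1$ and hence $X\cong\bbP^n$ by Cho--Miyaoka--Shepherd-Barron. Two problems remain. First, a small one: your claimed correspondence ``$\sF\cong T_{\bbP^n}$ iff $r=n$'' is false, since $\sO_{\bbP^n}(1)^{\oplus n}$ is an ample locally free subsheaf of $T_{\bbP^n}$ of rank $n$ (compose $\sO(1)^{\oplus n}\hookrightarrow\sO(1)^{\oplus n+1}\twoheadrightarrow T_{\bbP^n}$ from the Euler sequence); in the case $r=n$ the degree of $\sF$ on a line can be $n$ or $n+1$, and only in the latter case does the determinant argument give $\sF=T_{\bbP^n}$, while in the former one must identify $\sF$ as uniform of splitting type $(1,\dots,1)$ and invoke the classification of such bundles.

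The genuine gap is the case $r<n$, which you correctly flag but do not close, and which is where essentially all of the content of Andreatta--Wi\'sniewski lies. Nothing you have established prevents $p<n-1$, i.e.\ the minimal family could a priori give a Mori fibre space over a positive-dimensional base (exactly the situation that does occur in this paper under the weaker strictly nef hypothesis, where $X$ really is a nontrivial $\bbP^d$-bundle). The assertions that ampleness ``should exclude small contractions and strongly restrict divisorial ones'', that a VMRT argument ``forces'' $\cC_x=\bbP(T_{X,x})$, and that the base of the fibration can be ruled out, are precisely the statements requiring proof; none follows from the splitting-type inequality $r\leqslant p+1$ alone. The actual argument needs a quantitative link between $r$ and the geometry of the extremal contraction --- in Andreatta--Wi\'sniewski's approach a lower bound on fibre dimensions of the contraction in terms of the rank of the ample subsheaf, forcing the target to be a point; in later treatments one instead shows that the distribution spanned by $\sF$ and the minimal curves is algebraically integrable with rationally connected leaves equal to all of $X$. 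As it stands, your proposal is a correct reduction plus an accurate identification of the hard step, but not a proof.
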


\noindent When $\sF$ is a line bundle, this theorem is settled by
Wahl in \cite{Wahl1983} via the theory of algebraic derivations in
characteristic zero. In \cite{CampanaPeternell1998}, Campana and
Peternell proved the theorem in the cases $r\geqslant n-2$. Later,
it is shown that the assumption on the  local freeness can be
dropped, see \cite{AproduKebekusPeternell2008,Liu2019}.

In view of Theorem \ref{thm:Li-Ou-Yang}, it is natural to ask whether  Theorem \ref{thm:Andreatta-Wisniewski}
still holds  if the subsheaf $\sF$ is only assumed to be strictly nef.  Unfortunately,  Mumford
constructed an example (see \cite[Chapter I, Example 10.6]{Hartshorne1970}) which gives a negative answer to this
question. Indeed,   for any smooth projective curve $C$
of genus $g\geqslant 2$,
    there exists a rank
    $2$ Hermitian flat and strictly nef vector bundle $\sE$ over $C$. Then  the relative tangent bundle $T_{\bbP(\sE)/C}$ is a strictly nef subbundle of $T_{\bbP(\sE)}$ since it is isomorphic to the line bundle $\sO_{\bbP(\sE)}(2)$.

In this paper, we investigate the geometry of projective
manifolds whose tangent bundle contains a strictly nef subsheaf, and
obtain the following structure theorem, which is an extension of
Theorem \ref{thm:Andreatta-Wisniewski}.

\begin{thm}
    \label{thm:main-theorem}
    Let $X$ be a  complex projective manifold. Assume that the tangent bundle $T_X$ contains a locally free strictly nef subsheaf $\sF$ of rank $r>0$.
 Then $X$ admits a $\bbP^d$-bundle structure $\varphi\colon X\rightarrow T$ for some integer $d\geq r$.
 Furthermore
$T$ is a hyperbolic projective manifold.
\end{thm}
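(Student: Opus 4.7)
My plan is to establish a projective bundle structure via the MRC fibration and then deduce hyperbolicity of the base. First, strict nefness of $\sF$ forces $\det\sF$ to be a strictly nef line bundle: on any curve $C\subset X$, $\sF|_C$ is nef and every line bundle quotient has positive degree, so a Harder--Narasimhan-type filtration gives $\deg(\det\sF|_C)>0$. Saturating $\sF$ and dualising, we obtain a torsion-free quotient of $\Omega_X^1$ whose determinant is strictly anti-nef, so $\Omega_X^1$ is not nef along a covering family of curves; Miyaoka's uniruledness criterion then yields that $X$ is uniruled. Let $\pi\colon X\dashrightarrow T$ be the MRC fibration; by Graber--Harris--Starr the base $T$ is not uniruled. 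I would then show the composite $\sF\hookrightarrow T_X\to\pi^*T_T$ vanishes on the locus where $\pi$ is defined: a general MRC fibre is rationally connected and covered by free rational curves, along which $\pi^*T_T$ is trivial while $\sF$ is strictly nef, so any map from $\sF$ into a trivial bundle has no positive-degree image and must be zero on a covering family, hence identically. Thus $\sF\subset T_{X/T}$ and, writing $d$ for the generic fibre dimension, $d\geq r$.

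The technical heart is to upgrade the MRC rational map to a genuine $\bbP^d$-bundle morphism $\varphi\colon X\to T$. On a general fibre $F$, $\sF|_F$ is a strictly nef locally free subsheaf of $T_F$ and $F$ is rationally connected. I would promote $\sF|_F$ from strictly nef to ample on $F$: rational connectedness provides a covering family of free rational curves along which $\sF|_F$ is already ample (by strict nefness applied to $\sO_{\bbP(\sF)}(1)$), and combining this with a Harder--Narasimhan / foliation argument on $F$ should give ampleness on all of $F$. Once $\sF|_F$ is ample, Theorem \ref{thm:Andreatta-Wisniewski} forces $F\cong\bbP^d$. Rigidity of $\bbP^d$-bundles and standard deformation theory then extend the MRC to a globally defined smooth $\bbP^d$-bundle $\varphi\colon X\to T$ with $T$ smooth.

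For hyperbolicity of $T$, suppose for contradiction there exists a non-constant entire curve $f\colon\bbC\to T$. Pulling $\varphi$ back along $f$ yields a holomorphic $\bbP^d$-bundle $Y\to\bbC$, which is trivial since $\bbC$ is Stein and contractible; so $Y\cong\bbC\times\bbP^d$. The pullback of $\sF$ is a strictly nef subsheaf of $T_Y$ along compact curves in $Y$, but the triviality of $Y$ supplies many disjoint holomorphic sections of $Y\to\bbC$ whose Ahlfors currents (built from $f$) have zero intersection with any class pulled back from $T$; combined with the inclusion $\sF\subset T_{X/T}$ this forces a limit cycle against which $\det\sF$ has zero intersection, contradicting strict nefness. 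The principal obstacle is the second step: strict nefness, unlike ampleness, is not preserved by tensor products or restrictions, so promoting it to ampleness on the general fibre and simultaneously ruling out degenerations of the MRC fibration (in particular handling the locus where the saturation of $\sF$ is not locally free) requires techniques genuinely beyond those underpinning Theorem \ref{thm:Andreatta-Wisniewski}.
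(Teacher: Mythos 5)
Your skeleton (uniruledness $\Rightarrow$ MRC fibration $\Rightarrow$ bundle structure $\Rightarrow$ hyperbolic base) matches the paper's, but the justifications at the decisive steps contain genuine gaps. The most basic one is the opening claim that $\det\sF$ is strictly nef. Strict nefness is not preserved under exterior powers: Mumford's example, recalled in the introduction, is a rank-$2$ Hermitian flat --- hence numerically flat --- vector bundle $\sE$ on a curve of genus $\geqslant 2$ that is strictly nef, so $\det\sE$ is numerically trivial. Your Harder--Narasimhan argument only shows that every quotient line bundle of $\sF\vert_C$ has positive degree; when $\sF\vert_C$ is stable of degree $0$ this is perfectly consistent with $\deg(\det\sF\vert_C)=0$, and in general one only gets $\mu_{\min}(\sF\vert_C)\geqslant 0$, i.e.\ $\deg(\det\sF\vert_C)\geqslant 0$. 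Consequently the Miyaoka argument for uniruledness does not go through as stated: if $\sF$ is numerically flat, $\Omega_X^1$ restricted to a general complete intersection curve acquires no quotient of negative degree from $\sF^*$. This is exactly the case the paper flags as the obstruction to applying Miyaoka, and Theorem \ref{Almost-nef-non-uniruled} is devoted to it: assuming $X$ non-uniruled, the saturation of $\sF$ in $T_X$ is shown to be a numerically flat regular foliation with torsion determinant, and the Pereira--Touzet structure theorem produces, after a finite \'etale cover, a fibration by abelian varieties on whose fibers the foliation is linear, contradicting strict nefness on complete intersection curves in those fibers.

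The hyperbolicity step fails for a related reason. An Ahlfors current, or any limit of $1$-cycles, having zero intersection with $\det\sF$ (or with $c_1(\sO_{\bbP(\sF)}(1))$) does not contradict strict nefness, which is a condition only on honest algebraic curves; the existence of nef-but-not-ample strictly nef classes is precisely the gap your argument would have to bridge, and again $\det\sF$ need not be strictly nef in the first place. The paper instead exploits the flat $\PGL$-structure produced by Theorem \ref{cor:main:part1}: the monodromy representation of $\pi_1(T)$ cannot have virtually abelian image (Borel fixed point together with the fact that a flat line bundle is never strictly nef, Lemma \ref{lemma:representation-vir-abelian-not-strictly-nef} and Proposition \ref{prop:fixed-point}), and Yamanoi's theorem then degenerates every entire curve in every positive-dimensional subvariety of $T$. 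Some transcendental input of this strength appears unavoidable. Finally, extending the $\bbP^d$-bundle structure from the big open set supplied by Proposition \ref{MRC-fibration} to all of $X$ is not ``rigidity and standard deformation theory'': it is the bulk of the paper (Sections \ref{section:proj-flat}, \ref{Examples}, \ref{section:degeneration-P^d} and \ref{section:bundle structure}) and requires the theory of numerically projectively flat bundles (Theorems \ref{Num-Projectily-flatness-criterion} and \ref{thm:num-proj-flat=proj-flat}), a Chow-variety and universal-family construction, the negativity lemma, and equidimensionality criteria for degenerations of $\bbP^d$.
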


\noindent Here we recall that  a single point is also considered to be hyperbolic in the sense that every holomorphic map from $\bbC$ to it is a constant map.  Indeed,  we  obtain in Theorem \ref{cor:main:part1} a more concrete description on the
structure of the subsheaf $\sF$, and there are only two possibilities, which correspond to those of Theorem
\ref{thm:Andreatta-Wisniewski}:
    \begin{enumerate}
        \item $\sF\cong  T_{X/T}$ and $X$ is isomorphic to a flat projective bundle over
        $T$;
        \item $\sF$ is a numerically projectively flat vector bundle and its restriction  on every fiber of $\varphi$ is  isomorphic to  $\sO_{\bbP^d}(1)^{\oplus r}$.
    \end{enumerate}

\noindent  When $\sF$ is a line bundle, Druel obtained in
\cite{Druel2004} that $X$ is isomorphic to  either a projective
space or  a $\bbP^1$-bundle. However, when $\sF$ has greater rank,
 there are extra
structures as shown  in the second case above  (see Example
\ref{example:not-subbundles} for more details). To classify these structures,
 different methods are needed and  transcendental tools are
also crucially involved.

As an application of Theorem \ref{thm:main-theorem}, we obtain a  new characterization of projective spaces.

\begin{thm}
    \label{thm:simply-connected-Pn}  Let $X$ be an $n$-dimensional  complex projective manifold such that $T_X$ contains a locally free strictly nef subsheaf $\sF$.
    If $\pi_1(X)$ is virtually abelian, then $X$ is isomorphic to  $\bbP^n$, and $\sF$ is isomorphic to either $T_{\bbP^n}$ or $\sO_{\bbP^n}(1)^{\oplus r}$.
\end{thm}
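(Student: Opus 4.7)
The plan is to invoke Theorem \ref{thm:main-theorem} together with its refinement Theorem \ref{cor:main:part1} to realize $X$ as a $\bbP^d$-bundle $\varphi\colon X\to T$ over a hyperbolic projective manifold $T$, and then to show that the hypothesis $\pi_1(X)$ virtually abelian forces $T$ to be a single point. Once this reduction is carried out, $X\cong \bbP^d=\bbP^n$, and the two cases listed after Theorem \ref{cor:main:part1} collapse respectively to $\sF\cong T_{\bbP^n}$ (case (1), as $T_{X/T}=T_X$ when the base is a point) and $\sF\cong \sO_{\bbP^n}(1)^{\oplus r}$ (case (2), as the fiber equals all of $X$), which is exactly the asserted classification.

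The starting reduction is immediate: since the fibers $\bbP^d$ are simply connected, the homotopy long exact sequence of $\varphi$ yields $\pi_1(X)\cong \pi_1(T)$, so $\pi_1(T)$ is virtually abelian. Assume for contradiction that $\dim T>0$. Passing to a suitable finite étale cover of $T$ (and correspondingly to the base-changed bundle over $X$, which preserves hyperbolicity, projectivity and the $\bbP^d$-bundle structure), I may assume $\pi_1(T)\cong \bbZ^k$ is free abelian.

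The argument now splits into two subcases. If $k\ge 1$, consider the Albanese morphism $a\colon T\to A\defeq \mathrm{Alb}(T)$: because $\pi_1(T)$ is free abelian, $\pi_1(a)$ is injective, and Kobayashi hyperbolicity of $T$ implies $a$ is finite onto its image, so $a(T)\subset A$ is a positive-dimensional hyperbolic subvariety. By the Bloch--Ochiai theorem and the Ueno--Kawamata structure theorems, such a subvariety is of general type and contains no translate of a positive-dimensional abelian subvariety; comparing $\dim T\le q(T)=k/2$ with the requirement that $a\colon T\to a(T)$ be finite and that $T$ lift any entire curve in $A$ that can be lifted, one derives a contradiction. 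If instead $k=0$, so $T$ is simply connected, then the flat (respectively numerically projectively flat) structure provided by Theorem \ref{cor:main:part1} trivializes: in case (1), the flat $\bbP^d$-bundle has trivial monodromy and $X\cong T\times \bbP^d$, while in case (2) the bundle $\sF$ takes the form $(\sO_X(1)\otimes \varphi^{*}N)^{\oplus r}$ for some line bundle $N$ on $T$ with $X\cong T\times \bbP^d$. In case (1), a direct computation shows $\sO_{\bbP(T_{X/T})}(1)$ has degree zero on curves of the form $C\times\{\mathrm{pt}\}\subset T\times \bbP(T_{\bbP^d})$, violating strict nefness. In case (2), the inclusion $\sF\subset T_X=p_T^{*}T_T\oplus p_{\bbP^d}^{*}T_{\bbP^d}$ combined with $H^0(\bbP^d,\sO(-1))=0$ and the Künneth formula forces $N^{-1}$ to be effective on $T$, while the strict nefness of $\sF$ forces $N$ to be strictly nef; these two conditions are incompatible as soon as $\dim T>0$.

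The main obstacle is the case $k\ge 1$: one must use the Bloch--Ochiai and Ueno--Kawamata statements precisely, possibly iterating the Albanese construction on $a(T)$ and using the structure of its stabilizer in $A$, to rule out every positive-dimensional hyperbolic $T$ with free abelian $\pi_1(T)$. The simply connected case, by contrast, is resolved concretely by the product decomposition together with the $\mathrm{Hom}$/Künneth calculation above, so the heart of the proof is the delicate interplay between hyperbolicity of $T$ and virtual abelianness of its fundamental group, for which Theorem \ref{cor:main:part1} provides just enough rigidity.
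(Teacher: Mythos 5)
Your opening reduction is correct and matches the paper: Theorem \ref{thm:main-theorem} gives the $\bbP^d$-bundle $\varphi\colon X\to T$, the homotopy sequence gives $\pi_1(X)\cong\pi_1(T)$, and once $\dim T=0$ is established the classification of $\sF$ follows from Theorem \ref{cor:main:part1} (or Lemma \ref{Pn}). The genuine gap is in the step that is actually the content of the theorem, namely ruling out $\dim T>0$ when $\pi_1(T)\cong\bbZ^k$ with $k\geq 1$. Your Albanese/Bloch--Ochiai sketch does not work: Kobayashi hyperbolicity of $T$ does \emph{not} imply that the Albanese map is finite onto its image (a contracted subvariety of a hyperbolic manifold is simply another hyperbolic subvariety, which is no contradiction), and, more fundamentally, positive-dimensional hyperbolic subvarieties of abelian varieties exist in abundance (by Green's theorem any subvariety of general type containing no translate of a positive-dimensional abelian subvariety is hyperbolic), so Bloch--Ochiai and Ueno--Kawamata produce no contradiction from hyperbolicity plus $\pi_1(T)\cong\bbZ^k$ alone. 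You acknowledge this case is ``the main obstacle'' but never close it; as written it is an unproved assertion, and I do not believe it can be closed using only the two hypotheses you retain (hyperbolicity of $T$ and free abelian $\pi_1$).

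The paper's proof uses none of this. It exploits the extra rigidity recorded in Theorem \ref{cor:main:part1}: in either case there is a \emph{flat} projective bundle $Q\to T$, given by a representation $\rho\colon\pi_1(T)\to\PGL(\bbC)$, whose relative tangent bundle $T_{Q/T}$ is strictly nef. If $\rho(\pi_1(T))$ were virtually abelian, the Borel fixed-point theorem (Proposition \ref{prop:fixed-point}) would yield, after a finite \'etale cover, a fixed point of the monodromy and hence a section $W\subseteq Q$ along which $T_{Q/T}\vert_W$ is a flat vector bundle with virtually abelian monodromy; such a bundle admits a flat quotient line bundle and is never strictly nef (Lemma \ref{lemma:representation-vir-abelian-not-strictly-nef}). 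This is Proposition \ref{prop:representation-non-abelian} and Corollary \ref{cor:fundamental-group-subvariety}, which applied to $Z=T$ show that $\pi_1(T)$ is \emph{not} virtually abelian whenever $\dim T>0$ --- immediately contradicting your standing hypothesis. (Your $k=0$ subcase also silently assumes $X\cong T\times\bbP^d$ in case (2), where $\varphi$ is not asserted to be a flat bundle; but this is moot, since the $k=0$ case is subsumed by the same corollary.) To repair your proof you would need to import this flat-bundle/fixed-point mechanism rather than argue from hyperbolicity.
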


 By using Theorem \ref{thm:main-theorem} and \cite[Theorem~0.1]{BrunebarbeKlinglerTotaro2013}, we get the existence of non-zero symmetric differentials.

\begin{cor}
    \label{cor:existence-symmetric-forms}
    Let $X$ be an $n$-dimensional projective manifold whose tangent bundle contains a  locally free strictly nef subsheaf. If $X$ is not isomorphic to $\bbP^n$, then $X$ has a non-zero symmetric differential, i.e. $H^0(X,\Sym^i\Omega_X)\not=0$ for some $i>0$.
\end{cor}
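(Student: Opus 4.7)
The plan is to combine Theorem~\ref{thm:main-theorem} with \cite[Theorem~0.1]{BrunebarbeKlinglerTotaro2013}: produce a non-zero symmetric differential on the hyperbolic base of the projective bundle structure, and then pull it back to $X$.

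First, I invoke Theorem~\ref{thm:main-theorem} to obtain a $\bbP^d$-bundle structure $\varphi \colon X \to T$ with $T$ a hyperbolic projective manifold. If $T$ were a single point, then $X \cong \bbP^d$, and matching dimensions would force $d = n$, giving $X \cong \bbP^n$ and contradicting the hypothesis. Hence $\dim T \geq 1$.

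Next, I produce a non-zero section of $\Sym^i \Omega_T$ for some $i > 0$. Since the fibers of $\varphi$ are projective spaces, they are simply connected, so $\pi_1(X) \cong \pi_1(T)$. The contrapositive of \cite[Theorem~0.1]{BrunebarbeKlinglerTotaro2013} asserts that if $H^0(T, \Sym^i \Omega_T) = 0$ for every $i > 0$, then every complex linear representation of $\pi_1(T)$ has finite image. Hence it is enough to exhibit a linear representation $\rho \colon \pi_1(T) \to \mathrm{GL}_N(\bbC)$ with infinite image, which is where the assumptions that $T$ is hyperbolic and of positive dimension enter.

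Finally, given a non-zero $\sigma \in H^0(T, \Sym^i \Omega_T)$, I use that $\varphi$ is a smooth surjective morphism: the relative cotangent sequence yields an injection $\varphi^* \Omega_T \hookrightarrow \Omega_X$, which upon taking symmetric powers gives $\varphi^* \Sym^i \Omega_T \hookrightarrow \Sym^i \Omega_X$. The pullback $\varphi^* \sigma$ is then a non-zero global section of $\Sym^i \Omega_X$, as required.

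The main obstacle is the middle step. Although BKT cleanly reduces the existence of a symmetric differential on $T$ to that of an infinite-image complex linear representation of $\pi_1(T)$, for a general positive-dimensional hyperbolic projective manifold constructing such a representation is not automatic and requires genuine use of the hyperbolicity hypothesis; this is the delicate point and must be handled carefully, whereas the reduction to $T$ and the pullback step are essentially formal consequences of Theorem~\ref{thm:main-theorem}.
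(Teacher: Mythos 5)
Your overall skeleton --- reduce to exhibiting an infinite-image linear representation of the fundamental group and then invoke \cite[Theorem~0.1]{BrunebarbeKlinglerTotaro2013} --- is the same as the paper's, and your final pullback step is correct (though unnecessary: since $\pi_1(X)\cong\pi_1(T)$, one can apply the Brunebarbe--Klingler--Totaro theorem directly to $X$ rather than to $T$). However, there is a genuine gap at the step you yourself flag as ``the delicate point,'' and the way you propose to close it does not work. You plan to extract the infinite-image linear representation of $\pi_1(T)$ from the hypothesis that $T$ is hyperbolic and positive-dimensional. Hyperbolicity alone cannot deliver this: a hyperbolic projective manifold may be simply connected (for instance, smooth hypersurfaces in $\bbP^3$ of sufficiently high degree are simply connected and hyperbolic), in which case $\pi_1(T)$ admits no representation with infinite image at all. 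So the conclusion of Theorem \ref{thm:main-theorem} as you use it is strictly too weak for this corollary.

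The input the paper actually uses is the finer structure of Theorem \ref{cor:main:part1}: in either case of that theorem there is a \emph{flat} projective bundle $Q\to T$ (given by a representation $\pi_1(T)\to\PGL_{d+1}(\bbC)$, resp.\ $\PGL_{r}(\bbC)$) whose relative tangent bundle $T_{Q/T}$ is strictly nef. Proposition \ref{prop:representation-non-abelian} then shows that the image of this defining representation is infinite: if it were finite, a finite \'etale cover of $T$ would trivialize $Q$, contradicting the strict nefness of $T_{Q/T}$. Composing with a faithful linear representation of the projective linear group yields the desired infinite-image linear representation of $\pi_1(T)\cong\pi_1(X)$, and the Brunebarbe--Klingler--Totaro theorem applied to $X$ finishes the proof. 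To repair your argument, replace the appeal to hyperbolicity in the middle step by an appeal to Theorem \ref{cor:main:part1} together with Proposition \ref{prop:representation-non-abelian}.
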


One of the important tools for the proof of Theorem
\ref{thm:main-theorem} is the theory of numerically projectively
flat vector bundles. The following criterion is  a variant of \cite[Theorem 1.8]{HoeringPeternell2019}, which is also the first step to understand the structures of projective bundles induced by strictly nef subsheaves.

\begin{thm}
    \label{Num-Projectily-flatness-criterion}
    Let $X$ be an $n$-dimensional complex projective manifold and  $\sF$ be a reflexive coherent sheaf of rank $r$. Assume that there exists a $\bbQ$-Cartier divisor class $\delta\in N^1(X)_{\bbQ}$  such that the $\mathbb{Q}$-twisted coherent sheaf $\sF\hspace{-0.8ex}<\hspace{-0.8ex}\delta\hspace{-0.8ex}>$ is almost nef and that
    \[
    (c_1(\sF)+r\delta)\cdot A^{n-1}=0
    \]
    for some ample divisor $A$. Then $\sF$ is locally free and numerically projectively flat.
\end{thm}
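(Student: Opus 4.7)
The strategy is to follow the proof scheme of \cite[Theorem 1.8]{HoeringPeternell2019}, the essential novelty being the weakening from nefness to almost nefness of the twisted sheaf $\sF\langle\delta\rangle$. The argument proceeds in three main steps: slope semistability, vanishing of the discriminant, and an appeal to the characterisation of numerically projectively flat sheaves via these two properties.

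First, I would establish $A$-slope semistability of $\sF$, equivalently of $\sF\langle\delta\rangle$. Let $\sQ$ be any torsion-free coherent quotient of $\sF$ of positive rank $s$. Then $\sQ\langle\delta\rangle$ is a torsion-free quotient of $\sF\langle\delta\rangle$, and is therefore almost nef since quotients of almost nef $\bbQ$-twisted sheaves are almost nef. Choosing a very general complete intersection curve $C$ of type $(mA)^{n-1}$ with $m\gg 0$, one can arrange that $C$ is disjoint from the countable union of proper subvarieties outside which $\sF\langle\delta\rangle$ is nef, so that $\sQ\langle\delta\rangle|_C$ is an honest nef vector bundle on the smooth curve $C$. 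This yields
\[
(c_1(\sQ)+s\,\delta)\cdot A^{n-1}\;\geq\;0,
\]
hence $\mu_A(\sQ\langle\delta\rangle)\geq 0=\mu_A(\sF\langle\delta\rangle)$, which is the desired semistability.

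Second, I would prove that the discriminant $\Delta(\sF)=2r\,c_2(\sF)-(r-1)\,c_1(\sF)^2$ satisfies $\Delta(\sF)\cdot A^{n-2}=0$. One inequality, $\Delta(\sF)\cdot A^{n-2}\geq 0$, is Langer's form of the Bogomolov--Gieseker inequality for reflexive $A$-semistable sheaves. For the reverse inequality I would restrict $\sF$ to a very general smooth complete intersection surface $S$ cut out by $(n-2)$ general members of $|mA|$ with $m$ chosen so that Mehta--Ramanathan guarantees $\sF|_S$ is $A|_S$-semistable, and simultaneously so that $S$ is not contained in the exceptional locus for almost nefness. On the smooth surface $S$, the reflexive sheaf $\sF|_S$ is automatically locally free, and $\sF|_S\langle\delta|_S\rangle$ remains almost nef with slope zero. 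The Fulton--Lazarsfeld positivity of Chern polynomials for nef vector bundles, extended to the almost nef setting by a limiting argument along very general curves, then gives $c_2(\sF|_S\langle\delta|_S\rangle)\geq 0$, equivalently $\Delta(\sF|_S)\leq 0$; combined with Bogomolov--Gieseker one obtains $\Delta(\sF|_S)=0$, whence $\Delta(\sF)\cdot A^{n-2}=0$.

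Finally, with $A$-semistability and $\Delta(\sF)\cdot A^{n-2}=0$ in hand, I would invoke the equivalences established in \cite{HoeringPeternell2019} between $A$-semistability plus vanishing of the slope-zero discriminant and numerical projective flatness together with local freeness of $\sF$. The main obstacle throughout is the passage from nef to almost nef: one has to repeatedly choose general curves and surfaces that avoid a countable union of proper subvarieties, and, more substantially, to push Chern-class positivity results originally formulated for nef vector bundles into the almost nef category, which is what makes step two the technical heart of the argument.
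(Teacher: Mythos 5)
Your overall skeleton --- $A$-semistability, vanishing of the discriminant $\Delta(\sF)\cdot A^{n-2}$, and then the characterization of numerically projectively flat sheaves (which is Nakayama's theorem, Theorem \ref{thm:num-proj-flat-criterion-Nakayama}, rather than a result of H\"oring--Peternell) --- is the same as the paper's, and your first step is sound. The gap is in your second step. Positivity of $c_2$ is a codimension-two statement that is not detected by restriction to curves, so there is no ``limiting argument along very general curves'' that upgrades Fulton--Lazarsfeld positivity from nef to almost nef bundles: an almost nef bundle is only known to restrict to a nef bundle on curves not contained in a countable union of proper subvarieties, and those subvarieties may include divisors that every ample curve class meets; this gives no control on $c_2\cdot A^{n-2}$. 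That is precisely the hard point of the whole theorem. There is also a sign error: when $c_1(\sF)+r\delta=0$ one computes $c_2(\sF\hspace{-0.8ex}<\hspace{-0.8ex}\delta\hspace{-0.8ex}>)=\tfrac{1}{2r}\Delta(\sF)$, so even a genuine Fulton--Lazarsfeld inequality $c_2(\sF\hspace{-0.8ex}<\hspace{-0.8ex}\delta\hspace{-0.8ex}>)\cdot A^{n-2}\geq 0$ would reproduce the Bogomolov--Gieseker direction $\Delta\geq 0$ rather than the reverse inequality you need; for the reverse you would need positivity of the second Segre class, i.e.\ nefness of the dual twisted bundle, which is again not available in the almost nef setting.

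The paper circumvents this by never proving a $c_2$-inequality for almost nef twisted sheaves directly. It chooses $m$ with $m\delta$ Cartier and a line bundle $\sL$ with $c_1(\sL)=m\delta$, forms the honest untwisted sheaf $\sE=S^{[m]}\sF\otimes\sL$, which is almost nef with $c_1(\sE)\cdot A^{n-1}=0$ (whence $c_1(\sE)=0$ by pseudoeffectivity of $\det\sE$, giving $\delta=-\mu(\sF)$), and invokes \cite[Theorem 1.8]{HoeringPeternell2019} (Theorem \ref{HP-Num-flatness}) as a black box to conclude that $\sE$ is locally free and numerically flat, hence $c_2(\sE)\cdot A^{n-2}=0$; the discriminant vanishing for $\sF$ is then extracted from the universal Chern-class identity for symmetric powers (Lemma \ref{Chern-class-symmetric-power}). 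All of the difficulty you are trying to resolve in your step two is contained in the proof of that cited theorem, so you must either reprove it or cite it; as written, your step two assumes the key point.
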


It is known that  numerically projectively flat vector bundles are
extensions of \emph{projectively Hermitian flat} vector bundles. In
this paper, we show that they are actually isomorphic to
projectively flat vector bundles with compatible holomorphic
connections, and this structure is crucial in the proof of Theorem
\ref{thm:main-theorem}. We refer the reader to \cite[Section 3]{Simpson1992} (see also \cite{Deng2018}) for a similar result on numerically flat vector bundles.

\begin{thm}\label{thm:num-proj-flat=proj-flat}
    Let $X$ be a projective manifold and $\sE$ be a numerically projectively flat vector bundle on $X$. Then $\sE$ is isomorphic to a  projectively flat holomorphic vector bundle
    $\sF$, i.e. there exists a projectively flat connection $\nabla$
    on $\sF$ such that  $\nabla^{0,1}=\bar\partial_{\sF}$ where
    $\nabla^{0,1}$ is the $(0,1)$-part of $\nabla$.
\end{thm}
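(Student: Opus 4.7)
My plan is to reduce the statement to the analogous result for numerically flat vector bundles (Simpson, cf.\ \cite{Deng2018}), using the endomorphism bundle $\cEnd(\sE)$ as the intermediary. The underlying idea is that the numerical projective flatness of $\sE$ is equivalent to the numerical flatness of $\cEnd(\sE)\cong\sE\otimes\sE^{\vee}$, so the Simpson--Deng theorem furnishes a flat holomorphic connection on $\cEnd(\sE)$, which can then be lifted to a projectively flat holomorphic connection on $\sE$ itself.

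\textbf{Step 1: Numerical flatness of $\cEnd(\sE)$.} I would first check that $\cEnd(\sE)$ is numerically flat. The Chern class identity $c_2(\cEnd(\sE))=2r\,c_2(\sE)-(r-1)c_1(\sE)^{2}$, together with $c_1(\cEnd(\sE))=0$ and the numerical vanishing of the discriminant of $\sE$ (built into the definition of numerical projective flatness), gives vanishing of $c_1$ and $c_2$. Nefness of $\cEnd(\sE)$ depends only on $\bbP(\sE)\cong\bbP(\cEnd(\sE))$ as a projective bundle, and thus is inherited from the nefness (up to twist) of $\sE$.

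\textbf{Step 2: A flat holomorphic connection on $\cEnd(\sE)$.} Apply the result for numerically flat bundles (\cite{Simpson1992, Deng2018}) to $\cEnd(\sE)$: there exists a flat holomorphic connection $D$ on $\cEnd(\sE)$ with $D^{0,1}=\bar{\partial}_{\cEnd(\sE)}$. For the next step one moreover needs $D$ to be a derivation of the algebra structure on $\cEnd(\sE)$; I expect this to be extractable from the construction in \cite{Deng2018} by exploiting the filtration of $\cEnd(\sE)$ by numerically flat subbundles coming from the filtration of $\sE$ by projectively Hermitian flat pieces, and the fact that the composition law is holomorphic.

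\textbf{Step 3: Lifting to a projectively flat connection on $\sE$.} Choose any smooth connection $\nabla_{0}$ on $\sE$ with $\nabla_{0}^{0,1}=\bar{\partial}_{\sE}$, and let $D_{0}$ be the induced smooth connection on $\cEnd(\sE)$. Since $D_{0}^{0,1}=D^{0,1}$, the difference $D-D_{0}$ is a global smooth $(1,0)$-form valued in derivations of the algebra bundle $\cEnd(\sE)$; as every derivation of $M_{r}(\bbC)$ is inner, it lies in $\Omega^{1,0}_{X,\infty}\otimes\cEnd(\sE)/\sO_{X}$. Because the relevant sheaves of smooth forms are fine, we can lift globally to a smooth $\alpha\in\Omega^{1,0}_{X,\infty}\otimes\cEnd(\sE)$ with $[\alpha,\cdot]=D-D_{0}$. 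Then $\nabla\defeq\nabla_{0}+\alpha$ satisfies $\nabla^{0,1}=\bar{\partial}_{\sE}$, and its induced connection on $\cEnd(\sE)$ equals $D$; since $D$ is flat, the curvature $F_{\nabla}$ commutes with all sections of $\cEnd(\sE)$, so $F_{\nabla}$ is a scalar $2$-form. Hence $\nabla$ is the desired projectively flat connection on $\sF\defeq\sE$.

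\textbf{Main obstacle.} The genuinely non-trivial point is Step~2, namely ensuring that the flat connection $D$ on $\cEnd(\sE)$ can be chosen compatibly with the algebra structure. Without this, the difference $D-D_{0}$ in Step~3 would not take values in derivations, and the passage to an inner expression $[\alpha,\cdot]$ would break down. Step~1 is essentially tautological once the standard Chern-class dictionary is in place; and Step~3, once Step~2 is in hand, works cleanly because the obstruction to globalizing $\alpha$ vanishes in the smooth category. I therefore expect the technical heart of the argument to be a careful re-reading of the Simpson--Deng construction to extract the additional algebra-compatibility of the flat connection on $\cEnd(\sE)$.
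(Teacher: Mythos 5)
Your reduction to the numerically flat case of $\sE\otimes\sE^*$ is an attractive idea, and Steps~1 and~3 are essentially sound (modulo one slip: $\bbP(\sE)$ is \emph{not} isomorphic to $\bbP(\sE\otimes\sE^*)$; the nefness of $\sE\otimes\sE^*$ follows instead from the tensor-product property of nef $\bbQ$-twisted bundles, Corollary \ref{Symmetry-wedge}, applied to $\sE$ and $\sE^*$ twisted by $\mp\mu(\sE)$). The genuine gap is Step~2, and it is not a deferrable technicality: given the easy converse of your Step~3, the existence of a flat holomorphic connection $D$ on $\sE\otimes\sE^*$ with $D^{0,1}=\bar\partial$ \emph{which is moreover a derivation of the algebra structure} is equivalent to the theorem itself. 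The results you cite only give \emph{some} flat connection compatible with the holomorphic structure of the numerically flat bundle $\sE\otimes\sE^*$; such a connection is far from unique (the bundle is an iterated extension of unitary flat pieces and the lifts of the extension classes to de Rham cohomology can be varied), and nothing in the existence statement forces the multiplication map $m\colon(\sE\otimes\sE^*)\otimes(\sE\otimes\sE^*)\to\sE\otimes\sE^*$, which is a priori only holomorphic, to be $D$-parallel. What would rescue the argument is the stronger assertion that Simpson's correspondence for semistable bundles with vanishing Chern classes is a tensor equivalence of categories under which every \emph{holomorphic} morphism between such bundles is automatically \emph{flat}; applied to $m$ this yields the derivation property. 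But that assertion is precisely the nontrivial analytic content here, it cannot be applied to $\sE$ itself (whose Chern classes do not vanish), and your proposal neither states nor proves it — it only "expects" it.

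For comparison, the paper proves the statement directly on $\sE$: by Theorem \ref{thm:num-proj-flat-criterion-Nakayama}, $\sE$ admits a filtration whose graded pieces $\sG_i$ are projectively Hermitian flat with equal averaged first Chern classes, and one constructs the projectively flat connection by induction on the length of the filtration, in upper-triangular form with the Chern connections $\nabla_i$ on the diagonal and off-diagonal entries $\theta_{i,j}\in A^{0,1}(X,\sG_j^*\otimes\sG_i)$ chosen as harmonic representatives of the extension classes, corrected by the $\partial\bar\partial$-lemma on the Hermitian flat bundles $\sG_j^*\otimes\sG_i$ (Lemmas \ref{bb} and \ref{lemma:connection-construction}). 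If you wish to pursue your route, you would have to carry out an analogous harmonic-representative construction on $\sE\otimes\sE^*$ while keeping the algebra structure parallel at every stage of the induction, which is not easier than the direct argument.
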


The paper is organized as follows. After giving some elementary
results  in Section
\ref{section:pre}, we  recall the basics of $\mathbb{Q}$-twisted
sheaves in Section \ref{section:Q-twisted}. We prove
Theorem \ref{Num-Projectily-flatness-criterion} and Theorem
\ref{thm:num-proj-flat=proj-flat} in Section
\ref{section:proj-flat}. In Section \ref{Examples}, we  study
a  special case when $X$ is of the form
$\mathbb{P}(\sE)$  and  include some examples. Starting from Section
\ref{section:uniruled}, we  focus on the proof of Theorem
\ref{thm:main-theorem} in the general setting. We first show that
the MRC fibration of $X$ provides a $\mathbb{P}^d$-bundle
structure on a large open subset of $X$. Then  we recall some
results about degeneration of $\bbP^d$ in Section
\ref{section:degeneration-P^d} and prove  in Section
\ref{section:bundle structure} that the
$\bbP^d$-bundle structure holds on entire $X$.
%The central part is to show that there is a only one elementary Mori contraction on $X$, which is equidimensional and coincides with the MRC fibration.
 Finally, in Section
\ref{section:hyperbolicity}, we  show that the base $T$ is
hyperbolic, and complete the proof of Theorem \ref{thm:main-theorem}, Theorem \ref{thm:simply-connected-Pn} and Corollary \ref{cor:existence-symmetric-forms}.
 \\

{\bf Acknowledgements.} We would like to thank Professors Ya Deng, St\'ephane Druel, Baohua Fu, Andreas H\"oring, Thomas Peternell and Zhiyu Tian for inspiring discussions and useful communications. We would also like to thank Professor
Shing-Tung Yau for his valuable help, support and guidance.   The first-named author is supported by China Postdoctoral Science Foundation (2019M650873).

\vskip 2\baselineskip

\section{Preliminaries}

% Checked (Liu)

\label{section:pre}

Throughout this paper, we work over $\mathbb{C}$, the field of complex numbers.  All manifolds and varieties   are supposed to be irreducible.  The  following statement is a refined version of the negativity lemma (see \cite[Lemma 3.39]{KollarMori1998}).

\begin{lemma}
    \label{lemma:negativity-lemma-line-bundle}
    Let $f\colon X\to Y$ be a projective birational morphism between normal varieties of dimension $n\geqslant 2$.
    Assume that  $D$ is a $f$-exceptional $\mathbb{Q}$- Cartier $\mathbb{Q}$-divisor  which has at least one positive coefficient. Then  there is a family $\{C_\gamma\}_{\gamma\in \Gamma}$ of complete $f$-exceptional curves  such that $C_\gamma\cdot D<0$ for all $\gamma\in \Gamma$.

    Furthermore, for any fixed subvariety $W\subseteq X$ of codimension at least $2$, and for any fixed subvariety $V\subseteq X$ of codimension at least $3$,   a general member $C$ of $\{C_\gamma\}_{\gamma\in \Gamma}$ is not contained in $W$, and is disjoint from $V$.
\end{lemma}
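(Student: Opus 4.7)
The plan is to reduce to the classical negativity lemma on surfaces by cutting $X$ with a general complete intersection surface, taking care that the cuts can be moved in a family so that the resulting exceptional curves avoid the prescribed subvarieties. I would start by writing $D=D^+-D^-$ with effective parts of disjoint support; by hypothesis $D^+\neq 0$. Fix an irreducible component $E\subseteq\supp D^+$. Since $E$ is $f$-exceptional, $k:=\dim f(E)\leq n-2$. Choose very ample divisors $H$ on $Y$ and $A$ on $X$, and for $\gamma=(H_1,\ldots,H_k;A_1,\ldots,A_{n-2-k})$ varying in a dense open subset $\Gamma\subseteq|H|^k\times|A|^{n-2-k}$, set
\[
S_\gamma\;=\;f^*H_1\cap\cdots\cap f^*H_k\cap A_1\cap\cdots\cap A_{n-2-k}.
\]
A Bertini-type argument will show that for general $\gamma$, $S_\gamma$ is a normal surface, $f|_{S_\gamma}\colon S_\gamma\to f(S_\gamma)$ is a proper birational morphism, and $E\cap S_\gamma$ is a nonempty curve lying in finitely many fibers of $f$, hence $f|_{S_\gamma}$-exceptional with positive coefficient inherited from $E$ in $D$.

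Next, I would apply the classical surface negativity lemma — based on the negative definiteness of the intersection matrix of $f|_{S_\gamma}$-exceptional curves of Zariski and Mumford — to the restriction $D|_{S_\gamma}$, extracting an $f|_{S_\gamma}$-exceptional curve $C_\gamma\subset S_\gamma$ with $D|_{S_\gamma}\cdot C_\gamma<0$. Viewed inside $X$, $C_\gamma$ is an $f$-exceptional curve with $D\cdot C_\gamma<0$ by the projection formula, and letting $\gamma$ range over $\Gamma$ produces the desired family.

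The avoidance claims follow from straightforward dimension counts. If $V$ has codimension $\geq 3$ in $X$, then $\dim V\leq n-3$, so for generic $\gamma$ the expected dimension $\dim(V\cap S_\gamma)\leq 2+(n-3)-n<0$ gives $V\cap S_\gamma=\emptyset$, hence $C_\gamma\cap V=\emptyset$. If $W$ has codimension $\geq 2$, then $\dim(W\cap S_\gamma)\leq 0$ for generic $\gamma$, so the curve $C_\gamma\subset S_\gamma$ cannot be contained in $W$.

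The main obstacle will be the surface negativity step. Components $E_i$ of $D$ with $\dim f(E_i)>k$ can restrict to $S_\gamma$ as non-$f|_{S_\gamma}$-exceptional divisors, and their contribution to $D|_{S_\gamma}\cdot C_\gamma$ must be controlled. I would handle this by further specializing the cuts so that the particular fiber of $f|_{S_\gamma}$ containing $C_\gamma$ is disjoint from these non-exceptional components — possible because the latter meet only finitely many $f|_{S_\gamma}$-exceptional fibers — combined with the classical negative-definiteness argument applied to the genuinely $f|_{S_\gamma}$-exceptional part of $D|_{S_\gamma}$, where the positive coefficient at $E\cap S_\gamma$ is preserved.
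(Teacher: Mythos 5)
Your overall strategy coincides with the paper's: cut $X$ by a surface $S_\gamma$ obtained from $\dim f(E)$ pulled-back very ample divisors on $Y$ and the complementary number of very ample divisors on $X$, apply the surface negativity lemma (via negative definiteness of the exceptional intersection matrix on a resolution), deform the cuts to produce the family, and get the avoidance statements from the fact that each $C_\gamma$ lies in a complete intersection of $n-2$ general members of base-point-free big linear systems. The surface step, the choice of mixed cuts, and the dimension counts all match the paper's proof.

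The genuine problem is your proposed handling of the non-exceptional components in the last paragraph. You want to specialize the cuts so that the fiber of $f|_{S_\gamma}$ containing $C_\gamma$ is disjoint from the curves $D_i\cap S_\gamma$ coming from components with $\dim f(D_i)>k$; but this disjointness cannot be arranged in general. If $t_0\in f(E)\cap H'_1\cap\cdots\cap H'_k$ is the point under the relevant fiber and $D_i$ meets $f^{-1}(t_0)$, then, $D_i$ being a divisor and $\dim f^{-1}(t_0)\geqslant n-1-k$, the intersection $D_i\cap f^{-1}(t_0)$ has dimension at least $n-2-k$, so it survives the remaining $n-2-k$ general ample cuts and $D_i\cap S_\gamma$ necessarily meets the fiber over $t_0$ (this happens, e.g., whenever $f(E)\subseteq f(D_i)$). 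The correct repair is simpler and is what the paper implicitly does: choose $E$ so that $k=\dim f(E)$ is \emph{maximal among the components of $D$ with positive coefficient}. Then every positive component of $D$ either disappears on $S_\gamma$ (if $\dim f(D_i)<k$) or restricts to an $f|_{S_\gamma}$-exceptional curve (if $\dim f(D_i)=k$), so the only possibly non-exceptional components of $D|_{S_\gamma}$ carry \emph{negative} coefficients. Writing $D|_{S_\gamma}=P-N$ with $P$ the (nonzero, exceptional) positive part, one gets $P\cdot D|_{S_\gamma}=P^2-P\cdot N\leqslant P^2<0$ by negative definiteness, since the non-exceptional negative components only decrease the intersection number; no disjointness is needed. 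With this modification your argument closes.
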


\begin{proof}
    We first assume that $n=2$. Let $r\colon X'\to X$ be a desingularization and  $D'=r^*D$. Set $h=f\circ r$.
    Then as in the proof of \cite[Lemma 3.41]{KollarMori1998}, there is a component $C'$ of $D'$ with positive coefficient such that $C'\cdot D'<0$.
    Since $D'$ is relatively numerically trivial over $X'$, we see that $C'$ is not $r$-exceptional. Let $C=r(C')$.
    Then $C$ is a component of $D$ with positive coefficient such that $C\cdot D<0$.

    Next we study the general case. Let $D_1$ be a component of $D$ with positive coefficient and let  $d=\dim f(D_1)$. We chose $n-2$ hypersurfaces $H_1,\dots, H_{n-2}$ in $X$ such that $H_{i}$ is the pullback of some general very ample divisor in $Y$ for $i\le d$ and  is a general very ample divisor in $X$ for $i> d$. Let $S$  be the surface cut out by these hyperplanes.   Then $S$ is a normal surface. Let $T$ be the normalization of $f(S)$ and  denote by    $g \colon S\to T$ the natural morphism.

    Let $A$ be the cycle theoretic intersection of $D$ and $S$.  Since $D$ is $\mathbb{Q}$-Cartier, so is $A$. We note that $A$ has at least one positive coefficient.
      From the first paragraph, we see that $A$ is not $g$-nef, and there is a component $Z$ of $A$ with positive coefficient such that $Z\cdot A <0$. Therefore, $Z\cdot D<0$.    By  deforming  the hypersurfaces $H_i$, we can deform the curve $Z$ into  a family  $\{C_\gamma\}_{\gamma\in \Gamma}$.     The last assertion of the lemma follows from the fact that every curve  $C_\gamma$ is contained in the complete  intersection of $n-2$ base-point-free big divisors.
\end{proof}

\begin{lemma}
    \label{lemma:numerical-proportional-preseved}
    Let $f\colon X\rightarrow Y$ be a  morphism between normal projective varieties. Let $C_1$ and $C_2$ be two complete   curves in $Y$ and let $C_1'$ and $C_2'$ be two curves in $X$ such that $f(C_i')=C_i$ for each $i$. If $C'_1$ is numerically proportional to $C_2'$ in $X$, then $C_1$ is numerically proportional to $C_2$ in $Y$.
\end{lemma}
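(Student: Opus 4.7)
The plan is to deduce the statement from the projection formula. Since $f(C_i')=C_i$ is a complete curve, the induced map $f|_{C_i'}\colon C_i'\to C_i$ is a surjective morphism of projective curves; in the standard case that $C_i'$ and $C_i$ are reduced and irreducible, this map has a well-defined positive degree $d_i\in\bbZ_{>0}$, and the proper pushforward of cycles gives
\[
f_*[C_i'] = d_i\,[C_i] \qquad \text{in } N_1(Y)_{\bbR}.
\]

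Next I would translate the hypothesis $[C_1']=\lambda[C_2']$ into a relation in $N_1(Y)_{\bbR}$. For any Cartier divisor $D$ on $Y$, the projection formula yields
\[
f^*D\cdot C_i' = D\cdot f_*C_i' = d_i\,(D\cdot C_i).
\]
Intersecting both sides of $[C_1']=\lambda[C_2']$ with $f^*D$ then gives $d_1(D\cdot C_1) = \lambda d_2(D\cdot C_2)$ for every Cartier divisor $D$ on $Y$. As Cartier divisor classes separate numerical classes of curves on the projective variety $Y$, we conclude $d_1[C_1]=\lambda d_2[C_2]$ in $N_1(Y)_{\bbR}$, which is exactly the asserted numerical proportionality of $C_1$ and $C_2$.

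There is no genuine obstacle in this argument; the only point requiring care is the pushforward identity $f_*[C_i']=d_i[C_i]$, which uses only the elementary fact that $f|_{C_i'}$ is a finite surjective morphism between projective curves, and which reduces to the same statement by working component-wise if $C_i'$ or $C_i$ happens to be reducible.
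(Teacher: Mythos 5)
Your proof is correct and follows essentially the same route as the paper: both arguments introduce the degrees $d_i$ of the finite maps $f|_{C_i'}\colon C_i'\to C_i$, apply the projection formula to get $f^*D\cdot C_i'=d_i\,(D\cdot C_i)$ for every line bundle/Cartier divisor $D$ on $Y$, and conclude since such classes separate numerical classes of curves. No issues.
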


\begin{proof}
    By assumption, there exists a positive rational number $r\in \bbQ$ such that
    \begin{equation*}
    \label{equation:numerically-proportional}
    C_1'\equiv_{\bbQ} r C_2'.
    \end{equation*}
    Denote by $d_i$ the degree of the finite morphism $f\vert_{C_i'}\colon C_i'\rightarrow C_i$. Let $\sL$ be a line bundle  on $Y$. By projection formula, we have, for each $i$,
    \[
    c_1(f^*\sL)\cdot C_i'=d_i c_1(\sL)\cdot C_i.
    \]
    Combining with the first equation
    %\eqref{equation:numerically-proportional}
    derives
    \[
    c_1(\sL)\cdot C_1=\frac{rd_2}{d_1}c_1(\sL)\cdot C_2.
    \]
    As $\sL$ is arbitrary, we conclude that $C_1$ is numerically proportional to $C_2$.
\end{proof}

\begin{lemma}
    \label{lemma:rational-section-birational-morphism}
    Let $f\colon X\rightarrow Y$ be a birational projective morphism  between normal quasi-projective varieties. Assume that $Y$ is smooth. Let $C$ be a complete curve in $Y$. Then there is a complete irreducible and reduced curve $C'$ in $X$ such that $f\vert_{C'}\colon C'\rightarrow C$ is a birational morphism.
\end{lemma}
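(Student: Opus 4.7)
The strategy is to produce $C'$ by lifting $C$ through a resolution of the indeterminacy of the birational rational map $f^{-1}\colon Y \dashrightarrow X$. As a warm-up, if $C$ meets the maximal open subset $U \subseteq Y$ over which $f$ is an isomorphism, then $C \cap U$ is dense in $C$ and the Zariski closure $C' := \overline{f^{-1}(C \cap U)}$ is an irreducible complete curve in $X$ on which $f$ restricts to a birational morphism onto $C$. Since $Y$ is smooth, the exceptional image $Y \setminus U = f(\Exc(f))$ has codimension at least $2$ in $Y$ (this is where smoothness of $Y$ is used), so this already covers the case $\dim Y \leq 2$.

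For the remaining case $C \subseteq Y \setminus U$ (possible only when $\dim Y \geq 3$), the plan is to apply Hironaka's resolution of indeterminacy to $f^{-1}$. Since $f$ is projective, fix a relative embedding $X \hookrightarrow \bbP^N_Y$ so that $f^{-1}$ corresponds to a rational map $Y \dashrightarrow \bbP^N$; then there is a sequence of blowups along smooth centers $\pi\colon \hat Y \to Y$, from a smooth variety $\hat Y$, such that $g := f^{-1} \circ \pi$ extends to a morphism $g\colon \hat Y \to X$ with $f \circ g = \pi$, the image automatically lying in $X$ by closedness of $X$ in $\bbP^N_Y$.

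I would then inductively build a complete irreducible curve $\hat C \subseteq \hat Y$ with $\pi|_{\hat C}\colon \hat C \to C$ birational, proceeding step by step along the blowup tower $\hat Y = Y_N \to \cdots \to Y_1 \to Y_0 = Y$, starting with $C_0 := C$. At each step, given $C_i \subseteq Y_i$ birational to $C$ and the next blowup $Y_{i+1} \to Y_i$ along a smooth center $W_{i+1}$: if $C_i \not\subseteq W_{i+1}$, take $C_{i+1}$ to be the strict transform of $C_i$; if $C_i \subseteq W_{i+1}$, use that the exceptional divisor restricts over $C_i$ to a $\bbP^{c-1}$-bundle (with $c = \codim_{Y_i}(W_{i+1}) \geq 2$), which admits a section after base change to the normalization of $C_i$ (since any vector bundle of rank $\geq 2$ on a smooth complete curve has a line sub-bundle), and take $C_{i+1}$ to be the image of such a section. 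Setting $\hat C := C_N$, the curve $C' := g(\hat C) \subset X$ (with its reduced scheme structure) is irreducible and complete, and $f|_{C'}\colon C' \to C$ is birational: the composition $\hat C \to C' \to C$ factors the birational morphism $\pi|_{\hat C}$, forcing both $\hat C \to C'$ and $f|_{C'}$ to have degree one.

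The main obstacle is the second alternative in the inductive construction, where the blowup center contains the current curve. One must pass to the normalization of a possibly singular curve to construct a section of the exceptional $\bbP^{c-1}$-bundle, and then verify that the resulting lift still maps birationally onto $C$ through all subsequent stages of the tower.
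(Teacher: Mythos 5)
Your proposal is correct and follows essentially the same route as the paper: reduce to the case where $f$ is a composition of blowups along smooth centers (the paper does this by writing $X$ as the blowup of an ideal sheaf $\sI$ on $Y$ and principalizing $\sI$, you by resolving the indeterminacy of $f^{-1}$ --- the same reduction in substance), then lift $C$ stage by stage, taking strict transforms when the curve is not in the center and a section of the exceptional projective bundle over the normalization of the curve when it is. The final descent from the lifted curve to $C' \subseteq X$ via degree considerations is also how the paper's reduction works, and the ``obstacle'' you flag is handled exactly as you describe, so there is no gap.
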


\begin{proof}
    By \cite[II, Theorem 7.17]{Hartshorne1977}, there exists a coherent sheaf of ideals $\sI$ on $Y$ such that $X$ is isomorphic to the blowing-up of $X$ with respect to $\sI$. By Hironaka's resolution theorem, there exists a finite sequence $g\colon W\rightarrow Y$ of blowups with smooth center such that $g^*\sI$ is invertible. In particular, by the univeral property of blowing-up (see \cite[II, Proposition 7.14]{Hartshorne1977}), $g$ factors through $f\colon X\rightarrow Y$. Thus, by replacing $X$ by $W$, we may assume that $f$ is the composition of a sequence of blowups at smooth center. Using induction, we then reduce it to the case when $f$   is a blowup of smooth center $Z\subseteq Y$.

    If $C\not\subset Z$ then we can take $C'$ to be the strict transform of $C$ in $X$. Assume that $C\subseteq Z$. We note that $f$ is a projective bundle over $Z$. Hence if $\overline{C}$ is   the normalization of $C$, then $\overline{C}\times_Y X$ is a projective bundle  on $\overline{C}$.
    Such a projective bundle admits a   section, whose image is $D$. Let $C'$ be the image of $D$ in $X$. Then $f\vert_{C'}$ is a birational morphism onto $C$.
\end{proof}

\vskip 2\baselineskip

\section{Positivity of $\bbQ$-twisted coherent sheaves}

%Checked(Liu)

\label{section:Q-twisted}

 For readers' convenience, we collect some basic properties on positivity of $\bbQ$-twisted coherent sheaves.

\subsection{Nefness of $\bbQ$-twisted coherent sheaves}
Let $\sF$ be a coherent sheaf on a  variety $X$.  $\sF$ is called a vector bundle if it is locally free.
The singular locus $\mathrm{Sing}(\sF)$ of $\sF$ is the smallest closed subset of $X$ such that $\sF$ is locally free over $X\backslash \mathrm{Sing}(\sF)$.  We denote by $\rk(\sF)$ the rank of $\sF$.
The dual sheaf $\sHom{\sF}{\sO_X}$ is denoted by $\sF^*$ and the reflexive hull of $\sF$ is the double dual $\sF^{**}$.
Given a morphism $\gamma\colon Y\rightarrow X$, we denote by $\gamma^{[*]}\sF\coloneqq (\gamma^*\sF)^{**}$ the reflexive pullback.  The $m$-th reflexive symmetric power and $q$-th reflexive exterior product of $\sF$ are
\begin{center}
    $S^{[m]}\sF\coloneqq(\Sym^m\sF)^{**}$ and $\wedge^{[q]}\sF\coloneqq (\wedge^q\sF)^{**}$.
\end{center}
The determinant $\det(\sF)$ of $\sF$ is defined as $\wedge^{[r]}\sF$, where $r=\rk(\sF)$. We denote by $N^1(X)_{\bbQ}$ the finite-dimensional $\bbQ$-vector space of numerical equivalence classes of $\bbQ$-Cartier divisors on $X$. If we assume moreover that a coherent sheaf $\sF$ is locally free in codimension $1$ and that $\det(\sF)$ is $\bbQ$-Cartier, then   the first Chern class $c_1(\sF)$ of $\sF$ is defined as
\[
c_1(\sF)\defeq c_1(\det(\sF))\in N^1(X)_{\bbQ}
\]
and the averaged first Chern class $\mu(\sF)$ of $\sF$ is given  by
\[
\mu(\sF) \defeq \frac{1}{r}c_1(\sF)\in N^1(X)_{\bbQ}.
\]

\noindent
The projectivization $\bbP(\sF)$ is defined by $\bbP(\sF)\coloneqq \Proj{Sym^{\bullet}\sF}$.  If $\sF$ is a vector bundle, then $\mathbb{P}(\sF)$ is the projective bundle of hyperplanes in $\sF$.    We denote by $\sO_{\bbP(\sF)}(1)$ the tautological line bundle on $\bbP(\sF)$  and   by $\zeta_{\sF}$ the tautological class
\[c_1(\sO_{\bbP(\sF)}(1))\in N^1(\bbP(\sF)).\]

Now we introduce formally  $\bbQ$-twisted coherent sheaves, which extends the notion of $\bbQ$-twisted vector bundles (see \cite[\S\,6.2]{Lazarsfeld2004a}).

\begin{defn}%\cite[Definition 6.2.1 and 6.2.3]{Lazarsfeld2004a}
    \begin{enumerate}
        \item A $\bbQ$-twisted coherent sheaf $\sF\hspace{-0.8ex}<\hspace{-0.8ex}\delta\hspace{-0.8ex}>$ on a projective variety $X$ is an ordered pair consisting of a coherent sheaf $\sF$ on $X$,
        % defined up to isomorphism,
        and a numerical equivalence class $\delta\in N^1(X)_{\bbQ}$.

        \item A $\bbQ$-twisted coherent sheaf $\sF\hspace{-0.8ex}<\hspace{-0.8ex}\delta\hspace{-0.8ex}>$ on a projective variety $X$ is called nef (resp. strictly nef, ample) if
        \[\zeta_{\sF}+p^*\delta\in N^1(\bbP(\sF))_{\bbQ}\]
        is a nef (resp. strictly nef, ample) $\bbQ$-Cartier divisor class where $p\colon \mathbb{P}(\sF) \to  X$ is the projection.
    \end{enumerate}
\end{defn}

The following result is  a Barton-Kleiman type criterion for
$\bbQ$-twisted coherent sheaves (see also \cite[Proposition
6.1.18]{Lazarsfeld2004a} and \cite[Proposition 2.1]{LiOuYang2019}).

\begin{prop}\label{BK criterion}
    Let $\sF\hspace{-0.8ex}<\hspace{-0.8ex}\delta\hspace{-0.8ex}>$ be a $\bbQ$-twisted coherent sheaf over a projective variety $X$. Then $\sF\hspace{-0.8ex}<\hspace{-0.8ex}\delta\hspace{-0.8ex}>$ is nef   if and only if given any finite morphism $\nu\colon C\rightarrow X$ from a smooth complete curve $C$ to $X$, and given any quotient line bundle $\sL$ of $\nu^*\sF$, one has
    \begin{equation}
        \det(\sL)+\deg(\nu^*\delta)\geqslant 0\label{BK}
    \end{equation}
    In particular, $\sF\hspace{-0.8ex}<\hspace{-0.8ex}\delta\hspace{-0.8ex}>$ is nef   if and only if the restriction $\sF\hspace{-0.8ex}<\hspace{-0.8ex}\delta\hspace{-0.8ex}>\vert_C$ is nef  for any complete curve $C\subseteq X$. The same criterion holds for strict nefness if  the  inequality in (\ref{BK}) is replaced by the strict  one.
\end{prop}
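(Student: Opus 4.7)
The plan is to reduce both directions to the standard fact that a $\bbQ$-Cartier divisor class on a projective variety is nef if and only if its intersection with every complete curve is non-negative. By definition, $\sF\hspace{-0.8ex}<\hspace{-0.8ex}\delta\hspace{-0.8ex}>$ is nef precisely when $L \defeq \zeta_{\sF}+p^*\delta$ is a nef $\bbQ$-Cartier class on $Y\defeq\bbP(\sF)$, so the task is to match $L$-degrees of curves in $Y$ with the quantities $\deg(\sL)+\deg(\nu^*\delta)$ appearing in \eqref{BK}.

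For the forward implication I would suppose $L$ is nef. Given a finite $\nu\colon C\to X$ from a smooth complete curve and a quotient line bundle $\nu^*\sF\twoheadrightarrow \sL$, the induced surjection $\Sym^\bullet(\nu^*\sF)\twoheadrightarrow \Sym^\bullet \sL$ produces a closed immersion $s\colon C\hookrightarrow \bbP(\nu^*\sF)$ which is a section of the natural projection to $C$ and satisfies $s^*\cO(1)=\sL$. Composing with the finite base-change morphism $q\colon \bbP(\nu^*\sF)\to Y$ and setting $\widetilde{C}=(q\circ s)(C)$, the projection formula gives $\deg(\sL)+\deg(\nu^*\delta)=d\cdot L\cdot \widetilde{C}$, where $d\geq 1$ is the degree of $C\to\widetilde{C}$; this is non-negative by assumption.

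For the reverse implication I would fix an arbitrary complete curve $\widetilde{C}\subset Y$ and split into two cases. If $\widetilde{C}$ is contracted by $p$ to a point $x\in X$, then it lies in the fiber $\bbP(\sF\otimes k(x))$, an ordinary projective space on which $\cO_Y(1)$ restricts to a hyperplane class; hence $\zeta_\sF\cdot \widetilde{C}>0$ while $p^*\delta\cdot\widetilde{C}=0$. Otherwise, let $\tilde\nu\colon C\to \widetilde{C}$ be the normalization and put $\nu=p\circ\tilde\nu\colon C\to X$, which is finite. The universal property of the fibered product lifts $\tilde\nu$ to a section $s\colon C\to \bbP(\nu^*\sF)$, and $\sL\defeq s^*\cO(1)$ is a quotient line bundle of $\nu^*\sF$ for which the same projection-formula computation (now with $d=1$) gives $L\cdot \widetilde{C}=\deg(\sL)+\deg(\nu^*\delta)\geq 0$ by hypothesis.

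The remaining assertions follow formally: the ``in particular'' statement is obtained by applying the criterion both to $X$ and to each complete curve $C\subseteq X$, using that finite morphisms to $C$ are also finite morphisms to $X$; and the strictly nef version is identical, replacing ``$\geq 0$'' by ``$>0$'' throughout and exploiting that $d\geq 1$. The main potential obstacle is that when $\sF$ is not locally free the morphism $p\colon \bbP(\sF)\to X$ is not a projective bundle and the fibers can jump in dimension; however, the Proj construction still ensures that $\cO_Y(1)$ is relatively ample and supplies the functorial bijection between surjections onto line bundles and sections of $\bbP(\cdot)$, so this amounts to careful bookkeeping rather than a substantive difficulty.
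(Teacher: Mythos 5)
Your argument is correct and follows essentially the same route as the paper's proof: both reduce to nefness of $\zeta_{\sF}+p^*\delta$ on $\bbP(\sF)$, dispose of $p$-contracted curves via relative ampleness of $\sO_{\bbP(\sF)}(1)$, and use the Grothendieck correspondence between quotient line bundles of $\nu^*\sF$ and sections of $\bbP(\nu^*\sF)\to C$ together with the projection formula to identify $\deg(\sL)+\deg(\nu^*\delta)$ with an intersection number against the image curve. Your explicit tracking of the degree $d$ of $C\to\widetilde{C}$ in the forward direction is a welcome extra precision, but the substance is the same.
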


\begin{proof}
    Let $p\colon \mathbb{P}(\sF) \to X$ be the natural projection.
    Since $\zeta_\sF+p^*\delta$ is $p$-relatively ample,  we only need to consider complete curves in $\mathbb{P}(\sF)$ which are not contracted by $p$.
    Let $B$ be such a curve and $C$ its normalization. We denote by $\nu \colon C\to X$ the morphism induced by the projection from $\bbP(\sF)\rightarrow X$. Then $\nu$ is a finite morphism.
  By \cite[(4.1.3) and Proposition 4.2.3]{Grothendieck1961},  quotient line bundles $\nu^*\sF\rightarrow \sL$ correspond one-to-one  to  sections $\varphi\colon C\rightarrow \bbP(\nu^*\sF)$ with the following commutative diagram
     \[
    \begin{tikzcd}[column sep=large, row sep=large]
        \bbP(\nu^*\sF)\ar[r,"{\pi}"] \ar[d,"{p'}"]  & \bbP(\sF)\ar[d,"{p}"]\\
        C\ar[r,"{\nu}"] \arrow[bend left]{u}{\varphi}    &  X
    \end{tikzcd}
    \]
    such that
        $\bbP(\nu^*\sF)=\bbP(\sF)\times_{X} C$, $\sO_{\bbP(\nu^*\sF)}(1)=\pi^*\sO_{\bbP(\sF)}(1)$
    and
    \[\sL\cong  \varphi^*\sO_{\bbP(\nu^*\sF)}(1)=\varphi^*\pi^*\sO_{\bbP(\sF)}(1).\]
This implies $
        \deg(\sL)+\deg(\nu^*\delta) =  B\cdot (\zeta_\sF+p^*\delta),
    $
and we can conclude the criterion for nefness. The proof for strict nefness is similar.
\end{proof}

As an  application, one has the following useful criterion. %for (strict) nefness of $\bbQ$-twisted %sheaves over curves.

\begin{cor}\label{Image-nef}
    Let $C$ be an irreducible projective curve and  $\sF\hspace{-0.8ex}<\hspace{-0.8ex}\delta\hspace{-0.8ex}>$  a nef (resp. strictly nef) $\bbQ$-twisted sheaf on $C$. If $\varphi\colon\sF\rightarrow \sQ$ is a morphism of coherent sheaves which is generically surjective, then the $\bbQ$-twisted sheaf $\sQ\hspace{-0.8ex}<\hspace{-0.8ex}\delta\hspace{-0.8ex}>$ is nef (resp. strictly nef).
\end{cor}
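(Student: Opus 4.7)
The plan is to apply the Barton--Kleiman criterion from Proposition \ref{BK criterion} directly to $\sQ\hspace{-0.8ex}<\hspace{-0.8ex}\delta\hspace{-0.8ex}>$ and reduce the check to the corresponding inequality for $\sF\hspace{-0.8ex}<\hspace{-0.8ex}\delta\hspace{-0.8ex}>$ by pulling back the quotient through $\varphi$. Concretely, I would fix a finite morphism $\nu\colon C'\rightarrow C$ from a smooth complete curve $C'$, and a quotient line bundle $q\colon \nu^*\sQ\twoheadrightarrow \sL$; the goal is to show $\deg(\sL)+\deg(\nu^*\delta)\geqslant 0$ (resp. $>0$).

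Next I would consider the composition
\[
\psi\colon \nu^*\sF\xrightarrow{\nu^*\varphi} \nu^*\sQ\xrightarrow{q}\sL,
\]
and set $\sL'\defeq \im(\psi)\subseteq\sL$. Since $\varphi$ is generically surjective, so is $\nu^*\varphi$, hence $\psi$ is generically surjective onto $\sL$; thus $\sL'$ is a nonzero subsheaf of the line bundle $\sL$ on the smooth curve $C'$, and is therefore itself a line bundle with $\deg(\sL/\sL')\geqslant 0$ (the quotient being a torsion sheaf). By construction $\sL'$ is a quotient line bundle of $\nu^*\sF$.

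Applying Proposition \ref{BK criterion} to the nef $\bbQ$-twisted sheaf $\sF\hspace{-0.8ex}<\hspace{-0.8ex}\delta\hspace{-0.8ex}>$ with the quotient $\nu^*\sF\twoheadrightarrow \sL'$, I obtain
\[
\deg(\sL')+\deg(\nu^*\delta)\geqslant 0.
\]
Combined with $\deg(\sL)\geqslant \deg(\sL')$, this yields $\deg(\sL)+\deg(\nu^*\delta)\geqslant 0$, so $\sQ\hspace{-0.8ex}<\hspace{-0.8ex}\delta\hspace{-0.8ex}>$ satisfies the Barton--Kleiman criterion and is therefore nef. Replacing the weak inequality by the strict one throughout handles the strictly nef case identically, since the extra term $\deg(\sL/\sL')\geqslant 0$ can only help.

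There is no real obstacle here: the only mild subtlety is to ensure that the image $\sL'$ is a genuine line bundle (so that it qualifies as a quotient line bundle for Barton--Kleiman), which is guaranteed by generic surjectivity of $\varphi$ together with the smoothness of $C'$. Everything else is a direct application of the criterion proved in the preceding proposition.
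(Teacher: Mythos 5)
Your proof is correct and follows essentially the same route as the paper: pull back, compose the quotient with $\nu^*\varphi$, observe that the image $\sL'$ is a quotient line bundle of $\nu^*\sF$ with $\deg(\sL)\geqslant\deg(\sL')$, and apply the Barton--Kleiman criterion twice. No gaps.
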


\begin{proof}
    Let $\nu\colon D\rightarrow C$ be a finite morphism from a smooth irreducible curve $D$ to $C$ and let $\nu^*\sQ\rightarrow \sL$ be a quotient line bundle.
    Since $\varphi$ is generically surjective, the composition
    $
    \nu^*\sF\rightarrow \nu^*\sQ\rightarrow \sL
    $
    is non-zero. Denote its image by $\sL'$. Since $\sL$ is torsion free, so is  $\sL'$. Thus $\sL'$ is a line bundle on the smooth curve $D$. If $\sF\hspace{-0.8ex}<\hspace{-0.8ex}\delta\hspace{-0.8ex}>$ is nef (resp. strictly nef),  then it follows from Proposition \ref{BK criterion} that
    \begin{center}
        $\deg(\sL')+\deg(\nu^*\delta)\geqslant 0$ (resp. $>0$).
    \end{center}
    We note that $\deg(\sL)\geqslant \deg(\sL')$. By applying Proposition \ref{BK criterion} again, we conclude that $\sQ\hspace{-0.8ex}<\hspace{-0.8ex}\delta\hspace{-0.8ex}>$ is nef (resp. strictly nef).
\end{proof}

The following proposition shows that nef $\bbQ$-twisted coherent sheaves are limits of ample $\bbQ$-twisted coherent sheaves.

\begin{prop}\cite[Proposition 6.2.11]{Lazarsfeld2004a}\label{Limit-ample}
    Let $\sF\hspace{-0.8ex}<\hspace{-0.8ex}\delta\hspace{-0.8ex}>$ be a $\bbQ$-twisted coherent sheaf over a projective variety $X$. Then $\sF\hspace{-0.8ex}<\hspace{-0.8ex}\delta\hspace{-0.8ex}>$ is nef if and only if $\sF\hspace{-0.8ex}<\hspace{-0.8ex}\delta+h\hspace{-0.8ex}>$ is ample for any ample class $h\in N^1(X)_{\bbQ}$.
\end{prop}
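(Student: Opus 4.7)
The plan is to reduce the statement to a direct application of Kleiman's ampleness criterion on $\bbP(\sF)$. Write $p\colon \bbP(\sF)\to X$ for the projection and set $H\defeq \zeta_\sF+p^*\delta$. By definition, $\sF\hspace{-0.8ex}<\hspace{-0.8ex}\delta\hspace{-0.8ex}>$ is nef (resp.\ ample) iff $H$ is a nef (resp.\ ample) $\bbQ$-Cartier divisor class on $\bbP(\sF)$, and similarly $\sF\hspace{-0.8ex}<\hspace{-0.8ex}\delta+h\hspace{-0.8ex}>$ is ample iff $H+p^*h$ is ample.

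For the nontrivial ``only if'' direction, assume that $H$ is nef and let $h$ be an ample class on $X$. I would verify that $H+p^*h$ meets every nonzero class $\gamma\in \NE{\bbP(\sF)}$ strictly positively, so that Kleiman's criterion applies. Computing via projection formula,
\[
(H+p^*h)\cdot\gamma \;=\; H\cdot\gamma \;+\; h\cdot p_*\gamma,
\]
and I would split into two cases. If $p_*\gamma\neq 0$, then $p_*\gamma$ is a nonzero class in $\NE{X}$ so $h\cdot p_*\gamma>0$ by ampleness of $h$, while $H\cdot\gamma\geqslant 0$ by nefness of $H$. If $p_*\gamma=0$, then $\gamma$ is carried by curves contained in fibers of $p$, so $p^*\delta\cdot\gamma=0$ and $H\cdot\gamma=\zeta_\sF\cdot\gamma>0$ by relative ampleness of $\sO_{\bbP(\sF)}(1)$ over $X$. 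In both cases $(H+p^*h)\cdot\gamma>0$, and Kleiman's criterion gives ampleness of $H+p^*h$.

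For the ``if'' direction, the argument is a soft limit: fix any ample class $h_0\in N^1(X)_{\bbQ}$; by hypothesis $H+\epsilon p^*h_0$ is ample for every positive rational $\epsilon$, so taking $\epsilon\to 0^+$ exhibits $H$ as a limit of ample classes in $N^1(\bbP(\sF))_{\bbR}$, hence nef. The only mildly delicate point in this whole proof is the case analysis above, where one has to observe that a curve class contracted by $p$ is necessarily detected by the relatively ample $\zeta_\sF$; everything else is formal, which is why the statement is essentially a reproduction of \cite[Proposition~6.2.11]{Lazarsfeld2004a} in the slightly more general setting of coherent (not necessarily locally free) sheaves.
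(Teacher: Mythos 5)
Your proof is correct in substance but takes a genuinely different route from the paper's for the nontrivial direction. The paper never touches the cone of curves: it invokes \cite[Proposition 4.4.10]{Grothendieck1961} to produce an ample class of the form $\varepsilon(\zeta_{\sF}+p^*\delta)+p^*h$ for $0<\varepsilon\ll 1$ (relative ampleness of $\zeta_{\sF}+p^*\delta$ over $X$ combined with ampleness of $h$ downstairs), and then writes $\zeta_{\sF}+p^*\delta+p^*h$ as the sum of the nef class $(1-\varepsilon)(\zeta_{\sF}+p^*\delta)$ and that ample class, concluding by ``nef plus ample is ample'' \cite[Corollary 1.4.10]{Lazarsfeld2004}. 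Your Kleiman-criterion argument gives a more geometric picture, but at the cost of one delicate point that you gloss over: a nonzero class $\gamma\in\NE{\bbP(\sF)}$ with $p_*\gamma=0$ need \emph{not} be ``carried by curves contained in fibers of $p$'' --- it is only a limit of effective classes whose pushforwards tend to zero --- so the positivity $\zeta_{\sF}\cdot\gamma>0$ does not follow merely from ampleness of $\sO_{\bbP(\sF)}(1)$ on each fiber. (The vanishing $p^*\delta\cdot\gamma=\delta\cdot p_*\gamma=0$ is fine by the projection formula alone.) The standard repair is precisely the paper's EGA input: choose an ample $A$ on $X$ and $\varepsilon>0$ with $\varepsilon(\zeta_{\sF}+p^*\delta)+p^*A$ ample; pairing with $\gamma$ kills the $p^*A$ and $p^*\delta$ terms and yields $\zeta_{\sF}\cdot\gamma>0$. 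With that fix your case analysis closes, and the ``if'' direction is the same soft limit argument in both proofs.
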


\begin{proof}
 By definition,   if $\sF\hspace{-0.8ex}<\hspace{-0.8ex}\delta+h\hspace{-0.8ex}>$ is ample for every ample class $h$, then  $\zeta_{\sF}+p^*(\delta+h)$ is ample. Hence  $\zeta_{\sF}+p^*\delta$   is nef.
  Assume conversely that $\sF\hspace{-0.8ex}<\hspace{-0.8ex}\delta\hspace{-0.8ex}>$ is nef. Since  $\zeta_{\sF}+p^*\delta$ is $p$-ample, by \cite[Proposition 4.4.10]{Grothendieck1961},  $\varepsilon(\zeta_{\sF}+p^*\delta)+p^*h$ is ample for  $0<\varepsilon \ll 1$. As $\zeta_F+p^*\delta$ is nef, it follows that
    \[\zeta_{\sF}+p^*\delta+p^*h=(1-\varepsilon)(\zeta_{\sF}+p^*\delta)+\varepsilon(\zeta_{\sF}+p^*\delta)+p^*h\]
    is ample by \cite[Corollary 1.4.10]{Lazarsfeld2004}.
\end{proof}

\noindent Similarly, one has
\begin{cor}\label{Symmetry-wedge}
    Let $\sE\hspace{-0.8ex}<\hspace{-0.8ex}\delta\hspace{-0.8ex}>$ and $\sF\hspace{-0.8ex}<\hspace{-0.8ex}\delta\hspace{-0.8ex}>$ be two nef $\bbQ$-twisted sheaves on a  projective variety $X$. Then the tensor product $(\sE\otimes \sF)\hspace{-0.8ex}<\hspace{-0.8ex}2\delta\hspace{-0.8ex}>$ is nef. In particular, $S^m\sE\hspace{-0.8ex}<\hspace{-0.8ex}m\delta\hspace{-0.8ex}>$ and $\wedge^q\sE\hspace{-0.8ex}<\hspace{-0.8ex}q\delta\hspace{-0.8ex}>$ are nef for all $m\geqslant 1$ and $1\leqslant q\leqslant \rk(\sE)$.
\end{cor}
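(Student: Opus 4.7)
The plan is to apply the Barton--Kleiman criterion (Proposition \ref{BK criterion}) to reduce the problem to nefness on smooth complete curves, and then to exploit the fact that on a smooth curve every quotient line bundle is torsion free in order to transfer the question to locally free sheaves. The latter case is handled by the classical tensor-product theorem for nef $\bbQ$-twisted vector bundles (see \cite[Theorem 6.2.12]{Lazarsfeld2004a}).

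More concretely, for every finite morphism $\nu\colon C\rightarrow X$ from a smooth complete curve and every quotient line bundle $\sL$ of $\nu^{*}(\sE\otimes\sF)=\nu^{*}\sE\otimes\nu^{*}\sF$, it suffices to prove
\[
\deg(\sL)+2\deg(\nu^{*}\delta)\geqslant 0.
\]
On $C$, I would decompose $\nu^{*}\sE=E_{0}\oplus T_{E}$ and $\nu^{*}\sF=F_{0}\oplus T_{F}$ into their locally free and torsion parts, so that the locally free part of $\nu^{*}\sE\otimes\nu^{*}\sF$ is $E_{0}\otimes F_{0}$. Since $\sL$ is torsion free, the surjection factors through $E_{0}\otimes F_{0}$. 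The pulled-back $\bbQ$-twisted sheaves $\nu^{*}\sE\hspace{-0.8ex}<\hspace{-0.8ex}\nu^{*}\delta\hspace{-0.8ex}>$ and $\nu^{*}\sF\hspace{-0.8ex}<\hspace{-0.8ex}\nu^{*}\delta\hspace{-0.8ex}>$ remain nef on $C$, and applying Corollary \ref{Image-nef} to the quotients $\nu^{*}\sE\twoheadrightarrow E_{0}$ and $\nu^{*}\sF\twoheadrightarrow F_{0}$ shows that $E_{0}\hspace{-0.8ex}<\hspace{-0.8ex}\nu^{*}\delta\hspace{-0.8ex}>$ and $F_{0}\hspace{-0.8ex}<\hspace{-0.8ex}\nu^{*}\delta\hspace{-0.8ex}>$ are nef $\bbQ$-twisted vector bundles. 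The classical tensor-product theorem then yields that $(E_{0}\otimes F_{0})\hspace{-0.8ex}<\hspace{-0.8ex}2\nu^{*}\delta\hspace{-0.8ex}>$ is nef, which gives the required inequality and proves the first assertion.

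For the ``in particular'' part, iterating the tensor-product statement $m$ times produces the nefness of $\sE^{\otimes m}\hspace{-0.8ex}<\hspace{-0.8ex}m\delta\hspace{-0.8ex}>$. Since we are working in characteristic zero, the canonical symmetrization and antisymmetrization maps provide generically surjective morphisms $\sE^{\otimes m}\rightarrow S^{m}\sE$ and $\sE^{\otimes q}\rightarrow\wedge^{q}\sE$ for $1\leqslant q\leqslant\rk(\sE)$; a second application of Corollary \ref{Image-nef}, performed on an arbitrary smooth complete curve after restriction, then yields the nefness of $S^{m}\sE\hspace{-0.8ex}<\hspace{-0.8ex}m\delta\hspace{-0.8ex}>$ and $\wedge^{q}\sE\hspace{-0.8ex}<\hspace{-0.8ex}q\delta\hspace{-0.8ex}>$. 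The one delicate point in the argument is the torsion bookkeeping on pulled-back sheaves: because the nefness of a $\bbQ$-twisted coherent sheaf on a curve is detected only by its torsion-free quotient, the curve-by-curve reduction through Corollary \ref{Image-nef} cannot be replaced by a purely global quotient argument, and this is the main step that needs to be carried out with care.
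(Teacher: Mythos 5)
Your proposal is correct, and it actually supplies more than the paper does: the paper states Corollary \ref{Symmetry-wedge} with only the remark ``Similarly, one has'' following Proposition \ref{Limit-ample}, implicitly invoking the limit-of-ample characterization and the tensor-product theorem of \cite[Theorem 6.2.12]{Lazarsfeld2004a} for $\bbQ$-twisted \emph{vector bundles}, and leaving the passage to general coherent sheaves unaddressed. Your route is genuinely different in that it works curve by curve through the Barton--Kleiman criterion of Proposition \ref{BK criterion}: on a smooth complete curve every coherent sheaf splits as (locally free) $\oplus$ (torsion), a line bundle quotient of $\nu^*\sE\otimes\nu^*\sF$ necessarily factors through $E_0\otimes F_0$, and Corollary \ref{Image-nef} transfers nefness to $E_0$ and $F_0$ so that the classical locally free tensor theorem applies on the curve. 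This buys a complete treatment of the non-locally-free case, which is exactly the point the paper glosses over; the cost is that you must (and do) verify the torsion bookkeeping and the stability of nefness under finite pullback to curves, both of which follow immediately from Proposition \ref{BK criterion}. The ``in particular'' part via iteration and the surjections $\sE^{\otimes m}\to S^m\sE$, $\sE^{\otimes q}\to\wedge^q\sE$ combined with Corollary \ref{Image-nef} is also sound (these maps are in fact everywhere surjective, so generic surjectivity is more than enough).
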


\subsection{Almost nef $\bbQ$-twisted coherent sheaves}

For a  strictly nef subsheaf of the tangent bundle, its saturation is \textit{a priori} not nef.   We therefore consider a weaker positivity. The following notion of almost nef line bundle was introduced in \cite{DemaillyPeternellSchneider2001}. We extend this to the setting of $\mathbb{Q}$-twisted sheaves.

\begin{defn}
    Let $\sF$ be a coherent sheaf on a  projective variety $X$, and let $\delta\in N^1(X)_{\bbQ}$ be a $\bbQ$-Cartier divisor class.
    The $\bbQ$-twisted sheaf $\sF\hspace{-0.8ex}<\hspace{-0.8ex}\delta\hspace{-0.8ex}>$ is said to be almost nef, if there is a countable family $(Z_i)_{i\in \mathbb{N}}$ of proper subvarieties of $X$ such that $\sF\hspace{-0.8ex}<\hspace{-0.8ex}\delta\hspace{-0.8ex}>\vert_C$ is nef for all irreducible curves $C\not\subset \cup_{i\in \mathrm{N}} Z_i$.

     If $\sF\hspace{-0.8ex}<\hspace{-0.8ex}\delta\hspace{-0.8ex}>$ is almost nef,  its negative locus  $\bbS(\sF\hspace{-0.8ex}<\hspace{-0.8ex}\delta\hspace{-0.8ex}>)$  is the smallest countable union of  closed subvarieties such that $\sF\hspace{-0.8ex}<\hspace{-0.8ex}\delta\hspace{-0.8ex}>\vert_C$ is nef for all irreducible curves $C\not\subset \bbS(\sF\hspace{-0.8ex}<\hspace{-0.8ex}\delta\hspace{-0.8ex}>)$.
\end{defn}

\begin{remark}
        By \cite[Proposition 3.3]{DemaillyPeternellSchneider2001} and \cite[Theorem 0.2]{BoucksomDemaillyPuaunPeternell2013}, a $\bbQ$-Cartier divisor $D$ on a projective manifold $X$ is almost nef if and only if $D$ is pseudo-effective.
        However, we remark that in general the negative locus $\bbS(D)$ of $D$ is   a proper subset of the non-nef locus $\bbB_{-}(D)$ of $D$ (e.g. \cite[Remark 6.3]{BoucksomDemaillyPuaunPeternell2013}).
    %\end{enumerate}
\end{remark}

We collect some basic properties of $\bbQ$-twisted almost nef coherent sheaves.

\begin{prop}\label{Almost-nef-properties}
    Let $\delta\in N^1(X)_{\bbQ}$ be a $\bbQ$-Cartier divisor class on a projective variety $X$, and let $\sE$, $\sF$ and $\sG$ be coherent sheaves on $X$.
    \begin{enumerate}
        \item If   $\sE\hspace{-0.8ex}<\hspace{-0.8ex}\delta\hspace{-0.8ex}>$ is almost nef and if $\sigma\colon \sE\rightarrow \sQ$ generically surjective, then $\sQ\hspace{-0.8ex}<\hspace{-0.8ex}\delta\hspace{-0.8ex}>$ is almost nef. Moreover, $\bbS(\sQ\hspace{-0.8ex}<\hspace{-0.8ex}\delta\hspace{-0.8ex}>)$ is contained in the union of $\bbS(\sE\hspace{-0.8ex}<\hspace{-0.8ex}\delta\hspace{-0.8ex}>)$ and the support of the torsion sheaf $\sG/\sigma(\sG)$.

        \item If $\sE\hspace{-0.8ex}<\hspace{-0.8ex}\delta\hspace{-0.8ex}>$ is almost nef, then $S^{[m]}\sE\hspace{-0.8ex}<\hspace{-0.8ex}m\delta\hspace{-0.8ex}>$ and $\wedge^{[q]}\sE\hspace{-0.8ex}<\hspace{-0.8ex}q\delta\hspace{-0.8ex}>$ are almost nef for all $m,q>0$.  Their negative loci are contained in the union of  $\mathrm{Sing}(\sE)$ and $\bbS(\sE\hspace{-0.8ex}<\hspace{-0.8ex}\delta\hspace{-0.8ex}>)$.

        \item  Let $p \colon \mathbb{P}(\sE) \to X$ be the natural projection. If $\zeta_{\sE}+p^*\delta$ is almost nef  and its negative locus $\bbS(\zeta_{\sE}+p^*\delta)$ does not dominate $X$, then  $\sE\hspace{-0.8ex}<\hspace{-0.8ex}\delta\hspace{-0.8ex}>$   is almost nef and $\bbS(\sE\hspace{-0.8ex}<\hspace{-0.8ex}\delta\hspace{-0.8ex}>)$ is contained in   $p(\bbS(\zeta_{\sE}+p^*\delta))$.

        \item Let $0\rightarrow \sF\rightarrow \sE\rightarrow \sQ\rightarrow 0$ be an exact sequence of coherent sheaves. If both $\sF\hspace{-0.8ex}<\hspace{-0.8ex}\delta\hspace{-0.8ex}>$ and $\sQ\hspace{-0.8ex}<\hspace{-0.8ex}\delta\hspace{-0.8ex}>$ are almost nef, then $\sE\hspace{-0.8ex}<\hspace{-0.8ex}\delta\hspace{-0.8ex}>$ is almost nef and $\bbS(\sE\hspace{-0.8ex}<\hspace{-0.8ex}\delta\hspace{-0.8ex}>)$ is contained in $\bbS(\sF\hspace{-0.8ex}<\hspace{-0.8ex}\delta\hspace{-0.8ex}>)\cup\bbS(\sQ\hspace{-0.8ex}<\hspace{-0.8ex}\delta\hspace{-0.8ex}>)$.
    \end{enumerate}
\end{prop}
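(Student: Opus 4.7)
All four parts follow from the Barton--Kleiman criterion (Proposition \ref{BK criterion}) together with Corollary \ref{Image-nef} or Corollary \ref{Symmetry-wedge}. The guiding principle is that once we exclude a countable union of proper subvarieties, the restriction of our $\mathbb{Q}$-twisted sheaves to the remaining curves is nef by hypothesis, and the sheaf-theoretic operations appearing commute with restriction to such a curve. For (1), I would take a general irreducible curve $C \not\subset \bbS(\sE\langle\delta\rangle) \cup \supp(\sQ/\sigma(\sE))$: along $C$ the map $\sigma|_C$ is still generically surjective and $\sE\langle\delta\rangle|_C$ is nef, so Corollary \ref{Image-nef} yields nefness of $\sQ\langle\delta\rangle|_C$. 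For (2), I would restrict to curves avoiding $\mathrm{Sing}(\sE) \cup \bbS(\sE\langle\delta\rangle)$; then $\sE|_C$ is locally free, reflexive operations collapse to ordinary ones, so $S^{[m]}\sE|_C \cong S^m(\sE|_C)$ and $\wedge^{[q]}\sE|_C \cong \wedge^q(\sE|_C)$, and Corollary \ref{Symmetry-wedge} concludes.

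For (3), given $C \not\subset p(\bbS(\zeta_\sE + p^*\delta))$, I would verify the Barton--Kleiman inequality on $C$ directly. A finite $\nu \colon D \to C$ from a smooth curve and a quotient $\nu^*\sE \to \cL$ produce, via the correspondence used in the proof of Proposition \ref{BK criterion}, a section $\varphi \colon D \to \bbP(\nu^*\sE)$ whose image $B = (\pi \circ \varphi)(D) \subset \bbP(\sE)$ surjects onto $C$. Since $p(B) = C$ is not contained in $p(\bbS(\zeta_\sE + p^*\delta))$, neither is $B$ itself; hence $(\zeta_\sE + p^*\delta) \cdot B \geq 0$, which is precisely $\deg \cL + \deg(\nu^*\delta) \geq 0$.

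For (4), I would take $C$ outside $\bbS(\sF\langle\delta\rangle) \cup \bbS(\sQ\langle\delta\rangle)$ and outside the singular loci of $\sF, \sE, \sQ$, so the restriction $0 \to \sF|_C \to \sE|_C \to \sQ|_C \to 0$ stays exact. Given $\nu \colon D \to C$ and a quotient $\nu^*\sE \to \cL$, I would split on the composition $\nu^*\sF \to \cL$: if it vanishes, $\cL$ factors through $\nu^*\sQ$ and nefness of $\sQ\langle\delta\rangle|_C$ gives $\deg \cL + \deg \nu^*\delta \geq 0$; otherwise its image $\cL' \subset \cL$ is a line bundle on $D$ which is a quotient of $\nu^*\sF$ and satisfies $\deg \cL' \leq \deg \cL$, so nefness of $\sF\langle\delta\rangle|_C$ gives $\deg \cL' + \deg \nu^*\delta \geq 0$ and hence the same inequality for $\cL$.

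The one delicate point is in part (3): I must rule out the possibility that the Barton--Kleiman section image $(\pi\circ\varphi)(D)$ lands entirely inside $\bbS(\zeta_\sE + p^*\delta)$, and the non-dominance assumption on this negative locus is tailored precisely to prevent this, which explains the extra hypothesis absent in the other parts. The remaining content of the proposition is structural bookkeeping, once one identifies the correct countable union of bad loci to discard in each case.
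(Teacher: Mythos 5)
Your proposal is correct and follows essentially the same route as the paper's proof: in each part one discards the appropriate countable union of bad subvarieties, reduces to a generically surjective map onto a line-bundle quotient on the remaining curves, and invokes the Barton--Kleiman criterion together with Corollaries \ref{Image-nef} and \ref{Symmetry-wedge} (and, for (3), the compatibility of the tautological class with restriction of $\bbP(\sE)$ to a curve). Two small precisions: in (2) a curve not \emph{contained} in $\mathrm{Sing}(\sE)$ may still meet it, so the natural map $S^m(\sE|_C)\to (S^{[m]}\sE)|_C$ is only generically an isomorphism rather than an isomorphism --- but generic surjectivity is all that Corollary \ref{Image-nef} requires, which is exactly how the paper argues; and in (4) left-exactness of the restricted sequence is never used (right-exactness of pullback suffices for the two-case analysis of $\nu^*\sE\to\sL$), so there is no need to avoid the singular loci of $\sF$, $\sE$, $\sQ$, which would otherwise weaken the asserted bound $\bbS(\sE\hspace{-0.8ex}<\hspace{-0.8ex}\delta\hspace{-0.8ex}>)\subseteq \bbS(\sF\hspace{-0.8ex}<\hspace{-0.8ex}\delta\hspace{-0.8ex}>)\cup\bbS(\sQ\hspace{-0.8ex}<\hspace{-0.8ex}\delta\hspace{-0.8ex}>)$.
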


\begin{proof}
    For (1), let $C\subseteq X$ be a complete curve such that $C\not\subset \bbS(\sE\hspace{-0.8ex}<\hspace{-0.8ex}\delta\hspace{-0.8ex}>)\cup \supp(\sG/\sigma(\sG))$. Then the induced morphism $\sE\vert_C\rightarrow \sG\vert_C$ is generically surjective and we conclude by Corollary \ref{Image-nef} that $\sG\hspace{-0.8ex}<\hspace{-0.8ex}\delta\hspace{-0.8ex}>\vert_C$ is nef.
    For (2), we only prove the statement for $S^{[m]}\sE\hspace{-0.8ex}<\hspace{-0.8ex}m\delta\hspace{-0.8ex}>$, and the case of exterior power is similar. Let $C\subseteq X$ be a complete  curve such that $C\not\subset \mathrm{Sing}(\sE)\cup \bbS(\sE\hspace{-0.8ex}<\hspace{-0.8ex}\delta\hspace{-0.8ex}>)$. Then  the induced morphism $(S^m\sE)\vert_C\rightarrow (S^{[m]}\sE)\vert_C$ is generically surjective. Since   symmetric powers commute with pullbacks (see for instance \cite[II, Exercise 5.16]{Hartshorne1977}), the following morphism
    \[S^m(\sE\vert_C)=(S^m\sE)\vert_C\rightarrow (S^{[m]}\sE)\vert_C \]
    is generically surjective. As $\sE\hspace{-0.8ex}<\hspace{-0.8ex}\delta\hspace{-0.8ex}>\vert_C$ is nef by assumption, we obtain by Corollary \ref{Symmetry-wedge} and Corollary \ref{Image-nef} that $(S^{[m]}\sE\hspace{-0.8ex}<\hspace{-0.8ex}m\delta\hspace{-0.8ex}>)\vert_C$ is nef.
    For (3), let $C\subseteq X$ be a complete curve  not contained in  $p(\bbS(\zeta_{\sE}+p^*\delta))$. Then we have the following commutative diagram
    \[\begin{tikzcd}[column sep=large, row sep=large]
        \bbP(\sE|_C)\dar[swap]{p'}\ar[r,"{j}"] & \bbP(\sE)\ar[d,"{p}"]\\
        C\ar[r,"{i}"]                        & X
      \end{tikzcd}
    \]
    such that
    \[\zeta_{\sE|_C}+p'^*(\delta\vert_C)=j^*(\zeta_{\sE}+p^*\delta).\]
    This implies that $\zeta_{\sE|_C}+p'^*(\delta\vert_C)$ is nef, that is, $\sE\hspace{-0.8ex}<\hspace{-0.8ex}\delta\hspace{-0.8ex}>\vert_C$ is nef.
    For (4), let $\nu \colon C\to X$ be finite  morphism from a smooth complete curve  $C$ to $X$ such that $\nu(C)$ is  not contained in  $ \bbS(\sF\hspace{-0.8ex}<\hspace{-0.8ex}\delta\hspace{-0.8ex}>)\cup \bbS(\sG\hspace{-0.8ex}<\hspace{-0.8ex}\delta\hspace{-0.8ex}>)$.
    Then we have an exact sequence
    \[\nu^*\sF \rightarrow \nu^*\sE\rightarrow \nu^*\sG\rightarrow 0.\]
    Let $\nu^*\sE\rightarrow \sL$ be a quotient line bundle. Then either the composition
    $\nu^*\sF\rightarrow \nu^*\sE\rightarrow \sL$
    is not zero, or there exists a factorization
    $\nu^*\sE\rightarrow \nu^*\sG\rightarrow \sL.$
    Since both $\sF\hspace{-0.8ex}<\hspace{-0.8ex}\delta\hspace{-0.8ex}>\vert_C$ and $\sG\hspace{-0.8ex}<\hspace{-0.8ex}\delta\hspace{-0.8ex}>\vert_C$ are nef, by  Corollary \ref{Image-nef},   we  obtain that $\sL\hspace{-0.8ex}<\hspace{-0.8ex}\delta\hspace{-0.8ex}>\vert_C$ is nef. Hence, $\sE\hspace{-0.8ex}<\hspace{-0.8ex}\delta\hspace{-0.8ex}>\vert_C$ is nef.
\end{proof}

\vskip 2\baselineskip

\section{Projectively flat vector bundles}

%Checked(Liu)
\label{section:proj-flat}
The notion of numerically flat vector bundle was firstly introduced in \cite[Definition 1.7]{DemaillyPeternellSchneider1994}.
We can extend this to $\mathbb{Q}$-twisted vector bundles  as follows: a $\mathbb{Q}$-twisted vector bundle $\sE\hspace{-0.8ex}<\hspace{-0.8ex}\delta\hspace{-0.8ex}>$ is numerically flat if it is nef and it has trivial first Chern class. We also introduce the following definition.

\begin{defn}
    Let $\sE$ be a vector bundle on a projective manifold $X$ with projectivization $p\colon \bbP(\sE)\rightarrow X$. The normalized tautological class $\Lambda_{\sE}$ is defined as $\zeta_{\sE}-p^*\mu(\sE)\in N^1(\bbP(\sE))_{\bbQ}$.  $\sE$ is called \emph{numerically projectively flat} if $\Lambda_{\sE}$ is nef.
\end{defn}

\noindent Equivalently, a vector bundle $\sE$ is numerically projectively flat if and only if the $\mathbb{Q}$-twisted vector bundle  $\sE\hspace{-0.8ex}<\hspace{-0.8ex}-\mu(\sE)\hspace{-0.8ex}>$ is numerically flat.

\subsection{Characterization of numerically projectively flat vector bundles}

Recall that a $C^\infty$ complex vector bundle $\sE$ is called \emph{projectively flat} if there exists an affine connection $\nabla$ such that its curvature $\nabla^2=\alpha\cdot \id_{\sE}$ for some complex  $2$-form $\alpha$. A holomorphic vector bundle $\sE$ is \emph{projectively Hermitian flat} if it admits a smooth Hermitian metric $h$ such that its Chern  curvature tensor $R=\nabla^2$ can be written as  $R=\alpha \cdot\id_{\sE}$ for some $2$-form $\alpha$.  In particular, the associated projectivized bundle $\bbP(\sE)$ is given by a representation $\pi_1(X)\rightarrow PU(r)$ (see \cite[Corollary 4.3]{Nakayama2004}). The following theorem is derived from the study of stable vector bundles and Einstein-Hermitian metrics by Narasimhan-Seshadri \cite{NarasimhanSeshadri1965}, Mehta-Ramanathan \cite{MehtaRamanathan1981/82,MehtaRamanathan1984}, Donaldson \cite{Donaldson1985}, Uhlenbeck-Yau \cite{UhlenbeckYau1986}, and Bando-Siu \cite{BandoSiu1994}. One can find a complete proof in \cite[IV, Theorem 4.1]{Nakayama2004}.

\begin{thm}
\label{thm:num-proj-flat-criterion-Nakayama}
Let $\sE$ be a reflexive sheaf of rank $r$ on a projective manifold $X$ of dimension $n$. Then the following assertions are equivalent.
\begin{enumerate}
\item $\sE$ is a numerically projectively flat vector bundle;
\item $\sE$ is semistable with respect to some ample divisor $A$ and the following equality holds: \[\left(c_2(\sE)-\frac{r-1}{2r}c_1^2(\sE)\right)\cdot A^{n-2}=0;\]
\item $\sE$ is a vector bundle and there exists a filtration of  subbundles
\[\{0\}=\sE_0\subsetneq \sE_1\subsetneq \dots\subsetneq \sE_{p-1}\subsetneq \sE_p=\sE\]
such that $\sE_i/\sE_{i+1}$ are  projectively Hermitian  flat and that the averaged first Chern classes $\mu(\sE_i/\sE_{i-1})$ are all equal to $\mu(\sE)$.
\end{enumerate}
\end{thm}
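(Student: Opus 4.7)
The plan is to prove the cyclic chain of implications $(3)\Rightarrow(1)\Rightarrow(2)\Rightarrow(3)$. The first two implications follow from the general theory of nef and numerically flat $\bbQ$-twisted sheaves developed in Section~\ref{section:Q-twisted} together with standard closure properties of numerically flat bundles in \cite{DemaillyPeternellSchneider1994}, while $(2)\Rightarrow(3)$ requires the full analytic machinery of Hermitian--Einstein metrics and the equality case of the Bogomolov--Miyaoka--Yau inequality; this is where the main obstacle lies.

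For $(3)\Rightarrow(1)$, I would argue by induction on the length $p$ of the filtration. Each projectively Hermitian flat graded piece $\sE_i/\sE_{i-1}$ has slope equal to $\mu(\sE)$, so after twisting by $-\mu(\sE)$ it becomes genuinely Hermitian flat, and in particular numerically flat. Since extensions of numerically flat bundles remain numerically flat \cite{DemaillyPeternellSchneider1994}, the induction yields that $\sE\langle-\mu(\sE)\rangle$ is numerically flat, and hence $\Lambda_\sE$ is nef.

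For $(1)\Rightarrow(2)$, assume $\sE\langle-\mu(\sE)\rangle$ is numerically flat. Then the Barton--Kleiman criterion (Proposition~\ref{BK criterion}) applied to any quotient $\sE\twoheadrightarrow\sQ$, combined with Corollary~\ref{Image-nef} and pairing with curves cut out by $n-1$ general elements of $|mA|$, shows $A$-semistability of $\sE$. The discriminant identity amounts to the vanishing of $c_2(\sE\langle-\mu(\sE)\rangle)\cdot A^{n-2}$, which reduces by a cut-down argument to the fact that a numerically flat vector bundle on a projective surface has both $c_1$ and $c_2$ numerically trivial.

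The heart of the theorem is $(2)\Rightarrow(3)$. By Mehta--Ramanathan, the restriction $\sE|_S$ to a general complete-intersection surface $S$ cut by $n-2$ elements of $|mA|$, $m\gg 0$, remains semistable, and the discriminant hypothesis descends to $(c_2-\tfrac{r-1}{2r}c_1^2)(\sE|_S)=0$. One then passes to the Jordan--Hölder filtration on $S$, whose stable graded pieces inherit the vanishing discriminant by additivity when all slopes coincide. By Donaldson--Uhlenbeck--Yau, extended to reflexive sheaves by Bando--Siu, each stable graded piece admits a Hermitian--Einstein metric, and the equality case of the Bogomolov inequality forces the traceless curvature to vanish; each piece is thus projectively Hermitian flat, hence locally free. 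The principal obstacle is to globalize: one must lift this surface filtration back to a filtration of locally free subsheaves on the whole of $X$ and establish local freeness of $\sE$ everywhere. This is resolved in \cite[IV, Theorem~4.1]{Nakayama2004} by combining Lefschetz-type comparisons of $\pi_1(S)$ and $\pi_1(X)$ with the representation-theoretic description of projectively Hermitian flat bundles.
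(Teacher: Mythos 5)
Your proposal is correct and is consistent with the paper's treatment: the paper gives no proof of this theorem at all, simply citing \cite[IV, Theorem 4.1]{Nakayama2004} for a complete argument, and your outline — easy implications $(3)\Rightarrow(1)$ and $(1)\Rightarrow(2)$ via extension/restriction properties of numerically flat $\bbQ$-twists, with the crux $(2)\Rightarrow(3)$ handled by Mehta--Ramanathan, Donaldson--Uhlenbeck--Yau/Bando--Siu and the globalization in Nakayama — is exactly the route that reference takes. Since you ultimately defer the genuinely hard step (lifting the surface filtration to $X$ and proving local freeness) to the same source the paper cites, there is no substantive divergence to report.
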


One can easily derive the following lemma from definition and Theorem \ref{thm:num-proj-flat-criterion-Nakayama}.

\begin{lemma}
    \label{lemma:prop-num-proj-flat}
    Let $\sE$ be a numerically projectively flat vector bundle on a projective manifold $Y$.
    \begin{enumerate}
        \item If $\det(\sE)$ is nef, then $\sE$ is nef.

        \item If $\sL$ is a line bundle on $Y$, then $\sE\otimes \sL$ is numerically projectively flat.

        \item If $f\colon X\rightarrow Y$ is a morphism from a projective manifold $X$ to $Y$, then $f^*\sE$ is numerically projectively flat.

        \item $\sE^*$ is numerically projectively flat.
    \end{enumerate}
\end{lemma}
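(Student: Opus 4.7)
My plan is to verify each assertion directly from the defining identity $\zeta_{\sE} = \Lambda_{\sE} + p^*\mu(\sE)$, reserving the characterization Theorem \ref{thm:num-proj-flat-criterion-Nakayama} for the statement about duals.

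For (1), since $\sE$ is numerically projectively flat, the class $\Lambda_{\sE}$ is nef on $\bbP(\sE)$ by definition; moreover, if $\det(\sE)$ is nef then $r\mu(\sE) = c_1(\det(\sE))$ is nef on $Y$, so $p^*\mu(\sE)$ is nef on $\bbP(\sE)$. Thus $\zeta_{\sE}$ is the sum of two nef classes, which gives the nefness of $\sE$. For (2), I would use the canonical identification $\bbP(\sE\otimes\sL) = \bbP(\sE)$, under which $\sO_{\bbP(\sE\otimes\sL)}(1) \cong \sO_{\bbP(\sE)}(1)\otimes p^*\sL$; combined with $\mu(\sE\otimes\sL) = \mu(\sE) + c_1(\sL)$, this yields $\Lambda_{\sE\otimes\sL} = \Lambda_{\sE}$, which is nef by hypothesis. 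For (3), let $F\colon \bbP(f^*\sE) = \bbP(\sE)\times_Y X\to \bbP(\sE)$ be the induced morphism and $p'\colon \bbP(f^*\sE)\to X$ the projection. Base change gives $\zeta_{f^*\sE} = F^*\zeta_{\sE}$ and $(p')^*\mu(f^*\sE) = F^*p^*\mu(\sE)$, so $\Lambda_{f^*\sE} = F^*\Lambda_{\sE}$, which is nef as the pullback of a nef class under a morphism between projective varieties.

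The most involved step is (4), the assertion for duals, where I would invoke the equivalence (1)$\Leftrightarrow$(2) of Theorem \ref{thm:num-proj-flat-criterion-Nakayama}. Slope semistability with respect to $A$ is preserved under dualization of reflexive sheaves, while $c_1(\sE^*) = -c_1(\sE)$ and $c_2(\sE^*) = c_2(\sE)$ show that the discriminant condition
\[
\left(c_2(\sE^*) - \frac{r-1}{2r}c_1^2(\sE^*)\right)\cdot A^{n-2} = 0
\]
still holds. Applying Theorem \ref{thm:num-proj-flat-criterion-Nakayama} in the other direction then yields that $\sE^*$ is numerically projectively flat. None of the four steps is genuinely deep; the only real obstacle is bookkeeping the change of $\zeta$ and $\mu$ under the three operations of tensoring by a line bundle, pullback along a morphism, and duality.
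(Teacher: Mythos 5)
Your proof is correct and follows exactly the route the paper intends: the paper gives no explicit argument, stating only that the lemma "can easily be derived from definition and Theorem \ref{thm:num-proj-flat-criterion-Nakayama}," and your verification of (1)--(3) directly from the identity $\zeta_{\sE}=\Lambda_{\sE}+p^*\mu(\sE)$ together with the use of the semistability-plus-discriminant characterization for (4) is precisely that derivation. The bookkeeping ($\Lambda_{\sE\otimes\sL}=\Lambda_{\sE}$, $\Lambda_{f^*\sE}=F^*\Lambda_{\sE}$, $c_1(\sE^*)^2=c_1(\sE)^2$ and $c_2(\sE^*)=c_2(\sE)$) is all accurate.
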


In  \cite{HoeringPeternell2019}, H\"oring and Peternell characterized numerically flat vector bundles by using almost nefness, instead of nefness in the original definition of \cite{DemaillyPeternellSchneider1994}. Here we quote their theorem in a special case and  refer the readers to \cite{HoeringPeternell2019} for the complete statement.

\begin{thm}\cite[Theorem 1.8]{HoeringPeternell2019}\label{HP-Num-flatness}
    Let $X$ be an $n$-dimensional projective manifold. Let $\sF$ be an almost nef reflexive coherent sheaf on $X$ such that $c_1(\sF)\cdot A^{n-1}=0$ for some ample divisor $A$ on $X$. Then $\sF$ is locally free and numerically flat.
\end{thm}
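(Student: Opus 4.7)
The plan is to reduce to the characterization of numerically projectively flat sheaves (Theorem~\ref{thm:num-proj-flat-criterion-Nakayama}) and then upgrade numerical projective flatness to numerical flatness using the vanishing of the first Chern class. Concretely, I would establish three milestones: (a) $c_1(\sF)\equiv 0$ in $N^1(X)_{\bbQ}$; (b) $\sF$ is semistable with respect to $A$; (c) $(c_2(\sF)-\frac{r-1}{2r}c_1(\sF)^2)\cdot A^{n-2}=0$. Under (a)--(c), Theorem~\ref{thm:num-proj-flat-criterion-Nakayama} yields that $\sF$ is locally free and numerically projectively flat, and since $c_1(\sF)\equiv 0$ the normalized tautological class $\Lambda_{\sF}$ coincides with $\zeta_{\sF}$; hence $\sF$ itself is nef with trivial first Chern class, i.e.\ numerically flat.

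For (a), almost nefness of $\sF$ makes $c_1(\sF)$ pseudoeffective (by the remark in Section~\ref{section:Q-twisted} combining \cite{DemaillyPeternellSchneider2001} and \cite{BoucksomDemaillyPuaunPeternell2013}). A pseudoeffective class killed by $A^{n-1}$ must vanish numerically: slicing by a general complete intersection surface $S=H_1\cap\cdots\cap H_{n-2}$ with $H_i\in|mA|$ reduces to dimension two, and the Zariski decomposition $D|_S=P+N$ forces $N=0$ (effective with $N\cdot A|_S=0$ and $A|_S$ ample), then $P$ is nef with $P\cdot A|_S=0$, so $P^2=0$ and $P\equiv 0$ by Hodge index. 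For (b), take $m\gg 0$ and a general complete intersection curve $C=H_1\cap\cdots\cap H_{n-1}$ with $H_i\in|mA|$. Such a $C$ is smooth, disjoint from $\mathrm{Sing}(\sF)$ (of codimension $\geq 3$ in $X$ because $\sF$ is reflexive on a smooth variety), and is not contained in any component of $\bbS(\sF)$, so $\sF|_C$ is a nef vector bundle of degree $c_1(\sF)\cdot m^{n-1}A^{n-1}=0$, hence numerically flat (both $\sF|_C$ and its dual are nef). Any saturated subsheaf $\sG\subset\sF$ restricts to a subsheaf of a numerically flat bundle on $C$, so $\deg(\sG|_C)\leq 0$, and thus $\mu_A(\sG)\leq 0=\mu_A(\sF)$, giving semistability.

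The main obstacle is (c). By (a) the discriminant reduces to $2r\,c_2(\sF)\cdot A^{n-2}$, so I must show $c_2(\sF)\cdot A^{n-2}=0$. The Bando--Siu theorem equips $\sF$ with an admissible Hermite--Einstein metric on $X\setminus\mathrm{Sing}(\sF)$ (possible because $\sF$ is semistable with $\mu_A=0$), whose Chern curvature $R$ satisfies the Einstein condition with mean zero and $\operatorname{tr}(R)=0$ by (a). For a general complete intersection curve $C$ as in (b), the numerical flatness of $\sF|_C$ together with the Narasimhan--Seshadri theorem identifies $\sF|_C$ as a polystable unitary flat bundle, so that the restricted Hermitian metric, being an admissible Hermite--Einstein metric on a polystable bundle of vanishing slope, must have curvature $R|_C=0$. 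As the tangent directions of such curves through any fixed point $x\in X\setminus\mathrm{Sing}(\sF)$ span $T_{X,x}$ (by the freedom in choosing the hyperplanes $H_i$ through $x$), one gets $R\equiv 0$ on $X\setminus\mathrm{Sing}(\sF)$; Chern--Weil then yields $c_2(\sF)\cdot A^{n-2}=0$, and simultaneously the Bando--Siu extension result promotes the flat metric across the singular locus, securing local freeness independently. The delicate technical point I expect to wrestle with is the compatibility of the admissible Hermite--Einstein metric with restriction to curves: showing that the restricted metric is itself the (essentially unique) flat unitary metric, so that $R|_C=0$ follows. An alternative route that sidesteps this—combining the Bogomolov--L\"ubke inequality $c_2\cdot A^{n-2}\geq 0$ with Fulton--Lazarsfeld Schur positivity $(c_1^2-c_2)\cdot A^{n-2}\geq 0$ applied after establishing nefness of $\sF|_S$ for a general surface $S$—delivers the same conclusion but requires a covering-family-of-curves argument to pass from ``nef on generic members of $|mA_S|$'' to genuine nefness of $\sF|_S$.
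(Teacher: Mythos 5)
The paper's proof of this statement is a one-line reduction: it cites H\"oring--Peternell's result (which is stated for a normal $\mathbb{Q}$-factorial variety and produces a quasi-\'etale cover $\gamma\colon\widetilde X\to X$ on which the reflexive pullback becomes a numerically flat bundle) and observes that, since $X$ is smooth, a cover \'etale in codimension one is actually \'etale, so the conclusion descends to $X$. Your proposal instead tries to prove the H\"oring--Peternell statement from scratch via Nakayama's characterization (Theorem~\ref{thm:num-proj-flat-criterion-Nakayama}); that is a genuinely different and more ambitious route, and if carried through it would indeed bypass the citation.

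Steps (a) and (b) of your sketch are sound (e.g.\ the surface-slice Zariski/Hodge-index argument for (a) is a valid proof that pseudoeffective plus $D\cdot A^{n-1}=0$ forces $D\equiv0$). The genuine gap is in (c), and it is exactly at the two places you flag, but neither is a technicality that can be ``wrestled with.'' Your first route is incorrect as stated: the restriction of an admissible Hermite--Einstein metric on $X$ (Einstein for a full K\"ahler form $\omega$) to a complete intersection curve $C$ is \emph{not} Hermite--Einstein for $\omega|_C$. The equation $\Lambda_\omega R=0$ is an $n$-fold trace condition on $R\in A^{1,1}(X,\mathrm{End}\,\sF)$ and gives no pointwise control on the single component $i^*_C R$; so polystability of $\sF|_C$ and Narasimhan--Seshadri do not let you conclude $R|_C=0$. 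Your second route, Bogomolov--L\"ubke plus Fulton--Lazarsfeld Schur positivity on a general surface $S$, is pinned on \emph{genuine} nefness of $\sF|_S$, and almost nefness of $\sF$ does not give this: the negative locus $\bbS(\sF)$ is a countable union of subvarieties that can contain curves lying inside every surface $S$ of your family, so ``nef on general complete intersection curves of $|mA|_S|$'' is strictly weaker than nefness on $S$. Promoting the former to the latter is precisely the substantive content of H\"oring--Peternell's Theorem~1.8 (proved there using the Harder--Narasimhan filtration and quasi-\'etale covers, not by bare Chern-class positivity). Finally, your closing remark that Bando--Siu ``secures local freeness independently'' is circular: extending the admissible metric as a flat metric across $\mathrm{Sing}(\sF)$ already presupposes the discriminant vanishing you are trying to establish in (c).
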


\begin{proof}
    According to \cite[Theorem 1.8]{HoeringPeternell2019}, there exists a finite cover $\gamma\colon \widetilde{X}\rightarrow X$, \'etale in codimension one, such that the reflexive pullback $\gamma^{[*]}\sF$ is locally free and numerically flat. Since $X$ is smooth, $\gamma$ is actually \'etale and $\gamma^*\sF=\gamma^{[*]}\sF$. Since $\gamma$ is \'etale, this implies that $\sF$ itself is locally free and numerically flat.
\end{proof}

\noindent Before giving the  proof of Theorem \ref{Num-Projectily-flatness-criterion},  we need the following elementary lemma.

\begin{lemma}\label{Chern-class-symmetric-power}
    Let $X$ be a projective manifold of dimension $n\geqslant 2$, and let $\sF$ be a reflexive   sheaf of rank $r\geqslant 2$ on $X$. For any positive integer $m\geqslant 2$, we have
    \begin{equation}\label{Chern class}
    c_2(S^{[m]}\sF)=Ac_1^2(\sF)+Bc_2(\sF),
    \end{equation}
    where $A$ and $B$ are non-zero rational numbers  depending only on $m$ and $r$, and satisfy
    \begin{equation}\label{AB}
    A+\frac{r-1}{2r}B-\frac{(R-1)Rm^2}{2r^2}=0,
    \end{equation}
    where $R=\binom{r+m-1}{r}$ is the rank of $S^{[m]}\sF$.
\end{lemma}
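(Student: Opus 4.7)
My plan is to reduce to the locally free case, derive (\ref{Chern class}) from the splitting principle, and then pin down the relation (\ref{AB}) by testing on a simple family of split examples.

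First, since $X$ is smooth and $\sF$ is reflexive, the non-locally-free locus $Z=\Sing(\sF)$ has codimension at least three, and on the open complement $U=X\setminus Z$ one has $S^{[m]}\sF|_U=S^m(\sF|_U)$. By semi-purity the restriction $H^4(X,\bbQ)\to H^4(U,\bbQ)$ is injective, so all the second Chern classes involved are determined by their restrictions to $U$. Applying the splitting principle there with formal Chern roots $\alpha_1,\dots,\alpha_r$ of $\sF|_U$, the Chern roots of $S^m(\sF|_U)$ are the linear forms $\sum_i k_i\alpha_i$ indexed by $I_m=\{(k_1,\dots,k_r): k_i\geq 0,\ \sum k_i=m\}$, a set of cardinality $R$. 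Hence $c_2(S^m\sF)$ is a symmetric polynomial of degree two in the $\alpha_i$ and is therefore uniquely of the form $A\,c_1(\sF)^2+B\,c_2(\sF)$ with $A,B\in\bbQ$ depending only on $r$ and $m$, which proves (\ref{Chern class}).

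For the identity (\ref{AB}) I would substitute into (\ref{Chern class}) the split example $\sF=\sO_Y(D)^{\oplus r}$, where $Y$ is any smooth projective surface and $D$ a divisor with $D^2\neq 0$. Since $S^m\sF=\sO_Y(mD)^{\oplus R}$, one reads off $c_2(S^m\sF)=\binom{R}{2}m^2[D]^2$, $c_1(\sF)^2=r^2[D]^2$ and $c_2(\sF)=\binom{r}{2}[D]^2$; the resulting scalar equation, divided by $r^2[D]^2$, is exactly (\ref{AB}).

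For the non-vanishing of $A$ and $B$, set $S_1=\sum_{(k)\in I_m}k_1^2$ and $S_2=\sum_{(k)\in I_m}k_1k_2$. A brief expansion of $c_2(S^m\sF)$ in the $\alpha_i$ yields
\[
A=\tfrac{1}{2}\bigl(R^2m^2/r^2-S_1\bigr)\quad\text{and}\quad B=S_1-S_2=\tfrac{1}{2}\sum_{(k)\in I_m}(k_1-k_2)^2.
\]
The multi-index $(m,0,\dots,0)\in I_m$ contributes $m^2$ to the sum for $B$, so $B>0$. Summing $(\sum_i k_i)^2=m^2$ over $I_m$ gives the elementary identity $rS_1+r(r-1)S_2=Rm^2$, which rewrites $A$ as $\frac{Rm^2(R-r)}{2r^2}+\frac{(r-1)S_2}{2}$; this is positive because $R>r$ for $m\geq 2$ and because $(1,1,0,\dots,0)\in I_m$ forces $S_2>0$. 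The only mild technicality, rather than a real obstacle, is the codimension-three bookkeeping in the first step that legitimises the use of the splitting principle on $U$; everything afterwards is routine symmetric-function combinatorics.
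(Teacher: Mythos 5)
Your proof is correct and follows essentially the same route as the paper's: the splitting principle gives the universal shape of $c_2(S^{[m]}\sF)$ (the paper treats the reduction to the locally free locus as clear, where you invoke semi-purity), the relation \eqref{AB} is obtained by evaluating on a direct sum of $r$ copies of a line bundle, and the non-vanishing of $A$ and $B$ is a direct computation — your closed formulas are in fact more explicit than the paper's, which only tests further split examples. One cosmetic point: for $m>2$ the multi-index $(1,1,0,\dots,0)$ does not lie in $I_m$, so to see $S_2>0$ you should use $(m-1,1,0,\dots,0)$ instead; this is harmless anyway, since $S_2\geqslant 0$ already yields $A\geqslant Rm^2(R-r)/(2r^2)>0$.
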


\begin{proof}
    The existence of the expression \eqref{Chern class} is clear and the splitting principle asserts that $A$ and $B$ depend only on $m$ and $r$. To prove \eqref{AB}, it suffices to prove it for some special $\sF$ by the universal property of $A$ and $B$.

    Firstly we choose an ample line bundle $\sL$ on $X$ and let $\sF=\sL^{\oplus r}$. Then   $S^m\sF\otimes\sL^{*\otimes m}$ is a trivial  vector bundle. In particular, we have
    \[
    c_2(S^m\sF\otimes \sL^{*\otimes m})=0.
    \]
    By the formula of second Chern class of tensor products, we obtain that
    \begin{align*}
    c_2(S^m\sF)+(R-1)c_1(S^m\sF)\cdot (-m)c_1(\sL)+\binom{R}{2}m^2c_1^2(\sL)=0.
    \end{align*}
    On the other hand, since $\sF$ is numerically projectively flat,  we  have
    \[\left(c_2(\sF)-\frac{r-1}{2r}c_1^2(\sF)\right)\cdot A^{n-2}=0\]
    for any ample divisor $A$.  For $c_1(\sF)=rc_1(\sL)$ and $c_1(S^m\sF)=\frac{Rm}{r}c_1(\sF)$, one has
    \[\left(A+\frac{r-1}{2r}B+\frac{(R-1)Rm^2}{2r^2}\right)c_1^2(\sF)\cdot A^{n-2}=0.\]
    Since $c_1(\sF)$ is ample, we must have $c_1^2(\sF)\cdot A^{n-2}\not=0$. This shows that \eqref{AB} holds. To see that $A$ and $B$ are non-zero, we may consider the vector bundles
    \begin{center}
        $\sF'\cong  \sO_X^{\oplus (r-1)}\oplus \sL$ and $\sF''\cong
        \sO_{X}^{\oplus (r-2)}\otimes \sL^*\oplus\sL$.
    \end{center}
    Then $c_1(\sF')\not=0$, $c_2(\sF')\not=0$. A straightforward computation shows that   $c_2(S^m(\sF'))\not=0$ for $m\geqslant 2$. In particular, $A$ is non-zero. Similarly,  one can show $B$ is non-zero.
\end{proof}

\begin{proof}[Proof of Theorem \ref{Num-Projectily-flatness-criterion}]

    Suppose $C$ is a curve cut out by general elements in $\vert kA\vert$ for $k\gg 0$. Then $C$ is disjoint from   $\mathrm{Sing}(\sF)$.
    In particular, $\sF$ is locally free along $C$ and $\sF\hspace{-0.8ex}<\hspace{-0.8ex}\delta\hspace{-0.8ex}>\vert_C$ is a $\bbQ$-twisted nef vector bundle.
    Moreover,  we have $c_1(\sF\hspace{-0.8ex}<\hspace{-0.8ex}\delta\hspace{-0.8ex}>\vert_C)=0$ as $(c_1(\sF)+r\delta)\cdot C=0$. This implies that $\sF\vert_C$ is semistable. By Mehta-Ramanathan  theorem, $\sF$  is $A$-semistable.

    Let $m$ be a positive integer such that $m\delta$ is Cartier. Let $\sL$ be a line bundle such that $c_1(\sL)=m\delta$.
    Since $\sF\hspace{-0.8ex}<\hspace{-0.8ex}\delta\hspace{-0.8ex}>$ is almost nef, so is   $\sE\coloneqq S^{[m]}\sF\otimes \sL$ by Proposition \ref{Almost-nef-properties}. On the other hand, we know
    \[c_1(\sE)\cdot A^{n-1}=\left(\frac{Rm}{r}c_1(\sF)+Rc_1(\sL)\right)\cdot A^{n-1}=\frac{Rm}{r}(c_1(\sF)+r\delta)\cdot A^{n-1}=0,\]
    where $R$ is the rank of $S^{[m]}\sF$. Then we have $c_1(\sE)=0$ since $\det(\sE)$ is almost nef (see \cite[Lemma 6.5]{Peternell1994}).
    In particular, $\delta=-\mu(\sF)$.

     Moreover, by Theorem \ref{HP-Num-flatness}, $\sE$ is locally free and numerically flat. In particular, we have $c_2(\sE)\cdot H^{n-2}=0$. As $c_1(\sF)=-r\delta$, we have
    \begin{align*}
    c_2(\sE) & =c_2(S^{[m]}\sF)+(R-1)c_1(S^{[m]}\sF)\cdot c_1(\sL)+\frac{R(R-1)}{2}c_1^2(\sL)\\
    & = c_2(S^{[m]}\sF)-\frac{(R-1)Rm^2}{r^2}c^2_1(\sF)+\frac{(R-1)Rm^2}{2r^2}c_1^2(\sF)\\
    & = c_2(S^{[m]}\sF)- \frac{(R-1)Rm^2}{2r^2}c_1^2(\sF).
    \end{align*}
    Then  Lemma \ref{Chern-class-symmetric-power} yields
    \[\left(c_2(\sF)-\frac{r-1}{2r}c_1^2(\sF)\right)\cdot A^{n-2}=0.\]
    Thanks to Theorem \ref{thm:num-proj-flat-criterion-Nakayama}, we conclude that $\sF$ is a numerically projectively flat vector bundle.
\end{proof}

\subsection{Projectively flat connections}

In this subsection,  we prove Theorem \ref{thm:num-proj-flat=proj-flat} and it can be deduced from the following theorem.

\begin{thm}\label{thm:Kahler:proj-flat-connection}
Let $(X,\omega)$ be a compact K\"ahler manifold and $\sE$  a holomorphic vector bundle on $X$. Assume that there exists a filtration of  subbundles
\[\{0\}=\sE_0\subsetneq \sE_1\subsetneq \dots\subsetneq \sE_{p-1}\subsetneq \sE_p=\sE\]
such that $\sE_i/\sE_{i+1}$ are  projectively Hermitian  flat and that the averaged first Chern classes $\mu(\sE_i/\sE_{i-1})$ are all equal to $\mu(\sE)$.
Then $\sE$ is isomorphic to a  projectively flat holomorphic vector bundle $\sF$, i.e.  there exists a projectively flat connection $\nabla$
    on $\sF$ such that  $\nabla^{0,1}=\bar\partial_{\sF}$ where
    $\nabla^{0,1}$ is the $(0,1)$-part of $\nabla$.
\end{thm}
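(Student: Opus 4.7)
The approach is to adapt the argument of \cite[Section~3]{Simpson1992} (see also \cite{Deng2018}) for numerically flat vector bundles to the projectively flat setting, by induction on the length $p$ of the filtration.

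For $p=1$ the bundle $\sE=V_1$ is projectively Hermitian flat and the Chern connection of its defining metric is the sought-after projectively flat connection with $\nabla^{0,1}=\bar\partial_\sE$. For the inductive step, suppose $\sE_{p-1}$ already carries a projectively flat holomorphic connection $\nabla_1$, and let $\nabla_2$ be the Chern connection of the projectively Hermitian flat metric on $V_p$. Their curvatures take the form $\alpha_i\cdot\id$ for $(1,1)$-forms $\alpha_i$ both representing $\mu(\sE)\in H^{1,1}(X,\bbR)$. The global $\partial\bar\partial$-lemma on the K\"ahler manifold $X$ permits the addition of a scalar $(1,0)$-form to one of $\nabla_1$, $\nabla_2$ so as to align both curvatures to a common form $\alpha\cdot\id$; a parallel application to the projectively Hermitian flat metrics on the graded pieces $V_k$ makes the induced metric on $\sHom{V_p}{\sE_{p-1}}$ an iterated extension by \emph{Hermitian flat} bundles, i.e., numerically flat.

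Fix a $C^\infty$ splitting $\sE\cong\sE_{p-1}\oplus V_p$, under which
\[\bar\partial_\sE=\begin{pmatrix}\bar\partial_{\sE_{p-1}} & \beta\\ 0 & \bar\partial_{V_p}\end{pmatrix}\]
for a $(0,1)$-form $\beta$ valued in $\sHom{V_p}{\sE_{p-1}}$ representing the extension class. Look for a connection of block form
\[\nabla=\begin{pmatrix}\nabla_1 & \theta+\beta\\ 0 & \nabla_2\end{pmatrix},\]
where $\theta$ is a $(1,0)$-form valued in $\sHom{V_p}{\sE_{p-1}}$, so that $\nabla^{0,1}=\bar\partial_\sE$ holds automatically. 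A direct computation shows the diagonal entries of $F_\nabla$ equal $\alpha\cdot\id$; projective flatness thus reduces to the vanishing of the off-diagonal block, which, after bidegree decomposition, amounts to solving
\[\bar\partial_{\sHom{V_p}{\sE_{p-1}}}\theta=-\partial^\nabla\beta \quad\text{and}\quad \partial^\nabla\theta=0.\]
Replacing $\beta$ by its harmonic representative with respect to the numerically flat metric on $\sHom{V_p}{\sE_{p-1}}$, one invokes the Hodge-theoretic arguments of \cite[Section~3]{Simpson1992} to show that $\partial^\nabla\beta$ is $\bar\partial$-exact, yielding the required $\theta$.

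I anticipate the main obstacle to be the verification that $\partial^\nabla\beta$ is $\bar\partial$-exact. This reduces to the vanishing of a specific Dolbeault class valued in $\sHom{V_p}{\sE_{p-1}}$, which rests on two inputs: the numerical flatness of $\sHom{V_p}{\sE_{p-1}}$, giving access to Simpson's Hodge theory, and the alignment of the graded curvatures to a common $(1,1)$-form $\alpha$ via the $\partial\bar\partial$-lemma. The latter is the essential new ingredient compared with Simpson's original argument for the flat case, where $\alpha=0$ throughout.
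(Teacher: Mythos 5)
Your reduction is set up correctly: writing $\nabla$ in upper-triangular form with off-diagonal entry $\beta+\theta$, the diagonal curvature blocks become $\alpha\cdot\mathrm{id}$ once the scalar forms are aligned, and projective flatness is equivalent to the pair of equations $\bar\partial_H\theta=-D^{1,0}\beta$ and $D^{1,0}\theta=0$ in $H=\sHom{V_p}{\sE_{p-1}}$, where $D$ is the induced flat connection --- equivalently, to the assertion that the Dolbeault class $[\beta]\in H^{0,1}_{\bp}(X,H)$ lifts to the first de Rham cohomology of the flat bundle $(H,D)$. This is exactly where your argument has a genuine gap. When $H$ is Hermitian flat the lift exists because the Bochner identity gives $\Delta'=\Delta''$, so the $\bp$-harmonic representative of $[\beta]$ is $D$-parallel and one may take $\theta=0$. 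But here $H$ is only an iterated extension of the Hermitian flat bundles $\sHom{V_p}{V_k}$; unless these extensions split, there is no metric on $H$ whose Chern connection is flat, so ``the harmonic representative with respect to the numerically flat metric'' is not a well-defined object and the identity $\Delta'=\Delta''$ is unavailable. Nor can one simply invoke Simpson's Hodge theory, which requires a harmonic metric and hence applies to semisimple flat bundles, whereas $(H,D)$ is in general a non-split extension of unitary flat bundles.

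The paper's proof is organized precisely to get around this point. It keeps the full block-triangular form relative to all graded pieces $\sG_i=\sE_i/\sE_{i-1}$, placing the off-diagonal entries in the genuinely Hermitian flat blocks $\sG_j^*\otimes\sG_i$, where harmonic representatives are fully parallel. After choosing these blockwise, a residual curvature term $\xi=(\nabla^{\sG_p^*\otimes\sG_1})^{1,0}(\delta)$ survives in the corner block; a direct computation using the vanishing of the already-constructed curvature entries shows that $\xi$ is $\bp$-closed, and the $\partial\bp$-lemma on the Hermitian flat bundle $\sG_p^*\otimes\sG_1$ then produces the correction term (Lemma~\ref{lemma:connection-construction}, building on Lemma~\ref{bb} for $p=2$). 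Your two-step induction compresses all of this into the single unproved claim that $\partial^{\nabla}\beta$ is $\bp$-exact; that claim is the actual content of the theorem, and as written your proposal does not establish it. (A secondary, fixable imprecision: once you replace $\beta$ by another representative of its class, $\nabla^{0,1}$ no longer equals $\bar\partial_\sE$ on the nose but only up to holomorphic isomorphism --- which is all the theorem asserts, and is anyway equivalent, since a compatible connection can be transported through any holomorphic isomorphism.)
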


\noindent  It is well-known that  an affine connection $\nabla$ on a $C^\infty$ complex vector bundle $\sQ$ defines a holomorphic vector bundle structure on $\sQ$ if $(\nabla^{0,1})^2=0$ where $\nabla^{0,1}$ is the $(0,1)$ part of $\nabla$ (see e.g. \cite[Proposition 1.3.7]{Kobayashi2014}).  If in addition  $(\sQ, \nabla)$ is projectively flat,  then the projective bundle $\mathbb{P}(\sQ)$ is induced by a representation of the fundamental group $\pi_1(X)$ in $\mathrm{PGL}_{r}(\mathbb{C})$, where $r$ is the rank of $\sG$. The strategy of the proof of Theorem \ref{thm:Kahler:proj-flat-connection} is to construct a new  holomorphic structure on $\sQ$, where $\sQ$ is the underlying $C^\infty$ complex vector bundle of the holomorphic vector bundle $\sE$, and this new holomorphic structure is isomorphic to $\sE$.   We recall some elementary results.

\begin{lemma} \label{lemma:unique-proj-flat-connection}
    Let $(X,\omega)$ a compact K\"ahler manifold and let $\sE$ be  a
    $[\omega]$-stable vector bundle of rank $r$ on $X$. If $\sE$ satisfies
    \begin{equation}\label{BG}
    \int_X \left((r-1) c_1^2(\sE)-2r c_2(\sE)\right)
    \omega^{n-1}=0,
    \end{equation}
    then there exists a smooth Hermitian
    metric $h^\sE$ on $\sE$ with Chern connection $\nabla^\sE$ such that the Chern curvature satisfies
    \begin{equation*}
    R^\sE= (\nabla^\sE)^2 = \gamma \cdot h^\sE,
    \end{equation*}
    where $\gamma$ is the unique harmonic representative of the average first Chern class
    $\frac{1}{r}c_1(\sE)\in H_{\bp}^{1,1}(X,\mathbb{C})$ with respect to the K\"ahler metric
    $\omega$, i.e. $\bp\gamma=\bp^*\gamma=0$.
\end{lemma}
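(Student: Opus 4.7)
The plan is to invoke the Donaldson-Uhlenbeck-Yau theorem together with L\"ubke's pointwise Bogomolov-Gieseker inequality, and then check harmonicity via the K\"ahler identities.

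Since $\sE$ is $[\omega]$-stable of rank $r$, the Donaldson-Uhlenbeck-Yau theorem produces a smooth Hermitian metric $h^\sE$ on $\sE$ whose Chern curvature satisfies the Hermitian-Einstein equation
\[
\sqrt{-1}\,\Lambda_\omega R^\sE \;=\; \lambda\cdot \id_{\sE},
\]
for some real constant $\lambda$ determined by $\mu(\sE)$ and $[\omega]$. L\"ubke's pointwise inequality asserts that for any such Hermitian-Einstein metric, the $2n$-form $\bigl(2r\, c_2(\sE) - (r-1)\, c_1^2(\sE)\bigr)\wedge \omega^{n-2}$ is everywhere non-negative, and vanishes at a point precisely when the trace-free part of $R^\sE$ vanishes there. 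The equality \eqref{BG} then forces the trace-free part of $R^\sE$ to vanish identically, so there is a smooth $(1,1)$-form $\gamma$ with $R^\sE = \gamma\otimes \id_{\sE}$; equivalently, $R^\sE = \gamma\cdot h^\sE$ under the standard identification of the curvature tensor with a section of $\Lambda^{1,1}\otimes \sE^*\otimes \overline{\sE^*}$.

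It remains to verify that $\gamma$ is harmonic. Taking traces of $R^\sE = \gamma\otimes\id_\sE$ gives $\operatorname{tr} R^\sE = r\gamma$, so by Chern-Weil the class $[\tfrac{\sqrt{-1}}{2\pi}\gamma]$ is precisely the averaged first Chern class $\tfrac{1}{r}c_1(\sE)\in H^{1,1}_{\bp}(X,\mathbb{C})$. The Bianchi identity $dR^\sE = 0$ gives $d\gamma = 0$, and since $\gamma$ has type $(1,1)$ this forces $\partial\gamma = \bp\gamma = 0$. Tracing the Hermitian-Einstein equation yields $\Lambda_\omega \gamma = -\sqrt{-1}\,\lambda/r$, a constant. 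The K\"ahler identity $\bp^{\,*} = -\sqrt{-1}\,[\Lambda_\omega,\partial]$ then produces
\[
\bp^{\,*}\gamma \;=\; -\sqrt{-1}\bigl(\Lambda_\omega\,\partial\gamma \;-\; \partial\,\Lambda_\omega\gamma\bigr) \;=\; 0,
\]
and an analogous calculation gives $\partial^{\,*}\gamma = 0$. Hence $\gamma$ is harmonic, and by uniqueness of the harmonic representative in its Dolbeault class it coincides with the form named in the statement.

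The substantive steps are the appeals to Donaldson-Uhlenbeck-Yau and to L\"ubke's pointwise inequality; once the projective Hermitian flatness of $(h^\sE, R^\sE)$ is in hand, the harmonicity of $\gamma$ is a routine K\"ahler-geometric verification. I expect no serious obstacle beyond correctly citing these classical results, since everything we need (existence of the HE metric, the pointwise characterization of the equality case, and the K\"ahler identities) is standard in the literature.
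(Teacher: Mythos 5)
Your proposal is correct and follows essentially the same route as the paper: Donaldson--Uhlenbeck--Yau gives the Hermitian--Einstein metric, the equality case of the (L\"ubke/Bogomolov--Gieseker) inequality — which the paper cites as \cite[Theorem 4.4.7]{Kobayashi2014} — forces projective flatness, and harmonicity of $\gamma$ follows from the Bianchi identity plus the K\"ahler identity $\bp^{*}=-\sqrt{-1}[\Lambda_\omega,\partial]$ exactly as you compute. (Incidentally, your exponent $\omega^{n-2}$ in the Chern--Weil integrand is the dimensionally correct one.)
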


\begin{proof}
    Since $\sE$ is $[\omega]$-stable, by the
    Donaldson-Uhlenbeck-Yau theorem, there exists a smooth
    Hermitian-Einstein metric $h^\sE$ on $\sE$. That means $\Lambda_\omega
    R^\sE=c\cdot h^\sE$ for some constant $c$. By \cite[Theorem 4.4.7]{Kobayashi2014}, the equality (\ref{BG}) implies that
    $(\sE,h)$ is  projectively flat, that is $R^\sE=\frac{1}{r} \eta \cdot h^\sE$
    for some  smooth closed $(1,1)$-form $\eta$. By taking the trace of $R^\sE$ with
    respect to $h^\sE$, we have $$R^{\det \sE}=-\sqrt{-1} \partial \bp\log(\det
    h^\sE)=\mathrm{tr}_{h^\sE} R^\sE=\eta.$$ This shows that class of  $\eta$ is equal to the first Chern class of $\sE$.
    Finally, since $h^\sE$ is Hermitian-Einstein, we also have
    $\Lambda_\omega \eta=rc$.
    Hence  $\bp^*\eta=-\sqrt{-1}[\Lambda_\omega,\partial]\eta=0$. This implies that $\eta$ is  harmonic. The lemma then follows by setting $\gamma=\frac{1}{r}\eta.$
\end{proof}

\begin{cor}\label{DUY}
    Let $(X,\omega)$ a compact K\"ahler manifold. Suppose $\sE$ and $\sF$
    are $[\omega]$-stable vector bundles with
    \begin{equation}\label{average}
    \frac{c_1(\sE)}{\mathrm{rank}(\sE)}=\frac{c_1(\sF)}{\mathrm{rank}(\sF)}\in
    H_{\bp}^{1,1}(X,\mathbb{C})\cap H^2(X,\mathbb{Q}).
    \end{equation}
    If both $\sE$
    and $\sF$ satisfy the equality (\ref{BG}), then there exist Hermitian-Einstein
    metrics $h^\sE$ and $h^\sF$ on $\sE$ and $\sF$ respectively, such that
    \begin{equation*}
    R^\sE=\gamma \cdot h^\sE,\ \ \ R^\sF=\gamma \cdot h^\sF\
    \end{equation*}  where $\gamma$
    is the unique harmonic representative of the class of
    (\ref{average}).  In particular, the metric on $\sE^*\otimes \sF$ induced by $h^\sE$ and $h^\sF$ is Hermitian flat.
\end{cor}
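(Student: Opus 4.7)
The plan is to apply Lemma \ref{lemma:unique-proj-flat-connection} separately to $\sE$ and to $\sF$, then exploit the hypothesis (\ref{average}) to see that the two curvature forms involve the same scalar $(1,1)$-form, and finally conclude that the induced curvature on $\sE^{*}\otimes \sF$ vanishes.

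First I would invoke Lemma \ref{lemma:unique-proj-flat-connection} for $\sE$: since $\sE$ is $[\omega]$-stable and the Bogomolov-Gieseker equality (\ref{BG}) holds, there exists a Hermitian-Einstein metric $h^\sE$ whose Chern curvature satisfies $R^\sE=\gamma_\sE\cdot h^\sE$, where $\gamma_\sE$ is the unique harmonic representative (with respect to $\omega$) of the class $\mu(\sE)=c_1(\sE)/\mathrm{rank}(\sE)$ in $H^{1,1}_{\bp}(X,\mathbb{C})$. Applying the same lemma to $\sF$ produces a Hermitian-Einstein metric $h^\sF$ with $R^\sF=\gamma_\sF\cdot h^\sF$, where $\gamma_\sF$ is the harmonic representative of $\mu(\sF)$. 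The hypothesis (\ref{average}) asserts $\mu(\sE)=\mu(\sF)$ as Dolbeault cohomology classes, and by the uniqueness of the harmonic representative in each such class we obtain $\gamma_\sE=\gamma_\sF$. Denote the common form by $\gamma$. This already yields the first assertion.

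For the last statement, I would use the standard formula for the Chern curvature on duals and tensor products: if $h^{\sE^{*}\otimes \sF}$ is the metric induced by $h^\sE$ and $h^\sF$, then its curvature endomorphism is
\[
R^{\sE^{*}\otimes \sF}=-(R^\sE)^{t}\otimes \id_{\sF}+\id_{\sE^{*}}\otimes R^\sF.
\]
Because $R^\sE$ acts as $\gamma\cdot \id_{\sE}$ and $R^\sF$ as $\gamma\cdot \id_{\sF}$ (both being scalar $(1,1)$-form multiples of the identity endomorphism), the two contributions cancel and $R^{\sE^{*}\otimes \sF}=0$, so the induced metric is Hermitian flat. Since each ingredient is supplied directly by Lemma \ref{lemma:unique-proj-flat-connection} together with Hodge theory on $(X,\omega)$, no serious obstacle is expected; the only point requiring care is the sign and normalization convention in the dual/tensor curvature formula, which I would state explicitly to make the cancellation transparent.
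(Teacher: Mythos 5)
Your proposal is correct and is exactly the argument the paper intends: the statement is given as an immediate corollary of Lemma \ref{lemma:unique-proj-flat-connection}, obtained by applying that lemma to $\sE$ and $\sF$ separately, noting that the uniqueness of the harmonic representative forces the two scalar forms to coincide, and then cancelling the contributions in the curvature of $\sE^*\otimes\sF$. No gaps.
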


The following lemma reveals the relationship between  isomorphism classes of holomorphic structures and Dolbeault cohomology groups.

\begin{lemma}
    \label{lemma:coh-class-iso-holomorphic-class}
    Let $\sF$ and $\sG$ be  two holomorphic  vector bundles on a complex manifold $X$.  We denote by $\sQ$ the underlying $C^\infty$ vector bundle of $\sG \oplus \sF.$
    %Assume that $\nabla^\sF$ and $\nabla^\sG$ are connections on $\sF$ and $\sG$ respectively such that their $(0,1)$-part coincide with the Dolbeault operators $\bp$.
    Then there is a bijection between  the cohomology group $H^{0,1}_{\bp}(X, \sF^*\otimes \sG)$ and the   set  of isomorphism classes of holomorphic vector bundle structure $\sE$  on $\sQ$ which realizes $\sE$  as an extension  $0\to \sG \to \sE \to \sF \to 0$ of holomorphic vector bundles.
\end{lemma}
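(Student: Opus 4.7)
The plan is to make the bijection explicit via $\bar\partial$-operators on the smooth bundle $\sQ$. Recall (Koszul--Malgrange) that a holomorphic vector bundle structure on $\sQ$ is the same datum as an operator
\[
D \colon \Omega^0(\sQ) \longrightarrow \Omega^{0,1}(\sQ)
\]
satisfying the Leibniz rule and the integrability condition $D^2 = 0$, where $D$ is extended to higher-degree forms in the standard way. First I would observe that, once the $C^\infty$ splitting $\sQ = \sG \oplus \sF$ is fixed, the requirement that a holomorphic structure $\sE$ on $\sQ$ realize an extension $0 \to \sG \to \sE \to \sF \to 0$ compatible with the given holomorphic structures on $\sG$ and $\sF$ forces the corresponding operator $D$ to have the block presentation
\[
D \;=\; \begin{pmatrix} \bar\partial_\sG & \beta \\ 0 & \bar\partial_\sF \end{pmatrix}
\]
for some smooth $\beta \in \Omega^{0,1}(X, \sHom(\sF, \sG))$; conversely, any such $\beta$ produces a Leibniz operator of this shape.

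Next I would compute $D^2$ in block form. Since $\bar\partial_\sG$ and $\bar\partial_\sF$ are integrable and the $\beta$-piece is strictly upper-triangular (so that its square vanishes), only the off-diagonal entry of $D^2$ survives, and a short computation identifies it with $\bar\partial\beta$ in the Dolbeault complex of $\sHom(\sF, \sG) \cong \sF^*\otimes \sG$. Hence the integrability condition $D^2 = 0$ is equivalent to $\beta$ being a $\bar\partial$-closed $(0,1)$-form with values in $\sF^*\otimes \sG$, and the assignment $\sE \mapsto \beta$ maps extension structures surjectively onto $Z^{0,1}_{\bp}(X, \sF^*\otimes\sG)$.

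To pass to a bijection with cohomology, I would then spell out the isomorphism relation. Any isomorphism between two extensions $\sE_1$ and $\sE_2$ is, at the $C^\infty$ level, an automorphism $\Phi$ of $\sQ$ that is the identity on $\sG$ and induces the identity on $\sF$; the block form is therefore forced to be
\[
\Phi \;=\; \begin{pmatrix} \id_\sG & \gamma \\ 0 & \id_\sF \end{pmatrix}
\]
for some $\gamma \in \Omega^0(X, \sHom(\sF,\sG))$, and holomorphicity translates into $\Phi \circ D_1 = D_2 \circ \Phi$. Unwinding this identity in block form reduces it to $\beta_2 = \beta_1 + \bar\partial\gamma$. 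Therefore two extensions are isomorphic if and only if the associated $\bar\partial$-closed forms differ by a $\bar\partial$-exact one, which establishes the desired bijection with $H^{0,1}_{\bp}(X, \sF^*\otimes \sG)$.

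There is no real obstacle here: this is the standard Dolbeault description of $\mathrm{Ext}^1$. The most delicate points are the sign bookkeeping when extending the block operator to higher-degree forms (needed to confirm that $D^2$ really picks up exactly $\bar\partial\beta$ in the off-diagonal slot and nothing else), and nailing down the precise meaning of \emph{isomorphism class of holomorphic structures realizing an extension}, which must be interpreted as compatibility with the given inclusion of $\sG$ and projection to $\sF$; once both of these are fixed, the computation is mechanical.
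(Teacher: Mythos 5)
Your proof is correct and follows essentially the same route as the paper: both identify an extension structure with an upper-triangular $(0,1)$-connection whose off-diagonal entry $\beta\in A^{0,1}(X,\sF^*\otimes\sG)$ must be $\bar\partial$-closed by integrability, and both identify isomorphism of extensions with shifting $\beta$ by $\bar\partial\alpha$ via a unipotent automorphism. Your write-up is somewhat more explicit about the block computation of $D^2$ and the form of the intertwining automorphism, but there is no substantive difference.
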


\begin{proof}
    To define such a holomorphic structure on $\sQ$, it is equivalent to give  a $(0,1)$-connection $\nabla^{0,1}$ on $\sQ$ of the form
    \begin{equation*}
    \nabla^{0,1}=\left[\begin{array}{cc} \bp_\sG&\eta\\
    0&\bp_\sF
    \end{array}\right]
    \end{equation*}
    such that $(\nabla^{0,1})^2=0$, where $\eta \in A^{0,1}(X,\sF^*\otimes \sG)$ is a smooth $(0,1)$-form.

    Now we assume that $\eta$ and $\eta'$ are two elements in $A^{0,1}(X,\sF^*\otimes \sG).$
    Then they induce  isomorphic extension structures, $\sE$ and $\sE'$,  on $\sQ$ if and only if there is some smooth form $\alpha \in A^0(X,\sF^*\otimes \sG)$ such that $\eta'=\eta+\bp \alpha$.
    Moreover, the corresponding isomorphism from $\sE$ to $\sE'$, as a smooth automorphism of $\sQ$,  is expressed as
    \begin{equation*}
    \left[\begin{array}{cc} 1_\sG& \alpha\\
    0&1_\sF
    \end{array}\right].
    \end{equation*}
    This finishes the proof.
\end{proof}

In the next lemma, we  prove Theorem \ref{thm:Kahler:proj-flat-connection} in a special case when $\sE$ is an extension of two projectively Hermitian flat vector bundles.

\begin{lemma}
    \label{bb} Let $(X,\omega)$ be a compact K\"ahler manifold
    and let
    $$
    0\to \sG_1\to \sE\to \sG_2\to 0
    $$
    be an exact sequence of holomorphic vector bundles on $X$.
    Suppose that $\sE$ is  $[\omega]$-semistable and satisfies the equality
    (\ref{BG}), and that $\sG_1$ and $\sG_2$ are  $[\omega]$-stable with the same average first Chern class.
    Let $\sQ$ be the underlying $C^\infty$ vector bundle of $\sG_1\oplus \sG_2$.
    Then there exists  a projectively flat
    connection $\nabla^\sF$ on $\sQ$ such that it defines a holomorphic structure $\sF$ isomorphic to $\sE$.
\end{lemma}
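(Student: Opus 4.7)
The plan proceeds in three stages.

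First I would derive \eqref{BG} for both $\sG_1$ and $\sG_2$. Setting $\Delta(\sF):=(\rk(\sF)-1)c_1^2(\sF)-2\rk(\sF)c_2(\sF)$, a Chern-class computation from the short exact sequence, combined with the hypothesis $\mu(\sG_1)=\mu(\sG_2)=\mu(\sE)$, yields
\[
\Delta(\sE)=\tfrac{r}{r_1}\Delta(\sG_1)+\tfrac{r}{r_2}\Delta(\sG_2),\qquad r=\rk(\sE),\ r_i=\rk(\sG_i).
\]
Pairing with $\omega^{n-1}$, the left side vanishes by the hypothesis on $\sE$, while each summand on the right is non-positive by the Bogomolov inequality for the $[\omega]$-stable bundles $\sG_i$; hence both $\sG_1$ and $\sG_2$ satisfy \eqref{BG} as well. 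Corollary~\ref{DUY} then yields Hermitian-Einstein metrics $h_1,h_2$ with Chern curvatures $R^{\sG_i}=\gamma\cdot h_i$ for a single harmonic $(1,1)$-form $\gamma$, whence the induced metric on $\sH:=\sG_2^*\otimes\sG_1$ has vanishing Chern curvature and is Hermitian flat.

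Next, let $\eta\in A^{0,1}(X,\sH)$ be the unique harmonic representative of the extension class of $\sE$ in $H^{0,1}(X,\sH)\cong\Ext^1(\sG_2,\sG_1)$, and let $\eta^*\in A^{1,0}(X,\sH^*)$ denote its Hermitian adjoint. On $\sQ$ equipped with $h=h_1\oplus h_2$ and block-diagonal Chern connection $D=D_1\oplus D_2$ (so that $R^D=\gamma\cdot\id_\sQ$), I propose the unitary ansatz
\[
\nabla:=D+\theta,\qquad \theta:=\begin{pmatrix}0 & \eta\\ -\eta^* & 0\end{pmatrix}.
\]
The $(0,1)$-part $\nabla^{0,1}$ is upper triangular with upper-right entry $\eta$, so the induced holomorphic structure $\sF$ on $\sQ$ realizes an extension of $\sG_2$ by $\sG_1$ with class $[\eta]$; by Lemma~\ref{lemma:coh-class-iso-holomorphic-class} we then have $\sF\cong\sE$ as holomorphic bundles.

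The heart of the proof is to verify that $\nabla^2=\alpha\cdot\id_\sQ$ for a global $2$-form $\alpha$. Expanding $\nabla^2=R^D+D\theta+\theta\wedge\theta$, the K\"ahler identities for the Hermitian flat bundle $\sH$ imply that any $\bar\partial_\sH$-harmonic form is also $\partial^{D_\sH}$-closed, so $D_\sH\eta=0$ and the off-diagonal blocks of $D\theta$ vanish. The diagonal of $\theta\wedge\theta$ reads $(-\eta\wedge\eta^*,-\eta^*\wedge\eta)$, and the main obstacle is to show these endomorphism-valued $(1,1)$-forms are scalar multiples of the identity with a common scalar $(1,1)$-form $\alpha-\gamma$. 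I would attack this by lifting $\eta$ to the universal cover $\widetilde{X}$, where the flat Hermitian bundle $\sH$ trivializes and harmonicity forces $\eta$ to be parallel with respect to the flat Chern connection of $\sH$, and then exploiting the $\pi_1(X)$-equivariance from the unitary representation associated to $\sH$, together with the rigidity coming from the equality case of \eqref{BG} established in the first stage, to extract the scalar condition on $\eta\wedge\eta^*$ and $\eta^*\wedge\eta$. Setting $\alpha:=\gamma+\tfrac{1}{r_1}\mathrm{tr}(-\eta\wedge\eta^*)$ will then yield a projectively flat connection $\nabla^\sF:=\nabla$ on $\sQ$ whose induced holomorphic structure is isomorphic to $\sE$.
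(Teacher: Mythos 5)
Your first two stages are exactly right and match the paper: the Chern-class identity gives \eqref{BG} for $\sG_1$ and $\sG_2$, Corollary~\ref{DUY} produces the metrics with $R^{\sG_i}=\gamma\cdot h_i$, and the harmonic representative $\eta$ of the extension class, inserted as the upper-right entry of $\nabla^{0,1}$, yields a holomorphic structure isomorphic to $\sE$ by Lemma~\ref{lemma:coh-class-iso-holomorphic-class}.

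The third stage, however, contains a genuine gap, and the obstacle you flag is not merely technical --- it is fatal to the unitary ansatz. With $\theta=\begin{pmatrix}0&\eta\\-\eta^*&0\end{pmatrix}$ the diagonal blocks of $\theta\wedge\theta$ are $-\eta\wedge\eta^*\in A^{1,1}(X,\mathrm{End}\,\sG_1)$ and $-\eta^*\wedge\eta\in A^{1,1}(X,\mathrm{End}\,\sG_2)$, and since wedging a $(0,1)$-form against a $(1,0)$-form anticommutes, $\mathrm{tr}(\eta\wedge\eta^*)=-\mathrm{tr}(\eta^*\wedge\eta)$. Hence if both blocks were equal to a common scalar form $\lambda$ times the identity, taking traces would give $r_1\lambda=-r_2\lambda$, forcing $\lambda=0$ and then $\eta=0$ pointwise (on a curve, for instance, $-\eta\wedge\eta^*$ is essentially $|\eta|^2$ times the volume form). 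So the ``scalar condition with a common scalar'' you hope to extract via the universal cover and the equality case of \eqref{BG} is simply false for every non-split extension, e.g.\ Mumford's rank-two bundles. The fix is to abandon unitarity: take $\theta=\begin{pmatrix}0&\eta\\0&0\end{pmatrix}$, strictly upper triangular, so that $\theta\wedge\theta=0$ identically. Then $\nabla^2=R^D+D_{\sH}\eta$ with $\sH=\sG_2^*\otimes\sG_1$, and the Bochner--Kodaira identity $\Delta'=\Delta''$ on the Hermitian flat bundle $\sH$ shows that the $\bar\partial$-harmonic form $\eta$ is $D_{\sH}$-parallel, giving $\nabla^2=\gamma\cdot\mathrm{id}_{\sQ}$ directly. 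This upper-triangular (non-metric) connection is exactly the paper's construction; note that its iterated version in Lemma~\ref{lemma:connection-construction} is what allows the induction over longer filtrations, whereas a unitary ansatz would break already at the first step.
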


\begin{proof}
    It is easy to see that if $\sE$ is $[\omega]$-semistable and
    satisfies  (\ref{BG}), then both $\sG_1$ and $\sG_2$ satisfy (\ref{BG}).
    By  Corollary \ref{DUY}, there exist Hermitian-Einstein metrics $h^{\sG_1}$
    and $h^{\sG_2}$ on $\sG_1$ and $\sG_2$ respectively, such that their Chern curvatures  are given by
    \begin{equation*}\label{equal}
    R^{\sG_1}=\gamma \cdot h^{\sG_1},\ \ \
    R^{\sG_2}=\gamma \cdot h^{\sG_2},
    \end{equation*}
    where  $\gamma$ is the
    unique harmonic representative of the average first Chern class of $\sG_1$. Let
    $\nabla^{\sG_1}$ and $\nabla^{\sG_2}$ be the Chern connections on $(\sG_1,h^{\sG_1})$ and
    $(\sG_2,h^{\sG_2})$ respectively.
    Then the Chern curvature of
    $(\sG_2^*\otimes \sG_1, h^{\sG_2^*}\otimes h^\sG_1, \nabla^{\sG_2^*}\otimes \nabla^\sG_1)$  satisfies $R^{\sG_2^*\otimes \sG_1}=0$.
    The standard
    Bochner-Kodaira identity on $\sG_2^*\otimes \sG_1$ shows the following equalities on Laplacian operators on
    $A^{\bullet,\bullet}(X,\sG_2^*\otimes \sG_1)$,
    \begin{equation*}
    \Delta''=\Delta'+\sqrt{-1}[R^{\sG_2^*\otimes \sG_1},\Lambda_\omega]=\Delta'.
    \end{equation*}

\noindent  By partition of unity, there is a connection $\nabla^{\sE}$ on $\sQ$ with $(\nabla^\sE)^{0,1}=\bp_{\sE}$ of the form
    \begin{equation*}
    \nabla^\sE:=\left[\begin{array}{cc} \nabla^{\sG_1}&\beta\\
    0&\nabla^{\sG_2}
    \end{array}\right].
    \end{equation*}
    By using the  metrics $\omega$, $h^{\sG_2^*}$ and $h^{\sG_1}$, one has an isomorphism $ H^1(X,\sG_2^*\otimes \sG_1))\cong H_{\bp}^{0,1}(X,\sG_2^*\otimes \sG_1).$
    Let $\eta \in A^{0,1}(X,\sG_2^*\otimes \sG_1)$ be the unique harmonic representative of the class $[\beta]\in H_{\bp}^{0,1}(X,\sG_2^*\otimes
    \sG_1)$. Then $\Delta'' \eta =0$. Since $\Delta'=\Delta''$, we deduce that
    \begin{equation*}\label{harmonic}
    \nabla^{\sG_2^*\otimes
        \sG_1}\eta=0.
    \end{equation*}
    We define a connection $\nabla^\sF$ on $\sQ$ in the form
    \begin{equation*}
    \nabla^\sF:=\left[\begin{array}{cc} \nabla^{\sG_1}&\eta\\
    0&\nabla^{\sG_2}
    \end{array}\right].
    \end{equation*}
    Then by Lemma \ref{lemma:coh-class-iso-holomorphic-class}, the connection $\nabla^\sF$ defines a holomorphic structure $\sF$ on $\sQ$ isomorphic to $\sE$. Moreover, we have
    \begin{equation*}
    (\nabla^\sF)^2=\gamma\cdot (h^{\sG_1}\oplus h^{\sG_2}).
    \end{equation*} \label{flat}
    Indeed, this is equivalent to
   $ \nabla^{\sG_1}\circ \eta+\eta\circ \nabla^{\sG_2}=\nabla^{\sG_2^*\otimes
        \sG_1}\eta=0$
and it  follows by the choice of $\eta$. This completes the proof of the lemma.
\end{proof}

\noindent The next statement addresses a generalization of the construction  in Lemma \ref{bb}.

\begin{lemma}
    \label{lemma:connection-construction}
    Let $\{(\sG_i,h^{\sG_i}, \nabla_i)\}_{i=1}^p (p\geq 2)$  be a collection of projectively Hermitian flat vector bundles on a compact K\"ahler manifold $(X,\omega)$.  Let $\sQ$ be the underlying $C^\infty$ vector bundle of $\bigoplus_{i=1}^p \sG_i$. Assume that there is  a connection  on $\sQ$ of the form
    \[
    \nabla^{\sQ}_1 =
    \begin{bmatrix}
    \nabla_1 & \theta_{1,2} & \cdots & \cdots & \theta_{1,p-1} & \beta_{1,p} \\
    & \nabla_2  & \cdots & \cdots& \theta_{2,p-1} & \beta_{2,p} \\
    & &\ddots & \cdots & \vdots & \vdots \\
    & & & \ddots & \vdots & \vdots \\
    & & & & \nabla_{p-1} & \beta_{p-1,p} \\
    0 & & & & & \nabla_p
    \end{bmatrix}
    \]
    such that  for each $i<j$, the $(i,j)$ entry belongs to  $A^{0,1}(X, \sG_j^*\otimes \sG_i)$.
    Furthermore, we assume that  $(\nabla^{\sQ}_1)^{0,1})^2=0$ and that
    \[
    \begin{bmatrix}
    \nabla_1 & \theta_{1,2} & \cdots & \cdots & \theta_{1,p-1} \\
    &\nabla_2  & \cdots & \cdots& \theta_{2,p-1}  \\
    & & \ddots & \cdots & \vdots  \\
    & & & \ddots & \vdots  \\
    0 & & & & \nabla_{p-1}
    \end{bmatrix}^2
    =
    \begin{bmatrix}
    \gamma h^{\sG_1} &  0 & \cdots & \cdots & 0 \\
    & \gamma h^{\sG_2} & \cdots & \cdots& 0  \\
    & &\ddots & \cdots & \vdots  \\
    & & & \ddots & \vdots  \\
    0 & & & &  \gamma h^{\sG_{p-1}}
    \end{bmatrix},
    \]
    where $r_i$ is the rank of $\sG_i$ for $i=1,...,p.$

    Then  there is a connection on $\sQ$ of the form
    \[
    \nabla^{\sQ}_2 =
    \begin{bmatrix}
    \nabla_1 & \theta_{1,2} & \cdots & \cdots & \theta_{1,p-1} & \theta_{1,p} \\
    & \nabla_2  & \cdots & \cdots& \theta_{2,p-1} & \theta_{2,p} \\
    & &\ddots & \cdots & \vdots & \vdots \\
    & & & \ddots & \vdots & \vdots \\
    & & & & \nabla_{p-1} & \theta_{p-1,p} \\
    0 & & & & & \nabla_p
    \end{bmatrix}
    \]
    with $\theta_{i,p}\in A^{0,1}(X, \sG_p^*\otimes \sG_i)$ for $i<p$ such that its curvature is of the form
    $$
    (\nabla^{\sQ}_2)^2   = \gamma \cdot (h^{\sG_1}\oplus\cdots\oplus h^{\sG_p}).
$$
    Moreover, the holomorphic structures induced by $\nabla^\sQ_1$ and $\nabla^\sQ_2$ are isomorphic.
\end{lemma}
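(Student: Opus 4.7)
The approach is to generalize the argument of Lemma~\ref{bb}, packaging the entire last column of $\nabla^{\sQ}_1$ as a single $\bar\partial$-cohomology class in an auxiliary flat bundle and then selecting a parallel representative. Set $\sQ_{p-1}\defeq \bigoplus_{i=1}^{p-1}\sG_i$ (as $C^\infty$ bundles), and identify
$$\beta\defeq (\beta_{1,p},\ldots,\beta_{p-1,p})\in A^{0,1}(X,\sG_p^*\otimes \sQ_{p-1}).$$
The upper-left $(p-1)\times(p-1)$ block of $\nabla^{\sQ}_1$ defines a connection $M$ on $\sQ_{p-1}$: its $(0,1)$-part endows $\sQ_{p-1}$ with a holomorphic structure realizing it as an iterated extension of $\sG_1,\ldots,\sG_{p-1}$, and by hypothesis $M^2=\gamma\cdot\id_{\sQ_{p-1}}$.

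The decisive observation is that the induced connection
$$D\defeq \nabla_p^*\otimes \id_{\sQ_{p-1}}+\id_{\sG_p^*}\otimes M$$
on $\sG_p^*\otimes \sQ_{p-1}$ has \emph{vanishing} curvature, since the two scalar contributions $-\gamma\cdot \id$ and $+\gamma\cdot \id$ cancel. Hence $\sG_p^*\otimes \sQ_{p-1}$ acquires the structure of a holomorphically flat vector bundle with holomorphic structure given by $D^{0,1}$. A direct block computation identifies the $(i,p)$-components of $((\nabla^{\sQ}_1)^{0,1})^2=0$ precisely with $D^{0,1}\beta=0$, so $\beta$ is $\bar\partial$-closed; and, given that the top block already satisfies $M^2=\gamma\cdot\id_{\sQ_{p-1}}$, the desired identity $(\nabla^{\sQ}_2)^2=\gamma\cdot \id_\sQ$ becomes $D\theta=0$, i.e.\ $\theta$ is $D$-parallel.

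The main analytic step is therefore to produce a $D$-parallel representative $\theta$ of the class $[\beta]\in H^{0,1}_{\bar\partial}(X,\sG_p^*\otimes \sQ_{p-1})$. The holomorphic filtration of $\sQ_{p-1}$ induced by $M^{0,1}$ yields, after tensoring with $\sG_p^*$, a filtration of $\sG_p^*\otimes \sQ_{p-1}$ whose graded pieces are the $\sG_p^*\otimes\sG_i$: these are \emph{genuinely} Hermitian flat for the tensor metric $h^{\sG_p^*}\otimes h^{\sG_i}$ (the two curvatures $-\gamma$ and $+\gamma$ cancel exactly), and on each such piece the Bochner--Kodaira identity $\Delta''=\Delta'$ forces harmonic $(0,1)$-forms to be parallel. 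Working inductively through the filtration, and applying a $\bar\partial$-solving argument at each step (for which the K\"ahler hypothesis on $X$ is used), one lifts the parallel representative on each graded piece to a $D$-parallel $\theta$ on $\sG_p^*\otimes \sQ_{p-1}$ satisfying $\theta-\beta=\bar\partial\alpha$ for some $\alpha\in A^0(X,\sG_p^*\otimes \sQ_{p-1})$.

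Defining $\nabla^{\sQ}_2$ by replacing the last column of $\nabla^{\sQ}_1$ with $\theta$, the translation above immediately gives $(\nabla^{\sQ}_2)^2=\gamma\cdot \id_\sQ$. The smooth automorphism of $\sQ$ which is the identity except for $\alpha$ inserted in the last column relates the two holomorphic structures (cf.\ Lemma~\ref{lemma:coh-class-iso-holomorphic-class} applied blockwise). The main obstacle lies in the analytic step: because $M$ is not assumed to be a Hermitian connection for $h^{\sQ_{p-1}}$, $D$ need not be the Chern connection of the tensor metric on $\sG_p^*\otimes \sQ_{p-1}$, so one cannot directly invoke the Bochner--Kodaira identity on the bundle as a whole; the filtration and inductive lift sketched above is precisely what circumvents this difficulty.
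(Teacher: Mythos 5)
Your proposal is correct in substance and is, at bottom, the same d\'evissage as the paper's, but repackaged in a way that is conceptually cleaner. The paper argues by induction on $p$: the inductive hypothesis (applied to $\sG_2,\dots,\sG_p$) produces $\theta_{2,p},\dots,\theta_{p-1,p}$, and the remaining entry $\theta_{1,p}$ is obtained by exhibiting the residual curvature $\xi$ in the $(1,p)$-slot, proving $\bar\partial\xi=0$ by a long explicit computation with the identities \eqref{eq-vansh-1}--\eqref{eq-vansh-3}, and absorbing $\xi$ via the $\partial\bar\partial$-lemma on the Hermitian flat bundle $\sG_p^*\otimes\sG_1$. You instead observe that the whole problem is equivalent to finding a $D$-parallel representative of $[\beta]\in H^{0,1}_{\bar\partial}(X,\sG_p^*\otimes\sQ_{p-1})$ for the \emph{flat} connection $D=\nabla_p^*\otimes\id+\id\otimes M$; this translation (including $D^2=0$, $D^{0,1}\beta=0$, and the equivalence of $(\nabla_2^{\sQ})^2=\gamma\cdot\id$ with $D\theta=0$) is correct, and it buys a real simplification: once one reduces modulo the $D$-invariant Hermitian flat sub $\sS=\sG_p^*\otimes\sG_1$ and corrects by $\bar\partial\alpha$ so that the quotient component is parallel, the obstruction is $\xi=(\nabla^{\sS})^{1,0}\delta$ and its $\bar\partial$-closedness follows in one line from $D^2=0$ applied to $D(\beta+\bar\partial\alpha)$, replacing the paper's page of computation.

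The one place where your write-up falls short of a proof is precisely the sentence ``working inductively through the filtration \dots one lifts the parallel representative on each graded piece to a $D$-parallel $\theta$.'' That lift is not automatic and is where all the analysis lives: you must (i) note that $D^{1,0}$ is diagonal (the off-diagonal entries of $M$ are $(0,1)$-forms), so that after the correction the residual term $\xi$ lies in $A^{1,1}(X,\sS)$ and equals $(\nabla^{\sS})^{1,0}\delta$; (ii) deduce $\bar\partial_{\sS}\xi=0$ from flatness of $D$ and $D$-invariance of $\sS$; and (iii) invoke the $\partial\bar\partial$-lemma on the Hermitian flat bundle $\sS$ over the compact K\"ahler $X$ to write $\xi=(\nabla^{\sS})^{1,0}\bar\partial\mu$ and replace $\delta$ by $\delta-\bar\partial\mu$. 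With those three steps made explicit your argument is complete and matches the paper's conclusion, including the isomorphism of holomorphic structures via Lemma~\ref{lemma:coh-class-iso-holomorphic-class}.
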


\begin{proof}
    We  prove it by induction on $p$. For $p=2$, it follows from Lemma \ref{bb}.  Assume that $p>2$ and that the lemma is true for smaller integers. By induction hypothesis, we can find $\theta_{2,p},...,\theta_{p-1,p}$ such that
    \[
    \begin{bmatrix}
    \nabla_2 & \theta_{2,3} & \cdots & \cdots & \theta_{2,p} \\
    &\nabla_3  & \cdots & \cdots& \theta_{3,p}  \\
    & & \ddots & \cdots & \vdots  \\
    & & & \ddots & \vdots  \\
    0 & & & & \nabla_{p}
    \end{bmatrix}^2
    =
    \begin{bmatrix}
    \gamma h^{\sG_2} &  0 & \cdots & \cdots & 0 \\
    & \gamma h^{\sG_3} & \cdots & \cdots& 0  \\
    & &\ddots & \cdots & \vdots  \\
    & & & \ddots & \vdots  \\
    0 & & & &  \gamma h^{\sG_{p}}
    \end{bmatrix}.
    \]
    Moreover, by Lemma \ref{lemma:coh-class-iso-holomorphic-class}, the column
    \[
    \begin{bmatrix}
    \theta_{2,p}-\beta_{2,p}\\
    \vdots\\
    \theta_{p-1,p}-\beta_{p-1,p}
    \end{bmatrix}
    \]
    represents a $(0,1)$-form $\bp_{\sG_p^*\otimes \sT} \alpha$, where $\alpha \in A^0(X,\sG_p^*\otimes \bigoplus_{i=2}^{p-1} \sG_i)$ and $\sT$ is the holomorphic  structure on $\bigoplus_{i=2}^{p-1} \sG_i$ induced by $\nabla^\sQ_1$.
    Note that $\alpha$ can also be view as an element in $A^0(X, \sG_p^*\otimes ( \bigoplus_{i=1}^{p-1} \sG_i))$.
    Then there is some $\delta \in A^{0,1}(X,\sG_p^*\otimes \sG_1)$ such that the column
    \[
    \begin{bmatrix}
    \delta - \beta_{1,p}\\
    \theta_{2,p}-\beta_{2,p}\\
    \vdots\\
    \theta_{p-1,p}-\beta_{p-1,p}
    \end{bmatrix}
    \]
    represents $\bp_{\sG_p^*\otimes \sH} \alpha$, where $\sH$ is the holomorphic vector bundle structure on $\bigoplus_{i=1}^{p-1} \sG_i$ induced by $\nabla^\sQ_1$.

    We define the following connection
    \[
    \nabla^{\sQ}_3 =
    \begin{bmatrix}
    \nabla_1 & \theta_{1,2} & \cdots & \cdots & \theta_{1,p-1} & \delta \\
    & \nabla_2  & \cdots & \cdots& \theta_{2,p-1} & \theta_{2,p} \\
    & &\ddots & \cdots & \vdots & \vdots \\
    & & & \ddots & \vdots & \vdots \\
    & & & & \nabla_{p-1} & \theta_{p-1,p} \\
    0 & & & & & \nabla_p
    \end{bmatrix}.
    \]
    Then $(\nabla^{\sQ}_3)^{(0,1)})^2=0$ and it defines a holomorphic structure on $\sQ$ which is isomorphic to the one defined by $\nabla^\sQ_1$ by Lemma \ref{lemma:coh-class-iso-holomorphic-class}.
    Moreover, we have
    \[
    (\nabla^{\sQ}_3)^2  =
    \begin{bmatrix}
    \gamma h^{\sG_1} &  0 & \cdots & \cdots & 0 & \xi \\
    & \gamma h^{\sG_2} & \cdots & \cdots& 0 & 0 \\
    & &\ddots & \cdots & \vdots & \vdots \\
    & & & \ddots & \vdots & \vdots \\
    & & & & \gamma h^{\sG_{p-1}} &  0 \\
    0 & & & & & \gamma h^{\sG_p}
    \end{bmatrix}
    \]
    where
    $$\xi= \nabla^{\sG_p^*\otimes \sG_1}(\delta) + \sum_{i=1}^{p-1}\theta_{1,i}\circ \theta_{i,p} \in A^{2}(X,\sG_p^*\otimes \sG_1).$$

   \noindent The vanishing of the $(1,i)$-entry  of  $(\nabla^{\sQ}_3)^2$ for $1<i<p$ implies
    \begin{equation}\label{eq-vansh-1}
    \nabla_1 \circ \theta_{1,i} = - (\theta_{1,i} \circ \nabla_i + \sum_{j=2}^{i-1} \theta_{1,j}\circ \theta_{j,i}),
    \end{equation}
    and the  vanishing of the $(i,p)$-entry  for $1<i<p$ implies
    \begin{equation}\label{eq-vansh-2}
    \theta_{i,p} \circ \nabla_p = -(\nabla_i \circ \theta_{i,p}  + \sum_{k=i+1}^{p-1} \theta_{i,k} \circ \theta_{k,p}).
    \end{equation}
    Moreover, the condition that $((\nabla^{\sQ}_3)^{(0,1)})^2=0$ shows that
    \begin{equation}\label{eq-vansh-3}
    \bp_{\sG_p^*\otimes \sG_1}(\delta) + \sum_{i=1}^{p-1}\theta_{1,i}\circ \theta_{i,p} =0 \in A^{2}(X,\sG_p^*\otimes \sG_1),
    \end{equation}
    and we deduce
    \begin{equation}\label{eq-vansh-4}
    \xi = (\nabla^{\sG_p^*\otimes \sG_1})^{1,0}(\delta).
    \end{equation}

    Now we compute that
    \begin{eqnarray*}
        \bp_{\sG_p^*\otimes \sG_1}((\nabla^{\sG_p^*\otimes \sG_1})^{1,0}(\delta))
        &=& \bp_{\sG_p^*\otimes \sG_1} (\nabla^{\sG_p^*\otimes \sG_1}(\delta))\\
        &=&-  \nabla^{\sG_p^*\otimes \sG_1}(\bp_{\sG_p^*\otimes \sG_1}(\delta))\\
        &\stackrel{(\ref{eq-vansh-3})}{=}&\nabla^{\sG_p^*\otimes \sG_1}(\sum_{i=1}^{p-1}\theta_{1,i}\circ \theta_{i,p})\\
        &=& \sum_{i=1}^{p-1}\nabla_1\circ \theta_{1,i} \circ \theta_{i,p}- \sum_{i=1}^{p-1} \theta_{1,i}\circ\theta_{i,p}\circ
        \nabla_p\\
        &\stackrel{(\ref{eq-vansh-1}),\ (\ref{eq-vansh-2})}{=}&   \sum_{i=1}^{p-1} \left(- (\theta_{1,i} \circ \nabla_i + \sum_{j=2}^{i-1} \theta_{1,j}\circ \theta_{j,i}) \circ \theta_{i,p}\right) \\
        && -\sum_{i=1}^{p-1} \left(-\theta_{1,i}\circ (\nabla_i \circ \theta_{i,p}  + \sum_{k=i+1}^{p-1} \theta_{i,k} \circ \theta_{k,p})\right)\\
        &=&   -\sum_{i=1}^{p-1}  \sum_{j=2}^{i-1} \theta_{1,j}\circ \theta_{j,i} \circ \theta_{i,p}  \\
        && +\sum_{i=1}^{p-1}\sum_{k=i+1}^{p-1}\theta_{1,i}\circ  \theta_{i,k} \circ \theta_{k,p}\\
        &=&0.
    \end{eqnarray*}
    We note that  $\sG_p^*\otimes \sG_1$ is Hermitian flat by Corollary \ref{DUY}. Hence by $\partial\bp$-Lemma, there exists  $\mu \in A^0(X,\sG_p^*\otimes \sG_1)$ such that
    \begin{equation*}
    (\nabla^{\sG_p^*\otimes \sG_1})^{1,0}(\delta) =(\nabla^{\sG_p^*\otimes \sG_1})^{1,0} ( \bp_{\sG_p^*\otimes
        \sG_1}(\mu))= \nabla^{\sG_p^*\otimes \sG_1} ( \bp_{\sG_p^*\otimes
        \sG_1}(\mu)).
    \end{equation*}
    Thanks to equation (\ref{eq-vansh-4}), this implies that  $$ \xi = \nabla^{\sG_p^*\otimes \sG_1} ( \bp_{\sG_p^*\otimes \sG_1}(\mu)).$$
    Let  $\theta_{1,p}=\delta- \bp_{\sG_p^*\otimes \sG_1}(\mu)$.  Then we check that
    $$
    \nabla^{\sG_p^*\otimes \sG_1}(\theta_{1,p}) + \sum_{i=1}^{p-1}\theta_{1,i}\circ \theta_{i,p}
    = \xi  - \nabla^{\sG_p^*\otimes \sG_1}(\bp_{\sG_p^*\otimes \sG_1}(\mu)) = 0.
    $$
    This completes the proof of Lemma \ref{lemma:connection-construction}.
\end{proof}

Now we are ready to prove Theorem \ref{thm:Kahler:proj-flat-connection} and  Theorem \ref{thm:num-proj-flat=proj-flat}.

\begin{proof}[Proof of Theorem \ref{thm:Kahler:proj-flat-connection}]
    Let $r_i$ be the rank of $\sG_i = \sE_i/\sE_{i+1} $.
    As in Lemma \ref{lemma:unique-proj-flat-connection}, we denote the projectively Hermitian  flat metric on $\sG_i$ by $h^{\sG_i}$ and the Chern connection  by $\nabla_i$.
    Then there is a $(1,1)$-form $\gamma$  such that $\nabla_i^2=\gamma h^{\sG_i}$ for all $i=1,..,p$.
    %We will show that $\sE$ is isomorphic to a projectively flat vector bundle $\sF$ as holomorphic vector bundles.  More precisely, there is a projectively flat connection $\nabla^{\sF}$ on $\sF$ whose $(0,1)$ part coincides with $\bp_{\sF}$. Moreover, the  connection on $\sG_i$ induced by $\sF$ coincides with the projectively Hermitian  flat connection $\nabla^{\sG_i}$ for each $i$.
    %
    We denote by $\sQ$ the underlying smooth vector bundle of $\sE$.
    To prove the theorem, we will construct a connection $\nabla^{\sF}$ on $\sQ$ in the following upper triangular form
    \[
    \nabla^{\sF} =
    \begin{bmatrix}
    \nabla_1 & \theta_{1,2} & \cdots & \cdots & \theta_{1,p-1} & \theta_{1,p} \\
    & \nabla_2  & \cdots & \cdots& \theta_{2,p-1} & \theta_{2,p} \\
    & &\ddots & \cdots & \vdots & \vdots \\
    & & & \ddots & \vdots & \vdots \\
    & & & & \nabla_{p-1} & \theta_{p-1,p} \\
    0 & & & & & \nabla_p
    \end{bmatrix}
    \]
    such that
    \begin{enumerate}
        \item $\theta_{i,j} \in A^{0,1}(X,\sG_j^* \otimes \sG_i)$  are   global smooth $(0,1)$-forms;
        \item $\nabla^\sF$ defines a holomorphic structure $\sF$ on $\sQ$ which is isomorphic to $\sE$;
        \item $\nabla^\sF$ is projectively flat.
    \end{enumerate}

\noindent
    By partition of unity, we can construct a connection $\nabla^{\sQ}_1$ on $\sQ$ of the form
    \[
    \nabla^{\sQ}_1 =
    \begin{bmatrix}
    \nabla_1 & \beta_{1,2} & \cdots & \cdots & \beta_{1,p-1} & \beta_{1,p} \\
    & \nabla_2  & \cdots & \cdots& \beta_{2,p-1} & \beta_{2,p} \\
    & &\ddots & \cdots & \vdots & \vdots \\
    & & & \ddots & \vdots & \vdots \\
    & & & & \nabla_{p-1} & \beta_{p-1,p} \\
    0 & & & & & \nabla_p
    \end{bmatrix}
    \]
    such that  $\beta_{i,j} \in A^{1}(X,\sG_j^* \otimes \sG_i)$ and that $(\nabla^{\sQ}_1)^{0,1}=\bp_{\sE}$.  By replacing $\beta_{i,j}$ by its $(0,1)$ part, we may assume further that $\beta_{i,j} \in A^{0,1}(X,\sG_j^* \otimes \sG_i)$.

    We  prove it by induction on $p$. If $p=2$, then it follows from Lemma \ref{bb}.
    Assume  $p>2$ and the assertion holds for smaller integers.
    Then by induction hypothesis, we can find $\theta_{i,j} \in A^{0,1}(X,\sG_j^* \otimes \sG_i)$ for $1\leqslant i<j\leqslant p-1$  such that
    \[
    \begin{bmatrix}
    \nabla_1 & \theta_{1,2} & \cdots & \cdots & \theta_{1,p-1} \\
    &\nabla_2  & \cdots & \cdots& \theta_{2,p-1}  \\
    & & \ddots & \cdots & \vdots  \\
    & & & \ddots & \vdots  \\
    0 & & & & \nabla_{p-1}
    \end{bmatrix}^2
    =
    \begin{bmatrix}
    \gamma h^{\sG_1} &  0 & \cdots & \cdots & 0 \\
    & \gamma h^{\sG_2} & \cdots & \cdots& 0  \\
    & &\ddots & \cdots & \vdots  \\
    & & & \ddots & \vdots  \\
    0 & & & &  \gamma h^{\sG_{p-1}}
    \end{bmatrix}.
    \]

    Moreover,  the following connection on $\sQ$
    \[
    \nabla^{\sQ}_2 =
    \begin{bmatrix}
    \nabla_1 & \theta_{1,2} & \cdots & \cdots & \theta_{1,p-1} & \beta_{1,p} \\
    & \nabla_2  & \cdots & \cdots& \theta_{2,p-1} & \beta_{2,p} \\
    & &\ddots & \cdots & \vdots & \vdots \\
    & & & \ddots & \vdots & \vdots \\
    & & & & \nabla_{p-1} & \beta_{p-1,p} \\
    0 & & & & & \nabla_p
    \end{bmatrix}
    \]
    defines a holomorphic structure isomorphic to $\sE$.  The existence of $\nabla^\sF$ then follows from Lemma \ref{lemma:connection-construction}. This completes the proof of  Theorem \ref{thm:Kahler:proj-flat-connection}.
\end{proof}

\begin{proof}[Proof of Theorem  \ref{thm:num-proj-flat=proj-flat}]
It follows from Theorem \ref{thm:num-proj-flat-criterion-Nakayama} and  Theorem \ref{thm:Kahler:proj-flat-connection}.
\end{proof}

\begin{convention}
    In the sequel of this paper, a holomorphic vector bundle $\sE$ over a quasi-projective manifold $X$ is called a projectively flat vector bundle if it admits a projectively flat connection $\nabla$ such that $\nabla^{0,1}=\bar{\partial}_{\sE}$.
\end{convention}

\subsection{Properties of projectively flat vector bundles}
We underline that all  vector bundles in this subsection are  holomorphic and an isomorphism between vector bundles is always an isomorphism of holomorphic vector bundles.

\begin{lemma}
\label{lemma:extension-proj-flat-vec-bundle}
Let $X$ be a smooth quasi-projective variety and let $\sE$ be a vector bundle on $X$. Assume that there is an open subset $X^\circ \subseteq X$ with complement of codimension at least $2$ such that $\sE|_{X^\circ}$ is isomorphic to a projectively flat vector bundle $\sF^\circ$. Then this isomorphism extends to an isomorphism of vector bundles $\sE \to \sF$ such that $\sF$ is projectively flat.
\end{lemma}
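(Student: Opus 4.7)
The plan is to first extend $\sF^\circ$ to a vector bundle on $X$ by pushing forward along the inclusion $j \colon X^\circ \hookrightarrow X$, and then extend the projectively flat structure by passing through the endomorphism bundle, where projective flatness becomes ordinary flatness.

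For the first step, I set $\sF \defeq j_*\sF^\circ$. Since $\sE$ is locally free (hence reflexive) and $X \setminus X^\circ$ has codimension at least two in the smooth variety $X$, we have $\sE \cong j_*(\sE|_{X^\circ})$; applying $j_*$ to the isomorphism $\phi$ produces an isomorphism $\sE \to \sF$ extending $\phi$, and in particular $\sF$ is a vector bundle on $X$.

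For the second step, the projectively flat connection $\nabla^\circ$ on $\sF^\circ$ induces a flat connection $\nabla^{\circ,\mathrm{End}}$ on $\mathrm{End}(\sF^\circ)$ with $(\nabla^{\circ,\mathrm{End}})^{0,1} = \bp$, since the induced curvature $[\nabla^{\circ 2}, -]$ vanishes. Its sheaf of flat local sections is a complex local system $L^\circ$ on $X^\circ$ with a canonical isomorphism $\sO_{X^\circ} \otimes_{\bbC} L^\circ \cong \mathrm{End}(\sF^\circ)$ of flat holomorphic bundles. Since $X \setminus X^\circ$ has codimension at least two in the smooth $X$, the inclusion induces an isomorphism $\pi_1(X^\circ) \cong \pi_1(X)$, so $L^\circ$ extends uniquely to a local system $L$ on $X$, and the associated flat holomorphic bundle $\sO_X \otimes_{\bbC} L$ restricts to $\mathrm{End}(\sF^\circ)$ on $X^\circ$. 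As $\sO_X \otimes_{\bbC} L$ and $\mathrm{End}(\sF)$ are reflexive sheaves on $X$ that agree on $X^\circ$, they are canonically isomorphic, and this equips $\mathrm{End}(\sF)$ with a flat connection $\widetilde{\nabla}$ satisfying $\widetilde{\nabla}^{0,1} = \bp$ and extending $\nabla^{\circ,\mathrm{End}}$.

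For the final step, I would lift $\widetilde{\nabla}$ to a projectively flat connection on $\sF$. The condition that $\widetilde{\nabla}$ acts as an algebra derivation of $\mathrm{End}(\sF)$ is a closed condition holding on the dense open $X^\circ$, hence on all of $X$. On each member $U_i$ of a trivializing open cover of $\sF$, the operator $\widetilde{\nabla} - d$ is therefore of the form $[A_i,-]$ for some $A_i \in A^{1,0}(U_i,\mathrm{End}(\sF))$, with $A_i$ determined up to a scalar $(1,0)$-form, using that derivations of the matrix algebra $M_r(\bbC)$ are inner. The collection $\{A_i - A_j\}$ is then a \v{C}ech $1$-cocycle with values in the fine sheaf of smooth scalar $(1,0)$-forms; the vanishing of this class allows us to adjust the $A_i$ to glue into a global $A \in A^{1,0}(X,\mathrm{End}(\sF))$. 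The connection $\nabla \defeq \bp + A$ on $\sF$ then satisfies $\nabla^{0,1} = \bp$ and $\nabla^{\mathrm{End}} = \widetilde{\nabla}$, and flatness of $\widetilde{\nabla}$ is equivalent to projective flatness of $\nabla$. The main obstacle is the second step, where the isomorphism $\pi_1(X^\circ) \cong \pi_1(X)$ and a reflexivity argument are combined to transfer the flat-holomorphic structure from $X^\circ$ across the codimension $\geq 2$ locus.
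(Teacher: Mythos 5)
Your proof is correct, and it takes a genuinely different route from the paper's. The paper passes to the flat $\mathbb{P}^{r-1}$-bundle $Z^\circ=\mathbb{P}(\sF^\circ)$, uses the same isomorphism $\pi_1(X^\circ)\cong\pi_1(X)$ to extend the underlying representation to a flat projective bundle $p\colon Z\to X$, extends the tautological line bundle $\sO_{\mathbb{P}(\sF^\circ)}(1)$ across the codimension-two locus, and recovers $\sF$ as $p_*\sL$, reading off projective flatness of $\sF$ from flatness of $Z$. You instead extend the sheaf directly as $j_*\sF^\circ$ (identified with $\sE$ by reflexivity), extend the flat structure on $\mathrm{End}(\sF^\circ)$ as a local system via the same $\pi_1$ isomorphism, and then lift the resulting flat connection $\widetilde{\nabla}$ on $\mathrm{End}(\sF)$ back to a projectively flat connection on $\sF$ by a \v{C}ech argument with values in the fine sheaf of scalar $(1,0)$-forms. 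Your version makes fully explicit the lifting step that the paper leaves implicit (why the holomorphic bundle associated to a flat projective bundle carries a projectively flat connection compatible with $\bar\partial$), at the cost of being longer; both arguments ultimately rest on the same topological input $\pi_1(X^\circ)\cong\pi_1(X)$. One small imprecision in your last step: the $1$-cocycle measuring the gluing defect is not literally $A_i-A_j$ (the $A_i$ are written in different trivializations), but the difference $\nabla_i'-\nabla_j'$ of the two locally lifted connections on $\sF$, which is a scalar $(1,0)$-form precisely because both lifts induce $\widetilde{\nabla}$ on $\mathrm{End}(\sF)$ and derivations of $M_r(\bbC)$ have center $\bbC\cdot\id$; with that reading the fine-sheaf argument goes through verbatim.
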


\begin{proof}
Let  $Z^\circ = \mathbb{P}(\sF^\circ)$.  Then $Z^\circ$ is given by a representation $\rho^\circ$ of the fundamental group $\pi_1(X^\circ)$ in $\mathrm{PGL}_{r}(\mathbb{C})$, where $r$ is the rank of $\sE$.
Since the complement of $X^\circ$ has codimension at least 2 in $X$, the fundamental groups  $\pi_1(X^\circ)$  and  $\pi_1(X)$ are canonically isomorphic.
Hence $\rho$ descends to a representation $\rho\colon \pi_{1}(X) \to \mathrm{PGL}_r(\mathbb{C})$. Such a representation induces a projective bundle $p\colon Z \to X$ which extends $Z^\circ \to X^\circ$. Let $\sL^\circ = \sO_{\mathbb{P}(\sF^\circ)}(1)$.
Then it extends to a line bundle $\sL$ on $Z$ whose restriction on a general fiber of $p$ is isomorphic to $\sO_{\bbP^{r-1}}(1)$.
Let $\sF = p_*\sL$.
Then $\sF$ is a projectively flat vector bundle such that $\sF|_{X^\circ}=\sF^\circ$.
Since  the complement of $X^\circ$ has codimension at least 2 in $X$, the isomorphism $\sE|_{X^\circ} \to \sF^\circ$ extends to an isomorphism $\sE \to \sF$.
\end{proof}

\begin{lemma}
\label{lemma:proj-flat-vec-bundle-property2}
Let $f:X\to Y$ be a projective bundle over a quasi-projective manifold $Y$. Assume that $\sE$ is a vector bundle on $Y$ such that $f^*\sE$ is isomorphic to a projectively flat vector bundle on $X$. Then $\sE$ is isomorphic to a projectively flat vector bundle on $Y$.
\end{lemma}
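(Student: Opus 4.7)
The plan is to descend the projectively flat structure on $f^{*}\sE$ from $X$ to $Y$ via the monodromy representation, exploiting that projective bundles have simply connected fibres.

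First, since the fibres of $f$ are projective spaces, hence simply connected, the long exact homotopy sequence of $f$ yields an isomorphism $f_{*}\colon \pi_{1}(X) \xrightarrow{\sim} \pi_{1}(Y)$. Because $f^{*}\sE$ is isomorphic to a projectively flat vector bundle, one can transport the projectively flat connection and view $f^{*}\sE$ itself as carrying such a connection; this induces a flat connection on the projective bundle $\bbP(f^{*}\sE) \to X$, so the latter is determined by a monodromy representation $\rho\colon \pi_{1}(X) \to \mathrm{PGL}_{r}(\mathbb{C})$, where $r = \rk(\sE)$. Transporting $\rho$ via $f_{*}$ gives $\bar{\rho}\colon \pi_{1}(Y) \to \mathrm{PGL}_{r}(\mathbb{C})$, which produces a flat projective bundle $q\colon Q \to Y$ such that $f^{*}Q \cong \bbP(f^{*}\sE)$ as flat projective bundles over $X$.

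To recognise $Q$ as the projectivisation of a vector bundle and then identify that vector bundle with $\sE$ up to a line bundle, I would argue that the Brauer class $[Q] \in \Br(Y)$ pulls back to $[\bbP(f^{*}\sE)] = 0 \in \Br(X)$, and appeal to the injectivity of $f^{*}\colon \Br(Y) \to \Br(X)$ for a projective bundle $f$ (via the Leray spectral sequence applied to $R^{i} f_{*}\sO_{X}^{*}$ for $i \leqslant 2$, using $\pic(\bbP^{d}) = \mathbb{Z}$ and $\Br(\bbP^{d}) = 0$). Hence $Q \cong \bbP(\sG)$ for a vector bundle $\sG$ on $Y$, and $\sG$ is projectively flat by construction. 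Comparing $f^{*}\sG$ with $f^{*}\sE$ yields $f^{*}\sG \cong f^{*}\sE \otimes \sM$ for some $\sM \in \pic(X)$; writing $\pic(X) = f^{*}\pic(Y) \oplus \mathbb{Z}\cdot \sO_{X}(1)$ and decomposing $\sM = f^{*}\sL \otimes \sO_{X}(a)$, a determinant computation forces $a = 0$, so $\sM = f^{*}\sL$. The projection formula together with $f_{*}\sO_{X} = \sO_{Y}$ then gives $\sG \cong \sE \otimes \sL$, and hence $\sE \cong \sG \otimes \sL^{-1}$. Since tensoring by a line bundle preserves projective flatness (couple the projectively flat connection on $\sG$ with any Chern connection on $\sL^{-1}$), $\sE$ is isomorphic to a projectively flat vector bundle, as required.

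The main difficulty I anticipate is establishing the injectivity of $f^{*}\colon \Br(Y) \to \Br(X)$ for a projective bundle $f$; this is classical but requires a small Leray spectral sequence calculation or a descent argument. A potential alternative, closer in spirit to Lemma~\ref{lemma:extension-proj-flat-vec-bundle}, is to bypass Brauer groups altogether by constructing directly a line bundle on $Q$ that restricts to $\sO(1)$ on each fibre of $q$, using the identification $\bbP(f^{*}\sE) = \bbP(\sE) \times_{Y} X$ and the tautological line bundle $\sO_{\bbP(f^{*}\sE)}(1)$; however this route still requires one to match the two $\bbP^{d}$-bundle structures carried by the fibre product.
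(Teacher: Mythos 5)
Your overall strategy coincides with the paper's: descend the monodromy representation of $\bbP(f^{*}\sE)$ through the isomorphism $\pi_{1}(X)\cong\pi_{1}(Y)$ to obtain a flat projective bundle $Q\to Y$ with $f^{*}Q\cong\bbP(f^{*}\sE)$, identify $Q$ with $\bbP(\sE)$ up to a line-bundle twist, and conclude. The first and last parts of your argument are fine. The gap is exactly where you anticipated it, and it is a genuine one: the map $f^{*}\colon \Br(Y)\to\Br(X)$ is \emph{not} injective for an arbitrary projective bundle $f$. In the paper's terminology (Section \ref{Examples}), $f$ is only assumed to be a $\bbP^{d}$-fibration, not an insignificant one. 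The Leray spectral sequence computation you propose, with $R^{1}f_{*}\sO_{X}^{\times}\cong\bbZ_{Y}$ and $R^{2}f_{*}\sO_{X}^{\times}=0$, yields
\[
\pic(X)\longrightarrow H^{0}(Y,\bbZ)\xrightarrow{\ d_{2}\ } H^{2}(Y,\sO_{Y}^{\times})\xrightarrow{\ f^{*}\ } H^{2}(X,\sO_{X}^{\times}),
\]
and $d_{2}(1)$ is precisely the obstruction class $\delta(X)$ of the bundle $f$ itself; the kernel of $f^{*}$ is the cyclic group it generates. So injectivity holds if and only if $f$ is insignificant, which is not part of the hypothesis. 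Knowing only that $\delta(Q)$ lies in $\langle\delta(X)\rangle$ does not give $\delta(Q)=0$, and the Brauer-class bookkeeping alone cannot see the extra information that $f^{*}Q$ is isomorphic to $\bbP(\sE)\times_{Y}X$ \emph{as a bundle over $X$}, which is what actually forces $Q\cong\bbP(\sE)$.

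The paper closes this gap by the "alternative" you mention but do not carry out: it considers the composition $Z:=f^{*}Q\xrightarrow{\ \sim\ }\bbP(f^{*}\sE)=\bbP(\sE)\times_{Y}X\to\bbP(\sE)$ and observes that a complete curve in $Z$ is contracted by this composition if and only if its image in $Q$ is a point; the rigidity lemma then descends the map to a surjection $Q\to\bbP(\sE)$ over $Y$, which is an isomorphism of $\bbP^{r-1}$-bundles. This replaces the Brauer argument entirely and works for non-insignificant $f$. Your remaining steps (the determinant argument forcing $\sM=f^{*}\sL$, pushing forward by the projection formula, and the stability of projective flatness under line-bundle twists) are correct, though note that if $f$ is not insignificant there is no class $\sO_{X}(1)$ and $\pic(X)/f^{*}\pic(Y)$ is only a subgroup of $\bbZ$ — your determinant computation still forces the relevant integer to vanish, so this is harmless. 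To make the proof complete you must either add the hypothesis that $f$ is insignificant (which would suffice for the paper's applications, where $f$ is always of the form $\bbP(\sE)\to\overline{X}$) or substitute the rigidity-lemma descent for the Brauer step.
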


\begin{proof}
Up to isomorphism, we may assume that $\sF = f^*\sE$ is projectively flat. Let $Q=\bbP(\sE)$ and $W = \bbP(\sF)$.
Then $W$ is given by a representation $\rho$ of $\pi_1(X)$ in $\mathrm{PGL}_{r}(\mathbb{C})$, where $r$ is the rank of $\sE$.
We remark that $f$ induces an isomorphism between the fundamental groups $\pi_1(X)$ and $\pi_1(Y)$.
Thus such a representation induces a representation $\eta\colon \pi_1(Y) \to \mathrm{PGL}_r(\mathbb{C})$.  Let $\pi\colon P\to Y$ be the projective bundle given by $\eta$. Then by pulling back $P$, we obtain a projective bundle $q\colon Z\to X$.
By construction, we have an isomorphism $Z\to W$ of projective bundles on $X$.

We denote by $\varphi\colon Z\to Q$ the composition of $Z\to W \to Q.$
Then a complete curve $C$ in $Z$ is contracted by $\varphi$ if and only if the image of $C$ in $P$ is a point.
Hence by rigidity lemma, we see that  $\varphi$ descends to a surjective morphism $\psi\colon P \to Q$ on $Y$.
Since both $P$ and $Q$ are projective bundles on $Y$, such a surjective morphism must be an isomorphism.  Note that the tensor product of a projectively flat vector bundle and a line bundle is again projectively flat, $\sE$ is isomorphic to a projectively flat vector bundle.
\end{proof}

The following lemma is a consequence of \cite[Theorem 1.5]{GrebKebekusPeternell2016b}.

\begin{lemma}\label{lemma:trivial-bundle-finite-change}
    Let $Y$ be a projective manifold and  $p \colon Y \to Y'$   a surjective morphism onto a normal projective variety with klt singularities. Let $P\rightarrow Y$ be a  flat projective bundle given by a representation $\pi_1(Y)\rightarrow \PGL_{d+1}(\bbC)$.
    Assume that the fibers of $p$ are simply connected over some open subset  $V\subseteq Y'$ such that $\codim Y'\backslash V \geqslant 2$.
    Then for every  $p$-exceptional curve $C$, there is a smooth   curve $D$ with finite surjective morphism $\mu \colon D \to C$ such that $P$ induces a trivial $\mathbb{P}^d$-bundle structure on $D \times_C P$ over $D$. In particular, if $P\cong \bbP(\sE)$ for some vector bundle $\sE$ on $Y$, then $\mu^*\sE$ is isomorphic to the direct sum of copies of a line bundle on $D$.
\end{lemma}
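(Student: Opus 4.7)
The plan is to reduce the lemma to a monodromy statement on $C$ and then kill the resulting monodromy by passing to a finite cover. Writing $\rho\colon \pi_1(Y) \to \PGL_{d+1}(\bbC)$ for the representation that defines $P$, and $\widetilde{C} \to C$ for the normalization of $C$, it suffices to show that the composition
\[
\pi_1(\widetilde{C}) \longrightarrow \pi_1(Y) \xrightarrow{\ \rho\ } \PGL_{d+1}(\bbC)
\]
has finite image: the kernel then determines a finite \'etale cover $D \to \widetilde{C}$ on which the pullback of $P$ trivializes.

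The first step is to invoke \cite[Theorem 1.5]{GrebKebekusPeternell2016b}. The hypotheses of our lemma --- klt base $Y'$, together with simply-connected general fibers of $p$ over a big open subset $V\subseteq Y'$ --- fit precisely the framework of that theorem, which says that every flat projective bundle on $Y$ descends (in a quasi-\'etale or orbifold sense) to $Y'$. Concretely, $\rho$ factors through a representation of the \'etale fundamental group of $Y'_{\mathrm{reg}}$, or of a suitable enlargement thereof taking the klt singularities of $Y'$ into account.

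The second step exploits the $p$-exceptionality of $C$: the composition $\widetilde{C} \to C \hookrightarrow Y \xrightarrow{p} Y'$ is the constant map to a single point $y_0 \in Y'$. Consequently, after the descent of the previous step, the image of $\pi_1(\widetilde{C})$ in $\PGL_{d+1}(\bbC)$ factors through the local (\'etale) fundamental group of $Y'$ at $y_0$. By the finiteness of local fundamental groups at klt singularities, this local group is finite, and hence the image of $\pi_1(\widetilde{C})$ under $\rho$ is finite as well.

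The final step is the promised cover. Taking $D \to \widetilde{C}$ to be the finite \'etale cover associated with the kernel of this finite representation, the composed map $\mu\colon D \to \widetilde{C} \to C$ is finite surjective and the pulled-back representation on $\pi_1(D)$ is trivial; hence $D \times_C P$ has no monodromy and is a trivial $\bbP^d$-bundle over $D$. For the ``in particular'' statement, if $P = \bbP(\sE)$ then $\mu^*\sE$ is a rank-$(d+1)$ vector bundle on the curve $D$ with trivial projectivization; since any two bundles with the same projectivization differ by a line bundle twist, $\mu^*\sE \cong \sL^{\oplus(d+1)}$ for some $\sL \in \pic(D)$. The main obstacle I anticipate is pinning down the precise form of the descent in \cite[Theorem 1.5]{GrebKebekusPeternell2016b} --- identifying which fundamental group $\rho$ actually factors through and how the big-open and klt hypotheses enter --- but once this is in hand, the remaining steps are essentially formal.
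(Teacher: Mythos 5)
Your strategy---reduce to showing that $\rho$ has finite image on $\pi_1(\widetilde{C})$ and then kill that image by a finite cover---is a genuinely different route from the paper's. The paper instead uses \cite[Theorem 1.5]{GrebKebekusPeternell2016b} to produce a quasi-\'etale cover $Z'\to Y'$ with $\hat{\pi}_1(Z'_{\mathrm{reg}})\cong\hat{\pi}_1(Z')$, extends the descended flat structure from $Z'_{\mathrm{reg}}$ to all of $Z'$ (following the proof of their Theorem 1.14), pulls it back to a resolution $Z$ of the main component of $Y\times_{Y'}Z'$, and then finds a curve $D_1\subseteq Z$ inside a fiber of $Z\to Z'$ mapping onto $C$; triviality on $D$ is then automatic because the extended bundle is pulled back from $Z'$. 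Your final reduction (finite monodromy implies trivialization on a finite \'etale cover of $\widetilde{C}$, and the line-bundle-twist argument for the ``in particular'') is correct.

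However, the central step of your argument is asserted rather than proved, and that is where all the content lies. First, a minor point: \cite[Theorem 1.5]{GrebKebekusPeternell2016b} is not itself a descent statement for $P$; the descent of $\rho$ to $\pi_1$ of a big open subset of $Y'$ comes from the simply-connected-fibers hypothesis. Second, and more seriously, the claim that $\rho(\pi_1(\widetilde{C}))$ ``factors through the local fundamental group of $Y'$ at $y_0$'' requires an argument, because the descent is only available over $W=Y'_{\mathrm{reg}}\cap V$ and $y_0$ may lie outside $W$. One has to use that, for a small neighbourhood $U\ni y_0$, the map $\pi_1(p^{-1}(U\cap W))\to\pi_1(p^{-1}(U))$ is surjective (the complement is a proper closed analytic subset of the manifold $p^{-1}(U)$) together with $\pi_1(U\cap W)\cong\pi_1(U_{\mathrm{reg}})$, to conclude that $\rho(\pi_1(p^{-1}(U)))$, and hence $\rho(\pi_1(\widetilde{C}))$, is a quotient of $\pi_1(U_{\mathrm{reg}})$. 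Third, the finiteness you then invoke is that of the \emph{topological} local fundamental group of a klt singularity; this is a hard theorem (Braun) that the authors do not use. With only the \'etale version (Xu) one must additionally observe that the image is a finitely generated linear group with finite profinite completion, hence finite by Malcev's residual finiteness---which is exactly the mechanism behind \cite[Theorem 1.14]{GrebKebekusPeternell2016b} that the paper uses as a black box. Filled in along these lines your route does work, but as written these steps are genuine gaps.
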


\begin{proof}
    By \cite[Theorem 1.5]{GrebKebekusPeternell2016b}, there is a quasi-\'etale cover $Z'\to Y'$ such that the algebraic fundamental groups $\hat{\pi}_1(Z'_{\mathrm{reg}})$ and $\hat{\pi}_1(Z')$ are canonically isomorphic. Let $Z$ be a desingularization of the main component of the fiber product $Y\times_{Y'} Z'$.

    \[\begin{tikzcd}[column sep=large, row sep=large]
        Z \rar{h}\dar[swap]{q}   &   Y\dar{p}\\
        Z' \rar   &   Y'
      \end{tikzcd}
    \]

    We note that $P$ is given by a representation of the fundamental group $\pi_1(Y)$ in $\mathrm{PGL}_{d+1}(\mathbb{C})$.
    Since the  fibers of $p$ are simply connected over   $V\subseteq Y'$, such a representation induces a representation of $\pi_1(Y_{\mathrm{reg}})$ in $\mathrm{PGL}_{d+1}(\mathbb{C})$.
    In another word, $P$ induces a flat $\mathbb{P}^d$-bundle $P'$ on the smooth locus of $Y'$.
    By pulling back we obtain a flat $\mathbb{P}^d$-bundle $Q'_1$ on  $Z'_{\mathrm{reg}}$,  given by a representation of $\pi_1(Z'_{\mathrm{reg}})$ in $\mathrm{PGL}_{d+1}(\mathbb{C})$.
    As in \cite[Proof of Theorem 1.14 on page 1990]{GrebKebekusPeternell2016b},   this representation of $\pi_1(Z'_{\mathrm{reg}})$ can descend to a representation  of $\pi_1(Z') $ in $\mathrm{PGL}_{d+1}(\mathbb{C})$.
    Hence there is a flat $\mathbb{P}^d$-bundle $Q'$ on $Z'$ which extends $Q'_1$. We denote by $Q$ the $\mathbb{P}^d$-bundle on $Z$ obtained by pulling back $Q'$. We note that $Q$ is a trivial  $\mathbb{P}^d$-bundle on every fiber of $q$.

    We also remark that the pullback of $P$ by $h$ is  isomorphic to $Q$.  Indeed, by construction, they are isomorphic over some open dense subset $U$ of $Z$. Moreover, the natural morphism $\pi_1(U) \to \pi_1(Z)$ is surjective and  both of these two $\mathbb{P}^d$-bundles are defined by representation of $\pi_1(Z)$ in $\mathrm{PGL}_{d+1}(\mathbb{C})$.

     Let $C$ be a    $p$-exceptional curve. Then there is a   curve $D_1\subseteq Z$ contained in some fiber of $q$ such that  $h(D_1)=C$. Let $D$ be the normalization of $D_1$ and $\mu\colon D\to C$ the induced morphism. Then the pullback of $D\times_C P \cong D\times_{D_1} Q$ is a trivial $\mathbb{P}^d$-bundle  on $D$.
\end{proof}

\vskip 2\baselineskip

\section{Projectivized bundles of vector bundles}

%Checked(Liu)

\label{Examples}

Let $X$ be a complex manifold. A projective bundle $P$ on $X$ is a smooth fibration $\varphi\colon P\rightarrow X$ with fibers isomorphic to  $\bbP^d$ for some $d>0$.
A projective bundle is called \emph{insignificant} if it is isomorphic to the projectivized bundle  $\bbP(\sE)$ of some vector bundle $\sE$. This is equivalent to the existence of a line bundle $\sL$ on $P$ whose restriction on every fiber of $\varphi$ is isomorphic to $\sO_{\bbP^d}(1)$.
If we denote by ${\rm Proj}(X)$ the set of isomorphism classes of projective bundles on $X$, then   there is a map $\delta\colon {\rm Proj}(X)\rightarrow H^2(X,\sO_X^{\times})$  such that
\begin{center}
    $P\in {\rm Proj}(X)$ is insignificant $\Leftrightarrow$ $\delta(P)=1\in H^2(X,\sO_X^{\times}).$
\end{center}
In another word, $\delta(P)$ is the obstruction for $P$ to be insignificant.
The image of $\delta$  is called the \emph{Brauer group} of $X$ (see for instance \cite{Elencwajg1985} for more details).
We recall the following theorem.

\begin{thm}\cite[Theorem~1]{Elencwajg1985}\label{thm:Brauer-group}
    Let $P\rightarrow X$ be a projective bundle on a complex manifold $X$. Then the fiber product $P'=P\times_XP$ is an insignificant projective bundle on $P$.
\end{thm}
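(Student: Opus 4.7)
The plan is to exhibit a line bundle $\sL$ on $P'=P\times_X P$ whose restriction to every fiber of the projection $\pi_2\colon P'\to P$ is isomorphic to $\sO_{\bbP^d}(1)$. Once this is done, setting $\sE := (\pi_2)_*\sL$ will produce a rank-$(d+1)$ vector bundle on $P$ with $P'\cong \bbP(\sE)$, and hence $P'$ will be insignificant in the sense of the discussion preceding the theorem.

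First I would pick an analytic cover $\{U_i\}$ of $X$ on which $\varphi$ trivializes, so that $P_i:=\varphi^{-1}(U_i)\cong \bbP(\cV_i)$ for a rank-$(d+1)$ vector bundle $\cV_i$ on $U_i$, and we have the tautological line bundle $\sO_{P_i}(1)$. On the open subset $P'_i := P_i\times_{U_i} P_i\subseteq P'$ I define
\[
\sL_i := \pi_1^*\sO_{P_i}(1) \otimes \pi_2^*\sO_{P_i}(-1).
\]
The key observation is that, although each individual factor $\sO_{P_i}(1)$ depends on the choice of $\cV_i$ up to tensoring with a line bundle pulled back from $U_i$, this ambiguity cancels in $\sL_i$ thanks to the identity $\varphi\circ\pi_1 = \varphi\circ\pi_2$ on $P'$.

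The crux of the argument is the gluing step. On an overlap $U_{ij}$ one has $\cV_j\cong \cV_i\otimes \sN_{ij}$ for some line bundle $\sN_{ij}$ on $U_{ij}$, inducing $\sO_{P_j}(1)\cong \sO_{P_i}(1)\otimes \varphi^*\sN_{ij}$ on $\varphi^{-1}(U_{ij})$, so that $\sL_j\cong \sL_i\otimes (\varphi\pi_1)^*\sN_{ij}\otimes (\varphi\pi_2)^*\sN_{ij}^{-1}\cong \sL_i$. More subtly, on triple overlaps the chosen isomorphisms $\cV_j\cong \cV_i\otimes \sN_{ij}$ fail to satisfy a cocycle condition by a scalar 2-cocycle $c_{ijk}\in \sO_X^{\times}(U_{ijk})$ representing the Brauer class $\delta(P)$; but the corresponding failure for the $\sL_i$ is the pullback factor $(\varphi\pi_1)^*c_{ijk}\cdot (\varphi\pi_2)^*c_{ijk}^{-1} = 1$, so the transition isomorphisms $\sL_i\cong \sL_j$ do satisfy the cocycle condition and glue to a global line bundle $\sL$ on $P'$. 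The fiberwise computation is then immediate: $\pi_1$ restricts to the canonical isomorphism $\pi_2^{-1}(p)\xrightarrow{\sim}\varphi^{-1}(\varphi(p))\cong \bbP^d$ while $\pi_2$ is constant on $\pi_2^{-1}(p)$, so $\sL|_{\pi_2^{-1}(p)}\cong \sO_{\bbP^d}(1)$. The main obstacle is thus the triple-overlap verification, which geometrically amounts to showing that the Brauer class $\delta(P)$ is killed under pullback along $\varphi$ to $P$, and which is realized concretely by the symmetry $\varphi\pi_1 = \varphi\pi_2$.
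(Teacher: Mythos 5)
The paper does not prove this statement; it is quoted from Elencwajg \cite[Theorem~1]{Elencwajg1985} without proof, so there is no internal argument to compare against. Your proof is correct and is essentially the standard (Elencwajg-style) \v{C}ech argument: locally write $P\vert_{U_i}\cong\bbP(\cV_i)$, observe that the gluing obstruction for the tautological bundles is a scalar $2$-cocycle $c_{ijk}$ representing $\delta(P)$, and note that on $P'=P\times_XP$ the two pullbacks $\pi_1^*\sO_{P_i}(1)$ and $\pi_2^*\sO_{P_i}(1)$ carry the \emph{same} obstruction cocycle because $\varphi\circ\pi_1=\varphi\circ\pi_2$, so their ``difference'' $\sL_i=\pi_1^*\sO_{P_i}(1)\otimes\pi_2^*\sO_{P_i}(-1)$ glues to a global line bundle restricting to $\sO_{\bbP^d}(1)$ on each fiber of $\pi_2$. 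The only implicit ingredients worth flagging are the local triviality of the fibration over a suitable cover (Fischer--Grauert) and the fact that an isomorphism $\bbP(\cV_i)\cong\bbP(\cV_j)$ forces $\cV_j\cong\cV_i\otimes\sN_{ij}$ for some line bundle $\sN_{ij}$ on $U_{ij}$; both are standard, and with them your argument is complete.
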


%\begin{proof}
%   Let $[P]$ be the obstruction class of $P$ in $H^2(X,\sO_X^{\times})$. Then \cite[Theorem 1]{Elencwajg1985} says that the induced obstruction class $[P']$ in $H^2(P,\sO_X^{\times})$ is trivial in $\Br'(P)$. In particular, $[P']$ is trivial in $\Br(P)$; that is, $P'$ is an insignificant projective bundle over $P$.
%\end{proof}

In this section, we   study insignificant projective bundles $X=\mathbb{P}(\sE)$ whose tangent bundle $T_X$ contains a strictly nef subsheaf $\sF$ and prove a structure theorem of such couples $(\mathbb{P}(\sE), \sF)$ in the first subsection and then provide some examples  in the second subsection.

\subsection{Strictly nef subsheaves of $T_{\mathbb{P}(\sE)}$}

We first  prove the following theorem, which is an analogue of \cite[Lemma 1.2]{CampanaPeternell1998}. It classifies all almost nef locally free subsheaves of the tangent bundle of an insignificant projective bundle provided stronger positivity of the restrictions to fibers.

%Together with the previous lemma,  we obtain a classification of strictily nef locally free subsheaf  of  $T_{\mathbb{P}(\sE)}$.

\begin{thm}
    \label{Proj-bundle}
    Let $Y$ be a projective manifold of dimension $m>0$ and let $\sE$ be a vector bundle of rank $d+1$ on $Y$. Denote by $X$ the projective bundle $\bbP(\sE)$ and by $p\colon X\rightarrow Y$ the natural projection.
    Assume that there is an almost nef locally free   subsheaf $\sF \subseteq T_{X}$ of rank $r$ such that its negative locus $\bbS(\sF)$ does not dominate $Y$.
    \begin{enumerate}
        \item If $\sF\vert_F=T_F$ for a general fiber $F$ of $p$, then $\sE$ is numerically projectively flat  and $\sF=T_{X/Y}$.

        \item If $\sF\vert_F\cong  \sO_{\bbP^n}(1)^{\oplus r}$ for each fiber $F$ of $p$, then there exists a numerically projectively flat subbundle $\sM$ of $\sE^*$ such that $\sF\cong  p^*\sM\otimes\sO_{\bbP(\sE)}(1)$.
    \end{enumerate}
\end{thm}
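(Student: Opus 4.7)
Both cases begin with the key observation that $\sF\subseteq T_{X/Y}$. From the relative tangent sequence $0\to T_{X/Y}\to T_X\to p^*T_Y\to 0$, the composition $\sF\hookrightarrow T_X\to p^*T_Y$ restricted to a general fiber $F\cong\bbP^d$ yields a morphism of type $T_{\bbP^d}\to\cO_F^{\oplus\dim Y}$ in Case~(1) and $\cO_F(1)^{\oplus r}\to\cO_F^{\oplus\dim Y}$ in Case~(2); both vanish because $H^0(\bbP^d,\Omega_{\bbP^d})=0=H^0(\bbP^d,\cO_{\bbP^d}(-1))$. As a global section of $\sHom(\sF,p^*T_Y)$ vanishing on the dense open of general fibers, the composition vanishes identically, and $\sF\subseteq T_{X/Y}$ follows.

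In Case~(2), the bundle $\sF\otimes\cO_X(-1)$ is trivial on every fiber of $p$, so cohomology and base change gives that $\sM\defeq p_*(\sF\otimes\cO_X(-1))$ is locally free of rank $r$ on $Y$ with canonical isomorphism $p^*\sM\cong\sF\otimes\cO_X(-1)$, i.e.\ $\sF\cong p^*\sM\otimes\cO_X(1)$. Twisting $\sF\subseteq T_{X/Y}$ by $\cO_X(-1)$ gives $p^*\sM\subseteq T_{X/Y}(-1)$; applying $p_*$ to the twisted Euler sequence
\[
0\to\cO_X(-1)\to p^*\sE^*\to T_{X/Y}(-1)\to 0,
\]
and using $p_*\cO_X(-1)=R^1p_*\cO_X(-1)=0$, identifies $p_*T_{X/Y}(-1)$ with $\sE^*$; pushing forward then yields $\sM\hookrightarrow\sE^*$.

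The core step is to apply Theorem~\ref{Num-Projectily-flatness-criterion} to conclude numerical projective flatness, with the twist $\delta$ chosen to kill $c_1$. My plan is to derive the required almost nefness of the $\bbQ$-twisted sheaf on $Y$ from that of $\sF$ on $X$ via a section-lifting argument: any curve $C'$ in the relevant projectivisation over $Y$ is lifted through Tsen's theorem to a section of the fiber-product bundle with $X$, chosen outside $\bbS(\zeta_\sF)$ thanks to the hypothesis that $\bbS(\sF)$ does not dominate $Y$. The identity $\zeta_\sF=q^*\zeta_\sM+\pi^*H$ (or its Case~(1) analogue coming from the Euler sequence) converts $\zeta_\sF\cdot C\geq 0$ into an inequality on $C'$; selecting sections that minimise $H\cdot\pi(C)$ and exploiting slope estimates for quotient line bundles of $\sE|_D$ on the image curve $D\subseteq Y$ provides exactly the required bound, after bootstrapping the semistability of $\sE$ on generic complete intersections via Mehta--Ramanathan.

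To complete Case~(1), the equality $\sF=T_{X/Y}$ remains. Since $\sF$ and $T_{X/Y}$ have the same rank $d$ and agree on general fibers, $T_{X/Y}/\sF$ is torsion with support a divisor which, by a dimension count, must be of the form $E=p^*E_Y$ for some effective divisor $E_Y$ on $Y$. Using the just-established numerical projective flatness of $\sE$, the class $\det T_{X/Y}=(d+1)\Lambda_\sE$ is a multiple of the nef class $\Lambda_\sE$; comparing this with the pseudo-effectivity of $\det\sF=\det T_{X/Y}-p^*E_Y$ (since $\sF$ is almost nef) and testing against suitable movable curves will force $E_Y$ to be numerically trivial, whence $E_Y=0$ as an effective divisor. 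I anticipate the principal technical obstacles to be the slope-estimate bootstrap in the section-lifting argument and the careful avoidance of the countable bad locus $\bbS(\sF)$ throughout.
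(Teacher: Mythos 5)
Your proposal has the right skeleton (reduce to $\sF\subseteq T_{X/Y}$, extract $\sM=p_*(\sF\otimes\sO_X(-1))$ and the map $\sM\to\sE^*$ via the twisted Euler sequence, invoke Theorem \ref{Num-Projectily-flatness-criterion}, and finish Case (1) by a determinant comparison), and the opening and closing steps match the paper's. But the central step is not proved: you only describe a ``plan'' to establish the almost nefness of the $\bbQ$-twisted sheaf on $Y$ via Tsen-type section lifting, slope estimates, and a Mehta--Ramanathan bootstrap, and you yourself flag unresolved ``principal technical obstacles'' there. That step is the heart of the theorem, so as written the argument is incomplete. Moreover, the proposed route overlooks that the ``relevant projectivisation over $Y$'' is $X=\bbP(\sE)$ itself: the almost nefness of $\sF$ gives, via the formal calculus of Proposition \ref{Almost-nef-properties} and the Euler sequence, that the divisor class $\det\bigl(p^*\sE^*\otimes\sO_{\bbP(\sE)}(1)\bigr)=(d+1)\Lambda_{\sE}$ (Case (1)), resp. $\det\sF=r(\zeta_{\sE}+p^*\mu(\sM))$ (Case (2)), is almost nef on $X$ with negative locus not dominating $Y$; Proposition \ref{Almost-nef-properties}(3) then descends this directly to almost nefness of $\sE\hspace{-0.8ex}<\hspace{-0.8ex}-\mu(\sE)\hspace{-0.8ex}>$, resp. $\sE\hspace{-0.8ex}<\hspace{-0.8ex}\mu(\sM)\hspace{-0.8ex}>$ and hence of its quotient $\sM^*\hspace{-0.8ex}<\hspace{-0.8ex}\mu(\sM)\hspace{-0.8ex}>$, on $Y$. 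No section lifting or independent semistability bootstrap is needed (semistability comes for free inside Theorem \ref{Num-Projectily-flatness-criterion}).

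Two further points. In Case (2) the theorem asserts that $\sM$ is a \emph{subbundle} of $\sE^*$; your pushforward argument only produces an injection of sheaves, and you never address the subbundle property. The paper needs a nontrivial argument here: one shows $\mu_A(\sM)$ equals the maximal slope of $\sE^*$, deduces that $\sM$ coincides with its saturation via \cite[Lemma 1.20]{DemaillyPeternellSchneider1994}, and then restricts to general curves through an arbitrary point to upgrade ``saturated'' to ``subbundle''. In Case (1), your claim that $\supp(T_{X/Y}/\sF)$ ``must be a divisor of the form $p^*E_Y$ by a dimension count'' is not quite right (the support can have higher codimension); what one actually uses is that $\det T_{X/Y}\otimes\det\sF^{-1}=\sO_X(D)$ with $D$ effective and relatively numerically trivial, hence $D=p^*\sL$ since the relative Picard rank is one, after which almost nefness of $c_1(\sF)$ forces $\sL^*$ pseudoeffective and $D=0$; one then still needs \cite[Lemma 1.20]{DemaillyPeternellSchneider1994} to pass from $\det\sF=\det T_{X/Y}$ to $\sF=T_{X/Y}$.
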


\begin{proof}
	By our assumption, in both cases, the restriction of $\sF$ to a general fiber $F$ of $p$ is ample. Therefore $\sF\vert_F$ is contained in $T_F$ for general $F$. Consequently, $\sF$ is contained in the relative tangent bundle $T_{X/Y}$.
	
    \textit{Proof of (1).} Let $S$ be the support of the torsion sheaf $T_{X/Y}/\sF$. Then $S$ is a closed  subvariety of $X$ which does not dominate $Y$. Consider the relative Euler sequence
    \begin{equation}
    \label{eq:relative-euler-sequence-proof-PF}
    0\rightarrow \sO_X\rightarrow p^*\sE^*\otimes\sO_{\bbP(\sE)}(1)\rightarrow T_{X/Y}\rightarrow 0.
    \end{equation}
    Since the inclusion $\sF\hookrightarrow T_{X/Y}$ is generically surjective and since $\sF$ is almost nef, by Proposition \ref{Almost-nef-properties} (1), $T_{X/Y}$ is almost nef and $\bbS(T_{X/Y})\subseteq S\cup\bbS(\sF)$.
    Then it follows from Proposition \ref{Almost-nef-properties} (4) that the vector bundle $p^*\sE^*\otimes\sO_{\bbP(\sE)}(1)$ is almost nef with negative locus contained in $S\cup \bbS(\sF)$. In particular, from Proposition \ref{Almost-nef-properties} (2), its determinant
    \[\det(p^*\sE^*\otimes\sO_{\bbP(\sE)}(1))=\sO_{\bbP(\sE)}(d+1)\otimes p^*\det(\sE^*)\]
    is almost nef with negative locus contained in $S\cup \bbS(\sF)$.
    This implies that the normalized tautological class $\Lambda_{\sE}$ is almost nef with $\bbS(\Lambda_{\sE})\subseteq S\cup\bbS(\sF)$.
    Since   $S\cup \bbS(\sF)$ does not dominate $Y$, the $\bbQ$-twisted vector bundle $\sE\hspace{-0.8ex}<\hspace{-0.8ex}-\mu(\sE) \hspace{-0.8ex}>$ is almost nef by Proposition \ref{Almost-nef-properties} (3), where $\mu(\sE)$ is the average first Chern class of $\sE$.
    Hence $\sE$ is numerically projectively flat by Theorem \ref{Num-Projectily-flatness-criterion}.

    %As $\sE$ is numerically projectively flat, so is $\sE^*$. By Lemma \ref{lemma:prop-num-proj-flat}, $p^*\sE^*\otimes \sO_{\bbP(\sE)}(1)$ is also numerically projectively flat on $\bbP(\sE)$. Moreover, by definition, $$\det(p^*\sE^*\otimes \sO_{\bbP(\sE)}(1))\cong \sO_{\bbP(\sE)}(r+1)\otimes p^*\det(\sE^*)$$ is nef. Thus $p^*\sE^*\otimes \sO_{\bbP(\sE)}(1)$ is nef by Lemma \ref{lemma:prop-num-proj-flat} (1). As nefness is preserved by quotient, $\sF$ itself is nef.

    Next we show that $\sF=T_{X/Y}$.   The induced morphism $\det(\sF)\rightarrow \det(T_{X/Y})$ implies that there exists an effective divisor $D$ such that $\det(T_{X/Y})\cong  \det(\sF)\otimes\sO_X(D)$.
    We note that $$c_1(T_{X/Y})-c_1(\sF) \equiv_p 0, $$ since the relative Picard number of $X$ over $Y$ is one.
    Hence there exists a line bundle $\sL$ on $Y$ such that $p^*\sL=\sO_X(D)$.
    By the relative Euler sequence \eqref{eq:relative-euler-sequence-proof-PF},  we have
    \[(d+1)\Lambda_{\sE}-c_1(p^*\sL)=c_1(T_{X/Y})-c_1(\sO_X(D))=c_1(\sF),\]
    which is almost nef since $\sF$ is. Moreover, since its negative locus is contained in $\bbS(\sF)$, the $\bbQ$-twisted vector bundle $(\sE\otimes \sL^*)\hspace{-0.8ex}<\hspace{-0.8ex} -\mu(\sE)\hspace{-0.8ex}>$ is almost nef. Then taking the determinant shows that $\sL^*$ is pseudoeffective.
    This implies that   $D=0$ and $\det(\sF)=\det(T_{X/Y})$. Since $\sF$ and $T_{X/Y}$ are vector bundles of the same rank, by \cite[Lemma 1.20]{DemaillyPeternellSchneider1994}, we have $\sF=T_{X/Y}$.\\

    \textit{Proof of (2).} The existence of $\sM \hookrightarrow \sE^*$ follows from Lemma \ref{lemma:factorization} below.
     Since $\sF$ is almost nef, by Proposition \ref{Almost-nef-properties} (2), the determinant
    \[\det(\sF)=p^*\det(\sM)\otimes\sO_{\bbP(\sE)}(r)\]
    is almost nef on $\bbP(\sE)$ with negative locus contained in $\bbS(\sF)$. So the $\bbQ$-twisted vector bundle $\sE\hspace{-0.8ex}<\hspace{-0.8ex} \mu(\sM) \hspace{-0.8ex}>$ is almost nef (see Proposition \ref{Almost-nef-properties} (3)). Since the natural morphism $\sE\rightarrow \sM^*$ is generically surjective, by Proposition \ref{Almost-nef-properties} (1), the $\bbQ$-twisted vector bundle $\sM^*\hspace{-0.8ex}<\hspace{-0.8ex} \mu(\sM) \hspace{-0.8ex}>$ is almost nef as well. Hence, it follows from Theorem \ref{Num-Projectily-flatness-criterion} that $\sM^*$ is numerically projectively flat, and so is $\sM$ by Lemma \ref{lemma:prop-num-proj-flat}.

    Next we  show that $\sM$ is saturated in $\sE^*$. Fix an ample divisor $A$ on $X$ and let $\sG$ be the maximal  destabilizing subsheaf of $\sE^*$ with respect to $A$. Since $\sM$ is numerically projectively flat, both $\sM^*$ and $\sM$ are $A$-semistable by Theorem \ref{thm:num-proj-flat-criterion-Nakayama}. In particular, we have
    \[\mu_{A}^{\min}(\sM)=\mu_A(\sM).\]
    On the other hand, since $\sM$ is a subsheaf of $\sE^*$, we have
    \[\mu_A(\sM)\leqslant \mu_A^{\max}(\sE^*)=\mu_A(\sG).\]
    Recall that the $\bbQ$-twisted vector bundle $\sE\hspace{-0.8ex}<\hspace{-0.8ex} \mu(\sM) \hspace{-0.8ex}>$ is almost nef.
    Since the  natural morphism $\sE\rightarrow \sG^*$ is generically surjective, by Proposition \ref{Almost-nef-properties} (1),   the $\bbQ$-twisted  sheaf $\sG^*\hspace{-0.8ex}<\hspace{-0.8ex} \mu(\sM) \hspace{-0.8ex}>$ is almost nef. Thus the $\bbQ$-Cartier divisor class
    \[
    c_1(\sG^*)+\rk(\sG^*)\mu(\sM)
    \]
    is pseudoeffective on $Y$.  It yields
    $$
    (c_1(\sG^*)+\rk(\sG^*)\mu(\sM))\cdot A^{n-1} \geqslant 0.
    $$  
    This implies
    \[
    \mu_A(\sM)=\mu(\sM)\cdot A^{n-1}\geqslant -\frac{1}{\rk(\sG^*)}c_1(\sG^*)\cdot A^{n-1}= -\mu_A(\sG^*)=\mu_A(\sG).
    \]
    Hence, we have $\mu_A(\sM)=\mu_A(\sG)$. By the definition of $\sG$, we see that $\sM$ is contained in $\sG$. Let $\overline{\sM}$ be the saturation of $\sM$ in $\sE^*$. Then  $\overline{\sM}$ is   contained in $\sG$. Moreover, as $\mu_A(\sM)\leqslant \mu_A(\overline{\sM})\leqslant \mu_A(\sG)$, we deduce that
    \[\mu_A(\sM)=\mu_A(\overline{\sM}).\]
    This implies that  the inclusion $\det(\sM)\rightarrow \det(\overline{\sM})$ is an isomorphism. Then   applying \cite[Lemma 1.20]{DemaillyPeternellSchneider1994} shows that $\sM$ is saturated in $\sE^*$.

    Finally, we choose an arbitrary point $x\in Y$. Let $C\subseteq Y$ be a general complete smooth curve passing through $x$. Then $\sM\vert_C$ is again a subsheaf of $\sE^*\vert_C$.
     Now replacing $Y$ by $C$ in the argument above, we can conclude that $\sM\vert_C$ is saturated in $\sE^*\vert_C$.
     Since $C$ is a curve, $\sM\vert_C$ is actually a subbundle of $\sE^*\vert_C$. This shows that  $\sM$ is a subbundle of $\sE^*$.
\end{proof}

\begin{lemma}\label{lemma:factorization}
Let $Y$ be a smooth quasi-projective variety and $\sE$ a vector bundle on $Y$. Let  $X=\mathbb{P}(\sE)$ and  let $p\colon X\to Y$ be the natural fibration. Assume that there is a locally free subsheaf $\sF \hookrightarrow T_{X/Y}$ such that $\sF|_F\cong \sO_{F}(1)^{\oplus r}$ for any fiber $F$ of $p$. Then there is a  vector bundle $\sM$ on $Y$ such that $\sF = p^*\sM\otimes \sO_{\mathbb{P}(\sE)}(1)$. Moreover, there is an injective morphism $\sM \hookrightarrow \sE^*$ which induces the inclusion $\sF \hookrightarrow T_{X/Y}$ via the relative Euler sequence.
\end{lemma}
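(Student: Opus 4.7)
The proof will be a routine push-forward/base-change argument; the relative Euler sequence does most of the work.

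First I would set $\sG \defeq \sF\otimes\sO_{\mathbb{P}(\sE)}(-1)$. By hypothesis, for every fiber $F$ of $p$ one has $\sG|_F\cong \sO_F^{\oplus r}$, so $h^0(F,\sG|_F)=r$ and $h^i(F,\sG|_F)=0$ for all $i>0$. By Grauert's theorem (cohomology and base change), the direct image $\sM\defeq p_*\sG$ is a locally free sheaf of rank $r$ on $Y$, and the natural map $p^*\sM\to \sG$ is surjective. Since both are vector bundles of the same rank $r$ on $X$, it must be an isomorphism. Twisting back by $\sO_{\mathbb{P}(\sE)}(1)$ gives the desired identification $\sF\cong p^*\sM\otimes\sO_{\mathbb{P}(\sE)}(1)$.

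Next, tensoring the relative Euler sequence
\[
0\longrightarrow \sO_X\longrightarrow p^*\sE^*\otimes\sO_{\mathbb{P}(\sE)}(1)\longrightarrow T_{X/Y}\longrightarrow 0
\]
with $\sO_{\mathbb{P}(\sE)}(-1)$ yields
\[
0\longrightarrow \sO_{\mathbb{P}(\sE)}(-1)\longrightarrow p^*\sE^*\longrightarrow T_{X/Y}\otimes\sO_{\mathbb{P}(\sE)}(-1)\longrightarrow 0.
\]
Applying $p_*$ and using $p_*\sO_{\mathbb{P}(\sE)}(-1)=0$ together with $R^1p_*\sO_{\mathbb{P}(\sE)}(-1)=0$ (since $H^1(\mathbb{P}^d,\sO(-1))=0$), I obtain an isomorphism $\sE^*\xrightarrow{\sim}p_*\bigl(T_{X/Y}\otimes\sO_{\mathbb{P}(\sE)}(-1)\bigr)$.

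Now apply $p_*$ to the inclusion $p^*\sM\hookrightarrow T_{X/Y}\otimes\sO_{\mathbb{P}(\sE)}(-1)$ coming from $\sF\subseteq T_{X/Y}$ after twisting by $\sO_{\mathbb{P}(\sE)}(-1)$. By left-exactness of $p_*$ and the projection formula $p_*p^*\sM\cong\sM$, this produces an injective morphism $\sM\hookrightarrow\sE^*$. Pulling back this morphism, tensoring with $\sO_{\mathbb{P}(\sE)}(1)$, and composing with $p^*\sE^*\otimes\sO_{\mathbb{P}(\sE)}(1)\twoheadrightarrow T_{X/Y}$ recovers the original inclusion $\sF\hookrightarrow T_{X/Y}$ by naturality of adjunction. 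No step should present any real obstacle; the only mild care required is verifying the base-change hypotheses for $p_*\sG$, which reduce to cohomology computations on $\mathbb{P}^d$.
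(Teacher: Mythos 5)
Your proof is correct, and its first half (defining $\sM=p_*(\sF\otimes\sO_{\mathbb{P}(\sE)}(-1))$ via Grauert and checking $p^*\sM\xrightarrow{\sim}\sF\otimes\sO_{\mathbb{P}(\sE)}(-1)$) coincides with the paper's, which simply asserts this step. For the second half you diverge slightly: the paper tensors the Euler sequence with $\sF^*$, uses $R^ip_*\sF^*=\sM^*\otimes R^ip_*\sO_{\mathbb{P}(\sE)}(-1)=0$ and Leray to get $H^1(X,\sF^*)=\Ext^1(\sF,\sO_X)=0$, lifts the inclusion $\sF\hookrightarrow T_{X/Y}$ through the Euler surjection to a map $\sF\to p^*\sE^*\otimes\sO_{\mathbb{P}(\sE)}(1)$ on $X$, and only then pushes down. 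You instead twist the Euler sequence by $\sO_{\mathbb{P}(\sE)}(-1)$, identify $\sE^*\cong p_*(T_{X/Y}\otimes\sO_{\mathbb{P}(\sE)}(-1))$, and push the inclusion down directly, recovering the lift afterwards from the triangle identity for the adjunction $p^*\dashv p_*$. The two arguments are two faces of the same computation (both rest on $R^ip_*\sO_{\mathbb{P}(\sE)}(-1)=0$), but yours is marginally cleaner in that it never needs a global $\Ext^1$-vanishing on $X$ and makes the compatibility with the Euler sequence an instance of adjunction naturality rather than something to be checked after an abstract lifting; the paper's version has the mild advantage of exhibiting the lift $\sF\hookrightarrow p^*\sE^*\otimes\sO_{\mathbb{P}(\sE)}(1)$ explicitly on $X$, which is the form used later in the text.
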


\begin{proof}
Set $\sM=p_*(\sF \otimes \sO_{\mathbb{P}(\sE)}(-1))$. Then $\sF = p^*\sM\otimes \sO_{\bbP(\sE)}(1).$
Tensoring the relative Euler sequence of $\bbP(\sE)$ with $\sF^*$, we  derive the following short exact sequence of vector bundles
    \[0\rightarrow \sF^*\rightarrow p^*\sE^*\otimes\sO_{\bbP(\sE)}(1)\otimes\sF^*\rightarrow T_{X/Y}\otimes\sF^*\rightarrow 0.\]
    For any $i\geqslant 0$, we have
    \[R^i p_*\sF^*=\sM^*\otimes R^i p_*\sO_{\bbP(\sE)}(-1)=0.\]
    Thus Leray Spectral sequence   implies that  $H^1(X,\sF^*)=0.$
    As a consequence,  the inclusion $\sF\hookrightarrow T_{X/Y}$ can be lifted to an inclusion $\sF\hookrightarrow p^*\sE^*\otimes\sO_{\bbP(\sE)}(1)$.
    We then obtain an  injective morphism $p^*\sM \hookrightarrow p^*\sE^*$. By taking the direct image, we obtain an inclusion  $\sM\hookrightarrow \sE^*$.
\end{proof}

\begin{lemma}\phantomsection\label{Pn} 	
    \begin{enumerate}
    	\item Let $\sF$ be a strictly nef vector bundle of rank $r$ on $\bbP^n$. Assume that there exists an injective morphism $\sF\hookrightarrow T_{\bbP^n}$. Then $\sF$ is isomorphic to $\sO_{\bbP^n}(1)^{\oplus r}$ or $T_{\bbP^n}$.
    	
    	\item Let $p\colon X \rightarrow Y$  be a projective bundle. Let $\sF$ be a vector bundle on $X$  which is strictly nef on every fiber of $p$. Assume further that $\sF|_F \cong \sO_{F}(1)^{\oplus r}$ for a fiber $F$ of $p$. Then for every fiber $F'$ of $p$, we have $\sF|_{F'} \cong \sO_{F'}(1)^{\oplus r}$.
    \end{enumerate}
  
\end{lemma}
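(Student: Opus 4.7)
My plan for part (1) is to analyze $\sF$ on lines in $\bbP^n$. For any line $\ell\subseteq\bbP^n$, I would apply Grothendieck's theorem to write $\sF|_\ell = \bigoplus_{i=1}^r \sO_\ell(a_i)$. Strict nefness forces every $a_i\geqslant 1$: a summand $\sO_\ell(a_j)$ with $a_j\leqslant 0$ produces a section of $\bbP(\sF|_\ell)\subseteq\bbP(\sF)$ on which the tautological class has non-positive degree, contradicting strict nefness. Since $\mathrm{Pic}(\bbP^n)=\bbZ$, the integer $d \defeq c_1(\sF)\cdot\ell = \sum a_i$ does not depend on $\ell$. For a general line $\ell$, the inclusion $\sF|_\ell\hookrightarrow T_{\bbP^n}|_\ell = \sO_\ell(2)\oplus\sO_\ell(1)^{\oplus n-1}$ is fiberwise injective, and together with $\mathrm{Hom}(\sO(b),\sO(1))=0$ for $b\geqslant 2$ this forces every $a_i\leqslant 2$ with at most one $a_i=2$. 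Thus $d\in\{r,r+1\}$, and combined with $a_i\geqslant 1$ and $\sum a_i=d$, the splitting type is uniform: $(1,\ldots,1)$ if $d=r$, and $(2,1,\ldots,1)$ if $d=r+1$, on every line.

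When $d=r$, the bundle $\sF\otimes\sO(-1)$ has trivial restriction to every line, and I would invoke the classical theorem on uniform bundles with constant splitting type (Van de Ven; see Okonek--Schneider--Spindler, \emph{Vector bundles on complex projective spaces}, Prop.~3.2.1) to conclude $\sF\otimes\sO(-1)\cong\sO_{\bbP^n}^{\oplus r}$, hence $\sF\cong\sO_{\bbP^n}(1)^{\oplus r}$. When $d=r+1$, picking any general line $\ell$, the $\sO_\ell(2)$-summand of $\sF|_\ell$ admits no nonzero morphism into the $\sO_\ell(1)^{\oplus n-1}$-factor of $T_{\bbP^n}|_\ell$, so it embeds isomorphically onto the tangent summand $T_\ell\subseteq T_{\bbP^n}|_\ell$. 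Hence $T_\ell\subseteq\sF|_\ell$ as subsheaves of $T_{\bbP^n}|_\ell$. At a general point $x\in\bbP^n$, the fiber $\sF_x$ then contains the tangent direction $T_{\ell,x}$ of every general line $\ell$ through $x$; these span $T_{\bbP^n,x}$, forcing $\sF_x = T_{\bbP^n,x}$ and $r=n$. Finally, $\sF\hookrightarrow T_{\bbP^n}$ with equal rank and equal first Chern class $(n+1)H$ gives $\sF = T_{\bbP^n}$ via \cite[Lemma 1.20]{DemaillyPeternellSchneider1994}.

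For part (2), I would use that the relative Picard number of the projective bundle $p\colon X\to Y$ is one, so any two lines contained in any two fibers of $p$ are numerically proportional in $X$. Hence $c_1(\sF)\cdot\ell$ is independent of which line $\ell$ in which fiber is chosen, and the hypothesis $\sF|_F\cong\sO_F(1)^{\oplus r}$ pins this constant to be $r$. For any fiber $F'\cong\bbP^d$ and any line $\ell'\subseteq F'$, Grothendieck's splitting combined with strict nefness of $\sF|_{F'}$ and $\sum a_i=r$ gives $\sF|_{F'}|_{\ell'}\cong\sO_{\ell'}(1)^{\oplus r}$. So $\sF|_{F'}$ is uniform of splitting type $(1,\ldots,1)$ on $\bbP^d$, and the same Van de Ven classification yields $\sF|_{F'}\cong\sO_{F'}(1)^{\oplus r}$; the case $d=1$ is automatic from Grothendieck alone.

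The main subtlety will be the clean invocation of the classification of uniform vector bundles of constant splitting type in arbitrary rank, which is standard but not immediate. A minor technical point in part (1) is that the restriction of $\sF\hookrightarrow T_{\bbP^n}$ to a line may fail to be fiberwise injective if $\sF$ is not saturated, but this only affects a proper subvariety of $\bbP^n$ and the general-point tangent-span argument still forces $\sF=T_{\bbP^n}$.
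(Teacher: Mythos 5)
Your proposal is correct, and part (2) is essentially the paper's own argument: numerical proportionality of lines in fibers pins the degree to $r$, strict nefness plus Grothendieck forces splitting type $(1,\dots,1)$ on every line, and the uniform-bundle statement (which the paper quotes as \cite[Proposition 1.2]{AndreattaWisniewski2001} and you quote as Okonek--Schneider--Spindler 3.2.1 after a twist -- the same classical fact) finishes. Part (1) is where you diverge: the paper disposes of it in two lines by observing that a strictly nef bundle on a rational curve is ample, so $\sF$ is ample on every line, and then citing \cite[Theorem 4.2]{AproduKebekusPeternell2008} for the dichotomy $\sO_{\bbP^n}(1)^{\oplus r}$ versus $T_{\bbP^n}$. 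You instead re-derive that dichotomy from scratch: bounding the splitting type from below by strict nefness and from above by the sheaf injection into $T_{\bbP^n}|_\ell\cong\sO_\ell(2)\oplus\sO_\ell(1)^{\oplus(n-1)}$ on a general line, splitting into the cases $c_1(\sF)\cdot\ell=r$ (trivial after twist, hence $\sO(1)^{\oplus r}$) and $c_1(\sF)\cdot\ell=r+1$ (where the $\sO(2)$-summand must hit $T_\ell$, the tangent directions of general lines through a general point span, forcing $r=n$ and then equality via \cite[Lemma 1.20]{DemaillyPeternellSchneider1994}). This is in effect a self-contained proof of the special case of the Aprodu--Kebekus--Peternell theorem that is needed; it costs more work but removes a black box. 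Two small points of care: your appeal to ``the classification of uniform bundles of constant splitting type in arbitrary rank'' should be read narrowly -- only the splitting type $(a,\dots,a)$ case (a twist of a bundle trivial on all lines) is classical and is all you use; and the saturation issue you flag at the end is genuine but harmless exactly as you say, since strict nefness gives $a_i\geqslant 1$ on \emph{every} line while the upper bound and the spanning argument only need general lines and general points.
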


\begin{proof}
    For (1), thanks to \cite[Theorem 4.2]{AproduKebekusPeternell2008}, it suffices to show that the restriction $\sF\vert_{\ell}$ on an arbitrary  line $\ell \subseteq \bbP^n$ is ample. Since  $\ell$ is a rational curve, the strictly nef bundle $\sF\vert_{\ell}$ must be ample.
    
    For (2), let $F'$ be an arbitrary fiber of $p$ and $C$ be a  line in $F'$.
Then $\sF|_C$ is a strictly nef vector bundle of rank $r$. Moreover, by assumption the degree of $\sF|_C$ is equal to $r$.
Hence we obtain $\sF|_C \cong \sO_{C}(1)^{\oplus r}$. As $C$ is arbitrary, this implies that $\sF|_{F'} \cong \sO_{F'}(1)^{\oplus r}$ by \cite[Proposition 1.2]{AndreattaWisniewski2001}.
\end{proof}

\begin{remark}
	\label{remark:strictly-nef}
	By the lemma above, if we assume that $\sF$ is strictly nef in Theorem \ref{Proj-bundle}, then  either $\sF\vert_F=T_F$ for a general fiber $F$ of $p$, or $\sF\vert_F\cong  \sO_{\bbP^n}(1)^{\oplus r}$ for any fiber $F$ of $p$. 
\end{remark}

As an immediate corollary of Theorem \ref{Proj-bundle}, we obtain  the following result.

\begin{cor}\label{cor:proj-vector-bundle-p1-base}
Let $\sE$ be a vector bundle on $\mathbb{P}^1$. Then the tangent bundle $T_{\mathbb{P}(\sE)}$ does not contain any strictly nef locally free subsheaves.
\end{cor}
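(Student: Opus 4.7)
The plan is to argue by contradiction: suppose $\sF \subseteq T_X$ is a strictly nef locally free subsheaf, where $X = \mathbb{P}(\sE)$ and $p\colon X \to \mathbb{P}^1$ is the projection (we may assume $\operatorname{rk}(\sE) = d+1 \geq 2$ so that the fibers of $p$ are positive-dimensional). Since strict nefness implies almost nefness with empty negative locus, Theorem \ref{Proj-bundle} applies, and by Remark \ref{remark:strictly-nef} we fall into one of its two cases. The underlying fact that powers both cases is that a numerically projectively flat vector bundle on $\mathbb{P}^1$ must be of the form $\sO(a)^{\oplus n}$: by Grothendieck's splitting theorem it decomposes as $\bigoplus \sO(a_i)$, and the semistability supplied by Theorem \ref{thm:num-proj-flat-criterion-Nakayama} forces all $a_i$ to coincide.

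In Case 1 we have $\sF = T_{X/\mathbb{P}^1}$ with $\sE$ numerically projectively flat, hence $\sE \cong \sO(b)^{\oplus (d+1)}$ and $X \cong \mathbb{P}^1 \times \mathbb{P}^d$. Then $T_{X/\mathbb{P}^1}$ is the pullback of $T_{\mathbb{P}^d}$ via the second projection, so its restriction to a curve of the form $C = \mathbb{P}^1 \times \{y\}$ is the trivial bundle $\sO_C^{\oplus d}$. This provides a degree-zero quotient line bundle of $\sF|_C$, violating strict nefness via the Barton--Kleiman criterion (Proposition \ref{BK criterion}).

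In Case 2 we have $\sF \cong p^*\sM \otimes \sO_{\mathbb{P}(\sE)}(1)$ with $\sM \hookrightarrow \sE^*$ numerically projectively flat; by the same splitting argument $\sM \cong \sO(a)^{\oplus r}$. Dualizing this saturated inclusion yields a surjection $\sE \twoheadrightarrow \sO(-a)^{\oplus r}$, and composing with projection onto the first factor produces a surjection $\sE \twoheadrightarrow \sO(-a)$. This corresponds to a section $\sigma\colon \mathbb{P}^1 \to X$ of $p$ with $\sigma^*\sO_{\mathbb{P}(\sE)}(1) \cong \sO(-a)$, so on $C = \sigma(\mathbb{P}^1)$ the contributions $p^*\sO(a)|_C = \sO_C(a)$ and $\sO_{\mathbb{P}(\sE)}(1)|_C = \sO_C(-a)$ cancel and $\sF|_C \cong \sO_C^{\oplus r}$, once again contradicting strict nefness. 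The only delicate point is verifying the classification of numerically projectively flat bundles on $\mathbb{P}^1$; once that is in hand, both cases reduce to exhibiting an explicit curve over which $\sF$ restricts to a trivial bundle.
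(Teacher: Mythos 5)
Your proof is correct and follows essentially the same route as the paper's: both reduce via Theorem \ref{Proj-bundle} and Remark \ref{remark:strictly-nef} to the two cases, use that a numerically projectively flat bundle on $\bbP^1$ is a direct sum of copies of a single line bundle, and in each case exhibit a curve (a horizontal $\bbP^1$ in $\bbP^1\times\bbP^d$, resp.\ the section induced by $\sE\twoheadrightarrow\sL^*$) on which $\sF$ restricts to a trivial bundle. Your write-up merely fills in the details the paper leaves as "one readily checks," including the Grothendieck-splitting-plus-semistability justification of the classification on $\bbP^1$.
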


\begin{proof}
We write $Y=\mathbb{P}^1$ and $X=\mathbb{P}(\sE)$.
Assume by contradiction that there is a strictly nef locally free subsheaf $\sF$ of $T_{X}$. Then $\sF$ is contained in $T_{X/Y}$.
As pointed out in Remark \ref{remark:strictly-nef}, by Lemma \ref{Pn}, we are in one of the  situations of  Theorem \ref{Proj-bundle}.

We note that  every numerically projectively flat vector bundle on $Y$ is the direct sum of copies of a line bundle. Hence, if we are in the first case of the theorem, then $X\cong \mathbb{P}^n\times Y$. One readily check that $T_{X/Y}$ is not strictly nef.
Now we assume the second case of Theorem \ref{Proj-bundle}. We write $\sM=\sL^{\oplus r}$. Then we have  a surjective morphism $$\sE \to \sM^* \to \sL^*,$$ where $\sM^* \to \sL^*$ one of the canonical projections.
The composition of the morphisms above induce a section $\sigma\colon Y\to {\mathbb{P}(\sE)}$ such that $$\sigma^*\sO_{\mathbb{P}(\sE)}(1) \cong \sL^*.$$ As a consequence  $\sigma^*\sF\cong \sO_Y^{\oplus r}$.  Hence $\sF$ is not strictly nef.
\end{proof}

%{\color{red} The following remark is new.}

%\begin{remark}
%   In Theorem \ref{Proj-bundle} (1), if $\sF$ is strictly nef, then one can prove that the vector bundle $p^*\sE^*\otimes \sO_{\bbP(\sE)}(1)$ is again strictly nef. Note that $p^*\sE^*\otimes \sO_{\bbP(\sE)}(1)$ is nef. Thus, if $p^*\sE^*\otimes \sO_{\bbP(\sE)}(1)$ is not strictly nef, we may assume that there exists a finite morphism $\nu\colon C\rightarrow \bbP(\sE)$ from an irreducible smooth complete curve onto its image such that $\nu^*(p^*\sE^*\otimes \sO_{\bbP(\sE)}(1))$ admits a line bundle quotient $\sL$ with $c_1(\sL)=0$. In particular, we remark that $p(\nu(C))$ is not a point and the composition $f\colon C\rightarrow \bbP(\sE)\rightarrow X$ is finite onto its image. Then the pull-back of the relative Euler sequence of $\bbP(\sE)$ is exactly the relative Euler sequence of $\bbP(f^*\sE)$
%   \[
%   0\rightarrow \sO_{\bbP(f^*\sE)}\rightarrow f^*\sE^*\otimes \sO_{\bbP(\sE)}(1)\rightarrow T_{\bbP(f^*\sE)/C}\rightarrow 0.
%   \]
%   Since $T_{\bbP(f^*\sE)/C}$ is strictly nef and $c_1(\sL)=0$, the induced morphism
%   \[
%   \sO_{\bbP(f^*\sE)}\rightarrow f^*\sE^*\otimes \sO_{\bbP(\sE)}(1)\rightarrow \sL
%   \]
%   must be an isomorphism. In particular, the relative Euler sequence above splits, which is a contradiction. Consequently, Theorem \ref{Proj-bundle} shows that the examples given in Section \ref{Subsection:examples} are the only possible constructions.
%\end{remark}

We conclude this subsection with the following   proposition.

\begin{prop}
	\label{prop:smothness-image-subbundle}
	Let $T$ be a complex manifold and let $X\rightarrow T$ be a $\bbP^d$-bundle. Let $Z$ be the fiber product $X\times_T X\cong \bbP(\sE)$, where $\sE$ is a vector bundle over $X$ (see Theorem \ref{thm:Brauer-group}).
	\[
	\begin{tikzcd}[column sep=large, row sep=large]
	X\dar[swap]{\varphi} & Z  \cong \bbP(\sE) \lar{q}\dar[swap]{\pi} \\
	T &
	X\lar{p=\varphi}
	\end{tikzcd}
	\]
	Assume that there exists a vector bundle $\sF$ over $X$ and a subbunlde $\sM$ of $\sE^*$ such that 
	\[
	q^*\sF\cong \pi^*\sM\otimes \sO_{\bbP(\sE)}(1).
	\]
	Let $M\subseteq Z$ be the projective subbundle $\bbP(\sM^*) \subseteq \bbP(\sE)$. Then $q(M)$ is a projective bundle over $T$.
\end{prop}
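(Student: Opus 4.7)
The plan is to analyze $M\subseteq Z$ fiber by fiber over $T$ and show that $\pi^{-1}(x)\cap M$ depends only on $\varphi(x)$, not on $x$ itself. Once this is established, $q(M)\cap X_t$ will be a fixed linear $\bbP^{r-1}$ inside $X_t\cong\bbP^d$ for every $t\in T$ (where $r=\rk\sM$), and these patch into a projective subbundle of $X\to T$.

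For the fiberwise analysis, I first fix $t\in T$ and restrict to the fiber $X_t\cong\bbP^d$. Since $Z_t=X_t\times X_t\to X_t$ (via $\pi$) is the trivial projective bundle, $\sE|_{X_t}$ has trivial projectivization, so $\sE|_{X_t}\cong\sL_t^{\oplus(d+1)}$ for some line bundle $\sL_t$ on $X_t$. Next I pull back the isomorphism $q^*\sF\cong\pi^*\sM\otimes\sO_{\bbP(\sE)}(1)$ along the section $\sigma_{x'}\colon X_t\to Z_t$ given by $y\mapsto(y,x')$ for a fixed $x'\in X_t$. The left hand side becomes the trivial bundle $\sF_{x'}\otimes\sO_{X_t}$ of rank $r$. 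On the right hand side, $\sigma_{x'}^*\pi^*\sM=\sM|_{X_t}$ and $\sigma_{x'}^*\sO_{\bbP(\sE)}(1)$ is the line bundle quotient of $\sE|_{X_t}=\sL_t^{\oplus(d+1)}$ determined by $x'$, which is isomorphic to $\sL_t$. Therefore $\sM|_{X_t}\otimes\sL_t\cong\sO_{X_t}^{\oplus r}$, so $\sM|_{X_t}\cong(\sL_t^*)^{\oplus r}$.

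The inclusion $\sM|_{X_t}\hookrightarrow\sE^*|_{X_t}\cong(\sL_t^*)^{\oplus(d+1)}$ then lies in $\mathrm{Hom}_{X_t}((\sL_t^*)^{\oplus r},(\sL_t^*)^{\oplus(d+1)})\cong\mathrm{Mat}_{(d+1)\times r}(\bbC)$, so it is a constant matrix of rank $r$ specifying a fixed $r$-dimensional subspace $V_t\subseteq\bbC^{d+1}$. Under the identification $\bbP(\sE_x)\cong X_t$ for every $x\in X_t$, this realizes $M_x=\bbP(\sM_x^*)$ as the same linear subspace $\bbP(V_t^*)\subseteq X_t$, independent of $x$. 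Consequently $q(M)\cap X_t=\bbP(V_t^*)$ is a linearly embedded $\bbP^{r-1}$ in $X_t\cong\bbP^d$.

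Finally, for the globalization I cover $T$ by open sets $U$ over which $X|_U\cong\bbP^d\times U$. On each such $U$, the fiberwise subspaces $V_t$ vary algebraically in $t$ (inherited from $\sM\subseteq\sE^*$ being an algebraic subbundle on $X$), giving a morphism $U\to\Gr(r,d+1)$; pulling back the universal $\bbP^{r-1}$-subbundle yields a projective subbundle of $X|_U$ equal to $q(M)|_U$. These local projective subbundles glue into a global projective subbundle $q(M)\subseteq X$ over $T$. The main technical point is the fiberwise computation: correctly identifying $\sigma_{x'}^*\sO_{\bbP(\sE)}(1)\cong\sL_t$ is what forces $\sM|_{X_t}$ into the very rigid form $(\sL_t^*)^{\oplus r}$, which in turn makes the inclusion $\sM|_{X_t}\hookrightarrow\sE^*|_{X_t}$ a constant matrix and hence $M_x$ genuinely independent of $x\in X_t$.
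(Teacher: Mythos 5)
Your proof is correct, but it takes a genuinely different route from the paper's. The paper argues by descent along $\varphi$: it localizes on $T$, writes $X\cong\bbP(\sV)$, normalizes $\sE\cong p^*\sV$, and then produces the bundle $\sQ=\varphi_*(\sF\otimes\sO_{\bbP(\sV)}(-1))$ on $T$ together with an identification $p^*\sQ\cong\sM$, so that the subbundle $\sM\hookrightarrow\sE^*$ is literally the pullback of a subbundle $\sQ\hookrightarrow\sV^*$ and $q(M)=\bbP(\sQ^*)$ on the nose. You instead prove the key invariance fiberwise: after observing $\sE|_{X_t}\cong\sL_t^{\oplus(d+1)}$ and $\sM|_{X_t}\cong(\sL_t^*)^{\oplus r}$, the rigidity $\mathrm{Hom}_{X_t}(\sO^{\oplus r},\sO^{\oplus(d+1)})\cong\mathrm{Mat}_{(d+1)\times r}(\bbC)$ forces $M_x=\bbP(\sM_x^*)$ to be the \emph{same} linear $\bbP^{r-1}$ for all $x\in X_t$, which is exactly the content of $\sM$ being a pullback from $T$. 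Your computation of $\sigma_{x'}^*\sO_{\bbP(\sE)}(1)\cong\sL_t$ is right (it uses, implicitly, that the identification of the fibers $\bbP(\sE_x)$ with $X_t$ is constant in $x$, because a holomorphic map from $\bbP^d$ to the affine group $\PGL_{d+1}(\bbC)$ is constant --- worth saying explicitly). The only step I would tighten is the gluing: rather than invoking that "$V_t$ varies algebraically" to get a map to $\Gr(r,d+1)$, it is cleaner to pick a local section $s\colon U\to X$ of $\varphi$ and note that $q$ restricts to an isomorphism $\pi^{-1}(s(U))\cong X|_U$ carrying $M\cap\pi^{-1}(s(U))=\bbP\bigl((s^*\sM)^*\bigr)$ onto $q(M)|_U$, which is then manifestly a $\bbP^{r-1}$-bundle; your fiberwise result guarantees this image is all of $q(M)\cap\varphi^{-1}(U)$. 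With that routine repair your argument is complete; it is more elementary and explicit than the paper's, at the cost of not exhibiting the descended bundle $\sQ$ on $T$ directly.
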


\begin{proof}
	The problem is local on $T$, hence we can suppose that $T\subseteq \bbC^m$ is a small polydisc  so that   $X\cong \bbP(\sV)$ for some vector bundle $\sV$ on  $T$. 
	Then there exists a line bundle $\sL$ on  $X$ such that $p^*\sV\cong \sE\otimes \sL$. 
	By  replacing $\sE$ with $\sE\otimes \sL$ and   $\sM$ with $\sM\otimes \sL^*$, we may assume that $\sL$ is trivial. That is $p^*\sV\cong \sE$.
	
	Since the restrictions of $q^*\sF$ to the fibers of $\pi$ are isomorphic to direct sum of copies of $\sO_{\bbP^d}(1)$, the same holds for the restrictions of $\sF$ to the fibers of $\varphi$. 
	We define the following  the vector bundle on $T$
	\[
	\sQ=\varphi_*(\sF\otimes \sO_{\bbP(\sV)}(-1)).
	\]
	Then by assumption, we have
	\[
	q^*(\varphi^*\sQ\otimes \sO_{\bbP(\sV)}(1))\cong q^*\sF\cong \pi^*\sM\otimes \sO_{\bbP(\sE)}(1).
	\] 
	As $q^*\sO_{\bbP(\sV)}(1)\cong \sO_{\bbP(\sE)}(1)$, we deduce that $\pi^*p^*\sQ\cong \pi^*\sM$ and hence
	\[
	 p^*\sQ\cong \sM.
	\]
	 Since $p^*\sV^*\cong \sE^*$, the subbundle structure  $\sM \hookrightarrow \sE^*$ induces a subbundle structure $\sQ \hookrightarrow \sV^*$. Denote by $Q$ the projective  subbundle $\bbP(\sQ^*) \subseteq X \cong \bbP(\sV )$. 
	 Then $M$ is isomorphic to $Q\times_T X$ and consequently $q(M)=Q$. This finishes the proof.
\end{proof}

\subsection{Examples}

\label{Subsection:examples}

%In \cite[Propsoition 1.3]{HoeringPeternell2019}, H\"oring and Peternell show that a vector bundle $\sE$ of degree $0$ on a smooth projective curve is strictly nef if the symmetric power $S^m\sE$ is stable for every $m>0$. This can be viewed as a higher rank generalization of Mumford's result.

In this subsection, our goal is to extend Mumford's example to higher dimension.
A key ingredient is the following theorem due to Subramanian.

\begin{thm}\cite[Lemma 3.2 and Theorem 6.1]{Subramanian1989}\label{Subramanian}
    Let $C$ be a smooth curve of genus $g\geqslant 2$. Then for any $r\geqslant 2$, there exists a Hermitian flat vector bundle $\sE$ of rank $r$ such that the tautological line bundle $\sO_{\bbP(\sE)}(1)$ is ample when restricted to a proper subvariety of $\bbP(\sE)$. In particular, $\sE$ is strictly nef.
\end{thm}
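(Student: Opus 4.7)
The plan is to realize $\sE$ as the Hermitian flat bundle associated to a carefully chosen unitary representation $\rho \colon \pi_1(C) \to U(r)$. By the Narasimhan--Seshadri theorem, irreducible unitary representations correspond to stable degree-zero Hermitian flat bundles, and for any such bundle the tautological line bundle $\sO_{\bbP(\sE)}(1)$ is automatically nef. The whole game is therefore to pick $\rho$ so that no curve in $\bbP(\sE)$ has zero intersection with $\sO_{\bbP(\sE)}(1)$.

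I would choose $\rho$ with Zariski dense image in $SU(r)$. Existence for $g \geq 2$ follows from the fact that the character variety $\mathrm{Hom}(\pi_1(C),SU(r))/SU(r)$ has positive dimension $(2g-2)(r^2-1)$, whereas representations landing in a proper connected closed algebraic subgroup of $SU(r)$ form a countable union of proper subvarieties. The decisive property of such a $\rho$ is that for every finite index subgroup $H \leq \pi_1(C)$, the restriction $\rho|_H$ is again Zariski dense: the image $\rho(\pi_1(C))$ is contained in finitely many cosets of the Zariski closure $\overline{\rho(H)}$, and irreducibility of the connected algebraic group $SU(r)$ as an algebraic variety forces $\overline{\rho(H)} = SU(r)$. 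In particular $\rho|_H$ remains irreducible.

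Given this, I would analyse an arbitrary irreducible curve $D \subseteq \bbP(\sE)$. If $D$ is contained in a fiber then $\sO_{\bbP(\sE)}(1) \cdot D > 0$ trivially; otherwise the normalization provides a finite cover $\mu \colon \tilde{D} \to C$ and a section of $\bbP(\mu^*\sE)$, equivalently a quotient $\mu^*\sE \twoheadrightarrow \sL$ with $\deg \sL = \sO_{\bbP(\sE)}(1)\cdot D$. Since $\mu^*\sE$ is the flat bundle attached to the irreducible representation $\rho|_{\pi_1(\tilde{D})}$, it is stable of degree zero, so stability gives $\deg \sL > 0$. This yields strict nefness of $\sO_{\bbP(\sE)}(1)$, and hence of $\sE$. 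The stronger claim that $\sO_{\bbP(\sE)}(1)$ is ample on every proper subvariety can then be upgraded from strict nefness by Nakai--Moishezon, combined with the observation that any $\bar{\rho}$-invariant proper closed subvariety of $\bbP^{r-1}$ is excluded by Zariski density of the projective monodromy.

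The main obstacle I anticipate is the step asserting that Zariski density descends to all finite index subgroups inside a simple algebraic group. This is a standard fact from the structure theory of linear algebraic groups, but it is precisely this ingredient that makes the strategy succeed: the weaker property of $\rho$ being irreducible is insufficient, since irreducibility of a representation need not be preserved under restriction to finite index subgroups, whereas curves in $\bbP(\sE)$ factor through arbitrary finite covers of $C$.
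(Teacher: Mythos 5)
The paper offers no proof of this statement---it is quoted from Subramanian---so I can only assess your argument on its own terms. The construction (a unitary flat bundle whose monodromy is Zariski dense, hence remains Zariski dense and in particular irreducible on every finite-index subgroup of $\pi_1(C)$) and the verification of \emph{strict nefness} are correct and are essentially Subramanian's: a curve $D\subseteq\bbP(\sE)$ dominating $C$ gives a quotient line bundle of $\mu^*\sE$ on its normalization $\widetilde D$, the image $\mu_*\pi_1(\widetilde D)$ has finite index in $\pi_1(C)$ even when $\mu$ ramifies (the \'etale cover of $C$ corresponding to this subgroup is dominated by $\widetilde D$), and stability of $\mu^*\sE$ forces the quotient to have positive degree. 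Since the present paper only uses the strict nefness of $\sE$ in its examples, that part is in order.

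The gap is in your final step, where ampleness on every proper subvariety is to be ``upgraded from strict nefness by Nakai--Moishezon, combined with the observation that any $\bar\rho$-invariant proper closed subvariety of $\bbP^{r-1}$ is excluded by Zariski density.'' Nakai--Moishezon applied to a proper subvariety $V\subsetneq\bbP(\sE)$ requires $\zeta_{\sE}^{\,k}\cdot W>0$ for \emph{every} $k$-dimensional subvariety $W\subseteq V$, and as soon as $r\geqslant 3$ this includes $k\geqslant 2$. Your argument only establishes the case $k=1$, and positivity of the top self-intersection does not follow from strict nefness: $\bbP(\sE)$ itself satisfies $\zeta_{\sE}^{\,r}=0$, which is the whole point of Mumford's example. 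The invariant-subvariety observation disposes only of subvarieties that are constant in the flat trivialization, i.e.\ of the form $(\widetilde{C}\times V_0)/\pi_1(C)$; it says nothing about a $W$ dominating $C$ whose fibers $W_c\subseteq\bbP^{r-1}$ vary with $c$. The missing ingredient---precisely the content of Subramanian's Lemma~3.2---is to convert $\zeta_{\sE}^{\,k}\cdot W$ into the degree of a sub-line-bundle of an auxiliary flat bundle (for a divisor $W$ of relative degree $d$, a sub-line-bundle of $S^d\sE=p_*\sO_{\bbP(\sE)}(d)$; for general $k$, of the flat bundle attached to the space of degree-$d$ forms on a Grassmannian via Chow forms), and then to use irreducibility of these $\mathrm{SL}_r(\bbC)$-representations, together with Zariski density of the monodromy on finite covers, to exclude degree-zero sub-line-bundles. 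Without this reduction the case $k\geqslant 2$ (hence the full statement for $r\geqslant 3$) is unproven; for $r=2$ your argument is complete, since there the proper subvarieties are curves and points.
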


By using this theorem, we construct  two examples.  Fix  a smooth  curve $C$ of genus $g \geqslant 2$. Let  $r\geqslant 2$ and  $\sE$  a vector bundle of rank $r$ provided in Theorem \ref{Subramanian}.

\begin{example}[Strictly nef subbundles]
    \label{example:subbundles}
    Let $X=\bbP(\sE)$. Then we have the following relative Euler exact sequence
    \[0\rightarrow \sO_X\rightarrow p^*\sE^*\otimes\sO_{\bbP(\sE)}(1)\rightarrow T_{X/C}\rightarrow 0,\]
    where $p\colon X=\bbP(\sE)\rightarrow C$ is the natural projection.
    We claim that $$\sE'\coloneqq p^*\sE^*\otimes \sO_{\bbP(\sE)}(1)$$ is strictly nef.
    Indeed, let $\nu\colon C'\rightarrow X$ be a finite morphism from a  smooth complete curve $C'$ to $X$ and let $\nu^*\sE'\rightarrow \sL$ be a quotient line bundle. If $\nu(C')$ is contained in the fibers of $p$, then $\nu^*\sE'$ is ample and $\sL$ is still ample.
    Assume that $\nu(C')$ is not contained in the fibers of $p$. Then the composition $\nu'\colon C'\rightarrow X\rightarrow C$ is a finite morphism. Moreover, we have
    \[\nu^*\sE'\cong  \nu'^*\sE^*\otimes \nu^*\sO_{\bbP(\sE)}(1).\]
    This implies that $\sL\otimes \nu^*\sO_{\bbP(\sE)}(-1)$ is a quotient line bundle of $\nu'^*\sE^*$. Note that $\sE^*$ is numerically flat and therefore nef. Thus we have
    \[
    \deg(\sL)+\deg(\nu^*\sO_{\bbP(\sE)}(-1))\geqslant 0.
    \]
    However,  since $\sO_{\bbP(\sE)}(1)$ is strictly nef, we deduce that $\deg(\sL)>0$.  Therefore $\sE'$ is strictly nef by \cite[Proposition 2.1]{LiOuYang2019}. Hence, $\sF = T_{X/C}$ is a strictly nef subbundle of $T_X$ (see \cite[Propostion 2.2]{LiOuYang2019}).
\end{example}

\begin{example}[Strictly nef subsheaves which are not subbundles]\label{example:not-subbundles}
    %Let $s\geqslant 1$ be a positive integer.
    We consider the following extension of vector bundles
    \[0\rightarrow \sQ\rightarrow \sG\rightarrow \sE^*\rightarrow 0,\]
    where $\sQ$ is a nef vector bundle of positive rank.    Since $\sE^*$ is Hermitian flat, it is numerically flat. In particular, $\sE^*$ is nef and so is $\sG$ (see \cite[Proposition 1.15]{DemaillyPeternellSchneider1994}).
    Let $X=\bbP(\sG)$ and  $p\colon X= \bbP(\sG)\rightarrow C$ the natural projection. Then we have the following relative Euler sequence
    \[0\rightarrow \sO_X\rightarrow p^*\sG^*\otimes\sO_{\bbP(\sG)}(1)\rightarrow T_{X/C}\rightarrow 0.\]
    Since $\sE$ is a subbundle of $\sG^*$, it follows that
    $$ \sF \coloneqq p^*\sE\otimes \sO_{\bbP(\sG)}(1)$$ is a subbundle of $p^*\sG^*\otimes \sO_{\bbP(\sG)}(1)$.
    As in Example \ref{example:subbundles} above, one can  show that  $\sF$ is strictly nef.
    Moreover,  note that the composition
    \[\sF \rightarrow p^*\sG^*\otimes \sO_{\bbP(\sG)}(1)\rightarrow T_{X/C}\]
    is injective, it follows that $\sF$ is a strictly nef locally free subsheaf of $T_X$. On the other hand, since the restriction of $\sF$ to fibers of $p$ is isomorphic to $\sO_{\bbP^d}(1)^{\oplus r}$,  $\sF$ is not a subbundle of $T_{X/C}$.
\end{example}

\vskip 2\baselineskip

\section{Structures of MRC fibrations}

%Checked(Liu)

\label{section:uniruled}

 In this section we prove the following proposition.

\begin{prop}\label{MRC-fibration}
    Let $X$ be a projective manifold. Assume that $T_X$ contains a  locally free strictly nef subsheaf $\sF$. Then $X$ is uniruled. 
    
    Moreover, denote by  $\varphi\colon X\dashrightarrow T$  its MRC fibration. Then there exists an open subset $X^\circ\subseteq X$ with $\codim(X\setminus X^\circ)\geqslant 2$ such that the restriction
    \[\varphi^\circ\coloneqq\varphi\vert_{X^\circ}\colon X^\circ\rightarrow T^\circ\subseteq T\]
    is a $\bbP^d$-bundle. In particular, the restriction $\sF\vert_{X^\circ}$ is contained in $T_{X^\circ/T^\circ}$.
\end{prop}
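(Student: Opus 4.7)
\emph{Stage 1 (Uniruledness).} I would first show $X$ is uniruled using the strict nefness of $\det\sF$. On any complete curve $C\subset X$, pulling back to the normalization $\widetilde C$ and taking a subbundle filtration of $\sF|_{\widetilde C}$ produces line bundle quotients of positive degree by the Barton-Kleiman criterion (Proposition \ref{BK criterion}), hence $\det\sF\cdot C>0$. Were $X$ not uniruled, the Boucksom-Demailly-P\u aun-Peternell theorem would make $K_X$ pseudoeffective, and Miyaoka's generic semi-positivity theorem would force $\Omega^1_X|_C$ to be nef for a general complete intersection curve $C$ cut out by sufficiently ample divisors. Then the determinant of the saturation of $\sF|_C$ inside $T_X|_C$, being the dual of the determinant of a nef quotient of $\Omega^1_X|_C$, would have non-positive degree, contradicting $\det\sF\cdot C>0$. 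Hence $X$ is uniruled and admits an MRC fibration $\varphi\colon X\dashrightarrow T$ onto a smooth non-uniruled projective variety $T$.

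\emph{Stage 2 (Locating the fibration and fiberwise containment).} Since $X$ and $T$ are smooth and $T$ is projective, the indeterminacy locus of $\varphi$ has codimension at least $2$ in $X$; let $X_1\subseteq X$ be its open complement, so $\varphi|_{X_1}\colon X_1\to T$ is a morphism with $\codim(X\setminus X_1)\geq 2$. For a general fiber $F$ of $\varphi|_{X_1}$, smooth and rationally connected of some dimension $d$, the relative tangent sequence
\[
0 \to T_F \to T_X|_F \to \varphi^*T_T|_F \to 0
\]
exhibits $\varphi^*T_T|_F$ as a trivial bundle on $F$. To prove $\sF|_F\subset T_F$, I would restrict the composition $\sF|_F\to\varphi^*T_T|_F$ to any rational curve $\ell\cong\bbP^1\subset F$: strict nefness gives $\sF|_\ell\cong\bigoplus_i\sO_{\bbP^1}(a_i)$ with each $a_i>0$, so the restriction is a morphism from an ample bundle to a trivial bundle and must vanish. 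Since rational curves cover $F$, the composition vanishes identically, yielding $\sF|_F\subset T_F$.

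\emph{Stage 3 (Identifying the fibers: the main obstacle).} The heart of the proof is to show that the general fiber $F$ is isomorphic to $\bbP^d$, and to propagate this to a $\bbP^d$-bundle structure over a sufficiently large open $T^\circ\subseteq T$. With $\sF|_F\subset T_F$ strictly nef and locally free on the rationally connected manifold $F$, my plan is to upgrade the strict nefness of $\sF|_F$ to ampleness on $F$---exploiting that $\sF|_F$ is ample along every rational curve, combined with the fact that the cone of curves on a rationally connected manifold is controlled by classes of rational curves---and then invoke the Andreatta-Wi\'sniewski theorem (Theorem \ref{thm:Andreatta-Wisniewski}) to conclude $F\cong\bbP^d$. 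The ampleness upgrade is the delicate point, and the techniques of Druel \cite{Druel2004} and of the authors' earlier work \cite{LiOuYang2019} on positive subsheaves of tangent bundles are likely to play a role. Once the general fiber is $\bbP^d$, semi-continuity and the rigidity of $\bbP^d$ provide an open $T^\circ\subseteq T$ over which $\varphi$ is a $\bbP^d$-bundle; by a careful analysis of codimension-one degenerations of fibers, using the strict nefness of $\sF$ on all of $X_1$ to rule out the most degenerate behavior, one obtains $X^\circ=\varphi|_{X_1}^{-1}(T^\circ)$ with $\codim(X\setminus X^\circ)\geq 2$, yielding the desired $\bbP^d$-bundle $\varphi^\circ\colon X^\circ\to T^\circ$. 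The containment $\sF|_{X^\circ}\subset T_{X^\circ/T^\circ}$ is then inherited from Stage 2 by fiberwise containment.
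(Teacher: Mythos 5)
Your Stage 1 contains a genuine error. The claim that a subbundle filtration of $\sF|_{\widetilde C}$ ``produces line bundle quotients of positive degree\dots hence $\det\sF\cdot C>0$'' is false: in a filtration only the top graded piece is a quotient line bundle of $\sF|_{\widetilde C}$; the sub-line-bundles are \emph{not} quotients and can have negative degree. Indeed, strict nefness does not imply positivity of the determinant: Mumford's example (Theorem \ref{Subramanian}) gives a Hermitian flat, strictly nef rank-two bundle on a curve of genus $\geqslant 2$, which has degree zero. This is exactly the obstruction the paper flags before Theorem \ref{Almost-nef-non-uniruled} (``a strictly nef vector bundle may also be numerically flat, and Miyaoka's criterion cannot be applied''). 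With only $\det\sF\cdot C\geqslant 0$ available, Miyaoka's generic semipositivity yields no contradiction, so your uniruledness argument collapses. The paper instead assumes $X$ non-uniruled, shows via pseudoeffectivity of $\det(\overline{\sQ})^*$ and \cite[Theorem 5.2]{LorayPereiraTouzet2018} that the saturation $\overline{\sQ}$ of the image of $\sF$ is a regular foliation with torsion canonical bundle, then uses \cite[Theorem C]{PereiraTouzet2013} to exhibit (after an \'etale cover) abelian-variety fibers on which $\overline{\sQ}$ is a linear, hence trivial, foliation --- and \emph{that} triviality contradicts strict nefness on a complete intersection curve in such a fiber.

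Stage 2 is essentially fine for a general fiber. Stage 3, however, is a plan rather than a proof: you yourself call the ampleness upgrade ``the delicate point'' and leave it open, and it is in fact close to circular, since showing that a strictly nef locally free subsheaf of $T_F$ on a rationally connected $F$ is ample is essentially the content of Theorem \ref{thm:simply-connected-Pn}. The paper avoids this entirely: it only needs ampleness of $\sQ$ along a general \emph{minimal rational curve} (automatic, since a strictly nef bundle on a rational curve is ample) and then invokes \cite[Proposition 2.7]{AraujoDruelKovacs2008} to get the $\bbP^d$-bundle structure on the $\cV$-rationally connected quotient; the extension to an open set with codimension-two complement uses unsplitness of the family (Lemma \ref{lemma:unsplit}) together with \cite[Theorem 3.4]{Araujo2006}; and the identification with the MRC fibration is proved by contradiction using very free curves and Corollary \ref{cor:proj-vector-bundle-p1-base}. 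None of these steps appears in your sketch, so both the uniruledness and the bundle-structure halves of the proposition remain unproven as written.
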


We  first show that such a projective manifold $X$ must be uniruled (see Corollary \ref{Uniruledness}). We remark that \emph{a priori} this is not straightforward since a strictly nef vector bundle may also be numerically flat, and Miyaoka's criterion cannot be applied in this case.
The uniruledness in Proposition \ref{MRC-fibration}  is  a consequence of the following theorem, which itself may be of independent interest.

%We have already seen in Section \ref{Examples} that there exist strictly nef vector bundles which are numerically flat. Thus, it is \emph{a priori} not clear whether $X$ is uniruled or not even its tangent bundle contains a strictly nef locally free subsheaf. The aim of this subsection is trying to describe the possibilities of almost nef subsheaves of tangent bundles of non-uniruled projective manifolds. As an application, we will show that there does not exist non-zero map from a strictly nef coherent sheaf to the tangent bundle of a non-uniruled projective manifold.

\begin{thm}\label{Almost-nef-non-uniruled}
    Let $X$ be a non-uniruled projective manifold. Assume that there is a non-zero map $\sigma\colon \sF\rightarrow T_X$, where $\sF$ is an almost nef coherent sheaf over  $X$. Let $\widehat{\sQ}$ be the reflexive hull of the image $\sQ$ of $\sigma$. Then $\widehat{\sQ}$ is an involutive subbundle of $T_X$, and  is numerically flat  with torsion determinant bundle. Furthermore, there exist a finite \'etale cover  $\gamma\colon \widetilde{X}\rightarrow X$ and an almost holomorphic map $g\colon \widetilde{X}\dashrightarrow Y$ whose general fibers are   abelian varieties such that the restriction of $\gamma^*\widehat{\sQ}$ to a general fiber of $g$ is a linear foliation.
\end{thm}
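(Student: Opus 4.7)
My strategy is threefold: first establish that $\widehat{\sQ}$ is numerically flat, then show it is involutive, and finally derive the structural result from known theorems on numerically flat foliations.

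\emph{Numerical flatness.} By Proposition~\ref{Almost-nef-properties}(1) applied to $\sigma$, the image $\sQ$ is an almost nef subsheaf of $T_X$. Since $\widehat{\sQ}$ differs from $\sQ$ only in codimension $\geqslant 2$, and almost nefness is tested on curves belonging to general families that can be arranged to avoid any fixed codimension $\geqslant 2$ subvariety, $\widehat{\sQ}$ is also almost nef. In particular $c_1(\widehat{\sQ})$ is pseudo-effective. On the other hand, as $X$ is non-uniruled, the cotangent bundle $\Omega_X^1$ is generically nef by the theorem of Miyaoka and Campana--P\u{a}un. Dualizing the inclusion $\widehat{\sQ}\hookrightarrow T_X$ exhibits $\widehat{\sQ}^*$ as a quotient of $\Omega_X^1$ modulo a torsion sheaf, so $-c_1(\widehat{\sQ})\cdot A^{n-1}\geqslant 0$ for every ample divisor $A$. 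Combining with the pseudo-effectivity of $c_1(\widehat{\sQ})$ yields $c_1(\widehat{\sQ})\cdot A^{n-1}=0$, and Theorem~\ref{HP-Num-flatness} then implies that $\widehat{\sQ}$ is locally free and numerically flat.

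\emph{Involutivity.} Consider the $\sO_X$-linear O'Neill tensor
\[
N\colon \wedge^2\widehat{\sQ} \longrightarrow T_X/\widehat{\sQ}.
\]
The image of $N$ is torsion-free, and as a quotient of the numerically flat vector bundle $\wedge^2\widehat{\sQ}$ it is itself almost nef with numerically trivial first Chern class. On the other hand, $(T_X/\widehat{\sQ})^*$ is a subsheaf of $\Omega_X^1$ modulo torsion, hence generically nef by the Miyaoka--Campana--P\u{a}un theorem. Comparing slopes along a general complete intersection curve cut out by multiples of an ample divisor, one shows that any almost nef subsheaf of $T_X/\widehat{\sQ}$ with non-negative first Chern class must vanish. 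Thus $N=0$ and $\widehat{\sQ}$ is involutive.

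\emph{Structure.} It remains to understand the numerically flat foliation $\widehat{\sQ}$ on the non-uniruled manifold $X$. Invoking the structure theorems for foliations with numerically trivial canonical class (as developed by Druel and H\"oring--Peternell, together with the structure of numerically flat reflexive sheaves), one finds a finite \'etale cover $\gamma\colon \widetilde{X}\to X$ such that $\gamma^*\widehat{\sQ}$ becomes algebraically integrable with closed general leaves. The Chow-type quotient then provides the almost holomorphic map $g\colon \widetilde{X}\dashrightarrow Y$, and the general leaves carry a structure of abelian variety, arising from the unitary monodromy of $\widehat{\sQ}$ and the numerical triviality of the canonical class of the foliation. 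The restriction of $\gamma^*\widehat{\sQ}$ to a general fiber $A$ is a numerically flat subbundle of the trivial bundle $T_A$, hence is itself a trivial subbundle, i.e.\ a linear foliation on $A$. Finally, the torsion of $\det\widehat{\sQ}$ is extracted from the finiteness of the monodromy of $\widehat{\sQ}$ along the fibers of $g$ combined with the torsion of numerically trivial line bundles on the relevant base pieces.

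The main obstacle is the structure step. Establishing algebraic integrability, identifying the general fibers as abelian varieties, and concluding that $\det\widehat{\sQ}$ is torsion (rather than merely numerically trivial) requires nontrivial input from the theory of foliations with trivialized canonical class on non-uniruled varieties, and cannot be reduced to elementary positivity considerations. The earlier steps are comparatively routine once Theorem~\ref{HP-Num-flatness} is available.
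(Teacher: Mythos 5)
Your first step (numerical flatness via almost nefness plus non-uniruledness, then Theorem~\ref{HP-Num-flatness}) matches the paper's argument in substance. But the two remaining steps have genuine gaps.

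The involutivity argument does not close. You want to kill the O'Neill tensor $N\colon \wedge^2\widehat{\sQ}\to T_X/\widehat{\sQ}$ by a slope comparison, but the situation is exactly the borderline case where such comparisons fail: the image of $N$ is a quotient of the numerically flat bundle $\wedge^2\widehat{\sQ}$, hence has slope $\geqslant 0$ on general complete intersection curves, while non-uniruledness only gives that subsheaves of $T_X/\widehat{\sQ}$ have slope $\leqslant 0$. The conclusion is that the image has slope exactly $0$, not that it vanishes; your claim that ``any almost nef subsheaf of $T_X/\widehat{\sQ}$ with non-negative first Chern class must vanish'' is false in general (think of $T_X$ itself numerically flat). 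This borderline case is precisely why the paper does not argue by slopes: it passes to the saturation $\overline{\sQ}$ of $\sQ$ in $T_X$, shows $c_1(\overline{\sQ})=c_1(\widehat{\sQ})=0$, and then invokes \cite[Theorem 5.2]{LorayPereiraTouzet2018}, which in one stroke gives that $\overline{\sQ}$ is a \emph{regular} foliation with \emph{torsion} canonical bundle; the identity $\widehat{\sQ}=\overline{\sQ}$ (hence the subbundle property claimed in the statement) then follows from \cite[Lemma 1.20]{DemaillyPeternellSchneider1994}. Your proposal never addresses why $\widehat{\sQ}$ is saturated, i.e.\ why $T_X/\widehat{\sQ}$ is torsion free, which is needed both for the subbundle assertion and for any slope argument on the quotient; nor does it produce the torsion (rather than merely numerically trivial) determinant, which in the paper also comes from the Loray--Pereira--Touzet theorem.

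The structure step is also misstated. The relevant input is \cite[Theorem C]{PereiraTouzet2013} for foliations with $c_1=c_2=0$: after a finite \'etale cover there is an almost holomorphic map $g$ whose general \emph{fibers} are abelian varieties and on which the foliation restricts to a linear foliation. You instead assert that $\gamma^*\widehat{\sQ}$ becomes algebraically integrable with closed leaves which are abelian varieties and that $g$ is the quotient by the foliation; this conflates the leaves of $\widehat{\sQ}$ with the fibers of $g$ (a linear foliation on an abelian variety typically has dense, non-algebraic leaves), and is inconsistent with your own final sentence about the restriction to a fiber being a proper linear subbundle of $T_A$. You correctly flag this step as the hard one, but as written the argument would not assemble into a proof even granting the external theorems.
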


\begin{proof}
    Let $\overline{\sQ}$ be the saturation of $\sQ$ in $T_X$. By the definition of reflexive hull, we have  natural inclusions $\sQ\hookrightarrow\widehat{\sQ}\hookrightarrow\overline{\sQ}$, which induce  an injection
    \begin{equation}\label{Eq-determinants}
     \det(\widehat{\sQ})\hookrightarrow\det(\overline{\sQ}).
    \end{equation}
    Since $\sF$ is almost nef and the composition $\sF\rightarrow \widehat{\sQ}\rightarrow \overline{\sQ}$ is generically surjective, Proposition \ref{Almost-nef-properties} shows that   both $\widehat{\sQ}$ and $\overline{\sQ}$ are almost nef, so are $\det(\widehat{\sQ})$ and $\det(\overline{\sQ})$.   On the other hand, as $X$ is not uniruled, by \cite[Theorem 2.6]{BoucksomDemaillyPuaunPeternell2013}, both $\det(\widehat{\sQ})^*$ and $\det(\overline{\sQ})^*$ are pseudoeffective. Therefore, by  \cite[Lemma 6.5]{Peternell1994}, we have 
    \[
    c_1(\widehat{\sQ})= c_1(\overline{\sQ})=0.
    \]
    As a consequence, the natural morphism $\det(\widehat{\sQ})\hookrightarrow\det(\overline{\sQ})$ is  an isomorphism.   We can now apply \cite[Theorem 5.2]{LorayPereiraTouzet2018} to conclude that $\overline{\sQ}$ is a regular foliation  with torsion canonical bundle. Then, from \cite[Lemma 1.20]{DemaillyPeternellSchneider1994}, we obtain  $\widehat{\sQ} = \overline{\sQ}$ .

    Finally, since $\widehat{\sQ}$ is almost nef and $c_1(\widehat{\sQ})=0$, \cite[Theorem 1.8]{HoeringPeternell2019} says that $\widehat{\sQ}$ is a numerically flat vector bundle. In particular,  $c_2(\widehat{\sQ})=0$.  The remaining part of the theorem then follows from   \cite[Theorem C]{PereiraTouzet2013}.
\end{proof}

As an application, we obtain the following criterion for uniruledness.

\begin{cor}\label{Uniruledness}
    Let $\sF$ be a strictly nef coherent sheaf on a projective manifold $X$. Assume that there exists a non-zero map $\sigma\colon\sF\rightarrow T_X$ with image $\sQ$. Then $X$ is uniruled. Moreover, there exists an open subset $X^\circ\subseteq X$ and a $\mathbb{P}^d$-bundle $\varphi^\circ\colon X^\circ\rightarrow T^\circ$ over a smooth base $T^\circ$ such that $\sQ\vert_{X^\circ}\subseteq T_{X^\circ/T^\circ}$.
\end{cor}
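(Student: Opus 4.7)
I would proceed in three stages: uniruledness, verticality of $\sQ$, and upgrading $\varphi^\circ$ to a projective bundle.

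\emph{Uniruledness.} I argue by contradiction. A strictly nef coherent sheaf is almost nef, so if $X$ were non-uniruled, Theorem~\ref{Almost-nef-non-uniruled} applied to $\sigma\colon\sF\to T_X$ would give a finite \'etale cover $\gamma\colon\widetilde X\to X$ and an almost holomorphic map $g\colon\widetilde X\dashrightarrow Y$ with abelian fibers $A$ on which $\gamma^*\widehat\sQ|_A$ is a linear foliation---hence a trivial subbundle of the trivial tangent bundle $T_A$, so $\gamma^*\widehat\sQ|_A\cong\sO_A^r$. The generically surjective composition $\gamma^*\sF|_A\to\sO_A^r$, restricted to any smooth curve $C\subseteq A$ and then projected onto a trivial factor, produces a non-zero map $\gamma^*\sF|_C\to\sO_C$ whose image is $\sO_C(-D)$ for some effective divisor $D\geq 0$; this is a quotient line bundle of non-positive degree, contradicting the strict nefness of $\gamma^*\sF$ via Proposition~\ref{BK criterion}.

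\emph{MRC and verticality of $\sQ$.} Let $\varphi\colon X\dashrightarrow T$ be the MRC fibration and restrict to an open subset $X^\circ\subseteq X$ so that $\varphi^\circ\colon X^\circ\to T^\circ$ is a smooth surjective morphism onto a smooth base with rationally connected fibers. From the relative tangent sequence $0\to T_{X^\circ/T^\circ}\to T_{X^\circ}\to(\varphi^\circ)^*T_{T^\circ}\to 0$, the containment $\sQ|_{X^\circ}\subseteq T_{X^\circ/T^\circ}$ is equivalent to the vanishing of the composition $\tau\colon\sF|_{X^\circ}\to(\varphi^\circ)^*T_{T^\circ}$. Over a general fiber $F$, $(\varphi^\circ)^*T_{T^\circ}|_F\cong\sO_F^m$ is trivial; restricting the image $\sI$ of $\tau|_F$ to a general smooth curve $C\subseteq F$ yields a locally free subsheaf $\sI|_C\subseteq\sO_C^m$ which is also a strictly nef quotient of $\sF|_C$ by Corollary~\ref{Image-nef}. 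Projecting $\sI|_C$ onto a trivial factor produces, exactly as in the previous paragraph, a quotient line bundle of non-positive degree, a contradiction unless $\sI|_C=0$. Since $(\varphi^\circ)^*T_{T^\circ}$ is torsion-free and $\tau|_F$ vanishes on a general fiber, we conclude $\tau=0$ and therefore $\sQ|_{X^\circ}\subseteq T_{X^\circ/T^\circ}$.

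\emph{Fibers are $\bbP^d$ and the main obstacle.} To upgrade $\varphi^\circ$ to a $\bbP^d$-bundle I would show that every general fiber $F$ is isomorphic to $\bbP^d$. On such a rationally connected $F$, the strictly nef $\sF|_F$ maps non-trivially into $T_F$, and its restriction to any rational curve is ample (by Grothendieck, a strictly nef vector bundle on $\bbP^1$ splits as $\bigoplus\sO(a_i)$ with all $a_i>0$, hence is ample). Combining this with a characterization of $\bbP^n$ via positive subsheaves of the tangent bundle---in the spirit of Theorem~\ref{thm:Andreatta-Wisniewski} and its extensions to (possibly non locally free) subsheaves in \cite{AproduKebekusPeternell2008,Liu2019}---would yield $F\cong\bbP^d$; a final shrinking of $T^\circ$ so that every fiber is $\bbP^d$ then makes $\varphi^\circ\colon X^\circ\to T^\circ$ a $\bbP^d$-bundle in the sense of Section~\ref{Examples}. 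This last step is the main obstacle of the proof: strict nefness is genuinely weaker than ampleness, so the classical ample-subsheaf characterizations of $\bbP^n$ do not apply verbatim, and bridging the gap requires combining the ampleness on rational curves with the global structure of the covering family of rational curves on the rationally connected fiber $F$.
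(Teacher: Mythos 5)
Your first stage (uniruledness) matches the paper's argument: both invoke Theorem~\ref{Almost-nef-non-uniruled} and derive a contradiction by restricting to a general complete-intersection curve in an abelian fiber, where the linear foliation forces a degree-zero quotient of a strictly nef sheaf, against Proposition~\ref{BK criterion} and Corollary~\ref{Image-nef}. (One caveat: the paper is careful to restrict the \emph{image} $\sQ$ rather than its reflexive hull, using that $\supp(\widehat\sQ/\sQ)$ has codimension at least $2$ so that $\widehat\sQ|_C=\sQ|_C$ on a general such curve; your phrasing glosses over the difference between $\sQ$ and $\widehat\sQ$, but this is repairable.)

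The real problem is your third stage, and you correctly identify it as the main obstacle without resolving it --- so there is a genuine gap at the decisive step. You cannot conclude that a general MRC fiber $F$ is isomorphic to $\bbP^d$ from Theorem~\ref{thm:Andreatta-Wisniewski} or its extensions in \cite{AproduKebekusPeternell2008,Liu2019}, because those results require an \emph{ample} subsheaf of $T_F$, whereas $\sQ|_F$ is only strictly nef; ampleness of the restriction to every rational curve in $F$ does not upgrade to ampleness on $F$ (this failure is exactly the phenomenon behind Mumford's example discussed in the introduction). Moreover, the corollary does not actually require the MRC fibers to be $\bbP^d$: the paper's $T^\circ$ is the rationally connected quotient with respect to a covering family $\cV$ of \emph{minimal} rational curves, not a priori the MRC base (identifying the two is the content of the stronger Proposition~\ref{MRC-fibration}, which needs further work). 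The paper's route is: $X$ uniruled gives a covering family $\cV$ of minimal rational curves; $\sQ$ is locally free along a general member $D$ by \cite[II, Proposition 3.7]{Kollar1996}; $\sQ|_D$ is strictly nef by Corollary~\ref{Image-nef}, hence ample since $D$ is rational; and then \cite[Proposition 2.7]{AraujoDruelKovacs2008} directly produces the open subset $X^\circ$, the $\bbP^d$-bundle $\varphi^\circ$, and the containment $\sQ|_{X^\circ}\subseteq T_{X^\circ/T^\circ}$ in one stroke. Your second stage (verticality via vanishing of the composite into the trivial pullback bundle) is a reasonable argument in itself, but it is rendered unnecessary by that citation, and it cannot compensate for the missing proof that the fibration is a $\bbP^d$-bundle.
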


\begin{proof}
    We first assume by contradiction that $X$ is not uniruled.  Let $\widehat{\sQ}$ be the reflexive hull of $\sQ$.  After Theorem \ref{Almost-nef-non-uniruled}, replacing $X$ by some finite \'etale cover if necessary, we may assume that there is an almost holomorphic map $g\colon X \dashrightarrow Y$ whose general fibers are   abelian varieties. Moreover,  the restriction of $ \widehat{\sQ}$ on a general fiber $F$ of $g$ is a linear foliation.

    Denote by $S$ the support of $\widehat{\sQ}/\sQ$. Then $S$ is contained in the singular locus $\mathrm{Sing}(\sQ)$ of $\sQ$. In particular, $S$ has codimension at least $2$ as $\sQ$ is torsion free.  As a consequence, the intersection $F\cap S$ also has codimension at least $2$ in $F$.  Therefore, if $C$ is a complete intersection curve of general very ample divisors in $F$,  we have $\widehat{\sQ}\vert_C = \sQ|_C$. The latter is strictly nef by Proposition \ref{Image-nef}.    This contradicts to the fact that $\widehat{\sQ}\vert_A$ is a linear foliation.

    Finally, since $X$ is uniruled, it carries a covering family $\cV$ of minimal rational curves.
    As $\sQ$ is locally free in codimension one, by \cite[II, Proposition 3.7]{Kollar1996},   $\sQ$ is locally free along a general member $D$ of $\cV$. Note that $\sF|_D$ is strictly nef  by Proposition \ref{BK criterion}, so is $\sQ\vert_D$   by Proposition \ref{Image-nef}.
    This in turn implies that $\sQ|_D$ is an ample vector bundle  as $D$ is a rational curve. The remaining part of the corollary then  follows   from \cite[Proposition 2.7]{AraujoDruelKovacs2008}.
\end{proof}

\begin{remark}\label{rmk:rc-quotient}
    In view of the proof of \cite{AraujoDruelKovacs2008} (see also \cite[Theorem 3.4]{Araujo2006} or \cite[Theorem 2.1]{Liu2019}), the almost holomorphic map on $X$ induced by $\varphi^\circ$ is nothing but   the $\cV$-rationally connected quotient of $X$. Furthermore, every rational curve in $\cV$  meeting $X^\circ$ is a line in a fiber of $\varphi^\circ$.
\end{remark}

Now we can conclude Proposition \ref{MRC-fibration}.

\begin{proof}[Proof of Proposition \ref{MRC-fibration}]
    By Corollary \ref{Uniruledness}, $X$ is uniruled and it carries a covering family $\cV$ of minimal rational curves. As pointed out in Remark \ref{rmk:rc-quotient}, the $\cV$-rationally connected quotient $\varphi^\circ\colon X^\circ\rightarrow T^\circ$ is a $\mathbb{P}^d$-bundle and the rational curves parameterized by $\cV$ meeting $X^\circ$ are lines in  fibers of $\varphi^\circ$. Moreover, since $\cV$ is unsplit by Lemma \ref{lemma:unsplit} below, thanks to \cite[Theorem 3.4]{Araujo2006} (see also \cite[Theorem 2.1]{Liu2019}), $\varphi^\circ$ can be extended in codimension one; that is, we can choose $X^\circ\subseteq X$ such that $\codim(X\setminus X^\circ)\geqslant 2$.

    Now we assume to the contrary that $\varphi$ is not the MRC fibration.  Let $g:X\dashrightarrow Y$ be the MRC fibration. Then there is a natural factorization $$X\dashrightarrow T \dashrightarrow Y.$$ Set $Z=X\backslash X^\circ$. Then $Z$ has codimension at least $2$. Thus, for a general fiber $G$ of $g$,  $Z\cap G$ also has codimension at least $2$ in $G$. Note that $G$ is smooth and rationally connected. By \cite[II, Proposition 3.7]{Kollar1996}, there is a very free rational curve $C$ in $G$ which is contained in $G\backslash Z$.  Hence $C$ is also a curve contained in $X^\circ$.  Denote by $r\colon \bbP^1\rightarrow \varphi^\circ(C')$ the normalization, and by $W$ the fiber product $X^\circ\times_{\bbP^1} T^\circ$. Then the natural morphism $\pi\colon W\rightarrow \bbP^1$ is a $\mathbb{P}^d$-bundle. This implies that there exists a vector bundle $\sE$ on $\bbP^1$ such that $W=\bbP(\sE)$ as the Brauer group of $\bbP^1$ is trivial. Moreover, since $\sF|_{X^\circ}$ is contained in $T_{X^\circ/T^\circ}$, the pullback $\sF'$ of $\sF$ on $W$ is a strictly nef locally free subsheaf of $T_{W/\bbP^1}$.  This contradicts with Corollary \ref{cor:proj-vector-bundle-p1-base}.
\end{proof}

\begin{lemma}\label{lemma:unsplit}
    Let $X$ be an $n$-dimensional projective manifold, and let $\cV$ be a covering family of minimal rational curves on $X$. If $T_X$ contains a strictly nef locally free subsheaf $\sF$, then $\cV$ is unsplit.
\end{lemma}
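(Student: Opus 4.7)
The plan is to argue by contradiction. Suppose $\cV$ is not unsplit: then the closure of $\cV$ in the Chow variety of $X$ contains a reducible or non-reduced $1$-cycle $Z=\sum_i a_i C_i$ with $\sum_i a_i\geq 2$, each $C_i$ an irreducible rational curve. Since $\cV$ is a covering family and families of $1$-cycles are proper, we may arrange that $Z$ passes through a prescribed general point $x\in X$, and hence that at least one component, say $C_1$, passes through $x$.

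I would next invoke Corollary~\ref{Uniruledness} together with Remark~\ref{rmk:rc-quotient}: a general member $\ell\in\cV$ is contained in the open subset $X^\circ$, where the partial MRC quotient $\varphi^\circ\colon X^\circ\to T^\circ$ is a $\mathbb{P}^d$-bundle, and $\ell$ is a line inside a $\mathbb{P}^d$-fiber of $\varphi^\circ$. In particular $-K_X\cdot \ell = d+1$, and the restriction $\sF|_\ell$ is an ample vector bundle. Preservation of the numerical class in the flat family $\overline{\cV}$ gives
\[
\sum_i a_i\,(-K_X)\cdot C_i \;=\; d+1 \qquad\text{and}\qquad \sum_i a_i\, c_1(\sF)\cdot C_i \;=\; c_1(\sF)\cdot \ell.
\]
Since $\sF$ is strictly nef, Proposition~\ref{BK criterion} yields $c_1(\sF)\cdot C_i\geq 1$ for every $i$, while rationality of $C_i$ yields $-K_X\cdot C_i\geq 1$. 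Consequently $-K_X\cdot C_1\leq d<d+1$, so $C_1$ is a rational curve through a general point whose $(-K_X)$-degree is strictly smaller than that of a general member of $\cV$.

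The final step is to deduce a contradiction with the minimality of $\cV$ as a covering family. I would combine the deformation theory of rational curves of Koll\'ar \cite[Chapter~II]{Kollar1996} with the strict positivity of $\sF$ along $C_1$ to promote $C_1$ to a covering family of rational curves on $X$ of strictly smaller $(-K_X)$-degree than $\cV$, which would be forbidden by the defining minimality of $\cV$. The hardest part of the argument is precisely this promotion step: a lower-degree rational curve through a general point does not in general give rise to a covering family, and here the extra rigidity coming from strict nefness of $\sF$ (rather than mere nefness) is indispensable, because it forces $\sF|_{C_1}$ to remain ample along every deformation of $C_1$ and prevents the deformation locus of $C_1$ from collapsing into a proper subvariety, thereby yielding the desired covering subfamily.
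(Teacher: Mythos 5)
Your proof has a genuine gap right at the start, and it is the gap that matters most for this statement. ``Unsplit'' means that \emph{no} member of the closed family $\overline{\cV}$ degenerates, including members lying over special points of $X$. The degenerate cycles of $\overline{\cV}$ may well sweep out only a proper closed subset of $X$, so you cannot ``arrange that $Z$ passes through a prescribed general point $x$'': properness of the Chow family only guarantees that \emph{some} member passes through $x$, and for general $x$ that member is irreducible. Consequently your whole setup --- placing $C_1$ through a general point, inside $X^\circ$ where the $\bbP^d$-bundle structure is available, and then contradicting minimality (which by definition only constrains curves through general points) --- cannot see the degenerations the lemma is actually about. Two further problems: the inequality $-K_X\cdot C_i\geq 1$ for a component of a degenerate cycle is unjustified (a rational curve on a projective manifold can have non-positive anticanonical degree, and containing an ample subsheaf of rank $r<n$ does not force $-K_X\cdot C_i>0$); and the final ``promotion step'' is explicitly left open, so even on its own terms the argument is incomplete.

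The paper's proof is shorter and sidesteps all of this by measuring degrees against $c_1(\sF)$ rather than $-K_X$. For $r=\rk\sF\geq 2$: a general member $C$ of $\cV$ is standard, $f^*T_X\cong \sO(2)\oplus\sO(1)^{\oplus d}\oplus\sO^{\oplus(n-d-1)}$, and the ample subsheaf $f^*\sF$ of rank $r$ sits in the positive part, giving the upper bound $c_1(\sF)\cdot C\leq r+1$. On the other hand, for \emph{any} irreducible component $B_i$ of \emph{any} degenerate cycle $B=\sum a_iB_i$ --- no genericity needed --- $B_i$ is a rational curve, so the strictly nef bundle $\sF$ restricted to its normalization is ample of rank $r$, whence $c_1(\sF)\cdot B_i\geq r$. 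A non-reduced or reducible limit then has $c_1(\sF)\cdot B\geq 2r>r+1\geq c_1(\sF)\cdot C$, contradicting $B\equiv C$. The case $r=1$ is handled by Druel's theorem. Note that the lower bound $c_1(\sF)\cdot B_i\geq r$ (not merely $\geq 1$) is exactly where strict nefness earns its keep; your weaker bound $c_1(\sF)\cdot C_i\geq 1$ would not close the argument even if the other issues were repaired.
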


\begin{proof}
    If $\sF$ is a line bundle, this is a consequence of \cite[Corollaire]{Druel2004}. Therefore, we may assume $r=\rk \sF \geqslant 2$. Let $[C]\in \cV$ be a general member  and let $f\colon \bbP^1\rightarrow X$ be the morphism induced by the normalization of $C$. Then $f^*\sF$ is a subsheaf of $f^*T_X$. Since $\cV$ is a minimal family, there is some $d\geqslant r-1$ such that
    \[f^*T_X \cong \sO_{\bbP^1}(2)\oplus\sO_{\bbP^1}(1)^{\oplus d}\oplus\sO_{\bbP^1}^{\oplus (n-d-1)}.\]
    In particular, we have $c_1(\sF)\cdot C\leqslant r+1$.

    Now let $B=\sum a_i B_i$ be a $1$-cycle obtained as the limit of cycles in $\cV$ with $B_i$ irreducible and reduced.
    Then each $B_i$ is a rational curve. Since $\sF$ is strictly nef, it follows that  $c_1(\sF)\cdot B_i\geqslant r$ for all $i$.
    If $B$ is not reduced or not irreducible, then we have  $$c_1(\sF)\cdot B\geqslant 2r >r+1 \geqslant c_1(\sF)\cdot C.$$  This is a contradiction  and  hence $\cV$ is unsplit.
\end{proof}

\vskip 2\baselineskip

\section{Degeneration of $\bbP^d$}

%Checke(Liu)

\label{section:degeneration-P^d}

In order to prove Theorem \ref{thm:main-theorem}, we need to extend the projective bundle structure  $\varphi^\circ \colon X^\circ \to T^\circ$  obtained in Proposition \ref{MRC-fibration} from the MRC-fibration  to the whole manifold $X$.
To this end, we  study degenerations of  projective spaces.   Such  problems  have  been investigated in literatures, and we refer to \cite{Fujita1987,HoeringNovelli2013,AraujoDruel2014} and the references therein. Building on the work of Cho-Miyaoka-Shepherd-Barron \cite{ChoMiyaokaShepherd-Barron2002} (see also \cite{Kebekus2002}) and Koll\'ar \cite{Kollar2011}, we have  the following result which is essentially proved in \cite{HoeringNovelli2013} and \cite{AraujoDruel2014}. 

\begin{prop}
	\label{PROP:HN-degeneration-of-projective-spaces}
	Let  $\varphi\colon X\rightarrow T$ be an equidimensional fibration  between quasi-projective varieties  whose general fibers are isomorphic to $\bbP^d$. Assume that $T$ is normal and there exists a line bundle $\sL$ on $X$, whose restrictions on general fibers of $\varphi$ are isomorphic to $\sO_{\mathbb{P}^d}(e)$,  such that
	\begin{center}
		$c_1(\sL)\cdot C > \frac{1}{2} e$   for any  $\varphi$-exceptional rational curve  $C\subseteq  X$.
	\end{center}
	Then all the fibers of $\varphi$ are irreducible and generically reduced, and the normalization of any fiber is isomorphic to  $\bbP^d$.
	If we assume in addition that $X$ is normal, then   $\varphi\colon X\rightarrow T$ is a $\bbP^d$-bundle.
\end{prop}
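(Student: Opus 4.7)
The plan is to produce a covering family of ``relative lines'' of $\varphi$, use the numerical hypothesis on $\sL$ to verify that the family is unsplit, and then invoke the fibre-identification machinery developed by Fujita, H\"oring--Novelli and Araujo--Druel, which ultimately rests on the Cho--Miyaoka--Shepherd-Barron theorem as extended by Kebekus to the normal setting.

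First I would fix a general fibre $F_0 \cong \bbP^d$ of $\varphi$ and a line $\ell_0 \subseteq F_0$, and consider the irreducible component $\cV$ of $\RC{X}$ containing $[\ell_0]$. Members of $\cV$ passing through a general point of $X$ are lines in $\bbP^d$-fibres, hence $\varphi$-contracted of $\sL$-degree $e$; the rigidity lemma forces the image of every member of $\cV$ to lie in a single fibre of $\varphi$, and equidimensionality of $\varphi$ ensures the evaluation map $\cU\to X$ from the universal family is surjective. Thus $\cV$ is a covering family of $\varphi$-exceptional rational curves of $\sL$-degree $e$.

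Next I would prove that $\cV$ is unsplit, which is the only step where the numerical hypothesis on $\sL$ enters essentially. If $[\ell_0]$ degenerated in the Chow variety to a non-integral $1$-cycle $\sum m_i C_i$ (necessarily contained in a single fibre of $\varphi$), every $C_i$ would be a $\varphi$-exceptional rational curve, so $\sL \cdot C_i > e/2$ by hypothesis, while $\sum m_i (\sL \cdot C_i) = \sL\cdot \ell_0 = e$. This forces $\sum m_i < 2$, so only a single term appears with multiplicity one, and every limit of lines in $\cV$ remains an irreducible reduced rational curve of $\sL$-degree $e$.

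With $\cV$ unsplit, the identification of fibres proceeds along the lines of \cite{HoeringNovelli2013, AraujoDruel2014}. For an arbitrary fibre $F$ of $\varphi$ and an irreducible component $F_\alpha$ of $F$ (necessarily of dimension $d$, by equidimensionality), $\cV$ restricts to a dominating unsplit family of rational curves on $F_\alpha$, and then on the normalisation $F_\alpha^\nu$. Comparing $-K_{X/T}$ with $-K_{F_\alpha^\nu}$ via adjunction, together with the base value $-K_{F_0}\cdot\ell_0 = d+1$ on a general fibre, provides the anticanonical degree required by the CMSB--Kebekus criterion and yields $F_\alpha^\nu \cong \bbP^d$. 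A standard deformation argument with the unsplit family then shows that $F$ must consist of a single such component, and is generically reduced. When $X$ is in addition normal, equidimensionality and normality of $T$ force $\varphi$ to be flat by miracle flatness; each fibre is then reduced and irreducible with normalisation $\bbP^d$, hence isomorphic to $\bbP^d$ via Fujita's criterion \cite{Fujita1987}, and the equidimensional flat fibration with every fibre $\bbP^d$ is a $\bbP^d$-bundle.

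The main obstacle is the fourth step: the anticanonical degree computation on the normalisation of a possibly singular component of a special fibre. While the $\sL$-degree of members of $\cV$ is preserved under specialisation, controlling $-K_{F_\alpha^\nu}\cdot C$ requires adjunction applied to $F_\alpha \hookrightarrow X$ together with careful bookkeeping of normalisation discrepancies; this is the technical heart of the H\"oring--Novelli and Araujo--Druel arguments. The precise coefficient $1/2$ in the hypothesis $\sL\cdot C > e/2$ is exactly what rules out a line splitting into two curves of equal $\sL$-degree, which is the decisive degeneration to exclude.
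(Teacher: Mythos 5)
Your first three steps --- setting up the family of relative lines, using the hypothesis $c_1(\sL)\cdot C>\tfrac12 e$ to rule out any degeneration $\ell_0\rightsquigarrow\sum m_iC_i$ (since $e=\sum m_i\,c_1(\sL)\cdot C_i>\tfrac{e}{2}\sum m_i$ forces $\sum m_i=1$), and then identifying the normalization of each fiber component as $\bbP^d$ via the CMSB/Kebekus machinery --- coincide with the paper's argument, which establishes exactly this properness/unsplitness and then delegates the fiber identification and the irreducibility/generic reducedness to the computations of H\"oring--Novelli (using, for the latter, that $\varphi$ is a well-defined family of cycles over the normal base $T$, so the $\sL$-degree of the fiber cycles is constant).

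The genuine gap is in your final step. First, miracle flatness requires the base to be \emph{regular} (and the source Cohen--Macaulay); the proposition only assumes $T$ normal, so you cannot conclude that $\varphi$ is flat this way. Second, even granting flatness, a reduced irreducible fiber whose normalization is $\bbP^d$ need not itself be $\bbP^d$: the normalization map could be a non-trivial finite birational morphism (e.g.\ onto a non-normal image of $\bbP^d$), and Fujita's criterion does not apply until you know the fiber is normal --- which is precisely what has to be proved. The paper closes both gaps at once with Koll\'ar's simultaneous normalization theorem: since the pullback of $\sL$ to the normalization of every fiber is $\sO_{\bbP^d}(e)$, the Hilbert polynomials of the normalized fibers are constant, so there is a finite birational morphism $\eta\colon\widetilde{X}\to X$ with $\varphi\circ\eta$ flat with normal fibers; normality of $X$ and Zariski's main theorem then force $\eta$ to be an isomorphism, whence $\varphi$ itself is smooth with all fibers $\bbP^d$. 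You would need to substitute an argument of this type (or otherwise prove normality of the fibers and flatness of $\varphi$ directly) for your last paragraph to be valid.
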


\begin{proof}
	Let $\cH\subseteq \RC{X/T}$ be the unique irreducible component such that a general point corresponds to a line contained in the general fibers of $\varphi$. Let $[l]$ be a line contained in $\cH$. By assumption, we have
	\[
	c_1(\sL)\cdot C> \frac{1}{2}e = \frac{1}{2} c_1(\sL)\cdot l
	\]
	for any rational curve $C$ contracted by $\varphi$.
	Thus   $\cH$ is actually proper over $T$.
	In particular, we can apply the same argument as in \cite[p.222, Proof of Propositon 3.1]{HoeringNovelli2013} to show that the normalization of any irreducible component of any fiber of $\varphi$ is isomorphic to $\bbP^d$.

	Since $T$ is normal, by \cite[I, Definition 3.10 and Theorem 3.17]{Kollar1996}, $\varphi\colon X\rightarrow T$ is a well-defined family of $d$-dimensional proper algebraic cycles over $T$.
	In particular, the degree of the fibers with respect to $\sL$ is constant (see \cite[I, Lemma 3.17.1]{Kollar1996}).
	Then the computation   in \cite[p.223, Proof of Proposition 3.1]{HoeringNovelli2013} shows that the fibers of $\varphi$ are reduced and irreducible.
	Moreover, the pullback of $\sL$  on the normalization of any fiber $F$ of $\varphi$ is isomorphic to $\sO_{\bbP^d}(e)$.
	Thus the Hilbert polynomials of the normalizations of the fibers are the same.
	Then we apply \cite[Theorem 12]{Kollar2011} and obtain that $\varphi\colon X\rightarrow T$ admits a simultaneous normalization, which is a finite  birational morphism $\eta\colon \widetilde{X}\rightarrow X$ such that  the morphism $\varphi\circ\eta\colon \widetilde{X}\rightarrow T$ is flat with normal fibers.
	
	If we assume further that $X$ is normal, then Zariski's main theorem \cite[V, Theorem 5.2]{Hartshorne1977} implies that $\eta$ is an isomorphism. As a consequence  $\varphi\colon X\rightarrow T$ is a smooth morphism.
\end{proof}

We have the following corollary.

\begin{cor}\label{cor-equidim-implies-bundle}
	Let $\varphi \colon X\to T$ be an equidimensional Mori fibration from a smooth projective variety $X$ to a normal variety $T$. Assume that general fibers of $\varphi$ are isomorphic to $\bbP^d$ for some $d>0$. Assume further that there is a vector bundle $\sF$ of rank $r\geqslant 2$ whose restriction on every fiber is strictly nef. Then $\varphi$ is a $\mathbb{P}^d$-bundle between smooth varieties.
\end{cor}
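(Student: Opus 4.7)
The plan is to apply Proposition \ref{PROP:HN-degeneration-of-projective-spaces} with the line bundle $\sL := \det\sF$. To this end, I will exhibit an integer $e \geqslant 1$ with $\sL|_F \cong \sO_{\bbP^d}(e)$ on a general fiber $F$ of $\varphi$, and verify the pointwise inequality $c_1(\sL)\cdot C > e/2$ for every $\varphi$-exceptional rational curve $C$.

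For the fiber degree, I would pick a general fiber $F \cong \bbP^d$ lying over the smooth locus of $T$ (possible by generic smoothness of $\varphi$) and a line $l \subseteq F$. On the one hand, $\sF|_F$ is strictly nef of rank $r$ on $\bbP^d$, so $\sF|_l$ is a strictly nef rank-$r$ vector bundle on $\bbP^1$; by the splitting theorem on $\bbP^1$ every strictly nef bundle is ample, which forces $e := c_1(\sL)\cdot l \geqslant r$. On the other hand, since $\varphi$ is smooth near $F$ the normal bundle $N_{F/X}$ is trivial, so $T_X|_l \cong \sO_{\bbP^1}(2)\oplus \sO_{\bbP^1}(1)^{\oplus(d-1)}\oplus \sO_{\bbP^1}^{\oplus(n-d)}$. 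Because $\sF \hookrightarrow T_X$ restricts to an inclusion $\sF|_l \hookrightarrow T_X|_l$, the maximal degree of a rank-$r$ subsheaf of the displayed bundle gives $e \leqslant r+1$. Since $\pic(F)\cong \bbZ$, this determines $\sL|_F \cong \sO_{\bbP^d}(e)$ with $r \leqslant e \leqslant r+1$.

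For the lower bound on exceptional curves, let $C$ be an irreducible $\varphi$-exceptional rational curve, necessarily contained in some fiber $F'$. By hypothesis $\sF|_{F'}$ is strictly nef, hence so is its restriction $\sF|_C$. Pulling back by the normalization $\nu\colon \bbP^1 \to C$ yields a strictly nef rank-$r$ bundle on $\bbP^1$, which is ample; each summand contributes degree at least $1$, so
\[
c_1(\sL)\cdot C = \deg \nu^*\det \sF = \deg \nu^*\sF \geqslant r.
\]

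The assumption $r \geqslant 2$ then closes the gap exactly: $r > (r+1)/2 \geqslant e/2$, so $c_1(\sL)\cdot C > e/2$ for every $\varphi$-exceptional rational curve. All hypotheses of Proposition \ref{PROP:HN-degeneration-of-projective-spaces} are satisfied; since $X$ is smooth (in particular normal), the proposition gives that $\varphi$ is a $\bbP^d$-bundle, and the smoothness of the smooth morphism $\varphi$ together with smoothness of $X$ forces $T$ to be smooth. The delicate point of the argument is precisely this numerical balancing act: without $r\geqslant 2$, the bounds $e \leqslant r+1$ and $c_1(\sL)\cdot C \geqslant r$ would only yield $c_1(\sL)\cdot C \geqslant e/2$, not strict, and the degeneration criterion would fail—this is consistent with Mumford's example and Druel's line-bundle case, which require a genuinely different treatment.
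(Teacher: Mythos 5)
Your proposal follows exactly the route of the paper's (very terse) proof: set $\sL=\det\sF$ and feed it into Proposition~\ref{PROP:HN-degeneration-of-projective-spaces}; the paper simply asserts that the hypothesis of that proposition is satisfied, while you actually verify it, which is the right thing to do. The one point to flag is that your upper bound $e\leqslant r+1$ is obtained from the inclusion $\sF\hookrightarrow T_X$, which is \emph{not} among the stated hypotheses of the corollary: as written, the corollary only assumes that $\sF$ is a vector bundle whose restriction to every fiber is strictly nef, and that alone gives no upper bound on $e$ (e.g.\ $\sF\vert_F\cong\sO_{\bbP^d}(3)^{\oplus 2}$ would give $e=6$ while strict nefness on special fibers only guarantees $c_1(\sL)\cdot C\geqslant 2$, so the criterion $c_1(\sL)\cdot C>e/2$ could not be checked). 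Some control on $e$ is therefore genuinely needed, and in the paper it is implicitly supplied by the context in which the corollary is applied, where Lemma~\ref{Pn} shows $\sF\vert_F$ is either $T_F$ or $\sO_{\bbP^d}(1)^{\oplus r}$, so that $e\in\{r,r+1\}$. Your argument via $T_X\vert_l\cong\sO_{\bbP^1}(2)\oplus\sO_{\bbP^1}(1)^{\oplus(d-1)}\oplus\sO_{\bbP^1}^{\oplus(n-d)}$ is a correct and clean way to get the same bound once that extra hypothesis is granted; the remaining steps (degree $\geqslant r$ on exceptional rational curves via ampleness of strictly nef bundles on $\bbP^1$, and smoothness of $T$ from smoothness of $X$ and of $\varphi$) are all correct.
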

\begin{proof}
	Let  $\sL\coloneqq \det(\sF)$. Then this line bundle satisfies the hypothesis of Proposition \ref{PROP:HN-degeneration-of-projective-spaces}. 
	Therefore $\varphi\colon X\rightarrow T$ is   a $\bbP^d$-bundle. Since both $X$ and $\varphi$ are smooth, it follows that $T$ is smooth.
\end{proof}

In order to apply  previous results, we need to ensure the equidimensionality of fibrations. Therefore, in the  remainder of this section,  we provide some criteria  for equidimensionality.

\begin{prop}[Criterion for equidimensionality I]
	\label{prop:equidimensionality-criterion-I}
	Assume that there is a commutative diagram  of fibrations of normal projective varieties
	\[
	\begin{tikzcd}[column sep=large, row sep=large]
	\overline{X}\dar[swap]{\overline{\varphi}} \rar{\rho} &  X\dar{{\varphi}} \\
	\overline{T} \rar{\gamma}& T
	\end{tikzcd}
	\]
	such that
	\begin{enumerate}
		\item $T$ has only klt singularities,
		\item general fibers of $\varphi$ are isomorphic to $\bbP^d$,
		\item $\overline T$ is smooth and the fibers of $\gamma$ over an open subset $T^\circ$ of $T$ with $\codim(T\setminus T^\circ)\geq 2$ are simply connected,
		\item  $\overline{\varphi}\colon \overline{X}\rightarrow \overline{T}$ is a flat $\mathbb{P}^d$-bundle given by a representation $\pi_1(\overline{T})\rightarrow \PGL_{d+1}(\bbC)$,
		\item there is a strictly nef vector bundle $\sF$ on $X$ with a surjective morphism $\rho^*\sF\rightarrow T_{\overline{X}/\overline{T}}$.
	\end{enumerate}
	Then $\varphi\colon X\rightarrow T$ is equidimensional.
\end{prop}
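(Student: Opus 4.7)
The plan is to argue by contradiction, using Lemma \ref{lemma:trivial-bundle-finite-change} to trivialize the $\bbP^d$-bundle $\overline\varphi$ after base change to a smooth curve dominating a well-chosen $\gamma$-exceptional curve, and then to apply the Barton--Kleiman criterion (Proposition \ref{BK criterion}) to a horizontal curve in the trivialization.

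Suppose $\varphi$ has a fiber $F$ with $\dim F > d$. Since $\rho$ is surjective and $\overline\varphi$ has $d$-dimensional fibers, the image $V:=\overline\varphi(\rho^{-1}(F))$ is a positive-dimensional subvariety of $\gamma^{-1}(\varphi(F))$, so $V$ is $\gamma$-exceptional. The commutativity $\varphi\circ\rho = \gamma\circ\overline\varphi$ together with the definition of $V$ yields $\rho^{-1}(F) = \overline\varphi^{-1}(V)$. I would choose an irreducible curve $C\subseteq V$ in sufficiently general position so that $\rho(\overline\varphi^{-1}(C))$ has dimension $d+1$; this is immediate when $\dim V=1$ by taking $C=V$, and in general follows from a standard Bertini-type transversality argument, intersecting $V$ with $\dim V-1$ general very ample divisors so that $\overline\varphi^{-1}(C)$ meets the generic fibers of $\rho|_{\overline\varphi^{-1}(V)}\colon \overline\varphi^{-1}(V)\to F$ transversely. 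By hypotheses (1)--(4), Lemma \ref{lemma:trivial-bundle-finite-change} applied to the flat $\bbP^d$-bundle $\overline\varphi$ and the $\gamma$-exceptional curve $C$ produces a finite surjective morphism $\mu\colon D\to C$ from a smooth projective curve $D$ such that $\widetilde X:= \overline X\times_{\overline T} D$ is isomorphic to the trivial $\bbP^d$-bundle $D\times\bbP^d$. Let $\tau\colon \widetilde X\to\overline X\to X$ be the natural morphism; then $\tau(\widetilde X) = \rho(\overline\varphi^{-1}(C))$ has dimension $d+1$.

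By hypothesis (5), the surjection $\rho^*\sF\twoheadrightarrow T_{\overline X/\overline T}$ pulls back to a surjection $\tau^*\sF\twoheadrightarrow \pi_2^* T_{\bbP^d}$ on $\widetilde X \cong D\times\bbP^d$. If $\tau|_{D\times\{p\}}$ were constant for every $p\in\bbP^d$, then $\tau$ would factor through the projection $\pi_2\colon D\times\bbP^d\to\bbP^d$, forcing $\dim\tau(\widetilde X)\leqslant d$, which contradicts the previous paragraph. Hence there exists $p\in\bbP^d$ for which $\tau|_{D\times\{p\}}\colon D\to X$ is non-constant and therefore a finite morphism from a smooth projective curve. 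Restricting the above surjection to $D\times\{p\}$ yields $\tau^*\sF|_{D\times\{p\}}\twoheadrightarrow \sO_D^d$ (since $\pi_2^*T_{\bbP^d}|_{D\times\{p\}}$ is trivial of rank $d$), and composing with any coordinate projection $\sO_D^d\twoheadrightarrow \sO_D$ exhibits $\sO_D$ as a quotient line bundle of $(\tau|_{D\times\{p\}})^*\sF$. The Barton--Kleiman criterion applied to the strictly nef $\sF$ then forces $\deg\sO_D>0$, which is absurd, and completes the contradiction.

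The main obstacle in executing the plan is the general-position argument guaranteeing that $\dim\rho(\overline\varphi^{-1}(C)) = d+1$ when $\dim V\geqslant 2$: this requires a careful dimension-theoretic analysis of the fibration $\rho|_{\overline\varphi^{-1}(V)}\colon \overline\varphi^{-1}(V)\to F$, using the identity $\rho^{-1}(F)=\overline\varphi^{-1}(V)$ together with Bertini-type transversality with respect to its generic fibers. Once this curve $C$ is in hand, the interplay between Lemma \ref{lemma:trivial-bundle-finite-change} (which produces enough ``rigid'' horizontal sections after pulling back to $D$) and the Barton--Kleiman criterion (which detects the failure of strict positivity along these sections) furnishes the contradiction.
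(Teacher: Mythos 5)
Your overall strategy is sound and is essentially the paper's argument run in reverse: both proofs rest on Lemma \ref{lemma:trivial-bundle-finite-change} to trivialize $\overline{\varphi}$ over a finite cover of a $\gamma$-contracted curve, and on the observation that a trivial quotient of $\sF$ along a non-contracted horizontal curve violates strict nefness. Your final two paragraphs (the trivialization, the dichotomy on $\tau|_{D\times\{p\}}$, and the Barton--Kleiman contradiction) are correct. The problem is the step you yourself flag as the main obstacle: producing a curve $C\subseteq V$ with $\dim\rho(\overline{\varphi}^{-1}(C))=d+1$ when $\dim V\geqslant 2$. The Bertini argument you sketch --- cutting $V$ by $\dim V-1$ general very ample divisors so that $\overline{\varphi}^{-1}(C)$ meets the generic fibers of $\rho|_{\overline{\varphi}^{-1}(V)}$ transversely --- does not work as stated. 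Set $W=\overline{\varphi}^{-1}(V)=\rho^{-1}(F)$, $v=\dim V$, $f=\dim F$, and let $e$ be the dimension of $\overline{\varphi}(W_x)$ for a general fiber $W_x$ of $\rho|_W\colon W\to F$. A dimension count on the incidence variety in $F\times V$ gives $\dim\rho(\overline{\varphi}^{-1}(C))=f+e-v+1$ for general $C$, which equals $d+1$ only when $e=v+d-f$, i.e.\ only when $\overline{\varphi}|_{W_x}$ is generically finite. If the fibers of $\rho|_W$ are (even partially) contracted by $\overline{\varphi}$ --- for instance if $W_x$ lies inside a single fiber of $\overline{\varphi}$ --- then $\overline{\varphi}^{-1}(C)$ meets $W_x$ either in all of $W_x$ or not at all, "transversality" is meaningless, and one gets $\dim\rho(\overline{\varphi}^{-1}(C))<d+1$ for every $C$. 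To close the gap you must first show that $\rho$ contracts no curve lying in a fiber of $\overline{\varphi}$; this does follow from hypothesis (5), since for such a curve $B$ the bundle $\rho^*\sF|_B$ is trivial yet surjects onto $T_{\overline{X}/\overline{T}}|_B\cong T_{\bbP^d}|_B$, which has a quotient line bundle of positive degree --- but this is an extra application of the positivity hypothesis that your sketch does not supply, and without it the construction of $C$ can fail.

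It is worth noting that the paper's proof sidesteps this dimension-theoretic issue entirely by arguing directly rather than by contradiction: for an \emph{arbitrary} curve $C$ contracted by $\gamma$, the trivialization plus strict nefness shows that every fiber of the projection $D\times\bbP^d\to\bbP^d$ is contracted by $\tau$, so by the rigidity lemma $\tau$ factors through that projection and the images under $\rho$ of all fibers of $\overline{\varphi}$ over $C$ coincide. Since $\gamma^{-1}(t)$ is connected, the images of all fibers over $\gamma^{-1}(t)$ coincide, whence $\dim\varphi^{-1}(t)\leqslant d$, and semicontinuity finishes. This organization needs no choice of a special curve $C$ and no Bertini argument; you may want to restructure your proof along these lines, or else insert the finiteness-on-fibers lemma before invoking Bertini.
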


\begin{proof}
	Let $C\subseteq \overline{T}$ be a complete curve contracted by $\gamma$. Thanks to Lemma \ref{lemma:trivial-bundle-finite-change}, there exists a smooth curve $C'$ with a finite surjective morphism $n\colon C'\rightarrow C$ such that the fiber product $\overline{X}_{C'}\coloneqq \overline{X}\times_{\overline{T}} C'$ is isomorphic to   $C'\times \bbP^d$ as $\bbP^d$-bundles over $C'$.
	We denote by $$p_1\colon \overline{X}_{C'} \rightarrow C' \mbox{ and } \nu\colon \overline{X}_{C'} \to \overline{X}$$ the natural morphisms and by  \[
	p_2\colon \overline{X}_{C'}  \rightarrow \bbP^d
	\] the morphism induced by the natural projection from $C'\times \bbP^d$ to $\bbP^d$.
	
	Let $T_{\overline{X}_{C'}/C'}$ be the relative tangent bundle of $p_1$. Then its restriction on every fiber of $p_2$ is isomorphic to a trivial vector bundle.   On the other hand, the surjective morphism $\rho^*\sF\rightarrow T_{\overline{X}/\overline{T}}$  induces a surjective morphism $\nu^*\rho^*\sF\rightarrow T_{\overline{X}_{C'}/C'}$.  As a consequence, the restriction of $\nu^*\rho^*\sF$ on any fiber of $p_2$ is not strictly nef. This implies that the fibers of $p_2$ are contracted by the composition
	\[
	p\colon \overline{X}_{C'}\xrightarrow{\nu} \overline{X} \xrightarrow{\rho} X.
	\]
	By rigidity lemma, the morphism $p\colon \overline{X}_{C'}\rightarrow X$ factors through $p_2\colon \overline{X}_{C'}\rightarrow \bbP^d$. In particular, the images of all fibers of $\overline{\varphi}$ over $C$ in $X$ under $\rho$ coincide.
	
	Let $t\in T$ be an arbitrary point. Since the fiber $\overline{T}_t:= \gamma^{-1}(t)$ is connected, the previous paragraph implies that the images of all fibers of $\overline{\varphi}$ over $\overline{T}_t$ in $X$ under $\rho$ coincide.
	It follows that $\dim X_t\leqslant d$, where $X_t$ is the fiber of $X$ over $t$. By semicontinuity, we obtain that $\varphi$ is equidimensional.
\end{proof}

In the situation of Proposition \ref{MRC-fibration}, we can use the criterion in Proposition \ref{prop:equidimensionality-criterion-I} to deal with the case where the restriction $\sF\vert_F$ is isomorphic to $T_F$ for $F$ being a  general fiber of the MRC fibration. When $\sF|_F$ is isomorphic to the direct sum of copies of $\sO_F(1)$,  we need another more detailed treatment.

\begin{prop}
	\label{prop:equidimensionality-criterion II}
	Let $\varphi\colon X\rightarrow T$ be a surjective morphism  from a smooth projective variety $X$  to a normal projective  variety $T$, and let  $P\subseteq X$ be a  subvariety   such that the induced morphism $\varphi_P\coloneqq \varphi\vert_P\colon P\rightarrow T$ is surjective.
	If  at a point $x\in P$, the fiber $P_{\varphi(x)}$ is smooth of  dimension $\dim P-\dim T$, then the fiber $X_{\varphi(x)}$  is smooth  and of dimension $\dim X-\dim T$ at $x$.
\end{prop}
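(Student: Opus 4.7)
Write $t = \varphi(x)$, $n = \dim X$, $p = \dim P$, and $d = \dim T$. The plan is to split the statement into two claims---first, that $\dim_x X_t = n-d$, and second, that the scheme $X_t$ is smooth at $x$---and to attack them in that order, exploiting the smoothness of $X$ at $x$ for both.

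For the local fiber dimension, the inequality $\dim_x X_t \ge n-d$ is the standard lower bound for any dominant morphism of irreducible varieties, so I focus on the reverse inequality. Let $W$ be an irreducible component of $X_t$ passing through $x$. Because $X$ is smooth (hence regular) at $x$, Serre's refined intersection inequality in a regular ambient scheme guarantees that every component of $W \cap P$ through $x$ has dimension at least $\dim W + \dim P - \dim X = \dim W + p - n$. Set-theoretically $W \cap P \subseteq X_t \cap P = P_t$, and by hypothesis $\dim_x P_t = p-d$. Combining gives $\dim W + p - n \le p-d$, i.e.\ $\dim W \le n-d$, completing the first step.

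For the smoothness, I would translate the claim into a rank statement for the differential $(d\varphi)_x \colon T_xX \to T_tT$. Since $X$ is smooth at $x$, one has $T_xX_t = \ker (d\varphi)_x$, and so the general inequality $\dim T_x X_t \ge \dim_x X_t$ combined with the first step yields $\mathrm{rk}\,(d\varphi)_x \le d$. The opposite inequality is the delicate part. Restricting $(d\varphi)_x$ to the Zariski tangent space $T_xP \subseteq T_xX$ gives a linear map whose kernel is $T_xP_t$, which has dimension exactly $p-d$ by the smoothness assumption on $P_t$. Hence
\[
\mathrm{rk}\,(d\varphi)_x \;\ge\; \mathrm{rk}\,\bigl((d\varphi)_x|_{T_xP}\bigr) \;=\; \dim T_xP - (p-d).
\]
A priori $\dim T_xP \ge p$, with equality iff $P$ is smooth at $x$; if $\dim T_xP > p$, then $\mathrm{rk}\,(d\varphi)_x > d$, which forces $\dim T_x X_t < n-d$ and contradicts the first step. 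So $P$ is smooth at $x$, $\mathrm{rk}\,(d\varphi)_x = d$, and therefore $\dim T_x X_t = n-d = \dim_x X_t$, so $X_t$ is smooth at $x$.

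The main obstacle I anticipate is psychological rather than technical: it is tempting to try to deduce smoothness of $\varphi$ at $x$ by ``miracle flatness'' applied to $\mathcal{O}_{T,t} \to \mathcal{O}_{X,x}$, but this approach breaks down because $\mathcal{O}_{T,t}$ is only assumed to be normal and may fail to be regular, so one cannot control generators of $\mathfrak{m}_{T,t}\mathcal{O}_{X,x}$ inside $\mathfrak{m}_{X,x}/\mathfrak{m}_{X,x}^2$ by dimension counts on $T$. The clean way out is the observation above that scheme-theoretic smoothness of $P_t$ at $x$ silently forces $P$ itself to be smooth at $x$; after that, the whole argument becomes a two-line rank computation for the tangent map of $\varphi$ that never needs to see the embedding dimension of $T$ at $t$.
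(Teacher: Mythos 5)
Your proof is correct. The second half --- the rank computation for $(d\varphi)_x$ using $\ker(d\varphi)_x = T_xX_t$, $\ker((d\varphi)_x|_{T_xP}) = T_xP_t$, and $\dim T_xP \geq \dim P$ --- is precisely the tangent-space dual of the paper's argument, which runs the same count on the fibers at $x$ of the exact sequences $\varphi^*\Omega_T \to \Omega_X \to \Omega_{X/T} \to 0$ and $\varphi_P^*\Omega_T \to \Omega_P \to \Omega_{P/T} \to 0$ (your $\mathrm{rk}((d\varphi)_x|_{T_xP}) \geq \dim T$ is the paper's $\dim \mathrm{im}(\overline{h}) \geq m$). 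Where you genuinely diverge is the dimension statement: you establish $\dim_x X_t \leq \dim X - \dim T$ separately via Serre's intersection inequality in the smooth ambient $X$, whereas the paper extracts it from the same cotangent computation by sandwiching $n - m \leq \dim_x X_{\varphi(x)} \leq \dim_{\bbK(x)}\Omega_{X_{\varphi(x)}}\otimes\bbK(x) \leq n - m$. Your intersection-theoretic step is correct but logically superfluous: in your second step you only need $\mathrm{rk}(d\varphi)_x \leq \dim T$, and this already follows from the standard lower bound $\dim_x X_t \geq \dim X - \dim T$ (which you invoke anyway) together with $\dim T_xX_t \geq \dim_x X_t$; the resulting two-sided squeeze then delivers both the dimension and the smoothness at once, exactly as in the paper. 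A pleasant byproduct of your contradiction argument, not present in the paper, is that $P$ itself is forced to be smooth at $x$.
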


\begin{proof}
	Set $\dim X = n$, $\dim T=m$ and $\dim P = r$.  By assumption, we see that the relative differential sheaf $\Omega_{P/T}$ on $P$ is locally free of rank $r-m$ around $x$. Consider the following commutative diagram, with exact rows and columns,
	\[
	\begin{tikzcd}[column sep=large, row sep=large]
	\varphi^*\Omega_T\vert_{P}\rar \ar[d,equal]  &  \Omega_X\vert_{P}\dar\rar  &  \Omega_{X/T}\vert_{P}\rar\dar   & 0 \\
	\varphi_P^*\Omega_T\rar& \Omega_{P}\rar\dar    &  \Omega_{P/T}\ar[r]\ar[d]   & 0\\
	& 0 & 0 &
	\end{tikzcd}
	\]
	Let $\bbK(x)$ be the residue field of $P$ at $x$. Tensoring the diagram above with $\bbK(x)$, we have the following diagram, with exact rows and columns,
	\[
	\begin{tikzcd}[column sep=large, row sep=large]
	\varphi^*\Omega_T\vert_{P}\otimes\bbK(x)\rar{h}\ar[d,equal]  &  \Omega_X\vert_{P}\otimes\bbK(x)\dar\rar &  \Omega_{X/T}\vert_{P}\otimes\bbK(x)\rar \dar  & 0 \\
	\varphi_P^*\Omega_T\otimes\bbK(x)\rar{\overline{h}}& \Omega_{P}\otimes\bbK(x)\rar\dar     &  \Omega_{P/T}\otimes\bbK(x)\rar \dar  & 0\\
	& 0 & 0 &
	\end{tikzcd}
	\]
	The second row of the last diagram shows that
	\begin{align*}
	\dim_{\bbK(x)}(\im(\overline{h})) & = \dim_{\bbK(x)}(\Omega_P\otimes\bbK(x))-\left(r-m\right)\\
	& \geqslant r-\left(r-m\right)\\
	& = m.
	\end{align*}
	Since $\dim_{\bbK(x)}(\im(h))\geqslant \dim_{\bbK(x)}(\im(\overline{h}))$ and $X$ is smooth, the first row of the last diagram   implies that
	\begin{equation*}
	\label{equation:dimension-fibers-cotangents}
	\dim_{\bbK(x)}\left(\Omega_{X/T}\vert_{P}\otimes\bbK(x)\right)= n-\dim(\im(h))\leqslant n-m.
	\end{equation*}
	We note that $\Omega_{X/T}\vert_{P}\otimes\bbK(x) \cong \Omega_{X_{\varphi(x)}}\otimes\bbK(x)$, where $X_{\varphi(x)}$ is the fiber of $\varphi$ over $\varphi(x)$. Then we have
	\[
	n-m\leqslant \dim_x X_{\varphi(x)}  \leqslant \dim_{\bbK(x)} \Omega_{X_{\varphi(x)}}\otimes\bbK(x) \leqslant n-m
	\]
	This shows that $\Omega_{X/T}$ has rank $n-m$ around $x$. In particular, $\Omega_{X/T}$ is locally free around $x$ by Nakayama's lemma. Hence, $X_{\varphi(x)}$ is smooth at $x$ and has dimension $n-m$ at $x$.
\end{proof}

As an  application, we obtain  the following criterion for equidimensionality.

\begin{cor}[Criterion for equidimensionality II]
	\label{cor:equi-dim}
	Let $\varphi\colon X\rightarrow T$ be a surjective morphism  from a smooth projective  variety $X$  to a normal projective variety $T$.   Assume that there exists a (reduced and  irreducible) subvariety $P\subseteq X$  such that the induced morphism $\varphi_P\coloneqq \varphi\vert_P\colon P\rightarrow T$ is surjective and equidimensional with irreducible and generically reduced fibers. Let $t\in T$ be a point. Assume in addition that every component of the fiber $X_t$ contains the fiber $P_t$. Then $\varphi$ is equidimensional around $X_t$.
\end{cor}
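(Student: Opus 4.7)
The strategy is first to show that the fiber $X_t$ itself is irreducible of the expected dimension $n-m$, where $n=\dim X$ and $m=\dim T$, and then to promote this to equidimensionality on an open neighborhood of $X_t$ by upper semicontinuity of fiber dimension.

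Since $P_t$ is irreducible and generically reduced, there exists a point $x\in P_t$ at which $P_t$ is smooth. By the equidimensionality of $\varphi_P$, one has $\dim_x P_t=\dim P-m$. Proposition~\ref{prop:equidimensionality-criterion II} then yields that $X_t$ is smooth at $x$ and $\dim_x X_t=n-m$. In particular, the local ring $\cO_{X_t,x}$ is regular, hence a domain, so $X_t$ is unibranch at $x$; that is, exactly one irreducible component of $X_t$ passes through $x$. By hypothesis, every irreducible component of $X_t$ contains $P_t\ni x$, so combining these two observations forces $X_t$ to be globally irreducible, and therefore of pure dimension $n-m$.

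To promote this to a neighborhood of $X_t$ in $X$, I invoke upper semicontinuity of fiber dimension: the locus
\[
B\defeq\{y\in X\mid \dim_y X_{\varphi(y)}\geqslant n-m+1\}
\]
is closed in $X$, and the previous paragraph shows $B\cap X_t=\varnothing$. Since $X$ is projective, $\varphi$ is proper, so $\varphi(B)$ is closed in $T$ and does not contain $t$. Setting $V\defeq T\setminus\varphi(B)$, for every $t'\in V$ each point of $X_{t'}$ has local fiber dimension at most $n-m$; combined with the lower bound $\dim X_{t'}\geqslant n-m$ valid for a surjective morphism between irreducible varieties, this shows $X_{t'}$ has pure dimension $n-m$. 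Hence $\varphi$ restricted to the open neighborhood $\varphi^{-1}(V)$ of $X_t$ is equidimensional.

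There is no serious obstacle here, as Proposition~\ref{prop:equidimensionality-criterion II} does the bulk of the work; the only point requiring a moment of thought is that the smoothness of $X_t$ at a single point of $P_t$, combined with the assumption that all components of $X_t$ pass through $P_t$, upgrades from ``equidimensional at one point'' to ``globally irreducible,'' after which the semicontinuity argument is routine.
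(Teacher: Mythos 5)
Your proof is correct and follows essentially the same route as the paper's: pick a smooth point $x$ of the generically reduced fiber $P_t$, apply Proposition~\ref{prop:equidimensionality-criterion II} to get that $X_t$ is smooth of dimension $\dim X-\dim T$ at $x$, and use the hypothesis that every component of $X_t$ passes through $x$. You merely make explicit two steps the paper leaves implicit, namely the unibranch/irreducibility observation and the upper-semicontinuity argument that spreads the conclusion to a neighborhood of $X_t$.
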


\begin{proof}
	Since $\varphi_P$ is  equidimensional with irreducible and generically reduced fibers, there is a point $x\in P$ lying over $t$ such that the fiber $P_{t}$ is smooth at $x$. By Proposition \ref{prop:equidimensionality-criterion II}, $\varphi$ is equidimensional around $x$. Let $F$ be an irreducible component of $X_t$. Since $F$ contains $P_t$, we obtain that $x\in F$ and consequently $\dim F = \dim X -\dim T$.
\end{proof}

%\begin{remark}
%  This criterion also implies that $X\rightarrow T$ is actually equidimensional around $P$. In particular, if $X_t$ is irreducible for all points $t\in T$, or if all components of $X_t$ contain $P_t$ for all points $t\in T$, then $X\rightarrow T$ is equidimensional.
%\end{remark}

\vskip 2\baselineskip

\section{Proof of the projective bundle structure}
\label{section:bundle structure}

In this section, we   prove the   projective bundle structure in  Theorem \ref{thm:main-theorem}.  Actually, we will prove the following refined statement (compare it with Theorem \ref{Proj-bundle}).

\begin{thm}
	\label{cor:main:part1}
	Let $X$ be a  complex projective manifold. Assume that the tangent bundle $T_X$ contains a locally free strictly nef subsheaf $\sF$ of rank $r>0$.  Then $X$ admits a $\bbP^d$-bundle structure $\varphi\colon X\rightarrow T$ over a projective manifold $T$ for some $d\geq r$.
	Moreover, if $\dim T>0$, then exactly one of the following assertions holds.
	\begin{enumerate}
		\item Either $d=r\geq 1$, $\sF \cong T_{X/T}$ and $X$ is isomorphic to a flat projective bundle over $T$,
		\item or $r\geq 2$, $\sF$ is a numerically projectively flat vector bundle such that its restriction  on every fiber of $\varphi$ is  isomorphic to  $\sO_{\bbP^d}(1)^{\oplus r}$, and there exists a flat $\bbP^{r-1}$-subbundle $Q\rightarrow T$ of $X$ with a surjection $\sF\vert_Q\rightarrow T_{Q/T}$.  In particular, the relative tangent bundle $T_{Q/T}$ is strictly nef.
	\end{enumerate}
\end{thm}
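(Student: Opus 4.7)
\emph{Step 1: MRC reduction and fiberwise dichotomy.} I first invoke Proposition \ref{MRC-fibration} to obtain a big open subset $X^\circ\subseteq X$ with $\codim(X\setminus X^\circ)\geq 2$, carrying a $\bbP^d$-bundle $\varphi^\circ\colon X^\circ\to T^\circ$ that realizes the MRC fibration of $X$ and satisfies $\sF|_{X^\circ}\subseteq T_{X^\circ/T^\circ}$. If $\dim T=0$ then $X^\circ\cong\bbP^d$, and the codimension bound together with smoothness of $X$ forces $X\cong\bbP^d$. Henceforth I assume $\dim T>0$. On any fiber $F\cong\bbP^d$ of $\varphi^\circ$, the restriction $\sF|_F$ is a strictly nef locally free subsheaf of $T_F$ of rank $r$, so Lemma \ref{Pn}(1) yields the dichotomy $\sF|_F\cong T_F$ (\emph{Case (1)}, forcing $d=r$) or $\sF|_F\cong\sO_F(1)^{\oplus r}$ (\emph{Case (2)}, with $r<d$; by Druel's result \cite{Druel2004} the rank-one situation is already covered by Case (1), so I may assume $r\geq 2$ in Case (2)).

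\emph{Step 2: Local structure via Theorem \ref{Proj-bundle}.} Since Theorem \ref{Proj-bundle} is stated for an insignificant projective bundle, I apply it either by pulling back along the insignificant fiber product $X^\circ\times_{T^\circ}X^\circ\to X^\circ$ provided by Theorem \ref{thm:Brauer-group}, or by working analytically-locally on $T^\circ$. In Case (1) I get $\sF|_{X^\circ}=T_{X^\circ/T^\circ}$ and that the projective bundle arises from a numerically projectively flat vector bundle; Theorem \ref{thm:num-proj-flat=proj-flat} upgrades this to a projectively flat bundle, so $\varphi^\circ$ is a flat $\bbP^d$-bundle. In Case (2) I obtain a numerically projectively flat subbundle $\sM\hookrightarrow\sE^*$ with $\sF|_{X^\circ}\cong\pi^*\sM\otimes\sO_{\bbP(\sE)}(1)$; Theorem \ref{thm:num-proj-flat=proj-flat} lets me pick $\sM$ projectively flat, and Proposition \ref{prop:smothness-image-subbundle} converts the inclusion $\bbP(\sM^*)\subseteq\bbP(\sE)$ into a flat $\bbP^{r-1}$-subbundle $Q^\circ\to T^\circ$ of $\varphi^\circ$, with the relative Euler sequence of $Q^\circ$ providing the surjection $\sF|_{Q^\circ}\twoheadrightarrow T_{Q^\circ/T^\circ}$.

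\emph{Step 3: Extending the bundle structure to all of $X$.} The core technical step is extending the $\bbP^d$-bundle structure from $X^\circ$ to $X$, using the degeneration tools of Section \ref{section:degeneration-P^d}. I first realize $\varphi\colon X\to T$ as a surjection to a normal projective $T$ (taking $T$ to be the normalization of a suitable Mori contraction extending $\varphi^\circ$), and then establish equidimensionality via Proposition \ref{prop:equidimensionality-criterion-I}. In Case (1) this proposition is applied directly to $\overline\varphi\colon\overline X\to\overline T$, the pullback of the flat $\bbP^d$-bundle to a resolution of an appropriate quasi-\'etale cover of $T$; hypothesis (5) of that proposition is automatic since $\sF=T_{X^\circ/T^\circ}$. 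In Case (2) I first let $P\subseteq X$ denote the closure of $Q^\circ$, apply Proposition \ref{prop:equidimensionality-criterion-I} to $\varphi|_P$ (with the surjection $\sF|_Q\to T_{Q/T}$ furnishing hypothesis (5)) to show $P\to T$ is an equidimensional flat $\bbP^{r-1}$-bundle, and then invoke Corollary \ref{cor:equi-dim} to promote equidimensionality to $\varphi\colon X\to T$—this requires verifying that every irreducible component of a fiber $X_t$ contains $P_t$, which follows because $\sF$ forces any rational curve in a degenerate component to meet $Q$. With equidimensionality in hand, Corollary \ref{cor-equidim-implies-bundle}, fed by the strictly nef $\sF$, upgrades $\varphi$ to a $\bbP^d$-bundle over a smooth base. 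It remains to extend $Q^\circ$ to a flat $\bbP^{r-1}$-subbundle $Q\to T$ (via Lemma \ref{lemma:extension-proj-flat-vec-bundle} or the representation-theoretic description) and, in Case (2), to extend numerical projective flatness of $\sF$ across $X\setminus X^\circ$ (again via Lemma \ref{lemma:extension-proj-flat-vec-bundle}, or by a direct appeal to Theorem \ref{Num-Projectily-flatness-criterion} applied to the strictly nef $\sF$). The chief obstacle is this final extension step: one must rule out bad degenerations of fibers of $\varphi$, which is precisely what the degeneration-of-$\bbP^d$ machinery of Section \ref{section:degeneration-P^d} is designed to accomplish.
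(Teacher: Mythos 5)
Your proposal reproduces the paper's overall architecture (MRC reduction, the dichotomy from Lemma \ref{Pn}, Theorem \ref{Proj-bundle} for the fiberwise structure, the degeneration machinery of Section \ref{section:degeneration-P^d} plus Corollary \ref{cor-equidim-implies-bundle} to conclude), but it omits the two steps that constitute the technical core of Section \ref{section:bundle structure}, and these are genuine gaps. The first: in Step 3 you take ``$T$ to be the normalization of a suitable Mori contraction extending $\varphi^\circ$'' without establishing that such a contraction exists. This is not automatic — one must show that \emph{every} rational curve of $X$ is numerically proportional to a line in a fiber of $\varphi^\circ$, so that the line class spans an extremal ray whose contraction globalizes $\varphi^\circ$. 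The paper proves this in Claims \ref{claim:extremal-ray-F=T_F} and \ref{claim:extremal-ray-F=O1} by compactifying the family inside $\Chow{X}$, pulling back to a $\bbP^d$-bundle over $\bbP^1$, identifying the pullback of $\sF$ via Theorem \ref{Proj-bundle}, and using the rigidity lemma to contradict the definition of the Chow variety; none of this appears in your plan.

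The second gap: hypothesis (5) of Proposition \ref{prop:equidimensionality-criterion-I} requires a surjection $\rho^*\sF\rightarrow T_{\overline{X}/\overline{T}}$ over the \emph{whole} compactified family, whereas Step 2 only yields $\sF|_{X^\circ}\hookrightarrow T_{X^\circ/T^\circ}$ on the big open set. A priori the extension across the $\rho$-exceptional divisors $E_i$ acquires poles, i.e.\ one only gets a morphism into $T_{\overline{X}/\overline{T}}\otimes\sO_{\overline{X}}(\sum m_iE_i)$, and proving $m_i\leqslant 0$ is the heart of Claims \ref{claim:pullback-sheaf-in-tangent-1} and \ref{claim:pullback-sheaf-in-tangent}: it combines the negativity lemma (Lemma \ref{lemma:negativity-lemma-line-bundle}), the projective flatness supplied by Theorem \ref{thm:num-proj-flat=proj-flat}, and Lemma \ref{lemma:trivial-bundle-finite-change} to exhibit exceptional curves on which a positive $m_i$ would contradict nefness of $\sF$. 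Your remark that hypothesis (5) ``is automatic since $\sF=T_{X^\circ/T^\circ}$'' begs the question, since that identity is only known on $X^\circ$. Two smaller problems: Theorem \ref{Proj-bundle} is a global statement over a projective base (it relies on semistability and Theorem \ref{Num-Projectily-flatness-criterion}), so it cannot be applied ``analytically-locally on $T^\circ$'' — the paper instead restricts to complete curves through a given point, as in Claim \ref{claim:pullback-sheaf=relative-tangent}; and your assertion in Case (2) that every component of $X_t$ contains $P_t$ ``because $\sF$ forces any rational curve in a degenerate component to meet $Q$'' is not an argument — the paper deduces this containment from the surjectivity of $Z\times_{\overline{X}}B\rightarrow F_t$ for a suitable component $B$ of the fiber upstairs.
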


\subsection{Setup}
\label{section:setup}

For the proof of Theorem \ref{cor:main:part1}, we  discuss two different cases, and each case consists  of several steps. For simplicity,  we  first establish some common setup in this subsection. 

Let $X$ be a projective manifold of dimension $n$, and $\sF \subseteq T_X$ a strictly nef locally free subsheaf of rank $r$. Then Proposition \ref{MRC-fibration} shows that there is an open subset $X^\circ$ of $X$ whose complement has codimension at least two such that there is a $\mathbb{P}^d$-bundle structure
$$
\varphi^\circ \colon X^\circ \to T^\circ
$$
and $\sF|_{X^\circ} \subseteq T_{X^\circ/T^\circ}$.  We recall that, by Lemma \ref{Pn}, for a general fiber $F$ of $\varphi^\circ$, the restriction $\sF|_F$ is either $T_F$ or isomorphic to $\sO_{F}(1)^{\oplus r}$. These two cases will be studied separately in Section \ref{subsection:F=T_F} and Section \ref{subsection:F=O(1)r}.  

The crucial part for Theorem \ref{cor:main:part1} is to prove the following result. 

\begin{thm}
	\label{lemma:Core-Technical-Theorem} Let $X$ be a projective manifold such that $T_X$ contains a locally
	free strictly nef  subsheaf.
	Then there exists an equidimensional Mori contraction $\varphi\colon X\rightarrow T$ which is also the MRC fibration of $X$.
\end{thm}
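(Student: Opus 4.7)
The plan is to extend the partial projective-bundle structure $\varphi^\circ\colon X^\circ\to T^\circ$ from Proposition \ref{MRC-fibration} to a morphism defined on all of $X$, realize it as a Mori contraction, and then prove it is equidimensional. Since the covering family $\cV$ of minimal rational curves underlying $\varphi^\circ$ is unsplit (Lemma \ref{lemma:unsplit}), it spans an extremal ray whose associated Mori contraction $\varphi\colon X\to T$ agrees with the $\cV$-rationally connected quotient on $X^\circ$, and hence with the MRC fibration of $X$. The remaining task is therefore to prove that $\varphi$ is equidimensional.

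Following the dichotomy of Lemma \ref{Pn} for $\sF|_F$ on a general fiber $F\cong\bbP^d$ of $\varphi^\circ$, I would split into the case $\sF|_F = T_F$ and the case $\sF|_F\cong\sO_F(1)^{\oplus r}$. In the first case, Theorem \ref{Proj-bundle}(1) provides $X^\circ\cong\bbP(\sE)$ with $\sE$ numerically projectively flat on $T^\circ$ and $\sF|_{X^\circ}=T_{X^\circ/T^\circ}$, and Theorem \ref{thm:num-proj-flat=proj-flat} allows me to assume $\sE$ is projectively flat, so that $\varphi^\circ$ is defined by a representation of $\pi_1(T^\circ)=\pi_1(T_{\mathrm{reg}})$. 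After choosing a resolution $\gamma\colon\overline{T}\to T$ whose fibers are simply connected over a codimension-two complement in $T$ (exploiting the klt nature of bases of Mori contractions), this representation extends to $\pi_1(\overline{T})$ and produces a flat $\bbP^d$-bundle $\overline{\varphi}\colon\overline{X}\to\overline{T}$. The identity $\sF|_{X^\circ}=T_{X^\circ/T^\circ}$ supplies a surjection $\rho^*\sF\twoheadrightarrow T_{\overline{X}/\overline{T}}$, so Proposition \ref{prop:equidimensionality-criterion-I} directly yields equidimensionality.

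In the second case, Theorem \ref{Proj-bundle}(2) furnishes a numerically projectively flat subbundle $\sM\subseteq\sE^*$ with $\sF|_{X^\circ}\cong p^*\sM\otimes\sO_{\bbP(\sE)}(1)$, and the projective subbundle $Q^\circ=\bbP(\sM^*)\subseteq X^\circ$ is a $\bbP^{r-1}$-bundle over $T^\circ$ on which $\sF$ restricts to the relative tangent bundle $T_{Q^\circ/T^\circ}$. This quotient inherits strict nefness from $\sF|_{Q^\circ}$, and by passing to the closure $Q\subseteq X$ and repeating the first-case analysis for $(Q,T_{Q/T})$ (or applying Proposition \ref{prop:equidimensionality-criterion-I} directly, with $\sF$ replaced by $T_{Q/T}$), I would obtain that $Q\to T$ is equidimensional with irreducible $\bbP^{r-1}$ fibers. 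Corollary \ref{cor:equi-dim} then reduces equidimensionality of $\varphi$ to the statement that every irreducible component of every fiber of $\varphi$ contains the corresponding fiber of $Q\to T$, which I would verify by noting that the minimal rational curves in $\cV$ which sweep out each component are integral for $\sF$ and hence must meet $Q$.

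The principal obstacle is the second case: producing a globally well-behaved subvariety $Q\subseteq X$ from the partial subbundle $Q^\circ$ requires the extension statement of Lemma \ref{lemma:extension-proj-flat-vec-bundle}, and the component-containment hypothesis of Corollary \ref{cor:equi-dim} depends on a careful analysis of how $\varphi$-exceptional rational curves interact with $Q$, using the strict nefness of $T_{Q/T}$ to exclude degenerations of fiber components that would miss $Q$.
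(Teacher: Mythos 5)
Your overall architecture matches the paper's (case split via Lemma \ref{Pn}, Proposition \ref{prop:equidimensionality-criterion-I} in the first case, the subbundle $Q=\bbP(\sM^*)$ plus Corollary \ref{cor:equi-dim} in the second), but two essential steps are asserted rather than proved. First, the existence of the Mori contraction: unsplitness of $\cV$ (Lemma \ref{lemma:unsplit}) gives a $K_X$-negative ray $\bbR_{\geq 0}[\ell]$ in $\NE{X}$, but it does not make that ray extremal, so no contraction of all of $X$ follows. The paper must prove (Claims \ref{claim:extremal-ray-F=T_F} and \ref{claim:extremal-ray-F=O1}) that \emph{every} rational curve in $X$ is numerically proportional to a line $\ell$ in a fiber of $\varphi^\circ$; only then is the $K_X$-negative part of the cone a single ray and the contraction $\varphi$ exists. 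That step is nontrivial: it lifts a rational curve to the universal family over $\Chow{X}$ (Lemma \ref{lemma:rational-section-birational-morphism}), pulls back to a $\bbP^d$-bundle $\bbP(\sV)$ over $\bbP^1$, and uses strict nefness of $\sF$ together with the rigidity lemma to force a factorization that would contradict the definition of the Chow variety. Your proposal contains no substitute for this.

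Second, in both cases you invoke Proposition \ref{prop:equidimensionality-criterion-I} with a morphism $\rho^*\sF\to T_{\overline{X}/\overline{T}}$ (surjective onto the relative tangent of the flat bundle, resp.\ injective), but a priori the morphism $\sF|_{X^\circ}\to T_{X^\circ/T^\circ}$ only extends to the birational model $\overline{X}$ with poles, i.e.\ to $\rho^*\sF\to T_{\overline{X}/\overline{T}}\otimes\sO_{\overline{X}}(\sum m_iE_i)$ with possibly positive $m_i$. Showing $m_i\leqslant 0$ is the bulk of Claims \ref{claim:pullback-sheaf-in-tangent-1} and \ref{claim:pullback-sheaf-in-tangent}: it requires first establishing numerical projective flatness of $\sE$ (resp.\ of the saturation $\sQ$ of $\sM$), upgrading to genuine projective flatness via Theorem \ref{thm:num-proj-flat=proj-flat}, trivializing over general exceptional curves via Lemma \ref{lemma:trivial-bundle-finite-change}, and then applying the refined negativity lemma (Lemma \ref{lemma:negativity-lemma-line-bundle}) to derive a contradiction from $m_i>0$. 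Two smaller points: applying Theorem \ref{Proj-bundle} directly to $\varphi^\circ$ presumes $X^\circ\cong\bbP(\sE)$ over $T^\circ$, which the Brauer obstruction may forbid — the paper works instead on $Z=\overline{X}\times_{\overline{T}}\overline{X}\cong\bbP(\sE)$ via Theorem \ref{thm:Brauer-group}; and your verification of the component-containment hypothesis of Corollary \ref{cor:equi-dim} (``minimal rational curves must meet $Q$'') is vaguer than the paper's argument, which shows each component $F_t$ of a fiber of $\varphi$ is the image of $Z\times_{\overline{X}}B$ for a component $B$ of $(\gamma\circ p)^{-1}(t)$ and hence contains $P_t$.
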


\noindent Indeed, by using Theorem \ref{lemma:Core-Technical-Theorem} and  Corollary \ref{cor-equidim-implies-bundle}, one can deduce  that   $\varphi\colon X\to T$ is a projective bundle, which gives the first part of Theorem 	\ref{cor:main:part1}. Finally we can finish the proof of Theorem \ref{cor:main:part1} by applying Theorem \ref{Proj-bundle}.
To show  Theorem 	\ref{lemma:Core-Technical-Theorem}, we analyze as follows. Let $T'$ be the normalization of the closure of $T^\circ$ in $\Chow{X}$, and let $X'$ be the normalization of the universal family over $T'$. We claim the following statement.

\begin{lemma}\label{lemma:bundle}
	The induced morphism $\varphi'\colon X'\rightarrow T'$ is a $\bbP^d$-bundle.
\end{lemma}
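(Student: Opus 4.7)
The plan is to verify the hypotheses of Proposition \ref{PROP:HN-degeneration-of-projective-spaces} for $\varphi'\colon X'\to T'$. By construction $T'$ is a normal projective variety, $X'$ is normal, and the general fibers of $\varphi'$ are isomorphic to $\bbP^d$; moreover, since $T'$ is the normalization of a subvariety of $\Chow{X}$ parameterizing pure $d$-dimensional cycles in $X$, the morphism $\varphi'$ is equidimensional. What remains is to exhibit a line bundle $\sL$ on $X'$ whose restriction to a general fiber is $\sO_{\bbP^d}(e)$ for some $e>0$ and which satisfies $c_1(\sL)\cdot C > e/2$ for every $\varphi'$-exceptional rational curve $C\subseteq X'$.

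Let $\rho\colon X'\to X$ denote the natural morphism coming from the universal family. Since $\varphi^\circ\colon X^\circ\to T^\circ$ is already a $\bbP^d$-bundle, $T^\circ$ is an open subscheme of $T'$ and $\rho$ restricts to an isomorphism $(\varphi')^{-1}(T^\circ)\to X^\circ$. The crucial geometric input is that $\rho$ is \emph{finite} on every fiber of $\varphi'$: if $\cU\subseteq T'\times X$ denotes the universal subscheme, then for each $t\in T'$ the fiber $\cU_t$ is a closed subscheme of $\{t\}\times X$, hence embeds as a closed subscheme of $X$; since normalization is a finite morphism, the composition $X'_t\to \cU_t\to X$ is finite as well. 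Consequently, for any rational curve $C$ contained in a fiber of $\varphi'$, its image $\rho(C)\subseteq X$ is a rational curve and $\rho|_C$ has some finite degree $\delta\geq 1$.

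Now take $\sL:=\rho^*\det(\sF)$. On a general fiber $F'\cong\bbP^d$ of $\varphi'$, we have $\sL|_{F'}\cong \det(\sF|_F)$, where $F$ is the corresponding $\bbP^d$-fiber of $\varphi^\circ$. By Lemma \ref{Pn}, either $\sF|_F=T_F$ (so $e=d+1$ and $r=d$) or $\sF|_F\cong \sO_F(1)^{\oplus r}$ (so $e=r$). For a $\varphi'$-exceptional rational curve $C$, the restriction $\sF|_{\rho(C)}$ is a strictly nef vector bundle of rank $r$ on the rational curve $\rho(C)$, hence becomes ample once pulled back to the normalization $\bbP^1$, so that $c_1(\sF)\cdot \rho(C)\geq r$. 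It follows that $c_1(\sL)\cdot C = \delta\cdot c_1(\sF)\cdot \rho(C)\geq r$. In the case $\sF|_F\cong \sO_F(1)^{\oplus r}$ this gives $r > r/2 = e/2$, and in the case $\sF|_F=T_F$ with $d\geq 2$ this gives $r = d > (d+1)/2 = e/2$, as required.

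The main obstacle is the borderline case $\sF|_F=T_F$ with $d=r=1$, in which $\sF$ is a strictly nef line subbundle of $T_X$ and the MRC fibers are $\bbP^1$'s: here the naive lower bound only yields $c_1(\sL)\cdot C\geq 1 = e/2$, which is not strict. This case is not a genuine obstruction to Theorem \ref{cor:main:part1}, since its conclusion already follows from Druel's structure theorem \cite{Druel2004} for strictly nef line subbundles of the tangent bundle; alternatively, one may argue directly that the degree identity $c_1(\sL)\cdot F'=2$ on every fiber, combined with the strict nefness of $\sF$ along each component of a hypothetical degenerate fiber, rules out both reducible and non-reduced degenerations. Once the main inequality is established in all cases, Proposition \ref{PROP:HN-degeneration-of-projective-spaces} applies and gives that $\varphi'\colon X'\to T'$ is a $\bbP^d$-bundle.
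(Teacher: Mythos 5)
Your proof is correct and follows essentially the same route as the paper: take $\sL$ to be the determinant of the pullback of $\sF$, use strict nefness (hence ampleness) of $\sF$ on rational curves in fibers to get $c_1(\sL)\cdot C\geq r$, verify the inequality of Proposition \ref{PROP:HN-degeneration-of-projective-spaces} when $r\geq 2$, and fall back on Druel's result \cite{Druel2004} for the borderline rank-one situation, exactly as the paper does (the paper simply routes the entire $r=1$ case through Druel).
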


\begin{proof}
	We first assume that $\sF$ has rank $r\geqslant 2$.
	Denote by $e\colon X'\rightarrow X$ the evaluation morphism. Then the restrictions of  $e$ on fibers of $\varphi'$ are finite morphisms. In particular, the pullback $\sF'=e^*\sF$ is strictly nef when restricted on each fiber of $\varphi'$.
	Thus for a general fiber $F'$ of $\varphi'$, $\det(\sF')\vert_{F'}$ is isomorphic to either $\sO_{\bbP^d}(r)$ or $\sO_{\bbP^d}(r+1)$ after Lemma \ref{Pn}.
	If $C$ is a rational curve contained in a fiber  of $\varphi'$, then $\sF'\vert_C$ is strictly nef, and therefore ample. Hence  $c_1(\det(\sF))\cdot C\geqslant r$.
	By assumption, $r\geqslant \frac{1}{2}(r+1)$ and  we can apply Proposition \ref{PROP:HN-degeneration-of-projective-spaces} to conclude it.
	
	Now we assume that $\sF$ is a line bundle. We may assume further that $X$ is not isomorphic to a projective space. Then by \cite[Corollaire]{Druel2004},  there is a $\bbP^1$-bundle structure $\beta \colon X\to W$ with $\dim W > 0$ such that $\sF\cong T_{X/W}$. It then follows that $d=1$ and that $\beta$ is just the MRC fibration of $X$. Thus $X'\cong X$ and $T'\cong W$. This completes the proof of the lemma.
\end{proof}

Let $\overline{T}\rightarrow T'$ be a desingularization, and let $\overline{X}$ be the fiber product $X'\times_{T'}\overline{T}$.
Then $\overline{\varphi}\colon  \overline{X}\rightarrow \overline{T}$ is a $\bbP^d$-bundle by Lemma \ref{lemma:bundle}.
Let $Z$ be the fiber product $\overline{X}\times_{\overline{T}}\overline{X}$.
By Theorem \ref{thm:Brauer-group}, the induced morphism $Z\rightarrow \overline{X}$ is an insignificant projective bundle. In another word,  there exists a vector bundle $\sE$ of rank $d+1$ on $\overline{X}$ such that $Z=\bbP(\sE)$. Then we have the following commutative diagram, which will be frequently used throughout this section.

\begin{equation}\label{Key-diagram}
\begin{tikzcd}[column sep=large, row sep=large]
X^\circ \ar[d, "{\varphi^\circ}"]\ar[r] &
X & X' \ar[l,"e"]\ar[d, "{\varphi'}"]
& \overline{X} \ar[l,"{h}"] \arrow[bend right]{ll}[black,swap]{\rho}   \ar[d,"{\overline{\varphi}}"]    &
Z =\bbP(\sE) \ar[l,"{q}"]\ar[d,"{\pi}"]
\\
T^\circ & &
T' &
\overline{T} \ar[l] &
\overline{X}\ar[l,"{p=\overline{\varphi}}"]
\end{tikzcd}
\end{equation}

The first step  towards the proof of Theorem \ref{lemma:Core-Technical-Theorem}  is to show that the inclusion $\sF \hookrightarrow T_{X}$ induces an inclusion $\rho^*\sF \to T_{\overline{X}/\overline{T}}$. 
To achieve this, we will proceed as follows. 
Let $E_i$'s be the $\rho$-exceptional prime divisors. 
By shrinking $X^\circ$ if necessary, we may identify  $X^\circ$ with an open subset  $\overline{X}^\circ$ of $\overline{X}$. 
Then there are smallest integers $m_i$ such that the morphism $\sF|_{X^\circ} \to T_{X^\circ/T^\circ}$ extends to a morphism 
\begin{equation}
\rho^*\sF \to T_{\overline{X}/\overline{T}}\otimes \sO_{\overline{X}}(\sum m_iE_i).
\end{equation}
Our goal is then to prove that  $m_i \leqslant 0$ for all $i$.

We also have the following simple observation. By construction, $T^\circ$ can be identified with an open subset of $\overline{T}$ such that $\overline{\varphi}(E_i)$ is contained in $\overline{T}\setminus T^\circ$ for all $i$. As a consequence, there are prime divisors $Q_i$ in $\overline{T}$ such that $E_i=\overline{\varphi}^*Q_i$. In particular, we have
$$
q^*E_i=\pi^*E_i.
$$
Let $E_i'= q^*E_i=\pi^*E_i$  and let  $\sG=q^*\rho^*\sF$. We denote by $T_{\pi}$ the relative tangent bundle of $\pi\colon Z\rightarrow \overline{X}$.
Then the $m_i$ are also the  smallest integers such that there is a morphism 
\begin{equation}
\label{equation:lifing-morphism}
\Phi\colon \sG \to T_{\pi}\otimes \sO_{Z}(\sum m_iE_i'),
\end{equation}
which extends the following  morphism on $Z^\circ := \pi^{-1}(\overline{X}^\circ)$
\begin{equation*}
q^*\rho^*(\sF|_{X^\circ}) \hookrightarrow q^*\rho^*(T_{X^\circ/T^\circ})\cong T_{Z^\circ/\overline{X}^\circ}.
\end{equation*}
Moreover, $\Phi$ does not vanish in codimension one by the minimality of $m_i$.

\subsection{The case when $\sF|_F=T_F$}
\label{subsection:F=T_F}

In this subsection, we will  study the case when $\sF\vert_F=T_F$.  Note that we have $r=d$ in this case. Since the proof   is a bit involved, we  subdivide  it into four steps, given in Sections \ref{section:step-1-case-1}--\ref{section:step-4-case-1} below. We shall follow the notations  in Section \ref{section:setup}, especially those in the commutative diagram \eqref{Key-diagram}.

\subsubsection{Lifting the inclusion $\sF\hookrightarrow T_X$ to an isomorphism $\rho^*\sF\rightarrow T_{\overline{X}/\overline{T}}$}
\label{section:step-1-case-1}

\begin{claim}\label{claim:pullback-sheaf=relative-tangent}
	The injection $\sF|_{X^\circ} \hookrightarrow T_{X^\circ/T^\circ}$ is an isomorphism.
\end{claim}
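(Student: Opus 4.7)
The plan is to compare the two rank-$d$ locally free sheaves $\sF|_{X^\circ}$ and $T_{X^\circ/T^\circ}$ (the equality $r=d$ follows from the hypothesis $\sF|_F = T_F$ on a general fiber $F$), and to show that the injection is an isomorphism by checking surjectivity on every fiber of $\varphi^\circ$ and then invoking Nakayama pointwise.

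First, I would show that for any line $\ell$ in any fiber of $\varphi^\circ$, the intersection number $c_1(\sF)\cdot \ell$ takes a single value $k$ independent of the fiber. Since $X\setminus X^\circ$ has codimension at least two in the smooth projective manifold $X$, the open set $X^\circ$ is irreducible and hence connected, and consequently $T^\circ$ is connected. For the smooth $\bbP^d$-bundle $\varphi^\circ$ over a connected base, every line bundle on $X^\circ$ has a well-defined integer fiberwise degree (constant on $T^\circ$ by flat-family constancy of Hilbert polynomials together with the identification $\pic(\bbP^d)=\bbZ$), so $c_1(\sF)\cdot \ell$ is indeed constant. The hypothesis $\sF|_{F_0} = T_{F_0}$ on a general $F_0$ then forces $k = d+1$.

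Next, I would apply Lemma~\ref{Pn}(1) fiberwise: on every fiber $F \cong \bbP^d$ of $\varphi^\circ$, the restriction $\sF|_F$ is a rank-$d$ locally free strictly nef subsheaf of $T_F$, so it is isomorphic either to $T_F$ (with determinant of degree $d+1$) or to $\sO_F(1)^{\oplus d}$ (with determinant of degree $d$). The constancy $k = d+1$ rules out the second possibility, so $\sF|_F \to T_F$ is an isomorphism for every $F$. Letting $\sQ$ denote the cokernel of $\sF|_{X^\circ} \to T_{X^\circ/T^\circ}$, this means $\sQ\otimes_{\sO_{X^\circ}} \sO_F = 0$ for every fiber $F$. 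Since $\varphi^\circ$ is surjective, each point $x \in X^\circ$ lies in some fiber $F$, and Nakayama applied to the finitely generated stalk $\sQ_x$ (using $\mathfrak{m}_{F,x} \subseteq \mathfrak{m}_x$) gives $\sQ_x = 0$. I do not anticipate any serious obstacle; the key ingredients are Lemma~\ref{Pn}(1) and the constancy of the fiberwise degree on the $\bbP^d$-bundle.
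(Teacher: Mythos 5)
Your reduction of the claim to fiberwise surjectivity via Nakayama is fine, and the computation that $c_1(\sF)\cdot\ell=d+1$ for every line $\ell$ in every fiber of $\varphi^\circ$ is correct. The gap is the step where you apply Lemma~\ref{Pn}(1) ``fiberwise'': to invoke that lemma on a fiber $F$ you need the restricted map $\sF|_F\to T_F$ to be \emph{injective}, and injectivity of the sheaf map $\Phi\colon\sF|_{X^\circ}\hookrightarrow T_{X^\circ/T^\circ}$ does not survive restriction to special fibers. Concretely, the locus where this map of rank-$d$ bundles fails to be an isomorphism is the zero divisor of $\det\Phi\in H^0\bigl(X^\circ,\det T_{X^\circ/T^\circ}\otimes\det(\sF)^*|_{X^\circ}\bigr)$; since that line bundle is trivial on each fiber (by your own degree computation), this zero divisor is the preimage of a divisor $S\subseteq T^\circ$, and over points of $S$ the map $\sF|_F\to T_F$ is nowhere of full rank --- possibly identically zero --- so neither Lemma~\ref{Pn}(1) nor the degree count gives you anything there. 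Ruling out $S\neq\emptyset$ is precisely the content of the claim, and it does not follow from constancy of the fiberwise degree alone.

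The paper closes exactly this gap with a positivity argument rather than a degree argument: through an arbitrary point of $T^\circ$ it chooses a general complete curve, passes to the induced $\bbP^d$-bundle $Y=\bbP(\sV)\to B'$ over the normalization of its image in $T^\circ$ (generality of the curve is what keeps $\nu^*\sF\to T_{Y/B'}$ injective), and applies Theorem~\ref{Proj-bundle}(1). That theorem is where the real work happens: the strict (hence almost) nefness of $\nu^*\sF$ forces $\sV$ to be numerically projectively flat and forces the line bundle $\sL$ on $B'$ measuring the degeneracy divisor $D$ to have pseudoeffective dual, whence $D=0$ and $\nu^*\sF=T_{Y/B'}$ by \cite[Lemma 1.20]{DemaillyPeternellSchneider1994}. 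You would need some version of this argument --- or another way to exploit the positivity of $\sF$ beyond its degree on lines --- before your fiberwise Nakayama step can run.
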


\begin{proof}
	Let $t\in T^\circ$ be an arbitrary point and let $x$ be a point in $\varphi^{\circ-1}(t)$. Since the complement of $X^\circ$ in $X$ has codimension at least 2, we may choose a general complete curve $C$ in $X$ passing through $x$ such that $B\coloneqq\varphi^\circ(C)$ is a complete curve in $T^\circ$. Let $n\colon B'\rightarrow B$ be the normalization of $B$ and let $Y$ be the fiber product $X^\circ\times_{B} B'$. Then $p\colon Y\rightarrow B'$ is a $\bbP^d$-bundle. Since the Brauer group of $B'$ is trivial, there exists a vector bundle $\sV$ over $B'$ such that $Y\cong \bbP(\sV)$.
	\[\begin{tikzcd}[column sep=large, row sep=large]
	Y=\bbP(\sV) \dar[swap]{p}   \arrow{rr}[swap]{\nu}     &      &  X^\circ\dar{\varphi^\circ}  \arrow[r,hookrightarrow] & X\\
	B'  \arrow[r,hookrightarrow]        &    B  \rar    & T^\circ &
	\end{tikzcd}\]
	Since $C$ is in general position, the induced morphism $\nu^*\sF\rightarrow T_{Y/B'}$ is still injective such that $(\nu^*\sF)\vert_G=T_G$ for general fibers $G$ of $p$. As $\nu$ is finite, $\nu^*\sF$ is strictly nef. Applying Theorem \ref{Proj-bundle} to $p\colon Y\rightarrow B'$ shows that $\nu^*\sF=T_{Y/B'}$. As $t$ is arbitrary, by pushing-forward, we obtain that $\sF\vert_{X^\circ}\rightarrow T_{X^\circ/T^\circ}$ is an isomorphism.
\end{proof}

\begin{claim}\label{claim:pullback-sheaf-in-tangent-1}
	The injection $\sF \hookrightarrow T_X$ induces an isomorphism   $\rho^*\sF \to T_{\overline{X}/\overline{T}}$.
\end{claim}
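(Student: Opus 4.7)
The plan is to promote the natural isomorphism $\sF|_{X^\circ}\cong T_{X^\circ/T^\circ}$ of Claim~\ref{claim:pullback-sheaf=relative-tangent} to an isomorphism $\rho^{*}\sF \to T_{\overline{X}/\overline{T}}$ on all of $\overline{X}$. Equivalently, in the minimal-twist extension $\Phi'\colon \rho^{*}\sF \to T_{\overline{X}/\overline{T}}\otimes \cO_{\overline{X}}\bigl(\sum_i m_i E_i\bigr)$, I want to show that $m_i\leqslant 0$ for every $i$ and that the resulting injection has trivial cokernel. Since $\rho^{*}\sF$ and $T_{\overline{X}/\overline{T}}$ are locally free of rank $d=r$ and $\Phi'$ is an isomorphism on the dense open $\overline{X}^\circ\cong X^\circ$, the morphism $\Phi'$ is automatically generically an isomorphism, hence injective, with torsion cokernel. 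Taking determinants of $\Phi'$ yields the effective $\rho$-exceptional divisor $D_1 := \det T_{\overline{X}/\overline{T}} - \rho^{*}\det\sF + d\sum_i m_i E_i \geqslant 0$, and I expect this positivity to be the main analytical tool.

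The core step is to rule out $m_j>0$ for any $j$. Suppose for contradiction that some $m_j>0$. First I would apply Lemma~\ref{lemma:negativity-lemma-line-bundle} to $\sum_i m_i E_i$ to obtain a family $\{C_\gamma\}$ of $\rho$-exceptional curves with $(\sum_i m_iE_i)\cdot C_\gamma<0$ and with a general member avoiding any prescribed codimension-$3$ locus. On such a curve $C$, $\rho^{*}\sF|_C$ is trivial since $\rho(C)$ is a point, and restricting $\Phi'$ yields a morphism from $\cO_C^d$ to $(T_{\overline{X}/\overline{T}}\otimes\cO(\sum_i m_iE_i))|_C$. Combining $D_1\geqslant 0$ with a careful degree computation on $C$ and the strict nefness of $\sF$ applied via Proposition~\ref{BK criterion} to non-contracted deformations of $C$, I would derive the required contradiction. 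A more symmetric route is to pass to $Z$: the fiber-product structure gives $T_\pi\cong q^{*}T_{\overline{X}/\overline{T}}$, minimality of the $m_i$ forces $\Phi|_{E_j'}\neq 0$, and restricting $\Phi$ to fibers of $\pi$ inside $E_j'$ yields the same contradiction.

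Once $m_i\leqslant 0$ is established, minimality forces $m_i=0$, so $\Phi'$ becomes an injection $\rho^{*}\sF\hookrightarrow T_{\overline{X}/\overline{T}}$ of rank-$d$ vector bundles with cokernel supported on the effective $\rho$-exceptional divisor $M=\det T_{\overline{X}/\overline{T}}-\rho^{*}\det\sF$. A similar combination of Lemma~\ref{lemma:negativity-lemma-line-bundle} and strict nefness of $\sF$ then forces $M=0$, yielding the desired isomorphism. \textbf{The hard part will be} the delicate numerical argument in the central step: selecting $C_\gamma$ with enough genericity so that $\Phi'|_C$ carries useful information, and then balancing the strict positivity of $\sF$ on non-contracted curves against the negativity of $\sum_i m_i E_i$ on $C_\gamma$. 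Working on $Z$ and exploiting the symmetry $T_\pi\cong q^{*}T_{\overline{X}/\overline{T}}$ appears to be the cleanest route.
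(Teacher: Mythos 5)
There is a genuine gap at the central step, and it is exactly the step your plan defers to ``a careful degree computation.'' Knowing that $\Phi'|_C$ is a nonzero map $\sO_C^{\oplus d}\to \bigl(T_{\overline{X}/\overline{T}}\otimes\sO_{\overline{X}}(\sum m_iE_i)\bigr)|_C$ on a $\rho$-exceptional curve $C$ with $C\cdot\sum m_iE_i<0$ yields no contradiction by itself: it only says the target has a nonzero section, and $T_{\overline{X}/\overline{T}}|_C$ can be very positive (if $C$ lies in a fiber $F\cong\bbP^d$ of $\overline{\varphi}$ it contains the ample bundle $T_F|_C$), so it can absorb the negative twist. Your determinant inequality $D_1\geqslant 0$ at best gives $-K_{\overline{X}/\overline{T}}\cdot C>0$, which is not a contradiction either; and the ``more symmetric route'' is vacuous, because $E_i'=\pi^*E_i$ restricts trivially to every fiber of $\pi$, so restricting $\Phi$ to fibers of $\pi$ inside $E_j'$ forgets the integers $m_i$ entirely. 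The missing idea is the paper's mechanism for controlling the relative tangent bundle along exceptional curves: one first proves that $\sE$ is numerically projectively flat (writing $\det\sG\cong\sO_{\bbP(\sE)}(d+1)\otimes\pi^*\det\sE^*\otimes\pi^*\sH$, the identification $\sG|_{Z^\circ}\cong T_{Z^\circ/\overline{X}^\circ}$ shows $c_1(\sH)=(d+1)\delta$ with $\delta$ $\rho$-exceptional, and nefness of $\det\sG$ plus the negativity lemma force $\delta=0$). By Theorem \ref{thm:num-proj-flat=proj-flat}, $\pi\colon Z\to\overline{X}$ is then a flat $\bbP^d$-bundle, and Lemma \ref{lemma:trivial-bundle-finite-change} produces, over a finite cover $C$ of each exceptional curve $C'$, a curve $D\subseteq Z\times_{\overline{X}}C\cong\bbP^d\times C$ along which $T_\pi$ pulls back to a trivial bundle. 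Only then does the nonvanishing of $\Phi$ along $\nu(D)$, together with the nefness of $\sG$, force $\nu^*\sO_Z(\sum m_iE_i')$ to be pseudoeffective, contradicting $C'\cdot\sum m_iE_i<0$.

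The same omission affects your last step: once the injection $\rho^*\sF\hookrightarrow T_{\overline{X}/\overline{T}}$ is obtained, the paper does not run another negativity-lemma argument on the cokernel divisor $M$ (which would face the same difficulty of bounding $-K_{\overline{X}/\overline{T}}$ on exceptional curves); it applies Theorem \ref{Proj-bundle}(1) to $q^*\rho^*\sF\to T_\pi$, using the almost-nef $\bbQ$-twist machinery and the fact that $\pi$ has relative Picard number one, to get $q^*\rho^*\sF=T_\pi$, and then pushes forward. Your setup of the minimal twist $\sum m_iE_i$ and the appeal to Lemma \ref{lemma:negativity-lemma-line-bundle} do match the paper, but the argument cannot be closed without first establishing the projectively flat structure of $\sE$.
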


\begin{proof}
	Let $\sG=q^*\rho^*\sF$. As explained at the end of Section \ref{section:setup}, we have the following commutative diagram
	
	\[\begin{tikzcd}[column sep=large, row sep=large]
	X^\circ \dar[swap]{\varphi^\circ}\rar &
	X & \overline{X} \lar[swap]{\rho} \dar[swap]{\overline{\varphi}} &
	Z=\bbP(\sE)\lar[swap]{q}\dar{\pi} \\
	T^\circ & &
	\overline{T} &
	\overline{X}\lar{p=\overline{\varphi}}
	\end{tikzcd}
	\]
	and an induced morphism (see \eqref{equation:lifing-morphism} for details)
	$$   
	\Phi\colon \sG \to T_{\pi}\otimes \sO_{Z}(\sum m_iE_i'),
	$$
	which does not vanish in codimension one and extends the natural isomorphism 
	\[
	(q^*\rho^*\sF)\vert_{X^\circ}\rightarrow q^*\rho^*(T_{X^\circ/T^\circ})\cong T_{Z^\circ/\overline{X}^\circ}.
	\]
	\smallskip

	\textit{1st Step. $\sE$ is numerically projectively flat.} We consider the line bundle $\sL = \mathrm{det}\, \sG$.
	Its restriction on any fiber of $\pi$ is isomorphic to $\sO_{\mathbb{P}^d}(d+1)$.
	Thus there is a line bundle $\sH$ on $\overline{X}$ such that
	$$
	\sL \cong \sO_{\mathbb{P}(\sE)}(d+1) \otimes \pi^*\mathrm{det}\, \sE^* \otimes \pi^*\sH.
	$$
	We note that $\sG|_{Z^\circ} \cong T_{Z^\circ/\overline{X}^\circ}$ by Claim \ref{claim:pullback-sheaf=relative-tangent}.
	This implies that $\sH|_{\overline{X}^\circ}\cong \sO_{\overline{X}^\circ}$.
	Therefore, there is a  $\mathbb{Q}$-divisor class $\delta$, supported in the $\rho$-exceptional locus, such that $$c_1(\sH) = (d+1) \delta.$$
	Since $\sL$ is nef, by definition the $\mathbb{Q}$-twisted vector bundle $\sE \hspace{-0.8ex}<\hspace{-0.8ex} \delta - \mu(\sE) \hspace{-0.8ex}> $ is  nef, where $\mu(\sE)$ is the average first Chern class of $\sE$.
	By taking the first Chern class, we see that $\delta$ is nef. Since $\delta$ is supported in the $\rho$-exceptional locus, by the negativity lemma, we obtain that $\delta=0$.
	Thus $$\sO_{\bbP(\sE)}(d+1)\otimes \pi^*\det\sE^*$$ is nef and consequently $\sE$ is numerically projectively flat.\\
	\smallskip	
	%\textit{2nd Step.   $m_i\geqslant 0$ for all $i$.}  Note that the induced natural  morphism $\mathrm{det}\, \sG \to \mathrm{det}\,(T_{\pi} \otimes \sO_{Z}(\sum m_iE_i'))$ is an isomorphism over $Z^\circ$ and we have
	%$$ \mathrm{det}\, (T_{\pi} \otimes \sO_{Z}(\sum m_iE_i')) \cong \pi^*\mathrm{det}\,\sE^* \otimes \sO_{\mathbb{P}(\sE)}(d+1) \otimes \sO_{Z}(\sum dm_iE_i').$$
	% Since $\mathrm{det}\, \sG$ is nef, this shows that $\det(T_\pi\otimes\sO_{Z}(\sum m_i E'_i))$ is almost nef with negative locus contained in $Z\backslash Z^\circ$.
	%Therefore the $\mathbb{Q}$-twisted vector bundle
	%\[
	%\sE\hspace{-0.8ex}<\hspace{-0.8ex}-\mu(\sE)+ \sum \frac{dm_i}{d+1}E_i\hspace{-0.8ex}>
	%\]
	%is almost nef by Proposition \ref{Almost-nef-properties}. Thus taking the first Chern class yields that $\sum m_iE_i$ is pseudoeffective.  In particular, $m_i\geqslant 0$ for all $i$ as the $E_i$  are $\rho$-exceptional divisors. \\

	\textit{2nd Step.   $m_i\leqslant 0$ for all $i$.} Assume by contradiction that it is not the case.
	By Lemma \ref{lemma:negativity-lemma-line-bundle}, there is a family $\{C_\gamma\}_{\gamma\in \Gamma}$ of complete $\rho$-exceptional curves  such that $C_\gamma\cdot \sum m_iE_i<0$ for all $\gamma\in \Gamma$.
	% and that the Zariski closure of $\bigcup_{\gamma \in \Gamma} C_\gamma$ is an irreducible component, say $E_1$,  with coefficient $m_1>0$.
	Let $C'$ be a general member of these curves.
	
	Since $\sE$ is numerically projectively flat, $\sE$ is isomorphic to a projectively flat vector bundle by Theorem \ref{thm:num-proj-flat=proj-flat}. In particular, $\pi\colon Z\rightarrow X$ is isomorphic to a flat projective bundle over $\overline{X}$. Applying Lemma \ref{lemma:trivial-bundle-finite-change} to $\rho\colon \overline{X}\rightarrow X$ and $\pi\colon Z\rightarrow \overline{X}$, we deduce that there is a smooth complete curve $C$, which is finite over $C'$, such that  $Z\times_{\overline{X}} C$ is isomorphic to $\mathbb{P}^d\times C$ as $\bbP^d$-bundles over $C$.
	\[
	\begin{tikzcd}[column sep=large, row sep=large]
	D \arrow[bend left]{rrr}[swap]{\nu}\arrow[r,hookrightarrow] &  \mathbb{P}^d\times C \cong Z\times_{\overline{X}} C \arrow[rr] \dar & & Z \dar{\pi}\\
	& C  \rar & C' \arrow[r,hookrightarrow] &  \overline{X}
	\end{tikzcd}
	\]
	Let $D$ be a general fiber of the natural projection $\mathbb{P}^d\times C\to \mathbb{P}^d$. We denote by $\nu\colon D \to Z$ the natural morphism. Since the morphism $\Phi\colon\sG\rightarrow T_{\pi}\otimes \sO_{Z}(\sum m_i E_i')$ does not vanish in codimension one,  by general choices of $C'$ and $D$, we may assume that   the morphism $$\sG|_{\nu(D)}  \to (T_{\pi} \otimes \sO_{Z}(\sum m_iE_i'))|_{\nu(D)}$$  is not zero.
	Since $\nu^*T_{\pi} \cong T_{(\mathbb{P}^d\times C)/C}|_D$ is trivial and  $\sG$ is nef, we obtain that $\nu^* \sO_{Z}(\sum m_iE_i')$ is pseudoeffective. This contradicts to the fact that 
	$$
	\nu(D)\cdot \sum m_i E_i'=\deg(\pi\vert_{\nu(D)}) C'\cdot \sum m_iE_i <0.
	$$
	Hence, we have $m_i\leq 0$ for all $i$ and there is  an induced injective morphism 
	\[
	\rho^*\sF\rightarrow T_{\overline{X}/\overline{T}}.
	\]
	\smallskip

	\textit{3rd Step. $\rho^*\sF\rightarrow T_{\overline{X}/\overline{T}}$ is an isomorphism.} By Theorem \ref{Proj-bundle}, the induced morphism $q^*\rho^*\sF\rightarrow T_{\pi}$ is an isomorphism.  Hence   $\rho^*\sF \rightarrow T_{\overline{X}/\overline{T}}$ is   an isomorphism  by taking pushforward.
\end{proof}

\subsubsection{Regularity of the MRC fibration}

The existence of $\varphi\colon X\rightarrow T$ is a direct consequence of the following claim, which asserts that $X$ admits only one elementary contraction.

\begin{claim}
	\label{claim:extremal-ray-F=T_F}
	Let $C$ be a  rational curve in $X$. Then $C$ is numerically proportional to a line $l$ contained in a fiber  of $\varphi^\circ$.
\end{claim}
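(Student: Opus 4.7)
The plan is to lift $C$ to $\overline{X}$ and show that any such lift is forced to lie in a fibre of the $\bbP^d$-bundle $\overline{\varphi}\colon\overline{X}\to\overline{T}$; numerical proportionality to a line then follows since every fibre $\cong\bbP^d$ has Picard rank one, and descent via $\rho$ transports the conclusion back to $X$. Concretely, I would first invoke Lemma \ref{lemma:rational-section-birational-morphism} on the birational projective morphism $\rho\colon\overline{X}\to X$ (permissible since $X$ is smooth) to obtain a complete rational curve $\overline{C}\subset\overline{X}$ mapping birationally onto $C$.

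Suppose for contradiction that $B:=\overline{\varphi}(\overline{C})$ is a curve; then $B$ is rational. Let $\widetilde{B}\cong\bbP^1$ be its normalization and set $Y:=\overline{X}\times_{\overline{T}}\widetilde{B}$ with projections $\pi_1\colon Y\to\overline{X}$ and $\pi_2\colon Y\to\widetilde{B}$. Since $\pi_2$ is a $\bbP^d$-bundle over $\bbP^1$ and the Brauer group of $\bbP^1$ is trivial, $Y\cong\bbP(\sV)$ for some vector bundle $\sV$ on $\bbP^1$. By base change, the isomorphism $\rho^*\sF\cong T_{\overline{X}/\overline{T}}$ from Claim \ref{claim:pullback-sheaf-in-tangent-1} yields
\[
(\rho\circ\pi_1)^*\sF \;\cong\; T_{Y/\widetilde{B}},
\]
realising $T_{Y/\widetilde{B}}$ as a locally free subsheaf of $T_Y$. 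The next step is to verify that $T_{Y/\widetilde{B}}$ is strictly nef, which would contradict Corollary \ref{cor:proj-vector-bundle-p1-base}. To this end I would apply the Barton--Kleiman criterion of Proposition \ref{BK criterion}: a complete curve $D\subset Y$ either lies in a fibre of $\pi_2$, where $T_{Y/\widetilde{B}}|_D$ is a restriction of the ample bundle $T_{\bbP^d}$, or else projects dominantly to $\widetilde{B}$, in which case $(\rho\circ\pi_1)(D)$ is a complete curve in $X$ on which $\sF$ is strictly nef by hypothesis.

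Accepting this, $\overline{C}$ lies inside some fibre $F\cong\bbP^d$ of $\overline{\varphi}$ and hence is linearly equivalent in $F$ to a multiple of a line $\bar l\subset F$, giving $[\overline{C}]=\lambda[\bar l]$ in $N_1(\overline{X})_{\bbQ}$. Applying Lemma \ref{lemma:numerical-proportional-preseved} to $\rho$ then shows that $C=\rho(\overline{C})$ is numerically proportional in $X$ to $\rho(\bar l)$; moreover, since $\overline{\varphi}$ agrees with $\varphi^\circ$ over $T^\circ\subset\overline{T}$ and all fibres of $\overline{\varphi}$ are algebraically equivalent, one can arrange $\bar l$ to be a line in a fibre of $\varphi^\circ$. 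I expect the main obstacle to be the strict nefness of $T_{Y/\widetilde{B}}$: when $B$ meets $\overline{T}\setminus T^\circ$, the composite $\rho\circ\pi_1$ may contract positive-dimensional subvarieties of $Y$, and one must confirm that this does not sabotage the Barton--Kleiman check. Exploiting that $\rho|_{\overline{C}}$ is birational onto the rational curve $C\subset X$ (where $\sF$ is strictly nef), together with the projection formula, should settle this subtle point.
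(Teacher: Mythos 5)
Your setup (lifting $C$ via Lemma \ref{lemma:rational-section-birational-morphism}, base changing to the normalization of $B$, using the trivial Brauer group of $\bbP^1$ and the isomorphism $\rho^*\sF\cong T_{\overline{X}/\overline{T}}$) matches the paper, but the contradiction you aim for does not go through, and the obstacle you flag at the end is fatal rather than subtle. The strict nefness of $T_{Y/\widetilde{B}}\cong(\rho\circ\pi_1)^*\sF$ fails exactly on curves $D\subseteq Y$ that dominate $\widetilde{B}$ but are contracted by $\rho\circ\pi_1$: on such a $D$ the bundle is trivial, and the Barton--Kleiman test cannot be passed. Such curves genuinely occur. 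First, if $B=\overline{\varphi}(\overline{C})$ is contracted by $\overline{T}\to T'$, then for each point $x'$ in the corresponding fiber of $\varphi'$ the curve $\{x'\}\times B\subseteq\overline{X}$ is $\rho$-exceptional and dominates $B$; in this sub-case there is no contradiction to be had (the claim's conclusion holds, but because $h(\overline{C})$ lies in a fiber of $\varphi'$, not because $\overline{C}$ lies in a fiber of $\overline{\varphi}$), so your dichotomy is taken with respect to the wrong fibration. Second, even when $B$ maps to a genuine curve in $T'$, the paper's own analysis shows that $Y\cong\bbP^1\times\bbP^d$ and that the horizontal sections $\bbP^1\times\{pt\}$ \emph{are} contracted by the map to $X$ (this is forced by the strict nefness of $\sF$ on $X$, since the relative tangent bundle is trivial on them); hence $T_{Y/\widetilde{B}}$ is simply not strictly nef, and Corollary \ref{cor:proj-vector-bundle-p1-base} yields no contradiction. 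The birationality of $\rho|_{\overline{C}}$ onto $C$ says nothing about these other curves, so the proposed patch does not close the gap.

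The paper's proof inverts your logic: it lifts $C$ to the normalized universal family $X'\to T'$ over the Chow variety (not to $\overline{X}$), and in the case $\varphi'(C')=B$ a curve it uses Theorem \ref{Proj-bundle} to get $X'_B\cong\bbP^1\times\bbP^d$, observes that the fibers of the second projection must be contracted by the map to $X$ precisely because $\sF$ is strictly nef and trivial on them, and then applies the rigidity lemma to conclude that all points of $B\subseteq T'\subseteq\Chow{X}$ parametrize the same cycle --- contradicting the definition of the Chow variety. In other words, the curves on which your strict nefness check breaks down are not a nuisance to be ruled out; they are the input to the actual contradiction. To repair your argument you would need to replace the appeal to Corollary \ref{cor:proj-vector-bundle-p1-base} by this Chow-theoretic step (which requires working over $T'$), and separately dispose of the case where $B$ is $(\overline{T}\to T')$-exceptional by reducing to the fiber case for $\varphi'$.
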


\begin{proof}
	By Lemma \ref{lemma:rational-section-birational-morphism}, there exists a curve $C'$ contained in $X'$ such that $e\vert_{C'}\colon C'\rightarrow C$ is birational. In particular, $C'$ is a rational curve.
	\begin{equation*}
	\begin{tikzcd}[column sep=large, row sep=large]
	X^\circ \dar{\varphi^\circ}\rar &
	X & X' \lar{e} \dar[swap]{\varphi'}
	& \overline{X} \lar{h} \arrow[bend right]{ll}[swap]{\rho} \dar{\overline{\varphi}}
	\\
	T^\circ & &
	T' &
	\overline{T} \lar
	\end{tikzcd}
	\end{equation*}
	Note that $\varphi'\colon X'\rightarrow T'$ is a $\bbP^d$-bundle by Lemma \ref{lemma:bundle}, thus any complete curve contained in a fiber  of $\varphi'$ is numerically proportional to a line contained in a fiber of $\varphi'$.
	In particular, if $C'$ is contained in a fiber  of $\varphi'$, by Lemma \ref{lemma:numerical-proportional-preseved}, $C'$ is numerically proportional to a line contained in a fiber  of $\varphi^\circ$.
	
	Now we assume  that $\varphi'(C')=B$ is a curve.
	Let $\bbP^1\rightarrow B$ be the normalization. Denote by $X'_B$ the fiber product $X'\times_{T'}\bbP^1$ with induced morphism $\nu\colon X'_B\rightarrow X'$.
	Since $p_1\colon X'_B\rightarrow \bbP^1$ is a $\bbP^d$-bundle and the Brauer group of $\bbP^1$ is trivial,  there exists a vector bundle $\sV$  on $\bbP^1$ such that $X'_B= \bbP(\sV)$.
	Moreover, since  $\rho^*\sF \cong T_{\overline{X}/\overline{T}}$ by Claim \ref{claim:pullback-sheaf-in-tangent-1} and since $T_{\overline{X}/\overline{T}}\cong  h^*T_{X'/T'}$, we get
	\[
	\rho^*\sF = h^*e^*\sF  \cong T_{\overline{X}/\overline{T}}  \cong h^*T_{X'/T'}.
	\]
	Hence  $e^*\sF \cong T_{X'/T'}$. In particular, it yields
	\[
	\nu^*e^*\sF \cong  T_{\overline{X}_B/\bbP^1}.
	\]
	Then   Theorem \ref{Proj-bundle} shows that $\sV$ is numerically projectively flat. As a consequence, we obtain that $X'_{B}\cong  \bbP^1\times \bbP^d$. Let $p_2\colon X'_{B} \to \bbP^d$ be the morphism induced by the projection $\bbP^1\times \bbP^d\rightarrow \bbP^d$ and $f\colon X'_{B} \to X$ the composition of
	\[
	X'_B\xrightarrow{\nu} X'\xrightarrow{e} X.
	\]
	Since $\nu^*e^*\sF \cong  T_{\overline{X}_B/\bbP^1}$ is trivial on the fibers of $p_2$  and since $\sF$ is  strictly nef, the fibers of $p_2$ are all contracted by $f$.
	Hence, by rigidity lemma, the morphism $f\colon {X}'_B\rightarrow X$ factors through $p_2$. As a consequence, every point in $B\subseteq T'$ corresponds to the same cycle in $X$. This contradicts to the definition of $\Chow{X}$. Hence $C'$ is always contracted by $\varphi'$, and we complete the proof of the claim.
\end{proof}

\subsubsection{Proof of Theorem \ref{lemma:Core-Technical-Theorem} in the case when $\sF\vert_F=T_F$}

\label{section:step-3-case-1}

By Claim \ref{claim:extremal-ray-F=T_F}, there exists a Mori contraction $\varphi\colon X\rightarrow T$ extending the   fibration $\varphi^\circ\colon X^\circ\rightarrow T^\circ$. By replacing $\overline{T}$   with a common resolution of $T'$ and $T$, we have the following commutative diagram:
\[
\begin{tikzcd}[column sep=large, row sep=large]
X^\circ \dar[swap]{\varphi^\circ}\rar &
X\dar[swap]{\varphi} & X' \lar{e}\dar[swap]{\varphi'}
& \overline{X} \lar{h} \arrow[bend right]{ll}[swap]{\rho} \dar{\overline{\varphi}}    &
Z = \bbP(\sE)\lar[swap]{q}\dar{\pi}
\\
T^\circ & T &
T' &
\overline{T} \lar \arrow[bend left]{ll}{\gamma} &
\overline{X}\lar{p=\overline{\varphi}}
\end{tikzcd}
\]
By Claim \ref{claim:pullback-sheaf-in-tangent-1}, $\sF\hookrightarrow T_X$ induces  an isomorphism $\sG = q^*\rho^*\sF \to T_{Z/\overline{X}}$. By using
Theorem \ref{Proj-bundle} and Theorem \ref{thm:num-proj-flat=proj-flat},  we deduce that $\sE$ is isomorphic to a  projectively flat vector bundle.
Hence   $\pi$ is isomorphic to a  $\bbP^d$-bundle structure given by a representation $\pi_1(\overline{X})\rightarrow \PGL_{d+1}(\bbC)$.
Since $p\colon \overline{X}\rightarrow \overline{T}$ has simply connected fibers,  we deduce that  $\overline{\varphi}\colon \overline{X}\rightarrow \overline{T}$ is also isomorphic to a flat  $\bbP^d$-bundle over $\overline{T}$ given by a representation   $\pi_1(\overline{T})\rightarrow \PGL_{d+1}(\bbC)$. We also note that $T$ has only klt singularities. Now one can derive the equidimensionality of $\varphi$ from Proposition \ref{prop:equidimensionality-criterion-I}. \hfill \qed

\subsubsection{Proof of Theorem \ref{cor:main:part1} in the case when $\sF\vert_F=T_F$}

\label{section:step-4-case-1}
We maintain the notations of Section \ref{section:setup}.  We first show the projective bundle structure on $X$. 
If $\sF$ is a line bundle, then we have a $\bbP^1$-bundle structure $\varphi\colon X\to T$ with $\sF\cong T_{X/T}$ by \cite[Corollaire]{Druel2004}. Thus we may assume that $r\geq 2$. By the corrsponding case in Theorem \ref{lemma:Core-Technical-Theorem}, there exists an equidimensional Mori contraction $\varphi\colon X\rightarrow T$ which is also the MRC fibration of $X$. Since $\sF$ is strictly nef of rank at least two, by Corollary \ref{cor-equidim-implies-bundle}, $\varphi$ is again a projective bundle.

Next we assume that $\dim T>0$. Since $\varphi\colon X\rightarrow T$ is a projective bundle structure between projective manifolds, we may identify $T$ with $\overline{T}$ and $X$ with $\overline{X}$. Then the fiber product $X\times_T X$ can be identified with $Z=\bbP(\sE)$. In particular, we have the following commutative diagram:
\[
\begin{tikzcd}[column sep=large, row sep=large]
X\dar[swap]{\varphi} & Z \lar{q}\dar[swap]{\pi} \\
T &
X\lar{p=\varphi}
\end{tikzcd}
\]
We have seen $\sF\cong T_{X/T}$ by Claim \ref{claim:pullback-sheaf-in-tangent-1}. Moreover,  from the first step of Claim \ref{claim:pullback-sheaf-in-tangent-1}, $\sE$ is isomorphic to a projectively flat vector bundle. Therefore, $Z$ is isomorphic to a flat projective bundle over $X$,  given by a representation of $\pi_1(X)$ in $\PGL_{d+1}(\bbC)$.
Since $X$ is a projective bundle over $T$, the fundamental group of $X$ is canonically isomorphic to that of $T$.
Hence we get an induced flat projective bundle $Q$ over $T$. Let $f\colon Z\rightarrow Q$ be the natural projection. Then an irreducible $C\subset Z$ is contracted by $q$ if and only if $C$ is contracted by $f$.  Thus, by rigidity lemma, one can easily derive that $X$ is isomorphic to $Q$ as projective bundles over $T$. 
\hfill \qed

\subsection{The case when $\sF|_F=\sO_{\bbP^d}(1)^{\oplus r}$}
\label{subsection:F=O(1)r}

In this subsection, we  study the case when $\sF\vert_F\cong  \sO_{\bbP^d}(1)^{\oplus r}$.   As in the previous subsection, the proof  is subdivided into four parts, given in Sections \ref{section:step-1-case-2}--\ref{section:step-4-case-2}. We still follow the  notations in Section \ref{section:setup}.

\subsubsection{Lifting the inclusion $\sF\hookrightarrow T_X$ to an inclusion $\rho^*\sF\hookrightarrow T_{\overline{X}/\overline{T}}$}

\label{section:step-1-case-2}

\begin{claim}\label{claim:pullback-saturated-subsheaf}
	Use notations as in Section \ref{section:setup} and identify $X^\circ$ with $\overline{X}^\circ$. Let $\sE^\circ$ be the restriction $\sE\vert_{X^\circ}$. Then there is a vector bundle $\sM^\circ$ on $X^\circ$  such that 
	$$
	p_1^*\sF \cong p_2^*\sM^\circ \otimes \sO_{\mathbb{P}(\sE^\circ)}(1),
	$$ 
	where $p_1$ and $p_2$ are the natural projection $\pi\vert_{Z^\circ}$ and $q\vert_{Z^\circ}$ respectively. Furthermore, the inclusion $\sF|_{X^\circ} \hookrightarrow T_{X^\circ/T^\circ}$ induces a subbundle structure $\sM^\circ \hookrightarrow  (\sE^\circ)^*$.
\end{claim}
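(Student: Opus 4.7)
The plan is to reduce the claim to a direct application of Lemma~\ref{lemma:factorization} after exploiting the fiber-product structure of $Z^\circ$. The first move is to identify $Z^\circ = \pi^{-1}(\overline{X}^\circ)$ with the fiber product $X^\circ\times_{T^\circ}X^\circ$: since $\overline{X}^\circ$ is identified with $X^\circ$ and $\overline{\varphi}$ restricts there to $\varphi^\circ$, the defining fiber-product description $Z = \overline{X}\times_{\overline{T}}\overline{X}$ localizes to exactly this, with $p_1 = \pi|_{Z^\circ}$ and $p_2 = q|_{Z^\circ}$ the two factor projections, and with $p_1$ giving the bundle structure $Z^\circ = \bbP(\sE^\circ)$.

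The key observation is then that $p_2$ restricts to an isomorphism from each fiber of $p_1$ onto the corresponding fiber of $\varphi^\circ$. This yields a canonical identification $T_{Z^\circ/X^\circ}\cong p_2^*T_{X^\circ/T^\circ}$, so that pulling back the given inclusion $\sF|_{X^\circ}\hookrightarrow T_{X^\circ/T^\circ}$ along $p_2$ produces a locally free subsheaf $p_2^*\sF\hookrightarrow T_{Z^\circ/X^\circ}$. By the standing case assumption together with Lemma~\ref{Pn}(2), one has $\sF|_F\cong \sO_F(1)^{\oplus r}$ on \emph{every} fiber $F$ of $\varphi^\circ$; transporting through the identification above, $p_2^*\sF$ restricts to $\sO_{\bbP^d}(1)^{\oplus r}$ on every fiber of $p_1$.

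At this point the hypotheses of Lemma~\ref{lemma:factorization} are in place for the projective bundle $p_1\colon Z^\circ = \bbP(\sE^\circ)\to X^\circ$ equipped with the subsheaf $p_2^*\sF\hookrightarrow T_{Z^\circ/X^\circ}$. The lemma then delivers the vector bundle $\sM^\circ = (p_1)_*(p_2^*\sF\otimes \sO_{\bbP(\sE^\circ)}(-1))$ on $X^\circ$, an isomorphism $p_2^*\sF \cong p_1^*\sM^\circ\otimes \sO_{\bbP(\sE^\circ)}(1)$, and a subbundle structure $\sM^\circ\hookrightarrow(\sE^\circ)^*$ compatible via the relative Euler sequence with the given inclusion of $p_2^*\sF$ into $T_{Z^\circ/X^\circ}$. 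This is exactly the content of the claim, the roles of $p_1$ and $p_2$ being interchangeable by the evident symmetry of the fiber square $Z^\circ = X^\circ\times_{T^\circ}X^\circ$. The only delicate point is the canonical identification $T_{Z^\circ/X^\circ}\cong p_2^*T_{X^\circ/T^\circ}$, which is simply an unwinding of the fiber-product definition; once it is in place, the claim is an immediate corollary of Lemma~\ref{lemma:factorization}.
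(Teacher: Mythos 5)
Your treatment of the first half of the claim matches the paper's: identify $Z^\circ$ with $X^\circ\times_{T^\circ}X^\circ$, use base change for relative tangent bundles to pull the inclusion $\sF|_{X^\circ}\hookrightarrow T_{X^\circ/T^\circ}$ back to the fiber product, note via Lemma \ref{Pn} that the pullback of $\sF$ restricts to $\sO_{\bbP^d}(1)^{\oplus r}$ on \emph{every} fiber of the projectivization, and apply Lemma \ref{lemma:factorization} to get $\sM^\circ$ and the twist formula. The interchange of the roles of $p_1$ and $p_2$ relative to the statement is harmless, as you say, by the symmetry of the fiber square.

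There is, however, a genuine gap in the ``Furthermore'' part. Lemma \ref{lemma:factorization} only yields an \emph{injective morphism of sheaves} $\sM^\circ\hookrightarrow(\sE^\circ)^*$; it does not yield a \emph{subbundle} structure, i.e.\ it does not guarantee that the induced maps on fibers $\sM^\circ\otimes\bbK(x)\to(\sE^\circ)^*\otimes\bbK(x)$ are injective (compare $\sO_{\bbP^1}(-1)\hookrightarrow\sO_{\bbP^1}^{\oplus 2}$, which is injective as a sheaf map but need not be a subbundle). Upgrading the injection to a subbundle inclusion is precisely where the positivity of $\sF$ must enter, and your argument never uses it. The paper's proof adds a separate step: for an arbitrary point $x\in X^\circ$ it chooses a general complete intersection curve $C\subseteq X^\circ$ through $x$, applies Lemma \ref{lemma:factorization} again over $\bbP(\sE^\circ|_C)\to C$ to see that $\sM^\circ|_C\to(\sE^\circ)^*|_C$ is still injective, and then invokes Theorem \ref{Proj-bundle} (whose part (2) rests on almost nefness together with slope and semistability arguments) to conclude that $\sM^\circ|_C$ is a subbundle of $(\sE^\circ)^*|_C$; since over a curve this is equivalent to fiberwise injectivity, one concludes at the arbitrary point $x$. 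This stronger conclusion is not cosmetic: it is what allows the first step of the proof of Claim \ref{claim:pullback-sheaf-in-tangent} to identify $\sM$ with its saturation $\sQ$ over $\overline{X}^\circ$, on which the negativity-lemma argument there depends.
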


\begin{proof}
	By construction, we have $Z^\circ=\bbP(\sE^\circ)$. Since relative tangent bundles commute  with base change, there is an induced  inclusion $$p_1^*\sF \hookrightarrow  T_{p_2},$$ where   $T_{p_2}$ is the relative tangent bundle of $p_2$.
	Moreover, for every fiber $G$ of $p_2$, we have $p_1^*\sF|_{G} \cong \sO_{\bbP^d}(1)^{\oplus r}$.
	Thus there is a vector bundle $\sM^\circ$ on $X^\circ$  such that 
	$$
	p_1^*\sF \cong p_2^*\sM^\circ \otimes \sO_{\mathbb{P}(\sE^\circ)}(1).
	$$
	Furthermore,  by Lemma \ref{lemma:factorization}, the inclusion $\sF \hookrightarrow T_{X^\circ/T^\circ}$ induces an inclusion $\sM^\circ \hookrightarrow (\sE^\circ)^*$.
	
	Next we show that $\sM^\circ$ is a subbundle of $(\sE^\circ)^*$. Let $x$ be an arbitrary point in $X^\circ$ and   $C$  a  general complete intersection curve in $X$ passing through $x$.
	We may choose $C$ so that $C\subseteq X^\circ$. Let  $Y = \bbP(\sE^\circ\vert_C)$.
	Then we have the following commutative diagram.
	\[\begin{tikzcd}[column sep=large, row sep=large]
	Y=\bbP(\sE^\circ\vert_C) \dar{p}   \rar      &    Z^\circ=\bbP(\sE^\circ)\rar{p_1}\dar[swap]{p_2}  &  X^\circ\dar{\varphi^\circ}  \arrow[r,hookrightarrow] & X\\
	C  \arrow[r,hookrightarrow]   &    X^\circ  \rar   & T^\circ &
	\end{tikzcd}
	\]
	Denote by $f \colon Y\to X$ the composition of the first row.
	Since $C$ is in general position, we still have an injective morphism $f^*\sF \to T_{Y/C}$.
	By applying Lemma \ref{lemma:factorization} again, we see that the induced morphism $\sM^\circ|_C \to (\sE^\circ)^*\vert_C$  is still injective. Then Theorem \ref{Proj-bundle}  implies that $\sM^\circ|_C$ is a subbundle of $(\sE^\circ)^*\vert_C$. Since $x$ is arbitrary, $\sM^\circ$ is a subbundle of $(\sE^\circ)^*$
\end{proof}

\begin{claim}\label{claim:pullback-sheaf-in-tangent}
	The injection $\sF \hookrightarrow T_X$ induces an injection  $\rho^*\sF \hookrightarrow T_{\overline{X}/\overline{T}}$.
	Moreover, its restriction on each fiber of $\overline{\varphi}$ is still injective.
	%In particular, if $F$ is a fiber of $\overline{\varphi}$, then the restriction $\rho^*\sF\vert_F\rightarrow T_F$ is generically injective.
\end{claim}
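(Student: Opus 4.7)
The proof strategy parallels that of Claim \ref{claim:pullback-sheaf-in-tangent-1}. With $\Phi\colon \sG \to T_\pi \otimes \sO_Z(\sum m_iE_i')$ the lifting morphism of \eqref{equation:lifing-morphism} with minimal coefficients $m_i$, the aim is to show $m_i \leq 0$ for every $i$; the global inclusion $\rho^*\sF \hookrightarrow T_{\overline{X}/\overline{T}}$ then follows by applying $q_*$, using $q_*\sG = \rho^*\sF$ and the identification $q_*T_\pi \cong T_{\overline{X}/\overline{T}}$ on $\overline{X}^\circ$.

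\textbf{Step 1: producing a flat structure.} Unlike Case 1, here $\sG$ has generic rank $r$ possibly strictly less than $d$, so numerical projective flatness of $\sE$ cannot be read off directly from $\det\sG$. Instead, I work with the subbundle $\sM^\circ \hookrightarrow (\sE^\circ)^*$ supplied by Claim \ref{claim:pullback-saturated-subsheaf}, and extend it by saturating its image in the vector bundle $\sE^*$ to a reflexive subsheaf $\sM$ on all of $\overline{X}$. Writing $\det\sG = \sO_{\bbP(\sE)}(r)\otimes \pi^*\sJ$ for some line bundle $\sJ$, the isomorphism $\pi^*\rho^*\sF \cong q^*\sM^\circ \otimes \sO_{\bbP(\sE^\circ)}(1)$ on $Z^\circ$ yields $\sJ\vert_{\overline{X}^\circ}\cong \det\sM^\circ$, up to an effective $\rho$-exceptional correction. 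Nefness of $\det\sG$ combined with the negativity lemma applied to that correction then lets one rerun the argument of Theorem \ref{Proj-bundle}(2) to show that $\sM^*\hspace{-0.8ex}<\hspace{-0.8ex}\mu(\sM)\hspace{-0.8ex}>$ is almost nef. Theorem \ref{Num-Projectily-flatness-criterion} then forces $\sM$ to be numerically projectively flat, and Theorem \ref{thm:num-proj-flat=proj-flat} makes $\sM$ isomorphic to a projectively flat vector bundle on $\overline{X}$. The quotient $\sE \twoheadrightarrow \sM^*$ thus realizes $M \coloneqq \bbP(\sM^*)\hookrightarrow \bbP(\sE)=Z$ as a flat $\bbP^{r-1}$-subbundle over $\overline{X}$, given by a representation $\pi_1(\overline{X})\to \PGL_{r}(\bbC)$.

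\textbf{Step 2: bounding $m_i$.} Suppose $m_j > 0$ for some $j$. By Lemma \ref{lemma:negativity-lemma-line-bundle} there is a family of $\rho$-exceptional curves $C$ in $\overline{X}$ with $\sum m_iE_i\cdot C<0$, and by Lemma \ref{lemma:trivial-bundle-finite-change} applied to $\rho\colon \overline{X}\to X$ together with the flat bundle $M\to \overline{X}$, there is a finite cover $D\to C$ over which $M$ trivializes as $\bbP^{r-1}\times D$. A general section of this trivialization yields a curve $\widetilde{D}\subset M\subset Z$ along which $\sG\vert_{\widetilde{D}}$ retains positive degree thanks to the strict nefness of $\sF$ (Proposition \ref{BK criterion}) and along which the subbundle $T_{M/\overline{X}}\vert_{\widetilde{D}}$ of $T_\pi\vert_{\widetilde{D}}$ is trivial by construction. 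Using $E_i'\cdot\widetilde{D}=\deg(\widetilde{D}/C)\,E_i\cdot C$, a degree comparison along $\Phi\vert_{\widetilde{D}}$ — exactly mirroring Step 2 of Claim \ref{claim:pullback-sheaf-in-tangent-1} — produces the desired contradiction.

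\textbf{Step 3: fiberwise injectivity and main obstacle.} Once $\rho^*\sF\hookrightarrow T_{\overline{X}/\overline{T}}$ holds globally, its restriction to any fiber of $\overline{\varphi}$ remains injective: on a general fiber this is the standard embedding $\sO(1)^{\oplus r}\hookrightarrow T_{\bbP^d}$, and constancy of Hilbert polynomials along the flat family $\overline{\varphi}$, together with local freeness of $\rho^*\sF$ and $T_{\overline{X}/\overline{T}}$, rules out degeneration on special fibers. The main obstacle is Step 1: transporting the argument of Theorem \ref{Proj-bundle}(2), originally formulated over a smooth base endowed with a genuine projective bundle, to the present global setup on $\overline{X}$ with its $\rho$-exceptional divisors — tracking the exceptional correction in $\sJ$ while bookkeeping the dual roles of the $\pi$- and $q$-projections on $Z$ — is the principal technical burden.
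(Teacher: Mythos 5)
Your overall architecture (reduce to $m_i\leqslant 0$, produce a flat structure, trivialize over exceptional curves, derive a contradiction from the negativity lemma) is the right one and matches the paper's, but there are genuine gaps in two places.

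First, in Steps 1--2 you only establish flatness for the \emph{saturation} of $\sM^\circ$ in $\sE^*$ (the paper's $\sQ$), whereas the object you must trivialize over the exceptional curve is the bundle $\sM$ on $\overline{X}$ defined by $\sG\cong\pi^*\sM\otimes\sO_{\bbP(\sE)}(1)$, since it is the morphism $\sE\to\sM^*\otimes\sO_{\overline{X}}(\sum m_iE_i)$ (equivalently $\Phi$) whose non-vanishing feeds the contradiction. These two sheaves agree only away from the exceptional locus, which is exactly where your curves live. The paper bridges this with a separate step you omit: from numerical projective flatness of $\sQ$ it deduces that $\sF|_{X^\circ}$ is projectively flat (Lemma \ref{lemma:proj-flat-vec-bundle-property2}), extends this over the codimension-two complement to $\sF$ itself (Lemma \ref{lemma:extension-proj-flat-vec-bundle}), and only then concludes that $\sG$ and hence $\sM$ are projectively flat, so that Lemma \ref{lemma:trivial-bundle-finite-change} gives $\eta^*\sM\cong\sL^{\oplus r}$ on a finite cover of the exceptional curve. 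Relatedly, your degree comparison in Step 2 is set up incorrectly: a curve $\widetilde D$ lying over a $\rho$-exceptional curve maps to a point of $X$, so $\sG|_{\widetilde D}=(q^*\rho^*\sF)|_{\widetilde D}$ is \emph{trivial} of degree zero, not of positive degree; and comparing degrees against $T_\pi|_{\widetilde D}$ (which is not trivial along a multisection) does not mirror Step 2 of Claim \ref{claim:pullback-sheaf-in-tangent-1}. The paper instead pushes everything down to $\overline{X}$: the generically surjective map $\eta^*\sE\to\sL^*\otimes\eta^*\sO_{\overline{X}}(\sum m_iE_i)$ together with nefness of $\sE\hspace{-0.8ex}<\hspace{-0.8ex}\mu(\sM)\hspace{-0.8ex}>$ forces $\eta^*\sO_{\overline{X}}(\sum m_iE_i)$ to be pseudoeffective, contradicting $C'\cdot\sum m_iE_i<0$.

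Second, your Step 3 does not prove fiberwise injectivity. Constancy of Hilbert polynomials in a flat family, together with local freeness of source and target and injectivity on general fibers, does not rule out that the restriction to a special fiber has a kernel (a map of vector bundles can degenerate on a special fiber, e.g. multiplication by a base parameter). The paper's argument is different and essential: Theorem \ref{Proj-bundle} upgrades $\sM\hookrightarrow\sE^*$ to a \emph{subbundle}, so on every fiber of $\pi$ the induced map is the composite $\sO_{\bbP^d}(1)^{\oplus r}\hookrightarrow\sO_{\bbP^d}(1)^{\oplus(d+1)}\to T_{\bbP^d}$ coming from the Euler sequence, which is injective on every fiber; fiberwise injectivity of $\rho^*\sF\to T_{\overline{X}/\overline{T}}$ then follows by descent along $q$.
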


\begin{proof}
	Let $\sG=q^*\rho^*\sF$. As explained at the end of Section \ref{section:setup}, we have the following commutative diagram
	\[\begin{tikzcd}[column sep=large, row sep=large]
	X^\circ \dar{\varphi^\circ}\rar &
	X & \overline{X} \lar[swap]{\rho}\dar{\overline{\varphi}} &
	Z=\bbP(\sE)\lar[swap]{q}\dar{\pi} \\
	T^\circ & &
	\overline{T} &
	\overline{X}\lar{p=\overline{\varphi}}
	\end{tikzcd}\] 
	and an induced morphism (see \eqref{equation:lifing-morphism} )
	$$
	\Phi\colon \sG \to T_{\pi}\otimes \sO_{Z}(\sum m_iE_i')
	$$
	which does not vanish in codimension one. Alternatively, we have a morphism 
	$$
	\sG \otimes \pi^*\sO_{\overline{X}}(\sum -m_iE_i') \to T_{\pi}
	$$ 
	on $Z$, which is nonzero in codimension one.
	We note that the restriction of $\sG$ on every fiber of $\pi$ is  isomorphic to $\sO_{\mathbb{P}^d}(1)^{\oplus r}$ (see Lemma \ref{Pn}).
	Hence $$\sG  \cong  \pi^*\sM \otimes \sO_{\mathbb{P}(\sE)}(1)$$  for some vector bundle $\sM$ on $ \overline{X}$.
	Thanks to Lemma \ref{lemma:factorization}, we obtain an injective morphism $$\sM\otimes \sO_{\overline{X}}(\sum -m_iE_i) \to \sE^*$$ on $\overline{X}$
	which is nonzero in codimension one.  We denote by $\sQ$  the  saturation of $\sM\otimes \sO_{\overline{X}}(\sum -m_i E_i)$     in $\sE^*$. \\
	
	%there are smallest integers $m_i$ such that $\sF \to T_X$ extends to a morphism $\rho^*\sF \to T_{\overline{X}/\overline{T}}\otimes \sO_{\overline{X}}(\sum m_iE_i)$. 
	
	%Let $E_i'= q^*E_i=\pi^*E_i$  be prime divisors in $Z$ and let  $\sG=q^*\rho^*\sF$.
	%Then we have an induced morphism $$\sG \to T_{\pi}\otimes \sO_{Z}(\sum m_iE_i').$$ %Moreover,  $\theta$ does not vanish in codimension one by the minimality of $m_i$.

	\textit{1st  Step. $\sQ$ is a numerically projectively flat vector bundle.}
	By Claim \ref{claim:pullback-saturated-subsheaf}, the injection $\sF \hookrightarrow T_X$ induces  a subbundle  structure $\sM|_{\overline{X}^\circ} \hookrightarrow \sE^*|_{\overline{X}^\circ}$.
	Thus $\sQ\vert_{\overline{X}^\circ} = \sM\vert_{\overline{X}^\circ}$ and there exists a $\bbQ$-divisor $\delta$ supported in the $\rho$-exceptional locus  such that 
	$$
	\mu(\sM)-\mu(\sQ) = \delta.
	$$	
	Since $\sG\cong \pi^*\sM\otimes\sO_{\bbP(\sE)}(1)$ is nef, so is its determinant 
	$$
	\det \sG \cong \pi^*\det \sM \otimes \sO_{\bbP(\sE)}(r).
	$$  
	In particular, $\sE\hspace{-0.8ex}<\hspace{-0.8ex}\mu(\sM)\hspace{-0.8ex}>$ is nef. Therefore,  from the generically surjective morphism $\sE \to \sQ^*$, we know that the $\mathbb{Q}$-twisted sheaf
	\[
	\sQ^*\hspace{-0.8ex}<\hspace{-0.8ex}\mu(\sM)\hspace{-0.8ex}>
	\]
	is almost nef. By taking the first Chern class, we obtain that $\delta$ is pseudoeffective. As $\delta$ is $\rho$-exceptional, it follows that $\delta$ is effective.
	%Hence the dual morphism   $\sE|_{X^\circ} \to \sM^*|_{X^\circ}$  is surjective. Therefore, the  morphism $$ \sE \to \sM^*\otimes \sO_{\overline{X}}(\sum m_i E_i)$$ is generically surjective.
	% \[
	%\sM^*\otimes \sO_{\overline{X}}(\sum m_i E_i)\hspace{-0.8ex}<\hspace{-0.8ex}\mu(\sM)\hspace{-0.8ex}>
	%\]
	%is almost nef by Proposition \ref{Almost-nef-properties}.
	%Taking the first Chern class yields that $\sO_{\overline{X}}(\sum m_i E_i)$ is pseudoeffective.
	%Consequently $m_i\geqslant 0$ for all $i$ as the $E_i$ are $\rho$-exceptional divisors.

	Next we  show that $\delta=0$. Assume the  opposite.
	By Lemma \ref{lemma:negativity-lemma-line-bundle},  there is a family $\{C_\gamma\}_{\gamma\in \Gamma}$ of complete $\rho$-exceptional curves  such that $C_\gamma\cdot \delta<0$ for all $\gamma\in \Gamma$.
	Since $\sQ$ is locally free in codimension two and  it is a subbundle of $\sE^*$ in codimension one,  we may choose a general element $C'$ in $\{C_\gamma\}_{\gamma\in \Gamma}$ such that $\sQ$ is locally free along $C'$ and the induced morphism $\sE\vert_{C'}\rightarrow \sQ^*\vert_{C'}$ is generically surjective.
	This implies that   $\sQ^*\hspace{-0.8ex}<\hspace{-0.8ex}\mu(\sM)\hspace{-0.8ex}>\vert_{C'}$ is an almost nef vector bundle. By taking the determinant, we deduce that 
	$$
	C'\cdot \delta=C'\cdot (\mu(\sM) - \mu(\sQ))\geqslant 0,
	$$ 
	which yields a contradiction. Hence $\delta=0$ and we have $\mu(\sM)=\mu(\sQ)$.
	In particular, as $\sQ^*\hspace{-0.8ex}<\hspace{-0.8ex}\mu(\sM)\hspace{-0.8ex}>$ is almost nef, by Theorem \ref{Num-Projectily-flatness-criterion}, $\sQ^*$ is actually a numerically projectively flat vector bundle. So is $\sQ$ by Lemma \ref{lemma:prop-num-proj-flat}.\\
	
	\textit{2nd Step. $\sM$ is isomorphic to a projectively flat vector bundle.} For simplicity, we  will argue up to isomorphisms. We may assume that $\sQ$ is projectively flat by Theorem \ref{thm:num-proj-flat=proj-flat}.
	Since $$\sG|_{Z^\circ} \cong (\pi^*\sQ \otimes \sO_{\bbP(\sE)}(1))|_{Z^\circ},$$ we see that $\sG|_{Z^\circ}=(q^*\rho^*\sF)|_{Z^\circ}$ is projectively flat. Then $\rho^*\sF|_{\overline{X}^\circ}$ is projectively flat by Lemma \ref{lemma:proj-flat-vec-bundle-property2}. Therefore, $\sF|_{X^\circ}$ is projectively flat. By Lemma \ref{lemma:extension-proj-flat-vec-bundle}, we obtain that $\sF$ itself is projectively flat. Hence $\sG$ is projective flat and so is   $\sM$ by Lemma \ref{lemma:proj-flat-vec-bundle-property2}.\\

	\textit{3rd Step.  $m_i \leqslant 0$ for all $i$.}
	Assume the opposite. By Lemma \ref{lemma:negativity-lemma-line-bundle}, there is a family $\{C_\gamma\}_{\gamma\in \Gamma}$ of complete $\rho$-exceptional curves  such that $C_\gamma\cdot \sum m_iE_i<0$ for all $\gamma\in \Gamma$.
	Let $C'$ be a general member of these curves. Since $\sM$ is isomorphic to a projectively flat vector bundle, $\bbP(\sM)$ is isomorphic to a flat projective bundle. Now applying Lemma \ref{lemma:trivial-bundle-finite-change} to $p\colon \overline{X}\rightarrow \overline{T}$ and $\bbP(\sM)\rightarrow \overline{X}$ shows that   there is a smooth  curve $C$, finite over $C'$, such that  $\eta^*\sM \cong \sL^{\oplus r}$ for some line bundle $\sL$ on $C$, where $\eta\colon C\to \overline{X}$ is the natural morphism. Since $\sE\rightarrow \sM^*\otimes \sO_{\overline{X}}(\sum m_i E_i)$ does not vanish in codimension one, by general choice of $C'$, we may assume that the morphism $$\eta^*\sE \to \eta^*(\sM^* \otimes  \sO_{\overline{X}}(\sum m_i E_i))$$ is not identically zero.
	Hence we obtain a generically surjective morphism
	$$\eta^*\sE \to \sL^*\otimes \eta^*\sO_{\overline{X}}(\sum m_i E_i).$$
	Since the $\mathbb{Q}$-twisted sheaf $\sE\hspace{-0.8ex}<\hspace{-0.8ex}\mu(\sM)\hspace{-0.8ex}>$ is nef, it follows that $\eta^*\sE\otimes \sL$ is nef. Thus $\eta^*\sO_{\overline{X}}(\sum m_i E_i)$ is pseudoeffective on $C$. This contradicts to the fact that   $C'\cdot \sum m_iE_i<0$. As a consequence, we have  induced morphisms  $\rho^*\sF \to T_{\overline{X}/\overline{T}}$ and $\sM \hookrightarrow \sE^*$.\\
	
	\textit{4th Step. The restriction of $\rho^*\sF \to T_{\overline{X}/\overline{T}}$ on any fiber of $\overline{\varphi}$ is still injective.}  By Theorem \ref{Proj-bundle}, we see that  $\sM\hookrightarrow \sE^*$ is a subbundle.  The statement then follows.
\end{proof}

\subsubsection{Regularity of the MRC fibration}

Similarly to the case when $\sF\vert_F\cong T_F$, we prove the following claim.

\begin{claim}\label{claim:extremal-ray-F=O1}
	Let $C\subseteq X$ be a rational curve. Then $C$ is numerically proportional to a line contained in a fiber  of $\varphi^\circ$.
\end{claim}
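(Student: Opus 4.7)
The strategy parallels Claim \ref{claim:extremal-ray-F=T_F}, using part (2) of Theorem \ref{Proj-bundle} in place of part (1). First, I would lift $C$ via Lemma \ref{lemma:rational-section-birational-morphism} to a rational curve $C'\subset X'$; if $C'$ is contained in a fiber of the $\bbP^d$-bundle $\varphi'$ (Lemma \ref{lemma:bundle}), then $[C']$ is a positive multiple of the class of a line in a fiber, and the conclusion follows by pushforward along $e$ together with Lemma \ref{lemma:numerical-proportional-preseved}, since $e$ is an isomorphism over $T^\circ$. Otherwise $B:=\varphi'(C')\subset T'$ is a curve; normalizing $n\colon\bbP^1\to B$ and forming the base change $X'_B:=X'\times_{T'}\bbP^1\cong\bbP(\sV)$ for some vector bundle $\sV$ on $\bbP^1$ (using the triviality of the Brauer group of $\bbP^1$), I obtain induced morphisms $\nu\colon X'_B\to X'$ and $f:=e\circ \nu\colon X'_B\to X$ that will drive the rest of the argument.

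Mimicking the codimension-one extension technique of Claim \ref{claim:pullback-sheaf-in-tangent}, I would construct an injection $f^*\sF\hookrightarrow T_{X'_B/\bbP^1}$ whose restriction on each fiber of $p_1\colon X'_B\to \bbP^1$ is isomorphic to $\sO_{\bbP^d}(1)^{\oplus r}$ by Lemma \ref{Pn}. Applying Theorem \ref{Proj-bundle}(2) then yields a numerically projectively flat subbundle $\sN\hookrightarrow \sV^*$ of rank $r$ satisfying $f^*\sF\cong p_1^*\sN\otimes \sO_{\bbP(\sV)}(1)$; since every numerically projectively flat bundle on $\bbP^1$ has the form $\sO(a)^{\oplus r}$, we get $\sN\cong \sO_{\bbP^1}(a)^{\oplus r}$ for some integer $a$. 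The surjection $\sV\to \sN^*\cong\sO_{\bbP^1}(-a)^{\oplus r}$ therefore exhibits a $\bbP^{r-1}$-subbundle $P:=\bbP(\sN^*)\cong \bbP^1\times \bbP^{r-1}$ of $X'_B$, and for each quotient $\sN^*\to \sO_{\bbP^1}(-a)$ (parametrized by $\bbP^{r-1}$) the corresponding section $\sigma\colon \bbP^1\to P$ satisfies $\sigma^*f^*\sF\cong \sO_{\bbP^1}^{\oplus r}$. The strict nefness of $\sF$ then forces each $f\circ \sigma$ to be constant (otherwise its pullback of the strictly nef $\sF|_{f(\sigma(\bbP^1))}$ would be of positive degree); as these sections sweep out $P$, the rigidity lemma yields a factorization $f|_P=g\circ \pi$ for some $g\colon \bbP^{r-1}\to X$ and the projection $\pi\colon P\to \bbP^{r-1}$.

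The contradiction would then follow in the spirit of Claim \ref{claim:extremal-ray-F=T_F}. For every $t\in \bbP^1$ the image $f(P_t)=g(\bbP^{r-1})$ is a fixed nonempty subvariety contained in the cycle $Z_t:=f(F_t)\subset X$, but the $Z_t$ correspond to distinct points of $B\subset T'\subset \Chow{X}$. For $t\in B\cap T^\circ$, each $Z_t$ is a fiber of the $\bbP^d$-bundle $\varphi^\circ$, and distinct such fibers are disjoint in $X^\circ$, forbidding infinitely many of them from sharing the common nonempty subvariety $g(\bbP^{r-1})$. The main technical obstacles are the extension of $f^*\sF\to T_{X'_B/\bbP^1}$ to a globally defined injection and the verification that $B\cap T^\circ$ is nonempty; both should be handled by exploiting the codimension-two hypothesis $\codim_X(X\setminus X^\circ)\geq 2$ guaranteed by Proposition \ref{MRC-fibration}, together with the fact that the MRC fibration $\varphi^\circ$ and the morphism $\varphi'$ agree on $X^\circ=(\varphi')^{-1}(T^\circ)$.
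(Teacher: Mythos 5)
Your proposal tracks the paper's argument faithfully up to and including the contraction of the $\bbP^{r-1}$-subbundle: the lift of $C$ to $C'$, the base change $X'_B\cong\bbP(\sV)$, the identification $f^*\sF\cong p_1^*\sN\otimes\sO_{\bbP(\sV)}(1)$ with $\sN\cong\sO_{\bbP^1}(a)^{\oplus r}$ via Theorem \ref{Proj-bundle}(2), and the rigidity argument showing $f|_P$ factors through $\pi\colon P\to\bbP^{r-1}$ are all correct and are essentially what the paper does. The endgame, however, has a genuine gap. You want to conclude by contradiction: the cycles $Z_t=f(F_t)$, $t\in B$, all contain the fixed subvariety $g(\bbP^{r-1})$, while fibers of $\varphi^\circ$ over distinct points of $T^\circ$ are disjoint. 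This only bites if $B\cap T^\circ$ contains at least two points, and nothing guarantees that $B\not\subseteq T'\setminus T^\circ$. The codimension-two hypothesis you invoke concerns $X\setminus X^\circ$ inside $X$; it does not force $T'\setminus T^\circ$ to have codimension $\geq 2$ in $T'$ (a divisor $D\subseteq T'\setminus T^\circ$ would give $\varphi'^{-1}(D)$ an $e$-exceptional divisor, or a divisor whose cycles pass through non-general points of $X$ --- neither is excluded at this stage). If $B$ lies entirely in the boundary, the $Z_t$ are merely distinct members of a family of $d$-cycles, and distinct members of such a family can perfectly well share a common positive-dimensional subvariety, so no contradiction arises.

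There is also a conceptual mismatch: you are trying to prove that the case ``$\varphi'(C')$ is a curve'' is impossible, which is strictly stronger than the claim and is not available at this point (it would amount to knowing that $T'$ contains no rational curves, i.e.\ part of the hyperbolicity proved only in Section \ref{section:hyperbolicity}). The paper instead accepts this case and still extracts numerical proportionality: after normalizing $a=0$, the nefness of $\sV$ makes $\sO_{\bbP(\sV)}(1)$ globally generated; its Iitaka fibration $g\colon\bbP(\sV)\to Y$ contracts exactly the curves on which $\sO_{\bbP(\sV)}(1)$ is trivial, hence (by strict nefness of $\sF$, as in your section argument) $f$ factors through $g$ by rigidity; since $Y$ has Picard number one, $g(C'')$ and $g(l)$ are numerically proportional in $Y$, and Lemma \ref{lemma:numerical-proportional-preseved} transports this back to $X$. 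Unlike your disjointness argument, this route makes no reference to whether $B$ meets $T^\circ$. To repair your proof you would need either to establish $\codim_{T'}(T'\setminus T^\circ)\geq 2$ (not proved in the paper and not obviously true here) or to switch to the Iitaka-fibration argument.
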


\begin{proof}
	By Lemma \ref{lemma:rational-section-birational-morphism}, there exists a complete rational curve $C'\subseteq \overline{X}$  which is birational to $C$.  As in the proof of Claim \ref{claim:extremal-ray-F=T_F}, it is enough to consider the case when $C'$ is not contracted by $\overline{\varphi}$.
	
	Assume that $B=\overline{\varphi}(C')$ is a curve. Then $B$ is a rational curve.  Consider the normalization $\bbP^1 \to B$.
	Let $\overline{X}_B = \overline{X}\times_{\overline{T}} \bbP^1$  and $p_1\colon \overline{X}_B\rightarrow \bbP^1$ the natural projection.
	\[\begin{tikzcd}[column sep=large, row sep=large]
	X^\circ \dar[swap]{\varphi^\circ}\rar &
	X & \overline{X} \lar{\rho}\dar[swap]{\overline{\varphi}} & \overline{X}_B \cong\bbP(\sV)\dar{p_1}\lar{\nu}\arrow[bend right]{ll}[swap]{f} \rar{g} & Y \\
	T^\circ & &
	\overline{T} & \bbP^1\lar  &
	\end{tikzcd}\]
	Since $p_1\colon \overline{X}_B\rightarrow \bbP^1$ is a $\bbP^d$-bundle and the Brauer group of $\bbP^1$ is trivial, there exists a vector bundle $\sV$ such that $\overline{X}_B \cong \bbP(\sV)$.
	Let $\overline{\sF}=\rho^*\sF$.
	Then there is an induced injective  morphism $\nu^*\overline{\sF}\rightarrow T_{p_1}$   by Claim \ref{claim:pullback-sheaf-in-tangent}.
	By Theorem \ref{Proj-bundle}, there exists a numerically projectively flat subbundle $\sN\hookrightarrow \sV^*$ such that $$\nu^*\overline{\sF}\cong  p_1^*\sN\otimes\sO_{\bbP(\sV)}(1).$$
	We remark that  $\sN \cong \sL^{\oplus r}$ for some line bundle $\sL$ on $\bbP^1$. Hence, by replacing $\sV$ with $\sV\otimes \sL^*$, we may assume that $\sN\cong \sO_{\bbP^1}^{\oplus r}$.
	The nefness of $\nu^*\overline{\sF}$ then implies that $\sV$ is nef. In particular, there exist  integers $a_i\geqslant 0$ with $i=1,...,d+1-r$ such that
	\[
	\sV  \cong  \sN^*\oplus \bigoplus_{i=1}^{d+1-r}\sO_{\bbP^1}(a_i) \cong  \sO_{\bbP^1}^{\oplus r} \oplus\bigoplus_{i=1}^{d+1-r}\sO_{\bbP^1}(a_i).
	\]
	Note that $\sO_{\bbP(\sV)}(1)$ is globally generated.
	Let $g\colon \bbP(\sV)\rightarrow Y$ be the Iitaka fibration induced by $\sO_{\bbP(\sV)}(1)$. Then $g$ does not contract any curves contained in the fibers of $p_1$.
	%Moreover, if $B'$ is a section of $p_1$ corresponding to a  quotient $\sG\rightarrow \sO_{\bbP^1}$, then $B'$ is contracted by $g$ as $c_1(\sO_{\bbP(\sG)}(1))\cdot B'=0$.
	
	Consider a complete curve $B'' \subseteq \bbP(\sV)$ contracted by $g\colon \bbP(\sV)\rightarrow Y$. Then $\sO_{\bbP(\sV)}(1)\vert_{B''}$ is trivial. As $f^*\sF \cong \sO_{\bbP(\sV)}(1)^{\oplus r}$, it follows that $(f^*\sF)\vert_{B''}$ is trivial. In particular, since $\sF$ is strictly nef, it follows that $B''$ is contracted by the composition
	\[
	f\colon \bbP(\sV) \cong \overline{X}_{B}\xrightarrow{\nu}\overline{X}\xrightarrow{\rho} X.
	\]
	%If $B''\subseteq \bbP(\sG)$ is a curve contracted by $g$, then we have $c_1(\sO_{\bbP(\sG)}(1))\cdot B''=0$. In particular, $\nu^*\overline{\sF}\vert_{B''}$ is  numerically trivial. Since $\sF$ is strictly nef, it follows that $B''$ is contracted by the composition
	By rigidity lemma, the morphism $f\colon \bbP(\sV)\rightarrow X$ factors through $g\colon \bbP(\sV)\rightarrow Y$.
	
	Let $C'' \subseteq \bbP(\sV)$ be the curve corresponding to $\nu^{-1}(C') \subseteq \overline{X}_B$.
	Then  $C''$ is not contracted by $g$ as  $f(C'')=C$ is again a curve. Let $l$ be a line contained in a general fiber of $p_1$,   $\widetilde{C} = g(C'')$ and  $\widetilde{l} = g(l)$. Note that $Y$  has Picard number $1$, thus $\widetilde{C}$ and $\widetilde{l}$ are numerically proportional in $Y$.
	Since $f$ factorizes through $g$, by   Lemma \ref{lemma:numerical-proportional-preseved}, we conclude that $f(l)$ is numerically proportional to $f(C'')=C$ in $X$.
\end{proof}

\subsubsection{Proof of Theorem \ref{lemma:Core-Technical-Theorem} in the case when $\sF\vert_F=\sO_{\bbP^d}(1)^{\oplus r}$}

\label{section:step-3-case-2}

%We follow the notation as in \eqref{Key-diagram}.
If $\sF$ is a line bundle, by \cite[Corollaire]{Druel2004}, 
$X$ is isomorphic to $\bbP^n$. Thus we may assume that $r\geqslant 2$.

By Claim \ref{claim:extremal-ray-F=O1}, there exists a Mori contraction $\varphi\colon X\rightarrow T$ extending the   fibration $\varphi^\circ\colon X^\circ\rightarrow T^\circ$. By replacing $\overline{T}$   with a common resolution of $T'$ and $T$, we have the following commutative diagram:
\[
\begin{tikzcd}[column sep=large, row sep=large]
X^\circ \dar[swap]{\varphi^\circ}\rar &
X\dar[swap]{\varphi} & X' \lar{e}\dar[swap]{\varphi'}
& \overline{X} \lar{h} \arrow[bend right]{ll}[swap]{\rho} \dar{\overline{\varphi}}    &
Z\lar[swap]{q}\dar{\pi}
\\
T^\circ & T &
T' &
\overline{T} \lar \arrow[bend left]{ll}{\gamma} &
\overline{X}\lar{p=\overline{\varphi}}
\end{tikzcd}
\]
By Claim \ref{claim:pullback-sheaf-in-tangent}, there is an injective morphism $\rho^*\sF\rightarrow T_{\overline{X}/\overline{T}}$.
This induces an injective morphism $\sG = q^*\rho^*\sF\rightarrow T_{\pi}$.

By Theorem \ref{Proj-bundle},  there exists a numerically projectively flat subbundle $\sM$ of $\sE^*$ such that $\sG\cong  \pi^*\sM\otimes\sO_{\bbP(\sE)}(1)$.
Denote by $\pi_M\colon M\rightarrow \overline{X}$ the $\bbP^{r-1}$-bundle $\bbP(\sM^*)$. Then, by Theorem \ref{thm:num-proj-flat=proj-flat}, $M\rightarrow \overline{X}$ is isomorphic to a flat projective bundle  given by a representation $\pi_1(\overline{X})\rightarrow \PGL_{r}(\bbC)$. Moreover,  note that  $\sG\vert_{M}$ is isomorphic to $\pi_M^*\sM\otimes \sO_{\bbP(\sM^*)}(1)$ and the relative Euler sequence of $\bbP(\sM^*)$ induces a surjection $\sG\vert_{M}\rightarrow T_{M/\overline{X}}$. Let $P = (\rho\circ q)(M) \subseteq X$. According to Proposition \ref{prop:smothness-image-subbundle}, over $T^\circ$, $P$ is a $\bbP^{r-1}$-bundle. In particular, the general fiber of $P\rightarrow T$ is isomorphic to $\bbP^{r-1}$. Let $n\colon P'\rightarrow P$ be the normalization. Then we have a commutative diagram
\[
\begin{tikzcd}[column sep=large, row sep=large]
M\dar[swap]{\pi_M} \rar{\rho'} &  P'\dar{} \\
\overline{X} \rar& T.
\end{tikzcd}
\]
Note that $T$ has only $\bbQ$-factorial klt singularities and $n^*\sF$ is strictly nef with a surjection 
$$
\rho'^*n^*\sF\cong q^*\rho^*\sF\vert_M\rightarrow T_{M/\overline{X}},
$$
thus we can apply Proposition \ref{prop:equidimensionality-criterion-I} to conclude that $P'\rightarrow T$ is equidimensional.
Since $P'\rightarrow P$ is finite, $P\rightarrow T$ is again equidimensional.
In particular, by Proposition \ref{PROP:HN-degeneration-of-projective-spaces}, all the fibers of $P\rightarrow T$ are irreducible and generically reduced.

Let $t\in T$ and $F_t$  an irreducible component of the fiber of $\varphi\colon X\rightarrow T$ over $t$. Then there exists an irreducible component $B$ of $(\gamma\circ p)^{-1}(t)$ such that the induced morphism
\[
Z\times_{\overline{X}} B\rightarrow X
\]
is onto $F_t$. In particular, $F_t$ contains the fiber of $P\rightarrow T$ over $t$. Then Corollary \ref{cor:equi-dim} shows that $\varphi\colon X\rightarrow T$ is equidimensional. \hfil\qed

\subsubsection{Proof of Theorem  \ref{cor:main:part1}   in the case when $\sF\vert_F=\sO_{\bbP^d}(1)^{\oplus r}$}

\label{section:step-4-case-2}
We still use the notations of Section \ref{section:setup}. If $\sF$ is a line bundle, by \cite[Corollaire]{Druel2004}, $X$ is isomorphic to $\bbP^n$ and we are done. Thus we may assume that $r\geq 2$. Then Corollary \ref{cor-equidim-implies-bundle} shows that $\varphi\colon X\rightarrow T$ is a projective bundle between projective manifolds.

Next we assume that $\dim T>0$. Then we must have $r\geq 2$. Moreover, We may identify $T$ with $\overline{T}$ and $X$ with $\overline{X}$. In particular, $Z=\bbP(\sE)$ is isomorphic to the fiber product $X\times_T X$ and satisfies the following commutative diagram.
\[
\begin{tikzcd}[column sep=large, row sep=large]
X\dar[swap]{\varphi} & Z \lar{q}\dar[swap]{\pi} \\
T &
X\lar{p=\varphi}.
\end{tikzcd}
\]

Let $\sG  =  q^*\sF$. As explained in Section \ref{section:step-3-case-2}, there exists a numerically projectively flat subbundle $\sM$ of $\sE^*$ such that $\sG\cong \pi^*\sM\otimes \sO_{\bbP(\sE)}(1)$. In particular, $\sG$ is numerically projectively flat and so is $\sF$. Moreover, set $M=\bbP(\sM^*)$ and let $\pi_M\colon M\rightarrow X$ be the natural projection.  Then we have a surjection $\sG\vert_M\rightarrow T_{M/X}$ and $M\rightarrow X$ is isomorphic to a flat projective bundle given by a representation of $\pi_1(X)$ in $\PGL_{r}(\bbC)$. As $X\rightarrow T$ is a $\bbP^d$-bundle, $\pi_1(X)$ is isomorphic to $\pi_1(T)$.
Such a representation of $\pi_1(X)$ induces a flat $\bbP^{r-1}$-bundle  $Q\rightarrow T$ with the following commutative diagram
\[
\begin{tikzcd}[column sep=large, row sep=large]
Q\dar[swap]{} & M \lar{f}\dar[swap]{}    \\
T &
X\lar{p=\varphi}.
\end{tikzcd}
\] 

Let $P=q(M)$. By Proposition \ref{prop:smothness-image-subbundle}, we see that $P\rightarrow T$ is a $\bbP^{r-1}$-bundle. On the other hand, it is easy to see that an irreducible curve $C\subset M$ is contracted by $q$ if and only if it is contracted by $f$. Therefore, by rigidity lemma, $P$ is isomorphic to $Q$ as projective bundles over $T$. In particular, the pushforward of the surjection $\sG\vert_{M}\rightarrow T_{M/X}$ induces a surjection $\sF\vert_Q\rightarrow T_{Q/T}$.\hfill \qed

\vskip 2\baselineskip

\section{Proof of the hyperbolicity}
\label{section:hyperbolicity}
In this section, we  finish the proofs of Theorem \ref{thm:main-theorem}, Theorem  \ref{thm:simply-connected-Pn} and Corollary \ref{cor:existence-symmetric-forms}.
A projective manifold $Y$ is called \emph{Brody hyperbolic}  if every  holomorphic map  $f\colon \bbC\rightarrow Y$ is constant. Since $Y$ is assumed to be compact, the Brody hyperbolicity is equivalent to the Kobayashi hyperbolicity.  
% A holomorphic map $f\colon \bbC\rightarrow Z$ is  said to %be  \emph{degenerate} if its image $f(\bbC)$ is not Zariski %dense in $Z$. 
The following lemma is an application of \cite[Theorem 1.1]{Yamanoi2010}, which reveals  the relationships between fundamental groups  and   degeneracy of entire curves.

\begin{lemma}
	\label{lemma:representation-degenerate-entire-curve}
	Let $Z$ be a projective variety. If there exists   a  subgroup  $G\subseteq \pi_1(Z)$ of finite index such that it  admits a linear representation whose image is not virtually abelian, then every holomorphic map $f\colon \bbC \to Z$ is degenerate, i.e. $f(\bbC)$ is not Zariski dense in $Z$.
\end{lemma}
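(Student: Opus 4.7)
The plan is to reduce the statement to Yamanoi's theorem applied directly to a variety whose fundamental group carries the non–virtually abelian linear representation. The reduction passes through a finite étale cover and uses the lifting property of holomorphic maps from $\bbC$.

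First I would use $G$ to construct a finite étale cover $\pi\colon Z'\to Z$ with $\pi_*\pi_1(Z')=G\subseteq \pi_1(Z)$. By construction $\pi_1(Z')\cong G$, so $\pi_1(Z')$ itself admits a linear representation whose image is not virtually abelian. This is the setting in which \cite[Theorem 1.1]{Yamanoi2010} directly applies: it asserts that every entire holomorphic curve $g\colon \bbC\to Z'$ is algebraically degenerate, i.e. $g(\bbC)$ is not Zariski dense in $Z'$.

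Next, given an arbitrary holomorphic map $f\colon \bbC\to Z$, since $\bbC$ is simply connected and $\pi$ is a (finite) covering map, $f$ lifts to a holomorphic map $\tilde f\colon \bbC\to Z'$ with $\pi\circ\tilde f=f$. Applying Yamanoi's theorem to $\tilde f$, we obtain a proper closed subvariety $W\subsetneq Z'$ with $\tilde f(\bbC)\subseteq W$. Because $\pi$ is finite, $\pi(W)$ is a proper closed subvariety of $Z$, and $f(\bbC)=\pi(\tilde f(\bbC))\subseteq \pi(W)\subsetneq Z$. Hence $f$ is degenerate.

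The only nontrivial ingredient is Yamanoi's theorem itself, so the reduction above is essentially formal; the main conceptual point is simply recording that the hypothesis survives passage to the cover, and that degeneracy is preserved under the finite projection $\pi$. No further estimates or case analysis should be needed.
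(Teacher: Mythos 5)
Your reduction via the finite \'etale cover $\pi\colon Z'\to Z$ (so that $\pi_1(Z')\cong G$ carries the representation), the lifting of $f$ through $\pi$ using simple connectedness of $\bbC$, and the observation that degeneracy descends along the finite map $\pi$ are all correct and coincide with the first step of the paper's argument. However, there is a gap at the point where you invoke \cite[Theorem 1.1]{Yamanoi2010} directly on $Z'$: the lemma only assumes $Z$ is a projective \emph{variety}, so $Z'$ may well be singular, whereas Yamanoi's theorem is a statement about smooth projective varieties (projective manifolds). You cannot apply it to $Z'$ as written.

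The paper closes exactly this gap by arguing by contradiction: assuming $f$ (equivalently $\tilde f$) is non-degenerate, it passes to a desingularization $\overline{Z}\to Z'$, lifts the non-degenerate entire curve to $\overline{Z}$ (this lift exists because the image of $\tilde f$ is Zariski dense, hence meets the locus over which the resolution is an isomorphism, and the lift extends over the remaining discrete set of points of $\bbC$ by properness), and uses the surjectivity of $\pi_1(\overline{Z})\to\pi_1(Z')$ to transport the linear representation with non--virtually abelian image to $\pi_1(\overline{Z})$. Only then is Yamanoi's theorem applied, to the smooth variety $\overline{Z}$, yielding the contradiction. Your proof becomes complete once you insert this desingularization step; note that the contradiction argument is convenient here precisely because the lift to the resolution is only guaranteed for a non-degenerate curve, so one cannot simply lift an arbitrary $\tilde f$ to $\overline{Z}$ at the outset.
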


\begin{proof}
	By taking a finite \'etale cover, it is enough to prove the case when $G=\pi_1(Z)$.
	Assume that there exists a holomorphic map $f\colon \bbC \to Z$ which is non-degenerate. Let $\overline{Z} \to Z$ be a desingularization.
	Then $f$ lifts to a holomorphic map $\overline{f}\colon \bbC \to \overline{Z}$. Since there is a surjective morphism $\pi_1(\overline{Z}) \to \pi_1(Z)$, we concluded that $\pi_1(\overline{Z})$ also admits a linear representation whose image is not virtually abelian. By \cite[Theorem 1.1]{Yamanoi2010}, $\overline{f}$ is degenerate and so is $f$. This is a contradiction.
\end{proof}

In order to deduce the hyperbolicity in Theorem \ref{thm:main-theorem} from Lemma \ref{lemma:representation-degenerate-entire-curve}, we need some preparatory results.

\begin{lemma}
	\label{lemma:representation-vir-abelian-not-strictly-nef}
	Let $Z$ be a positive dimensional projective variety.  Assume that $\sG$ is a flat vector bundle given by a linear representation   $\rho\colon \pi_1(Z) \to \mathrm{GL}_r(\bbC)$. If the image of $\rho$  is virtually abelian,  then $\sG$ is not strictly nef.
\end{lemma}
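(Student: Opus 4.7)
The plan is to reduce to the case of a flat line bundle, exploiting the fact that a flat line bundle on a positive dimensional projective variety has numerically trivial first Chern class and hence cannot be strictly nef. Since $\rho(\pi_1(Z))$ is virtually abelian, I would first pick a finite-index normal subgroup $H \subseteq \pi_1(Z)$ with $\rho(H)$ abelian. Let $\gamma\colon \widetilde{Z}\to Z$ be the corresponding finite \'etale cover; then $\widetilde{\sG} := \gamma^*\sG$ is the flat holomorphic vector bundle associated to $\rho|_H$, which has abelian monodromy.

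Next I would use the fact that any commuting family of elements of $\mathrm{GL}_r(\bbC)$ admits a common eigenvector (hence a common invariant flag by induction on the rank). Under the Riemann--Hilbert correspondence between flat holomorphic bundles and local systems, this yields a filtration
\[
0 = \widetilde{\sG}_0 \subsetneq \widetilde{\sG}_1 \subsetneq \cdots \subsetneq \widetilde{\sG}_r = \widetilde{\sG}
\]
by flat holomorphic subbundles whose graded pieces are flat line bundles. In particular, $\sL := \widetilde{\sG}/\widetilde{\sG}_{r-1}$ is a flat quotient line bundle on $\widetilde{Z}$. Since a flat holomorphic connection has zero curvature, we have $c_1(\sL) = 0$ in $H^2(\widetilde{Z},\bbR)$, so $c_1(\sL)$ is numerically trivial.

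To conclude, I would argue by contradiction: suppose $\sG$ is strictly nef. Since $\gamma$ is finite surjective, any finite morphism $\nu\colon C \to \widetilde{Z}$ from a smooth complete curve composes with $\gamma$ to a finite morphism $C \to Z$, so Proposition \ref{BK criterion} applied to $\sG$ together with the equality $\nu^*\widetilde{\sG} = (\gamma\circ\nu)^*\sG$ shows that $\widetilde{\sG}$ is strictly nef. For any complete curve $C \subseteq \widetilde{Z}$ (which exists because $\dim \widetilde{Z} = \dim Z \geq 1$), the surjection $\widetilde{\sG}|_C \twoheadrightarrow \sL|_C$ combined with Corollary \ref{Image-nef} forces $\sL|_C$ to be strictly nef, hence $c_1(\sL)\cdot C > 0$, contradicting $c_1(\sL) = 0$. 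The main technical point is the simultaneous triangularization step in the holomorphic/flat category, which I would handle by transporting the algebraic invariant-flag data through the monodromy equivalence to obtain honest flat holomorphic subbundles; every other step is essentially bookkeeping.
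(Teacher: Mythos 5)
Your proof is correct and follows essentially the same route as the paper's: pass to a finite \'etale cover with abelian monodromy, simultaneously triangularize to obtain a flat quotient line bundle, and contradict strict nefness via the degree-zero quotient on a complete curve. The only difference is that you spell out the (routine) justifications that the paper leaves implicit, namely that strict nefness is preserved under finite pullback and that a flat line bundle has degree zero on every curve.
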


\begin{proof}
	Up to finite \'etale cover, we may assume that the image of $\rho$ is abelian.  Then the image $\rho(\pi_1(Z))$ can be simultaneously triangulated. Hence there is a quotient morphism of flat vector bundles $\sG \to \sL$ such that $\sL$ is a line bundle. Such a quotient induces a section $\sigma\colon Z \to \bbP(\sG)$. Moreover, $\sigma^*\sO_{\bbP(\sG)}(1) \cong \sL$. Since a flat line bundle is never strictly nef, this contradicts to the strict nefness of $\sG$.
\end{proof}

The following result  is a consequence of the Borel  fixed-point theorem.

\begin{prop}
	\label{prop:fixed-point}
	Let $G$ be a finitely generated virtually abelian subgroup of $\PGL_{d+1}(\bbC)$. Then there exists a subgroup $G'\subseteq G$ of finite index such that the natural action of $G'$ on $\bbP^d$ has a fixed point.
\end{prop}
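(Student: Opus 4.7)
The plan is to invoke the Borel fixed-point theorem after replacing $G$ by an abelian finite-index subgroup and then Zariski-closing it. By hypothesis, there is a finite-index abelian subgroup $A \subseteq G$. Since being abelian is a Zariski-closed condition (the commutator map is a morphism of algebraic varieties), the Zariski closure $\overline{A}$ of $A$ inside the algebraic group $\PGL_{d+1}(\bbC)$ is again abelian, in particular solvable.

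Next I would pass to the identity component $\overline{A}^{0}$, which is a connected closed subgroup of finite index in $\overline{A}$. Since $\overline{A}$ is solvable, so is $\overline{A}^{0}$, which is therefore a connected solvable linear algebraic group over $\bbC$. Borel's fixed-point theorem then provides a point $x \in \bbP^{d}$ fixed by $\overline{A}^{0}$.

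Finally, set $G' \defeq A \cap \overline{A}^{0}$. The inclusion $A \hookrightarrow \overline{A}$ induces an injection
\[
A / (A \cap \overline{A}^{0}) \hookrightarrow \overline{A} / \overline{A}^{0},
\]
whose target is finite; hence $G'$ has finite index in $A$, and consequently in $G$. By construction $G' \subseteq \overline{A}^{0}$, so every element of $G'$ fixes the point $x$, which is what we wanted.

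There is no real obstacle here; the only points requiring care are the verification that $\overline{A}$ is abelian (which follows from the closedness of the commutator relation in the Zariski topology) and that $A \cap \overline{A}^{0}$ retains finite index in $G$ (which follows from a standard diagram chase using that $\overline{A}^{0}\subseteq \overline{A}$ has finite index).
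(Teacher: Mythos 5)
Your proof is correct and follows essentially the same route as the paper: pass to a finite-index abelian subgroup, take its Zariski closure (which is abelian, hence connected solvable after passing to the identity component), and apply the Borel fixed-point theorem. Your version merely makes explicit the finite-index bookkeeping ($G' = A \cap \overline{A}^{0}$) that the paper compresses into "replacing $G'$ with some subgroup of finite index if necessary."
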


\begin{proof}
	Since $G$ is virtually abelian, there exists a finite index subgroup $G'$ of $G$ such that $G'$ is abelian. Let $\overline{G'}$ be the Zariski closure of $G'$ in $\PGL_{d+1}(\bbC)$. Since $G'$ is finitely generated and $\PGL_{d+1}(\bbC)$ is a linear algebraic group, $\overline{G'}$ is  abelian.
	By replacing $G'$ with some subgroup of finite index if necessary, we can assume further that $\overline{G'}$ is connected. Therefore, by Borel  fixed-point theorem, the action  of $\overline{G'} $ on $\bbP^d$ has a fixed point.
\end{proof}

We obtain the following result from Lemma \ref{lemma:representation-vir-abelian-not-strictly-nef} and Proposition \ref{prop:fixed-point}.

\begin{prop}
	\label{prop:representation-non-abelian}
	Let $Z$ be a positive dimensional projective variety  and  $P\rightarrow Z$ a flat $\bbP^d$-bundle given by a representation $\rho\colon \pi_1(Z) \to \mathrm{PGL}_{d+1}(\bbC)$. Assume that  the relative tangent bundle $T_{P/Z}$ is strictly nef. Then  the image $\rho(\pi_1(Z))$ is infinite. Moreover, there exists   a  subgroup  $G\subseteq \pi_1(Z)$ of finite index such that it  admits a linear representation whose image is not virtually abelian.
\end{prop}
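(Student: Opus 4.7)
The plan is to prove each assertion by contradiction, using in both cases the Barton--Kleiman criterion (Proposition \ref{BK criterion}) together with the preceding structural results.

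For the first assertion, suppose $\rho(\pi_1(Z))$ is finite and set $H:=\ker\rho$, corresponding to a finite \'etale cover $Z_H\rightarrow Z$. Since $\rho|_H$ is trivial, the pulled-back flat bundle $P_H:=P\times_Z Z_H$ is isomorphic to $Z_H\times\bbP^d$, and $T_{P_H/Z_H}$ is the pullback of $T_{\bbP^d}$ along the second projection. Because $Z$ is positive-dimensional, so is $Z_H$, and normalizing any one-dimensional subvariety of $Z_H$ produces a finite morphism $\mu\colon C\rightarrow Z_H$ from a smooth complete curve. Fixing $x\in\bbP^d$, the composite
\[
\nu\colon C\xrightarrow{(\mu,\,x)} Z_H\times\bbP^d = P_H \longrightarrow P
\]
is finite, and $\nu^*T_{P/Z}\cong\sO_C^{\oplus d}$ admits degree-zero quotient line bundles, contradicting the strict nefness of $T_{P/Z}$ via Proposition \ref{BK criterion}.

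For the second assertion, assume for contradiction that every finite-index subgroup of $\pi_1(Z)$ has only virtually abelian images under linear representations. Composing $\rho$ with the adjoint embedding $\mathrm{PGL}_{d+1}(\bbC)\hookrightarrow\mathrm{GL}(\mathfrak{sl}_{d+1}(\bbC))$, which is injective since $\mathrm{PGL}_{d+1}$ has trivial center, realizes $\rho$ as a linear representation of $\pi_1(Z)$, so by hypothesis $\rho(\pi_1(Z))$ is virtually abelian; being also finitely generated, Proposition \ref{prop:fixed-point} supplies a finite-index subgroup $G'\subseteq\rho(\pi_1(Z))$ whose action on $\bbP^d$ fixes some point $p$. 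Set $G:=\rho^{-1}(G')$, which has finite index in $\pi_1(Z)$, and let $Z_G\rightarrow Z$ be the corresponding \'etale cover. The flat bundle $P_G:=P\times_Z Z_G$ has monodromy factoring through $\mathrm{Stab}(p)$, so the constant $p$-section on the universal cover of $Z_G$ descends to a section $\sigma\colon Z_G\rightarrow P_G$. I would identify $\sigma^*T_{P_G/Z_G}$ as the flat vector bundle on $Z_G$ associated to the linear representation $\tau\colon G\rightarrow\mathrm{GL}(T_p\bbP^d)$ obtained by differentiating the stabilizer action at $p$. By the contradiction hypothesis applied to $G$, the image $\tau(G)$ is virtually abelian, so Lemma \ref{lemma:representation-vir-abelian-not-strictly-nef} forces $\sigma^*T_{P_G/Z_G}$ to fail to be strictly nef. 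But since $P_G\rightarrow P$ is finite \'etale, $T_{P_G/Z_G}$ is the pullback of the strictly nef $T_{P/Z}$; for any finite morphism $\mu\colon C\rightarrow Z_G$ from a smooth complete curve, the composite $\sigma\mu\colon C\rightarrow P_G$ is finite, whence every quotient line bundle of $\mu^*\sigma^*T_{P_G/Z_G}=(\sigma\mu)^*T_{P_G/Z_G}$ has positive degree by Proposition \ref{BK criterion}. This shows $\sigma^*T_{P_G/Z_G}$ is strictly nef, a contradiction.

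The main obstacle in this plan is the identification of $\sigma^*T_{P_G/Z_G}$ with the flat vector bundle coming from the linearized stabilizer action $\tau$ on $T_p\bbP^d$. This identification is precisely what converts the global strict nefness of $T_{P/Z}$ into a statement about a \emph{linear} representation of $G$, where the contradiction hypothesis can be deployed; it requires unpacking the local presentation $P_G=(\widetilde{Z}_G\times\bbP^d)/G$ and checking that the induced monodromy on the restricted relative tangent bundle along the constant section is exactly $\tau$. Once this identification is secured, the rest of the argument is routine, and the inheritance of positive-dimensionality by the covers $Z_H$ and $Z_G$ is automatic.
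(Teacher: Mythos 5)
Your proposal is correct and follows essentially the same route as the paper's proof: trivializing the bundle on the kernel cover for the first assertion, and for the second, using Proposition \ref{prop:fixed-point} to produce a fixed-point section over a finite \'etale cover, identifying the restriction of $T_{P/Z}$ along that section with the flat bundle of the differentiated stabilizer action, and invoking Lemma \ref{lemma:representation-vir-abelian-not-strictly-nef}. The only differences are cosmetic (you argue by contradiction where the paper argues directly, and you make the adjoint embedding of $\mathrm{PGL}_{d+1}$ and the degree computations explicit); the identification you flag as the "main obstacle" is exactly the step the paper asserts via the description $T_{P/Z}|_W=(T_{\bbP^d,p}\times\widetilde{Z})/\pi_1(Z)$.
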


\begin{proof}
	Arguing by contraction, we assume that the image $\rho(\pi_1(Z))$ is finite. Then, after replacing $Z$ by a finite \'etale cover, the flat $\bbP^d$-bundle $P\rightarrow Z$ is isomorphic to $Z\times \bbP^d$. This contradicts to the strict nefness of $T_{P/Z}$.
	
	Next, if  the image of $\rho(\pi_1(Z))$ is not  virtually abelian, then we are done. Otherwise, if $\rho(\pi_1(Z))$ is  virtually abelian,  we shall construct some $G\subseteq \pi_1(Z)$ as required. Indeed, by Proposition \ref{prop:fixed-point}, there is a   subgroup $G'$ of $G$ of finite index such that the natural action of $G'$ on $\bbP^d$ has a fixed point.  Let $\Gamma \subseteq \pi_1(Z)$ be the preimage $\rho^{-1}(G')$. Then, by replacing $Z$ with the finite \'etale cover induced by $\Gamma \subseteq \pi_1(Z)$, we may assume that the   natural action of $G$ on $\bbP^d$ has a fixed point $p$.
	
	We remark that $P=(\bbP^d\times \widetilde{Z})/\pi_1(Z)$ where $\widetilde{Z}$ is the universal cover of $Z$, and  the action of $\pi_1(Z)$ is defined as $$ g. (x,z) = (\rho(g)(x), g\cdot z) $$ for every $g\in \pi_1(Z)$.
	Let $W=(\{p\}\times \widetilde{Z})/\pi_1(Z)$.
	Then $W\cong Z$ and there is a closed embedding  $W\hookrightarrow P$.
	We note that, the restriction $T_{P/Z}|_W$ is just equal to the bundle $(T_{\bbP^d,p}\times \widetilde{Z})/\pi_1(Z)$, where the action of $\pi_1(Z)$ on $T_{\bbP^d,p}$ is the differentiation of the action of $\pi_1(Z)$ on $\bbP^d$ at $p$.
	Consequently, $T_{P/Z}|_W$ is a flat vector bundle on $W$.
	It is strictly nef as well. This implies that the representation of $\pi_1(Z)$ corresponding to $T_{P/Z}|_W$ is not virtually abelian by Lemma \ref{lemma:representation-vir-abelian-not-strictly-nef}, and we are done.
\end{proof}

As an application, one can   derive the following corollary.

\begin{cor}
	\label{cor:fundamental-group-subvariety}
	Under the assumption of Theorem \ref{thm:main-theorem}, let $\varphi\colon X\rightarrow T$ be the $\bbP^d$-bundle structure provided in Theorem \ref{cor:main:part1}. If $Z$ is a  subvariety of $T$ with $\dim Z>0$, then there exists a finite index subgroup of $\pi_1(Z)$ admitting a linear representation whose image is not virtually abelian. In particular, every holomorphic map $f:\mathbb{C}\rightarrow Z$ is degenerate.
\end{cor}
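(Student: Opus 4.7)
The plan is to reduce the statement to Proposition \ref{prop:representation-non-abelian} applied to a suitable flat projective bundle over $Z$, and then invoke Lemma \ref{lemma:representation-degenerate-entire-curve} for the degeneracy assertion. Since $\dim Z>0$ forces $\dim T>0$, Theorem \ref{cor:main:part1} produces exactly two possibilities for the couple $(X,\sF)$, and I would handle them in parallel.

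In case (1) of Theorem \ref{cor:main:part1}, $\varphi\colon X\to T$ is itself a flat $\bbP^d$-bundle coming from a representation $\pi_1(T)\to\PGL_{d+1}(\bbC)$, and $\sF\cong T_{X/T}$ is strictly nef. Composing this representation with the natural homomorphism $\pi_1(Z)\to\pi_1(T)$ induced by the inclusion $Z\hookrightarrow T$, I obtain a flat $\bbP^d$-bundle structure on $\varphi^{-1}(Z)\to Z$ whose relative tangent bundle is the restriction $T_{X/T}|_{\varphi^{-1}(Z)}$, which remains strictly nef. In case (2), I would instead restrict the flat $\bbP^{r-1}$-subbundle $Q\to T$ produced by Theorem \ref{cor:main:part1}, whose relative tangent bundle $T_{Q/T}$ is already known to be strictly nef, to obtain a flat $\bbP^{r-1}$-bundle $Q\times_T Z\to Z$ given by a representation of $\pi_1(Z)$ with strictly nef relative tangent bundle.

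In either case, Proposition \ref{prop:representation-non-abelian} applied to $Z$ and the chosen flat projective bundle yields a finite index subgroup $G\subseteq\pi_1(Z)$ admitting a linear representation whose image is not virtually abelian. The degeneracy of every holomorphic map $f\colon\mathbb{C}\to Z$ then follows directly from Lemma \ref{lemma:representation-degenerate-entire-curve}, completing the argument.

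I expect no serious obstacle here: all the substantive work has already been carried out in Theorem \ref{cor:main:part1} and in Proposition \ref{prop:representation-non-abelian}. The only point worth verifying is that the strict nefness of $T_{X/T}$ (respectively of $T_{Q/T}$) is preserved under restriction to $\varphi^{-1}(Z)$ (respectively to $Q\times_T Z$), which is immediate from the fact that strict nefness of a locally free sheaf is inherited by restrictions to closed subvarieties, as can be read off directly from Proposition \ref{BK criterion}.
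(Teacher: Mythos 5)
Your proposal is correct and follows essentially the same route as the paper: the paper likewise observes that Theorem \ref{cor:main:part1} always provides a flat projective bundle $Q$ over $T$ (namely $X$ itself in case (1), the $\bbP^{r-1}$-subbundle in case (2)) with strictly nef relative tangent bundle, restricts it to $Q\times_T Z$, and concludes via Proposition \ref{prop:representation-non-abelian} and Lemma \ref{lemma:representation-degenerate-entire-curve}. Your explicit case split and the remark on restriction of strict nefness via Proposition \ref{BK criterion} are just slightly more detailed versions of the same argument.
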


\begin{proof}
	By Theorem \ref{cor:main:part1}, there is always a flat projective bundle $Q$ over $T$ such that the relative tangent bundle $T_{Q/T}$ is strictly nef. Hence,   $P=Q\times_T Z$ is a flat projective bundle over $Z$. Furthermore, the relative tangent bundle $T_{P/Z}$ is strictly nef as well.
	Therefore, by  Proposition \ref{prop:representation-non-abelian}, some   subgroup of $\pi_1(Z)$ of finite index admits a linear representation whose image is not virtually abelian. By Lemma \ref{lemma:representation-degenerate-entire-curve}, every holomorphic map $f:\bbC\rightarrow Z$ is degenerate.
\end{proof}

\vskip 1\baselineskip

\begin{proof}[{Proof of Theorem \ref{thm:main-theorem}}]
	Thanks to Theorem \ref{cor:main:part1}, we only need  to prove the hyperbolicity of $T$.
	Let  $f\colon \bbC\rightarrow T$ be a holomorphic map. 
	Assume by contradiction that $f$ is not a constant map. Let $Z$ be the Zariski closure of $f(\bbC)$. Then $\dim Z >0$ and the induced map $\bbC \to Z$ is non degenerate. 
	This contradicts to Corollary \ref{cor:fundamental-group-subvariety}.
\end{proof}

\begin{proof}[{Proof of Theorem  \ref{thm:simply-connected-Pn}}]
	By Theorem \ref{thm:main-theorem}, there is a $\bbP^d$-bundle structure $\varphi\colon X\to T$. In particular, $\pi_1(T)$ is virtually abelian. By Corollary \ref{cor:fundamental-group-subvariety}, we deduce that $\dim T=0$ and consequently $X$ is isomorphic to $\bbP^n$.
\end{proof}

\begin{proof}[Proof of Corollary \ref{cor:existence-symmetric-forms}]
	Since $X$ is not isomorphic to $\bbP^n$, by Theorem \ref{thm:main-theorem} and Theorem \ref{cor:main:part1}, there is a flat projective bundle $Q\rightarrow T$ such that $\dim T>0$ and $T_{Q/T}$ is strictly nef. Hence by Proposition \ref{prop:representation-non-abelian}, there is a linear representation of $\pi_1(T)$ with infinite image. Since $\pi_1(X) \cong \pi_1(T)$,  we obtain  the existence of nonzero symmetric differentials on $X$ by using \cite[Theorem 0.1]{BrunebarbeKlinglerTotaro2013}.
\end{proof}

\vskip 1\baselineskip

\def\cprime{$'$} %newcommand of prime over letters

\renewcommand\refname{Reference}
\bibliographystyle{alpha}
\bibliography{strcitlynef}

\end{document}